\numberwithin{equation}{section}
\newcommand{\localmu}{\mu}
\newcommand{\E}{{\bf E}}
\newcommand{\R}{{\bf R}}
\newcommand{\localconstant}{C}
\newcommand{\cU}{\mathcal{U}}
\newcommand{\norm}[1] {\left \| #1 \right \|}
\newcommand{\inclu}[0] {\ar@{^{(}->}}
\newcommand{\gph}{{\rm gph}\,}
\newcommand{\dist}{{\rm dist}}
\newcommand{\EE}{\mathbb{E}}
\newcommand{\lip}{\mathrm{lip}}
\newcommand{\RR}{\mathbb{R}}
\newcommand{\cX}{\mathcal{X}}
\newcommand{\cL}{\mathcal{L}}
\newcommand{\cP}{\mathcal{P}}
\newcommand{\cD}{\mathcal{D}}
\newcommand{\cM}{\mathcal{M}}
\newcommand{\epi}{\mathrm{epi}\,}
\newcommand{\range}{\mathrm{range}}
\newcommand{\abs}[1]{\left| #1 \right|}
\newcommand{\prox}{\mathrm{prox}}
\newcommand{\proj}{P}
\newcommand{\dom}{\mathrm{dom}\,}
\newcommand{\argmin}{\operatornamewithlimits{argmin}}
\newcommand{\NN}{\mathbb{N}}
\newcommand{\expect}[1]{\mathbb{E}\left[#1\right]}
\newcommand{\dotp}[1]{\left\langle #1\right\rangle}
\newtheorem{thm}{Theorem}[section]
\newtheorem{definition}[thm]{Definition}
\newtheorem{proposition}[thm]{Proposition}
\newtheorem{lem}[thm]{Lemma}
\newtheorem{cor}[thm]{Corollary}
\newtheorem{assumption}{Assumption}
\newcommand{\ub}{\texttt{ub}}
\newcommand{\cF}{\mathcal{F}}
\newtheorem{example}{Example}[section]
\theoremstyle{remark}
\newtheorem{claim}{Claim}
\newcommand{\discrete}{k}
\newcommand{\perturb}{\nu}
\newcommand{\tangent}[2]{T_{#1}({#2})}
\newcommand{\tangentM}[1]{\tangent{\cM}{#1}}
\newcommand{\zetak}{\zeta_k^{(k_0)}}
\numberwithin{equation}{section}
\newcommand{\cI}{\mathcal{I}}
\newcommand{\Nul}{{\rm Null}\,}
\newcommand{\M}{{\mathcal M}\,}
\newcommand{\cZ}{{\mathcal Z}\,}
\newcommand{\cG}{\mathcal{G}}
\newcommand{\xs}{x^{\star}}
\title{Asymptotic normality and optimality in \\nonsmooth stochastic approximation}
	\author{Damek Davis\thanks{School of ORIE, Cornell University,
Ithaca, NY 14850, USA;
\texttt{people.orie.cornell.edu/dsd95/}. Research of Davis supported by an Alfred P. Sloan research fellowship and NSF DMS award 2047637.}  \qquad Dmitriy Drusvyatskiy\thanks{Department of Mathematics, U. Washington,
Seattle, WA 98195; \texttt{www.math.washington.edu/{\raise.17ex\hbox{$\scriptstyle\sim$}}ddrusv}. Research of Drusvyatskiy was supported by NSF DMS-1651851 and CCF-2023166 awards.}\qquad Liwei Jiang\thanks{School of ORIE, Cornell University. Ithaca, NY 14850, USA;
	\texttt{orie.cornell.edu/research/grad-students/liwei-jiang}}}
\date{}
\begin{document}
\maketitle

\begin{abstract}
In their seminal work, Polyak and Juditsky showed that stochastic approximation algorithms for solving smooth equations enjoy a central limit theorem. Moreover, it has since been argued that the asymptotic covariance of the method is best possible among any estimation procedure in a local minimax sense of H\'{a}jek and Le Cam. A long-standing open question in this line of work is whether similar guarantees hold for important non-smooth problems, such as stochastic nonlinear programming or stochastic variational inequalities. In this work, we show that this is indeed the case. 
\end{abstract}

	

\section{Introduction}
Polyak and Juditsky~\cite{polyak1992acceleration} famously showed that the stochastic gradient method for minimizing smooth and strongly convex functions enjoys a central limit theorem: the error between the running average of the iterates and the minimizer, normalized by the square root of the iteration counter, converges to a normal random vector.
Moreover,  the covariance matrix of the limiting distribution is in a precise sense ``optimal" among any estimation procedure. 
A long standing open question is whether similar guarantees -- asymptotic normality and optimality -- exist for nonsmooth optimization and, more generally, for equilibrium problems. 
In this work, we obtain such guarantees under mild conditions that hold both in concrete circumstances (e.g. nonlinear programming) and under generic  linear perturbations.

The types of problems we will consider are best modeled as stochastic variational inequalities.  Setting the stage, consider the task of finding a solution $x^{\star}$ of the inclusion
\begin{equation}\label{eqn:stoch_var_ineq}
0 \in \mathop\EE_{z \sim \cP}[A(x, z)] + N_{\cX}(x).
\end{equation}
Here, $\cP$ is a probability distribution accessible only through sampling,  $A(\cdot, z)$ is a smooth map for almost every $z\sim \cP$, and $N_{\cX}(x)$ denotes the normal cone to a closed set $\cX$. 
Stochastic variational inequalities \eqref{eqn:stoch_var_ineq} are ubiquitous in contemporary optimization. For example, optimality conditions for constrained optimization problems $$\min_{x}~\mathop \EE_{z\sim \mathcal{P}}~ f(x,z)\qquad \textrm{subject to }x\in \cX,$$ fit into the framework~\eqref{eqn:stoch_var_ineq} by setting $A(x, z)=\nabla f(x,z)$ in \eqref{eqn:stoch_var_ineq}. More generally still,  Nash equilibria $x^{\star}=(x^{\star}_1,\ldots,x^{\star}_m )$ of stochastic games are solutions of the system
$$x_j^{\star}\in \argmin_{x_j\in\cX_j} \mathop\EE_{z \sim \cP}[f_j(x, z)]\qquad \textrm{for all }j=1,\ldots, m,$$
where $f_j$ and $\cX_j$, respectively, are the loss function and the strategy set of player $j$. First order optimality conditions for these $k$ coupled inclusions can be modeled as \eqref{eqn:stoch_var_ineq} by setting $[A(x, z)]_j:=\nabla_{x_j} f_j(x,z)$ and  $\cX:=\cX_1,\ldots, \cX_{m}$. 

There are two standard strategies for solving \eqref{eqn:stoch_var_ineq}: sample average approximation (SAA) and the stochastic forward-backward algorithm (SFB). The former proceeds by drawing a batch of samples $z_1,z_2,\ldots,z_k \overset{\rm iid}{\sim} \mathcal{P}$ and finding a solution $x_k$ to the empirical approximation \begin{equation}\label{eqn:saa}
0 \in \frac{1}{k}\sum_{i=1}^k[A(x, z_i)] + N_{\cX}(x).
\end{equation}
In contrast, the stochastic forward-backward (SFB) algorithm proceeds in an online manner, drawing a single sample $z_k\sim \mathcal{P}$ in each iteration $k$ and declaring the next iterate $x_{k+1}$ as 
\begin{equation}\label{eqn:sfb}
x_{k+1}\in P_{\cX}(x_k-\alpha_k \cdot A(x_k,z_k)).
\end{equation}
Here, $P_{\cX}(\cdot)$ denotes the nearest-point projection onto $\cX$. In the case of constrained optimization, $A(x,z)=\nabla f(x,z)$ is the gradient of some loss function $f(x,z)$, and the process \eqref{eqn:sfb} reduces to the stochastic projected gradient algorithm. Online algorithms like SFB are usually preferable to SAA since each iteration is inexpensive and can be performed online, whereas SAA requires solving the auxiliary optimization problem  \eqref{eqn:saa}. Although the asymptotic distribution of the SAA estimators  is by now well-understood \cite{king1993asymptotic,shapiro1989asymptotic,dupacova1988asymptotic}, our understanding of the asymptotic performance of the FSB iterates is limited in nonsmooth and constrained settings. The goal of this paper is to fill  this gap.
The main result of our work is the following.

\begin{quote}
Under reasonable assumptions, the running average of the SFB iterates exhibits the same asymptotic distribution as SAA. Moreover, both SAA and SFB are asymptotically optimal in a locally minimax sense of H\'{a}jek and Le Cam \cite{le2000asymptotics,van2000asymptotic}.
\end{quote}

We next describe our results, and their consequences, in some detail.
Namely, it is classically known (e.g. \cite{king1993asymptotic,shapiro1989asymptotic,dupacova1988asymptotic}) that the asymptotic performance of SAA \eqref{eqn:saa} is strongly influenced by the sensitivity of the solution $x^{\star}$ to perturbations of the left-hand-side of \eqref{eqn:stoch_var_ineq}.  In order to isolate this effect, let $S(v)$ consist of  solutions $x$ to the perturbed system
$$v\in \mathop\EE_{z \sim \cP}[A(x, z)] + N_{\cX}(x).$$
Throughout, we will assume that the solutions $S(v)$ vary smoothly near $x^{\star}$. More precisely, we will assume that the graph of $S$ locally around $(0,x^{\star})$ coincides with the graph of some smooth map $\sigma(\cdot)$. In the language of variational analysis \cite{dontchev2009implicit}, the map $\sigma(\cdot)$ is called a smooth localization of $S$ around $(0,x^{\star})$. It is known that this assumption holds in a variety of concrete circumstances and  under generic linear perturbations of semi-algebraic problems \cite{MR3461323}.

 Let us next provide the context and state our results.
It is known from \cite{king1993asymptotic,shapiro1989asymptotic} that under mild assumptions, the solutions $x_k$ of SAA \eqref{eqn:saa} are asymptotically normal:
\begin{equation}\label{eqn:ass_normsaa}
\sqrt{k}(x_k-x^{\star})\xrightarrow[]{D}\mathsf{N}\big(0, \nabla\sigma(0) \cdot {\rm Cov}(A(x^{\star},z)) \cdot \nabla\sigma(0)^{\top}\big).
\end{equation}
Thus the Jacobian of the solution map $\nabla\sigma(0)$ appears in the asymptotic covariance of the SAA estimator. In fact, we will argue that this is unavoidable. The first contributions of our work is that we prove that the asymptotic performance of SAA is locally minimax optimal---in the sense of H\'{a}jek and Le Cam \cite{le2000asymptotics,van2000asymptotic}---among all estimation procedures. Roughly speaking, this means that for any estimation procedure that outputs $\hat x_k$ based on $k$ samples, there exists a sequence of perturbations $\cP_k$ with $\frac{d\cP_k}{d\cP}=1+O(k^{-1/2})$, such that the performance of $\hat x_k$ on the perturbed sequence of problems is asymptotically no better than the performance of SAA on the target problem. 
We note that the analogous lower bound for stochastic nonlinear programming was obtained earlier in \cite{duchi2021asymptotic}, and our arguments are motivated by the techniques therein.
Aside from the lower bound, the main result of our work is to show that under reasonable assumptions, the running average of the SFB iterates enjoys the same  asymptotics as \eqref{eqn:ass_normsaa} and is thus  asymptotically optimal.

\begin{figure*}[t!]
    \centering
    \begin{subfigure}[t]{0.5\textwidth}
        \centering
        \includegraphics[scale=0.75]{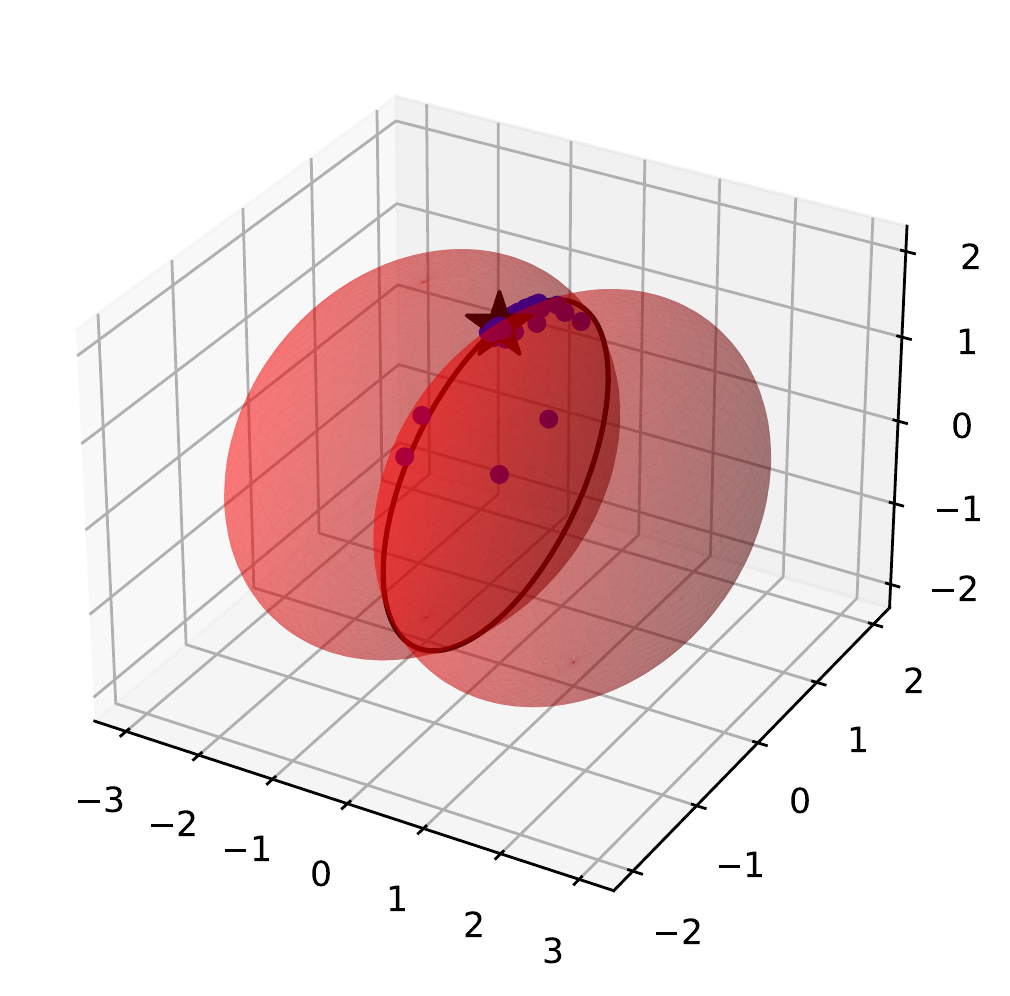}
        \caption{Feasible region and the iterates $x_k$}
    \end{subfigure}%
    ~ 
    \begin{subfigure}[t]{0.5\textwidth}
        \centering
        \includegraphics[scale=0.75]{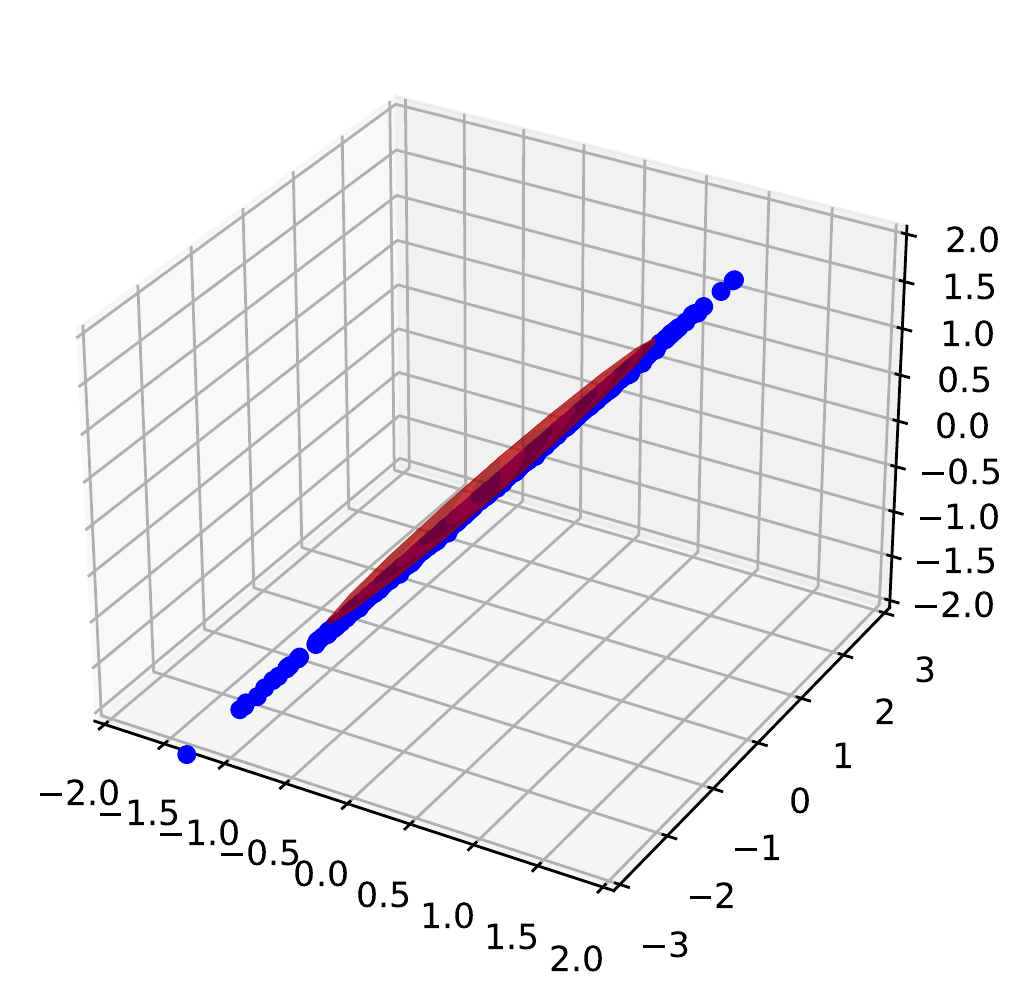}
        \caption{The deviations $\sqrt{k}(\bar x_k-x^\star)$ and the $95\%$ confidence region.}
    \end{subfigure}\\
    \begin{subfigure}[t]{0.5\textwidth}
        \centering
        \includegraphics[scale=0.5]{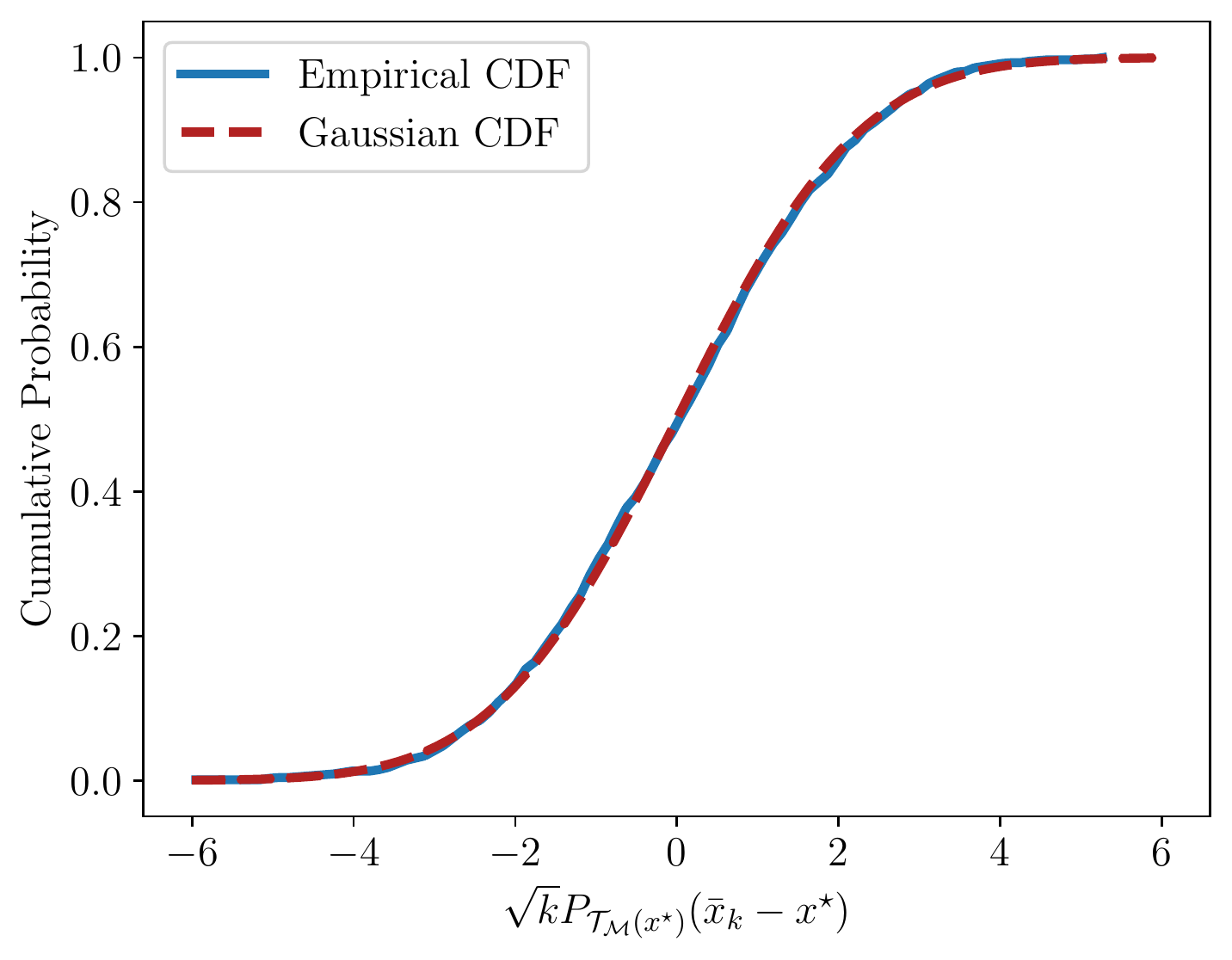}
        \caption{Empirical vs Gaussian CDF}
    \end{subfigure}%
    ~ 
    \begin{subfigure}[t]{0.5\textwidth}
        \centering
        \includegraphics[scale=0.5]{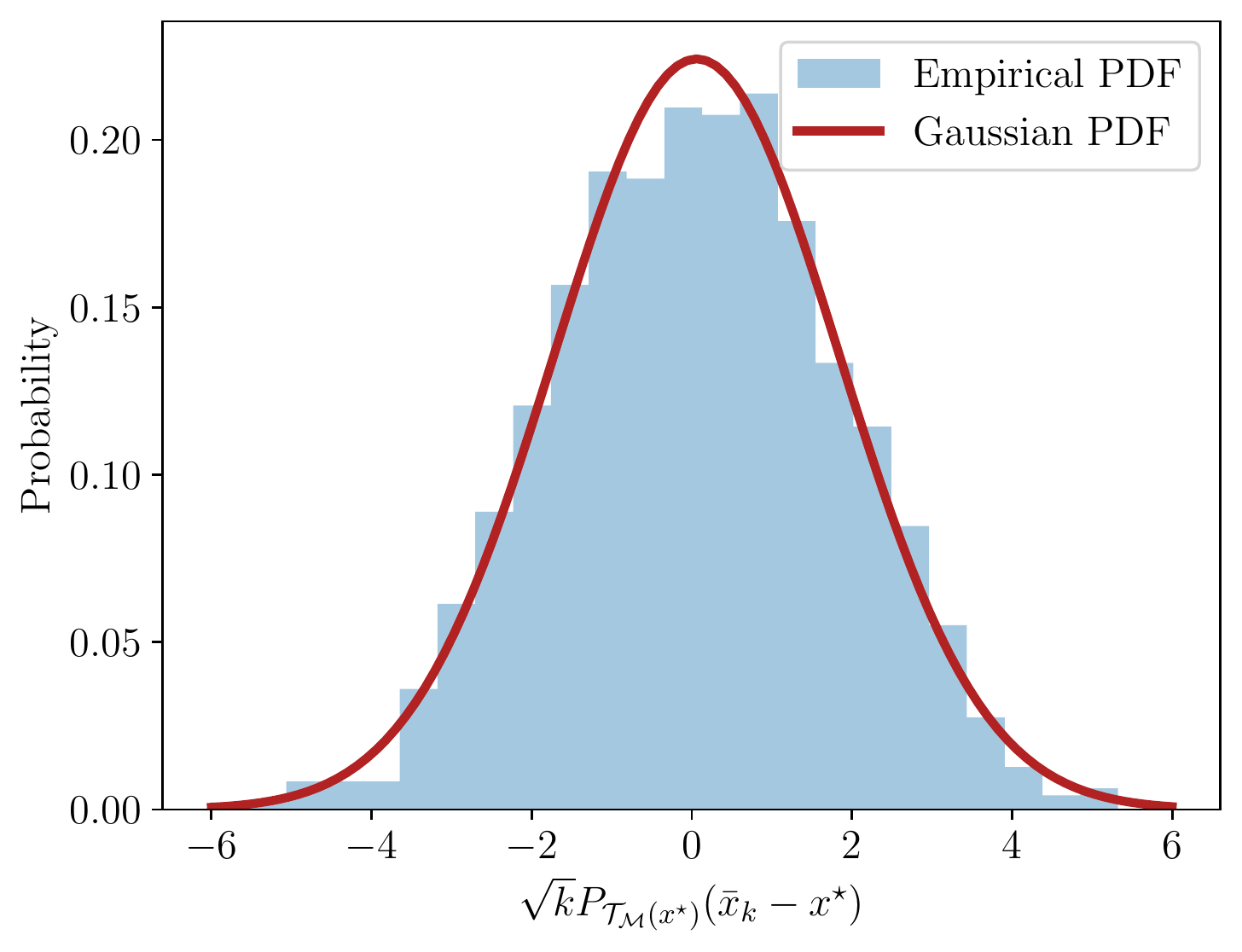}
        \caption{Histogram vs Gaussian density.}
    \end{subfigure}\\

    \caption{Performance of the stochastic projected gradient method for minimizing $\mathop\EE_g [-x_1+\langle g,x\rangle]$ over the intersection of two balls centered around $(-1,0,0)$ and $(1,0,0)$ of radius two. The expectation is taken over a Gaussian $g\sim N(0,I)$. The optimal solution $(0,0,\sqrt{3})$ (marked with a star)  lies on the active manifold $\cM$, which is a circle depicted in black. The figure on the top left depicts the iterates generated by a single run of the process initialized at the origin with stepsize $\eta_k=k^{-3/4}$ and executed for $1000$ iterations. The figure on the top right depicts the rescaled deviations $\sqrt{k}(\bar x_K-x^\star)$ taken over $100$ runs with $K=10^6$. The two figures clearly show that the iterates rapidly approach the active manifold and asymptotically the deviations $\sqrt{k}(\bar x_k-x^\star)$ are supported only along the tangent space to $\cM$ at $x^{\star}$. The two figures on the second row show the histogram and the empirical CDF, respectively, of the tangent components $\sqrt{k} P_{T_{\cM}(x^{\star})}(\bar x_k-x^{\star})$, overlaid with the analogous functions for a Gaussian.}\label{fig:illustration}
\end{figure*}


The guarantees we develop are already interesting for stochastic nonlinear programming:
\begin{align}\label{intro:constrainedstochasticsmooth}
\min_{x} ~\mathop f(x)=\mathop\EE_{z \sim \cP}[f(x, z)] \qquad \text{subject to}\qquad g_{i}(x) \leq 0 \qquad \forall i=1,\ldots, m.
\end{align}
Here each $g_i$ is a smooth function and the map $x \mapsto f(x, z)$ is smooth for a.e. $z\sim \mathcal{P}$. 
The optimality conditions for this problem can be modeled as the variational inequality  \eqref{eqn:stoch_var_ineq} under the identification $A(x,z)=\nabla f(x,z)$ and $\cX=\{x: g_i(x)\leq 0~\forall i=1,\ldots, m\}$. The stochastic forward-backward algorithm then becomes the stochastic projected gradient method. Our results imply that under the three standard conditions---linear independence of active gradients, strict complementarity, and strong second-order sufficiency---the running average of the SFB iterates $\bar x_k=\frac{1}{k}\sum_{i=1}^k x_i$ is asymptotically normal and optimal:
	$$\sqrt{k}(\bar x_k -  x^\star) \xrightarrow{D} N\left(0,  \nabla \sigma(0)\cdot\text{Cov}(\nabla f( x^\star, z))\cdot\nabla \sigma(0)\right).$$
Moreover,  as is classically known, the Jacobian $\nabla \sigma(0)$ admits an explicit description as 
\begin{equation}\label{eqn:invert_sigma_nlp}
\nabla \sigma(0)=(P_{\mathcal{T}}\nabla^2_{xx} \mathcal{L}(x^{\star},y^{\star})P_{\mathcal{T}})^{\dagger},
\end{equation}
where $\nabla^2_{xx}\mathcal{L}(x^{\star},y^{\star})$ is the Hessian of the Lagrangian function, the symbol $\dag$ denotes the  Moore-Penrose pseudoinverse, and 
$P_{\mathcal{T}}$ is the projection onto the linear subspace $\{\nabla g_i(x^{\star})\}^{\perp}_{i\in \mathcal{I}}$ and $\mathcal{I}=\{i: g_i(x^{\star})=0\}$ is the set of active indices. An illustrative example of the announced result is depicted in Figure~\ref{fig:illustration}, which plots the performance of the projected stochastic gradient method for minimizing a linear function over the intersection of two balls.
This result may be surprising in light of the existing literature. Namely, Duchi and Ruan~\cite{duchi2021asymptotic} uncover a striking gap between the estimation quality of SAA and at least one standard online method, called dual averaging~\cite{nesterovdualaveraging,xiao2009dual}, for stochastic nonlinear optimization. 
Indeed, even for the problem of minimizing the expectation of a linear function over a ball, the dual averaging method exhibits a suboptimal asymptotic covariance~\cite[Section 5.2]{duchi2021asymptotic}.\footnote{In contrast, in the special case that $\cX$ is polyhedral and convex, the dual averaging method is optimal~\cite{duchi2021asymptotic}.} In contrast, we see that the stochastic projected gradient method is asymptotically optimal.

Let us now return to the general problem~\eqref{eqn:stoch_var_ineq} and the stochastic forward-backward algorithm~\eqref{eqn:sfb}. In order to derive the claimed asymptotic guarantees for SFB, we will impose a few extra assumptions. First, in addition to assuming that $\sigma(\cdot)$ is smooth near the origin, we will assume that there exists a neighborhood $U$ of the origin such that $\sigma(U)$ is a smooth manifold. This assumption is mild, since it holds automatically for example if the matrix  $\nabla\sigma(\cdot)$ has constant rank on a neighborhood of the origin. In the language of \cite{drusvyatskiy2014optimality}, the set  $\cM=\sigma(U)$ is called an {\em active manifold} around $\bar x$. Returning to the case of stochastic nonlinear programming, the active manifold is simply the zero-set of the active inequalities 
$$\mathcal{M}=\{x: g_i(x)=0~~ \forall i\in \mathcal{I}\}.$$
See Figure~\ref{fig:illustration} for an illustration.
Variants of active manifolds have been extensively studied in nonlinear programming, under the names of identifiable surfaces \cite{wright1993identifiable}, partly smooth sets \cite{lewis2002active}, $\mathcal{UV}$-structures \cite{lemarecha2000,mifflin2005algorithm}, $g\circ F$ decomposable functions \cite{shapiroreducible}, and minimal identifiable sets \cite{drusvyatskiy2014optimality}.

The main idea of our argument is to relate the nonsmooth dynamics of SFB to a smooth stochastic approximation algorithm on $\cM$. More precisely, we will show that under mild conditions,  
 the shadow sequence $y_k := P_{\cM}(x_k)$ along the manifold $\cM$ behaves smoothly up to a small error
\begin{equation}\label{eqn:shadow_intro}
y_{k+1} = y_k - \alpha_k P_{\tangentM{y_k}}(A(y_k, z_k)+\nu_k) + o(\alpha_k),
\end{equation}
where $\tangentM{y_k}$ denotes the tangent space of $\cM$ at $y_k$ and $\nu_k$ is a zero-mean noise. Consequently, we may build on the techniques of Polyak and Juditsky \cite{polyak1992acceleration}  to obtain the asymptotics of the shadow sequence $y_k$, and then infer information about the original iterates $x_k$. We note that in the constrained optimization setting, the iteration \eqref{eqn:shadow_intro} becomes an inexact  Riemannian gradient method on the restriction of $f$ to $\cM$.

The validity of \eqref{eqn:shadow_intro} relies on two extra conditions, introduced  in \cite{davis2021subgradient}, which relate the ``first-order'' behavior of $\cX$ to that of $\cM$.
Namely, for $x\in \cX$ and $y\in \cM$ near $x^\star$, we assume: 
\begin{itemize}
\item $\langle N_{\cX}(x)\cap \mathbb{S}^{d-1},x-y\rangle= o(\|x-y\|)$\hfill [$(b)$-regularity]
\item $N_{\cX}(x)\cap \mathbb{S}^{d-1}\subset N_{\cM}(y)+O(\|x-y\|)\mathbb{B}$\hfill [strong $(a)$-regularity]
\end{itemize}
The $(b)$-regularity condition simply asserts that the secant line joining $x\in \cX$ and $y\in \cM$ becomes tangent to $\mathcal{X}$ as $x$ and $y$ tend to the same point near $x^{\star}$. The strong-$(a)$ regularity in contrast asserts that the normal cone $N_{\cX}(x)$ is contained in $N_{\cM}(y)$ up to a linear error $O(\|x-y\|)$---a kind of Lipschitz condition. The two regularity conditions are introduced and thoroughly developed in \cite{davis2021subgradient}, with numerous examples and calculus rules presented. In particular, both conditions hold automatically for  stochastic nonlinear programming.

\paragraph{Outline.} The outline of the rest of the paper is as follows. Section~\ref{sec:notation} presents the basic notation and constructions that will be used in the paper. Existence of smooth localizations $\sigma(\cdot)$ is a central assumption of our work. Section~\ref{sec:smooth_invertible_maps} develops asymptotic convergence guarantees and a local minimax lower bound for SAA. Sections~\ref{assumptions:for_algos} presents the classes of algorithms, which we will consider. Section~\ref{sec:ass_norm_main} states the main result on asymptotic normality of iterative methods. Section~\ref{sec:twopillarsmainresults} discuses the two pillars underpinning the main result of our work.

\section{Notation and basic constructions}\label{sec:notation}
This section records basic notation that we will use throughout the paper.
To this end, the symbol $\R^d$ will denote a Euclidean space with 
inner product $\langle\cdot,\cdot \rangle$ and the induced norm $\|x\|=\sqrt{\langle x,x\rangle}$. The symbol ${\bf B}$ will stand for the closed unit ball in $\R^d$, while $B_r(x)$ will denote the closed ball of radius $r$ around a point $x$. 
For any function $f\colon\R^d\to\R\cup\{+\infty\}$,  the {\em domain}, {\em graph}, and {\em epigraph} are defined as
\begin{align*}
\dom\, f&:=\{x\in \R^d: f(x)<\infty\},\\
\gph f&:=\{(x,f(x))\in \R^d\times \R: x\in\dom\, f\},\\
\epi\, f&:=\{(x,r)\in \R^d\times \R: r\geq f(x)\},
\end{align*}
respectively. We say that $f$ is {\em closed} if $\epi f$ is a closed set, or equivalently if $f$ is lower-semicontinuous. The {\em proximal map of $f$ with parameter $\alpha>0$} is given by 
$$\prox_{\alpha f}(x):=\argmin_y \left\{f(y)+\frac{1}{2\alpha}\|y-x\|^2\right\}.$$
 The {\em distance} and the {\em projection} of a point $x\in\R^d$ onto a set $Q\subset\R^d$ are 
\begin{align*}
d(x,Q):=\inf_{y\in Q}\|y-x\|\qquad\textrm{and}\qquad P_Q(x):=\argmin_{y\in Q}\|y-x\|,
\end{align*}
respectively. The indicator function of  $Q$, denoted by $\delta_Q(\cdot)$, is defined to be zero on $Q$ and $+\infty$ off it.  The symbol $o(h)$ stands for any function $o(\cdot)$ satisfying $o(h)/h\to 0$ as $h\searrow 0$.

\subsection{Smooth manifolds}\label{sec:manifolds}
Next, we recall a few definitions from smooth manifolds; we refer the reader to \cite{lee2013smooth,boumal2020introduction} for details.
Throughout the paper, all smooth manifolds $\mathcal{M}$ are assumed to be embedded in $\R^d$ and we consider the tangent and normal spaces to $\mathcal{M}$ as subspaces of $\R^d$. Thus, a set $\M\subset\R^d$ is a {\em $C^p$ manifold} (with $p\geq 1$)  if around any point $x\in \M$ there exists an open neighborhood $U\subset\R^d$ and a $C^p$-smooth map $F$ from $U$ to some Euclidean space $\R^n$ such that the Jacobian $\nabla F(x)$ is surjective and equality $\M\cap U=F^{-1}(0)$ holds. Then $F=0$ are called the {\em local defining equations} for $\cM$, and  the tangent and normal spaces to $\M$ at $x$ are defined by $T_{\M}(x):=\Nul(\nabla F(x))$ and $N_{\M}(x):=(T_{\M}(x))^{\perp}$, respectively.  Note that for $C^p$ manifolds $\M$ with $p\geq 1$, the projection $P_{\M}$ is $C^{p-1}$-smooth on a neighborhood of each point $x$ in $\mathcal{M}$ and is $C^p$-smooth on a neighborhood of the origin in the tangent space $T_{\M}(x)$ \cite{miller2005newton}. 
Moreover, the inclusion $\range(\nabla P_{\cM}(x)) \subseteq \tangentM{P_{\cM}(x)}$ holds for all $x$ near $\cM$ and the equality $\nabla P_{\cM}(x) = P_{\tangentM{x}}$ holds for all $x \in \cM$.

Let $\M\subset\R^d$ be a $C^p$-manifold for some $p\geq 1$. Then a function $f\colon \M\to\R$ is called $C^p$-smooth around a point $x\in \M$ if there exists a $C^p$ function $\hat f\colon U\to \R$ defined on an open neighborhood $U\subset\R^d$ of $x$ and that agrees with $f$ on $U\cap \M$. Then the {\em covariant gradient of $f$ at $x$} is the vector 
$\nabla_{\M} f(x):=P_{T_{\M}(x)}(\nabla \hat f(x)).$
When $f$ and $\M$ are $C^2$-smooth, the {\em covariant Hessian of $f$ at $x$} is  defined to be the unique self-adjoint 
bilinear form $\nabla_{\M}^2 f(x)\colon T_{\M}(x)\times T_{\M}(x)\to \R$ satisfying
$$\frac{d^2}{dt^2}f(P_{\M}(x+tu))\mid_{t=0}~=~\langle \nabla_{\M}^2 f(x)u, u\rangle\qquad \textrm{for all }u\in T_{\M}(x).$$
If $\cM$ is $C^3$-smooth, then we can write $\nabla_{\M}^2 f(x)$ simply as $$\nabla_{\M}^2 f(x)=P_{T_{\cM}( x)}\nabla^2 \hat f(x)P_{T_{\cM}( x)},$$ while regarding the right-side as a linear operator on $T_{\cM}( x)$.

A map $F\colon\cM\to\R^m$ is called $C^p$ smooth near a point $ x$ if there exists a map $\hat F\colon U\to\R^d$ defined on some neighborhood $U\subset\R^d$ of $ x$ that agrees with $F$ on $\cM$ near $ x$. In this case, we define the {\em covariant Jacobian} $\nabla F( x)\colon T_{\cM}( x)\to\R^m$ by the expression $\nabla_{\cM} F( x)u=\nabla \hat F( x) u$ for all $u\in T_{\cM}( x)$. An easy computation shows that in the particular case when $F(x)=\nabla_{\cM} f(x)$ for a $C^3$-smooth function $f\colon\cM\to\R$, 
the quadratic form defined by  $\nabla_{\cM} F(x)$ on $T_{\cM}(x)$ coincides with $\nabla^2_{\cM} f(x)$. More precisely, equality holds:
$$P_{T_{\cM}( x)}\cdot \nabla_{\cM} F(x)\cdot P_{T_{\cM}( x)}=P_{T_{\cM}( x)}\cdot\nabla^2_{\cM} f(x) \cdot P_{T_{\cM}( x)}.$$

\subsection{Normal cones and subdifferentials}\label{sec:normalcones}
Next, we require a few basic constructions of nonsmooth and variational analysis. Our discussion will follow mostly closely Rockafellar-Wets \cite{rockafellar2009variational}. Other influential treatments of the subject include \cite{mordukhovich2006variational,penot2012calculus,clarke2008nonsmooth,borwein2010convex}. 
 The {\em Fr\'{e}chet normal cone} to a set  $Q\subset\R^d$ at a point $x\in \R^d$, denoted $\hat N_Q(x)$, consists of all vectors $v\in \R^d$ satisfying 
\begin{equation}\label{eqn:frechet}
\langle v,y-x\rangle\leq o(\|y-x\|)\quad\textrm{as}\quad y\to x\textrm{ in }Q.
\end{equation}
Thus $v$ lies in $\hat N_Q(x)$ if up to first-order $v$ makes an obtuse angle with all directions pointing from $x$ into $Q$. Generally, Fr\'{e}chet normals are highly discontinuous  with respect to perturbations of the basepoint $x$. Consequently, we enlarge the Fr\'{e}chet normal cone as follows.
The {\em limiting normal cone} to $Q$ at $ x\in Q$, denoted by $N_Q(x)$, consists of all vectors $v\in\R^d$ for which there exist sequences $x_i\in Q$ and $v_i\in \hat N_Q(x_i)$ satisfying $(x_i,v_i)\to (x,v)$.

The analogous of normal cones for functions are subdifferentials, or sets of generalized derivatives.
Namely, consider a function $f\colon\R^d\to\R\cup\{\infty\}$ and a point $x\in \dom\, f$. The  {\em Fr\'{e}chet subdifferential  of $f$ at $x$}, denoted $\hat \partial f(x)$, consists of all vectors $v\in \R^d$ satisfying 
the approximation property:
$$f(y)\geq f(x)+\langle v,y-x\rangle+o(\|y-x\|)\quad \textrm{as}\quad y\to x.$$
The {\em limiting subdifferential  of $f$ at $x$}, denoted $\partial f(x)$, consists of all vectors $v\in \R^d$
such that there exist sequences $x_i\in \R^d$ and Fr\'{e}chet subgradients $v_i\in \hat \partial f(x_i)$ satisfying $(x_i,f(x_i),v_i)\to (x,f(x),v)$ as $i\to\infty$. 
A point $ x$ satisfying $0\in\partial f(x)$ is called {\em critical} for $f$. The primary goal of algorithms for nonsmooth optimization is the search for critical points.

\subsection{Active manifolds}\label{sec:activemanifolds}
Critical points of typical nonsmooth functions lie on a certain manifold that captures the activity of the problem in the sense that critical points of slight linear perturbation of the function do not leave the manifold. Such active manifolds have been modeled in a variety of ways, including identifiable surfaces \cite{wright1993identifiable}, partial smoothness \cite{lewis2002active}, $\mathcal{UV}$-structures \cite{lemarecha2000,mifflin2005algorithm}, $g\circ F$ decomposable functions \cite{shapiroreducible}, and minimal identifiable sets \cite{drusvyatskiy2014optimality}.
In this work, we adopt the following formal model of activity,  explicitly used in~\cite{drusvyatskiy2014optimality}.

\begin{definition}[Active manifold]\label{defn:ident_man}{\rm 
Consider a function $f\colon\R^d\to\R\cup\{\infty\}$ 
	and 
fix a set $\mathcal{M} \subseteq \dom f$ 
	containing 
	a point $\bar x$ satisfying $0\in \partial f(\bar x)$. 
Then $\mathcal{M}$ 
	is called an
{\em  active} $C^p${\em-manifold around} $\bar x$  
	if 
		there exists 
	a constant $\epsilon>0$ satisfying the following.
\begin{itemize}
\item {\bf (smoothness)}  
The set $\mathcal{M}$ 
	is 
a $C^p$ manifold near $\bar x$ and  the restriction 
	of 	$f$ 
	to $\mathcal{M}$ 
	is $C^p$-smooth near $\bar x$.
\item {\bf (sharpness)} 
The lower bound holds:
$$\inf \{\|v\|: v\in \partial f(x),~x\in U\setminus \cM\}>0,$$
where we set $U=\{x\in B_{\epsilon}(\bar x):|f(x)-f(\bar x)|<\epsilon\}$.
\end{itemize}
More generally, we say that {\em $\cM$ is an active manifold for $f$ at $\bar x$ for $\bar v \in \partial f(\bar x)$} if $\cM$ is an active manifold for the tilted function $f_v(x)=f(x)-\langle v,x\rangle$ at $\bar x$.
}
\end{definition}

The sharpness condition simply means that the subgradients of $f$ must be uniformly bounded away from zero at points off the manifold that are sufficiently close to $\bar x$ in distance and in function value. The localization in function value can be omitted for example if $f$ is weakly convex or if $f$ is continuous on its domain; see \cite{drusvyatskiy2014optimality} for details.

Intuitively, the active manifold has the distinctive feature that the function varies smoothly along the manifold and grows linearly normal to it; see Figure~\ref{fig:test1} for an illustration. This is summarized by the following theorem from  \cite[Theorem D.2]{davis2019stochastic_geo}. 
\begin{proposition}[Identification implies sharpness]\label{prop: sharpness}
	Suppose that a closed function $f\colon\E\to\R\cup\{\infty\}$ admits an active manifold $\cM$ at  a point $\bar x$ satisfying $0\in \hat \partial f(\bar x)$. 
	Then there exist constants $c, \epsilon > 0 $ such that 
	\begin{equation}\label{eqn:growth_cond}
		f(x) - f(\proj_{\cM}(x)) \ge  c\cdot \dist(x, \cM), \qquad \forall x \in B_{\epsilon}(\bar x).
	\end{equation}
\end{proposition}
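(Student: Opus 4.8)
\emph{Strategy.} I would argue by contradiction, using the sharpness condition through a nonsmooth descent (decrease) principle. From a hypothetical point $x$ near $\bar x$, off $\cM$, at which $f(x)-f(\proj_{\cM}(x))$ is much smaller than $\dist(x,\cM)$, I would construct a sequence along which $f$ decreases at a uniform \emph{linear} rate and which converges to a point of $\cM$; the total value drop, combined with the fact that $f$ varies only slowly along $\cM$ (because $\bar x$ is a critical point of $f|_{\cM}$), would then force $f(x)-f(\proj_{\cM}(x))\ge c\,\dist(x,\cM)$ after all.

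\emph{Preliminaries.} First, $\bar x\in\cM$: since $0\in\hat\partial f(\bar x)\subseteq\partial f(\bar x)$ and $\bar x$ lies in the set $U$ of Definition~\ref{defn:ident_man}, the sharpness lower bound would be violated were $\bar x\notin\cM$. Lower semicontinuity together with $0\in\hat\partial f(\bar x)$ furnishes a nondecreasing function $\omega$ with $\omega(s)/s\to 0$ and $f(z)\ge f(\bar x)-\omega(\|z-\bar x\|)$ near $\bar x$. Since the projection of $\hat\partial f(\bar x)$ onto $T_{\cM}(\bar x)$ lies in $\hat\partial(f|_{\cM})(\bar x)=\{\nabla_{\cM}f(\bar x)\}$, we get $\nabla_{\cM}f(\bar x)=0$; hence, by continuity of the covariant gradient and of the projection, $f|_{\cM}$ is $\eta$-Lipschitz (in the ambient metric) on $\cM\cap B_{\epsilon}(\bar x)$ with $\eta\downarrow 0$ as $\epsilon\downarrow 0$. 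Writing the sharpness constant as $2c>0$, I shrink $\epsilon$ --- which only enlarges $2c$, hence is free --- so that $\eta\le c/3$, $\omega(\epsilon)<\epsilon$ and $\omega(\epsilon)/\epsilon< c/3$, while also keeping $\epsilon$ small enough that $\proj_{\cM}$ is well defined and locally Lipschitz on $B_{\epsilon}(\bar x)$.

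\emph{Descent to the manifold.} Suppose \eqref{eqn:growth_cond} fails for this $c$ and every radius; then there is $x\in\dom f\setminus\cM$ with $\|x-\bar x\|$ as small as we please and $f(x)-f(\proj_{\cM}(x))<c\,\delta$, where $\delta:=\dist(x,\cM)>0$. Using $\delta\le\|x-\bar x\|$ and the $\eta$-Lipschitz bound for $f|_{\cM}\circ\proj_{\cM}$, one gets $f(x)<f(\bar x)+\tfrac12\omega(\epsilon)$ once $\|x-\bar x\|$ is small, so $x\in U$, and $f\ge f(\bar x)-\omega(\epsilon)$ throughout $B_{\epsilon}(\bar x)$. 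Starting from $z_{0}:=x$, I iterate: given $z_{i}\notin\cM$ in $B_{\epsilon}(\bar x)$, the open ball $B_{\rho_{i}}(z_{i})$ with $\rho_{i}:=\dist(z_{i},\cM)$ misses $\cM$, so sharpness gives $d(0,\partial f(z))\ge 2c$ whenever $z\in B_{\rho_{i}}(z_{i})$ and $f(z)\le f(z_{i})$; the standard decrease principle (a consequence of Ekeland's variational principle) then yields $\inf_{\bar B_{\rho_{i}}(z_{i})}f\le f(z_{i})-2c\rho_{i}$, and a minimizer $z_{i+1}$ of $f$ over $\bar B_{\rho_{i}}(z_{i})$ cannot lie in the open ball off $\cM$ (it would be an unconstrained local minimizer, contradicting sharpness), so $\|z_{i+1}-z_{i}\|\le\rho_{i}$ and $f(z_{i+1})\le f(z_{i})-2c\rho_{i}$; I stop if $z_{i+1}\in\cM$. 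Because the values remain in $(f(\bar x)-\epsilon,f(\bar x)+\epsilon)$ one has $2c\sum_{i}\rho_{i}\le f(x)-\inf_{B_{\epsilon}(\bar x)}f\le\tfrac32\omega(\epsilon)$, whence $\sum_{i}\rho_{i}<\epsilon/2$ by the calibration; so all iterates stay in $B_{\epsilon}(\bar x)$ (an induction), the process is legitimate, the $z_{i}$ are Cauchy, and they converge to some $z^{\star}\in\cM$ with $\|x-z^{\star}\|\le\sum_{i}\rho_{i}$ and $f(z^{\star})\le f(x)-2c\sum_{i}\rho_{i}$.

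\emph{Closing the loop.} Since $z^{\star}\in\cM$ we have $\sum_{i}\rho_{i}\ge\|x-z^{\star}\|\ge\dist(x,\cM)=\delta$. On the other hand $z^{\star}$ and $\proj_{\cM}(x)$ both lie in $\cM\cap B_{\epsilon}(\bar x)$ and $\|z^{\star}-\proj_{\cM}(x)\|\le\|z^{\star}-x\|+\|x-\proj_{\cM}(x)\|\le\sum_{i}\rho_{i}+\delta$, so the $\eta$-Lipschitz bound gives $f(z^{\star})\ge f(\proj_{\cM}(x))-\eta(\sum_{i}\rho_{i}+\delta)$. Combining this with $f(z^{\star})\le f(x)-2c\sum_{i}\rho_{i}$ and the failure hypothesis $f(x)<f(\proj_{\cM}(x))+c\delta$ yields $(2c-\eta)\sum_{i}\rho_{i}<(c+\eta)\delta$, and $\eta\le c/3$ then forces $\sum_{i}\rho_{i}<\tfrac45\delta$, contradicting $\sum_{i}\rho_{i}\ge\delta$. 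Hence \eqref{eqn:growth_cond} holds with constant $c$ and radius $\epsilon/2$.

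\emph{Main obstacle.} The delicate step is the descent-to-the-manifold construction: one must verify that, under only lower semicontinuity, the uniform lower bound on $\|\partial f\|$ off $\cM$ genuinely produces a decreasing sequence of controlled total length that lands on $\cM$, and --- crucially --- that this sequence never escapes the function-value localization $U$ appearing in Definition~\ref{defn:ident_man}. This last point is exactly what dictates the quantitative calibration of $\epsilon$ against the sharpness constant above (and it becomes vacuous when $f$ is continuous on its domain, as noted after the definition); everything else --- the reductions in the preliminaries and the Lipschitz bookkeeping for $f|_{\cM}$ and $\proj_{\cM}$ --- is routine.
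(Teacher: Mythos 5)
Your proof is correct in its essentials and takes the standard route for this kind of growth estimate: Ekeland's variational principle (via the decrease principle / strong slope) translates the uniform lower bound on subgradient norms off the manifold into linear value growth, while the fact that $0\in\hat\partial f(\bar x)$ forces $\nabla_{\cM}f(\bar x)=0$, which keeps $f|_{\cM}$ nearly flat and makes the contradiction close. The paper does not reproduce a proof of this proposition --- it cites it from Theorem D.2 of an external reference --- so a line-by-line comparison is not possible, but the cited argument almost surely proceeds along the same lines, since a decrease-principle/Ekeland step is the only standard way to convert a uniform slope lower bound into a value lower bound for merely lsc data. You correctly single out the genuine subtlety, namely that the descent sequence and the auxiliary points probed by the decrease principle must stay inside the function-value localization $U$ of Definition~\ref{defn:ident_man}, and that this is what imposes the calibration of $\omega(\epsilon)$ and $\eta$ against the sharpness constant.

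A few bookkeeping items are glossed over and would need to be tightened. In the induction keeping the iterates inside $B_{\epsilon}(\bar x)$, the bound $\sum_{j\le N}\rho_{j}$ is established only after $z_{N+1}$ has already been placed in $B_{\epsilon}(\bar x)$ (so that $f(z_{N+1})\ge f(\bar x)-\omega(\epsilon)$), which is circular as written; the clean fix is to run the induction at radius $\epsilon/2$ and use $\rho_{N}\le\|z_{N}-\bar x\|<\epsilon/2$ together with $\|z_{N+1}-z_{N}\|\le\rho_{N}$ to get $z_{N+1}\in B_{\epsilon}(\bar x)$ unconditionally, after which the value bound and hence the partial-sum bound become available. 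The step ``a minimizer over $\bar B_{\rho_i}(z_i)$ cannot lie in the open ball off $\cM$, so $\|z_{i+1}-z_i\|\le\rho_i$ and $f(z_{i+1})\le f(z_i)-2c\rho_i$'' is stated as an inference, but the two conclusions hold regardless of where the minimizer lands (the first because it lies in the closed ball, the second from the decrease principle); the observation about local minimality is not needed and can be dropped. Finally, it is worth stating explicitly that the sharpness bound, given on the limiting subdifferential $\partial f$, transfers to the Fr\'echet subdifferential $\hat\partial f\subseteq\partial f$ (and hence to the strong slope), since that is what the decrease principle actually uses. None of these affects the structure of the argument, which is sound.
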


\begin{figure}[h]
	\centering
		\centering
		\includegraphics[width=.5\linewidth]{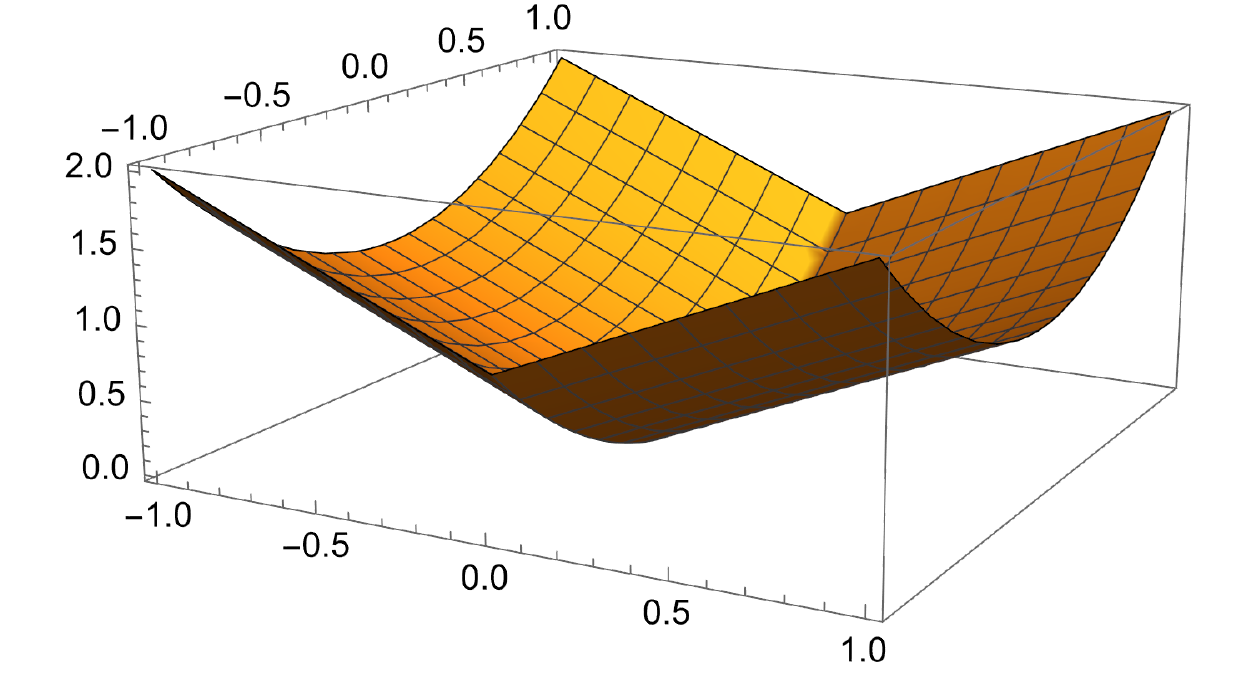}
		\captionof{figure}{$f(x_1,x_2)=|x_1|+x^2_2$}
		\label{fig:test1}
\end{figure}

Notice that there is a nontrivial assumption $0\in \hat \partial f(\bar x)$ at play in Proposition~\ref{prop: sharpness}. Indeed, under the weaker inclusion $0\in  \partial f(\bar x)$ the growth condition \eqref{eqn:growth_cond} may easily fail, as the univariate example $f(x)=-|x|$ shows. It is worthwhile to note that under the assumption $0\in \hat \partial f(\bar x)$, the active manifold is locally unique around $\bar x$ \cite[Proposition~8.2]{drusvyatskiy2014optimality}.

In order to make progress, we will require two extra conditions to hold along the active manifold that tightly couple the subgradients of $f$ on and off the manifold. Although these two conditions, introduced  in \cite{davis2021subgradient}, may look formidable, they are very mild indeed.

The motivation for the first regularity condition stems from a weakening of Taylor's theorem that is appropriate for nonsmooth functions. Namely, recall that any $C^1$-smooth function $f$ satisfies the first-order approximation property
\begin{equation}\label{eqn:taylor_dumb}
f(y)=f(x)+\langle \nabla f(x),y-x\rangle+o(\|y-x\|)\qquad \textrm{as }x,y\to \bar x.
\end{equation}
This estimate is fundamental to optimization theory and practice since it quantifies the approximation qualify of the linear model of $f$ furnished by the gradient. When $f$ is nonsmooth, the analogue of \eqref{eqn:taylor_dumb} with subgradients replacing the gradient can not possibly hold uniformly over all points $x$ and $y$ near $\bar x$, since it would imply differentiability. Instead, a reasonable requirement is to only require \eqref{eqn:taylor_dumb} to hold with points $y$  lying on the active manifold $\cM$. In fact, for our purposes, it suffices to replace the equality with an inequality.

\begin{definition}[$(b_{\leq})$-regularity]{\rm 
Consider a function $f\colon\R^d\to\R\cup\{\infty\}$ that is locally Lipschitz continuous on its domain. Fix a set $\cM\subset\dom f$ that is a $C^1$ manifold around $\bar x$ and such that the restriction of $f$ to $\cM$ is $C^1$-smooth near $\bar x$. We say that $f$ is {\em $(b_{\leq})$-regular along $\cM$ at $\bar x$} if there exists $\epsilon>0$ such that the estimates 
		\begin{align}
			f(y)&\geq f(x)+\langle v, y-x\rangle+(1+\|v\|)\cdot o(\|y-x\|),\label{eqn:bd1}
		\end{align}
		hold for all $x\in \dom f\cap B_{\epsilon}(\bar x)$, $y\in \cM\cap B_{\epsilon}(\bar x)$, and $v\in \partial f(x)$.}
\end{definition}

Importantly, condition $(b_{\leq})$ along an active manifold $\cM$  directly implies that all negative subgradients of $f$, taken at points $x$ near $\bar x$, point towards the active manifold $\cM$. Indeed, this is a direct consequence of Proposition~\ref{prop: sharpness}, and is summarized in the following corollary. Consequently, subgradient type methods always move in the direction of the active manifold---clearly a desirable property. 
\begin{cor}[Proximal aiming]\label{cor:prox-aiming_gen}
	Consider a closed function $f\colon\R^d\to\R\cup\{\infty\}$ that admits an active $C^1$-manifold $\cM$ at a point $\bar x$ satisfying $0\in \hat\partial f(\bar x)$. Suppose that $f$ is locally Lipschitz continuous on its domain and that $f$ is $(b_{\leq})$-regular along $\cM$ near $\bar x$. Then, there exists a constant $\mu>0$ such that, the estimate
	\begin{equation}\label{eqn:prox_aim_super_here}
	\langle v,x-P_{\cM}(x)\rangle\geq \mu\cdot\dist(x,\cM)+(1+\|v\|)\cdot o(\dist(x,\cM)),
	\end{equation}
	holds for all $x \in \dom f$ near $\bar x$ and for all $v\in \partial f(x)$. 
	\end{cor}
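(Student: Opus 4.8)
The plan is to combine the growth estimate of Proposition~\ref{prop: sharpness} with the $(b_{\leq})$-regularity inequality \eqref{eqn:bd1}, evaluated at the single distinguished point $y = P_{\cM}(x)$. Since the desired estimate \eqref{eqn:prox_aim_super_here} holds trivially (both sides zero) when $x \in \cM$, I would assume throughout that $\dist(x,\cM)>0$.

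First I would fix $x \in \dom f$ close enough to $\bar x$ that $P_{\cM}(x)$ is a well-defined singleton — this holds on a neighborhood of $\bar x$ because $\cM$ is a $C^1$ manifold containing $\bar x$ — and such that both $x$ and $P_{\cM}(x)$ lie in the ball $B_{\epsilon}(\bar x)$ on which Proposition~\ref{prop: sharpness} and the $(b_{\leq})$-regularity inequality are simultaneously valid. The second requirement is automatic once $\|x-\bar x\|$ is small, since $\|P_{\cM}(x)-\bar x\| \le \|P_{\cM}(x)-x\| + \|x-\bar x\| = \dist(x,\cM) + \|x-\bar x\| \le 2\|x-\bar x\|$, and in particular $\dist(x,\cM)\le\|x-\bar x\|\to 0$ forces $\|x-P_{\cM}(x)\|\to 0$.

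Next I would apply \eqref{eqn:bd1} with $y := P_{\cM}(x) \in \cM \cap B_{\epsilon}(\bar x)$ and an arbitrary $v \in \partial f(x)$, obtaining $f(P_{\cM}(x)) \ge f(x) + \langle v, P_{\cM}(x) - x\rangle + (1+\|v\|)\cdot o(\|x - P_{\cM}(x)\|)$. Rearranging and using $\|x - P_{\cM}(x)\| = \dist(x,\cM)$ yields $\langle v, x - P_{\cM}(x)\rangle \ge \big(f(x) - f(P_{\cM}(x))\big) + (1+\|v\|)\cdot o(\dist(x,\cM))$. Finally I would invoke Proposition~\ref{prop: sharpness} — whose hypothesis $0 \in \hat\partial f(\bar x)$ is exactly what is assumed in Corollary~\ref{cor:prox-aiming_gen} — to bound $f(x) - f(P_{\cM}(x)) \ge c\cdot\dist(x,\cM)$, and then set $\mu := c$.

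There is no serious obstacle here; the statement is essentially an immediate consequence of the two cited facts. The only points requiring care are bookkeeping ones: verifying that $P_{\cM}(x)$ is single-valued and lands inside the common neighborhood where both results apply, and checking that the occurrence of $o(\cdot)$ coming from $(b_{\leq})$-regularity (written in $\|x-y\|$ with $y$ on $\cM$) matches the asymptotic regime $\dist(x,\cM)\searrow 0$ of the target estimate, which it does by the inequality $\dist(x,\cM)\le\|x-\bar x\|$ noted above.
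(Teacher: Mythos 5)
Your proof is correct and is exactly the argument the paper has in mind: the paper itself calls the corollary a "direct consequence of Proposition~\ref{prop: sharpness}" together with condition $(b_{\leq})$, and your combination of the sharpness lower bound $f(x)-f(P_{\cM}(x))\ge c\,\dist(x,\cM)$ with the $(b_{\leq})$ inequality evaluated at $y=P_{\cM}(x)$, followed by setting $\mu=c$, is that intended proof. The bookkeeping you flag (single-valuedness of $P_{\cM}$ near $\bar x$, keeping both $x$ and $P_{\cM}(x)$ in the common working ball, and identifying $\|x-P_{\cM}(x)\|$ with $\dist(x,\cM)$) is handled correctly.
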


The second regularity condition has a different flavor, stemming from a weakening of Lipschitz continuity of the gradient. Namely, nonsmoothness by its nature implies that the deviation  $\partial f(x)-\partial f(y)$ is not controlled well by the distance in the arguments $\|x-y\|$. On the other hand, it turns out that if we take $y\in \cM$ and only look at the subgradient deviations in tangent directions, the error  $P_{T_{\cM}(y)}[\partial f(x)-\partial f(y)]$ is typically linearly bounded by  $\|x-y\|$. Moreover, in typical circumstances $P_{T_{\cM}(y)}[\partial f(y)]$ consists only of a single vector, the covariant gradient $\nabla_{\cM} f(y)$.

\begin{definition}[Strong $(a)$-regularity]
{\rm
Consider a function $f\colon\R^d\to\R\cup\{\infty\}$ that is locally Lipschitz continuous on its domain. Fix a set $\cM\subset\dom f$ that is a $C^1$ manifold around $\bar x$ and such that the restriction of $f$ to $\cM$ is $C^1$-smooth near $\bar x$. We say that $f$ is {\em strongly $(a)$-regular along $\cM$ near $\bar x$} if there exist constants $C,\epsilon>0$ satisfying: 
		\begin{align}
		\|P_{T_{\cM}(y)}(v-\nabla_{\cM} f(y))\|&\leq C(1+\|v\|) \|x-y\|,\label{eqn:str_a1}
		\end{align}
		 for all $x\in \dom f\cap B_{\epsilon}(\bar x)$, $y\in \cM\cap B_{\epsilon}(\bar x)$, and $v\in \partial f(x)$.

}
\end{definition}

The two regularity conditions easy extend to sets through their indicator functions. Namely, we say that a set $Q\subset\R^d$ is $(b_{\leq})$-regular (respectively strongly $(a)$-regular) along a $C^1$ manifold $\cM\subset Q$ at $\bar x\in \cM$ if the indicator function $\delta_Q$ is $(b_{\leq})$-regular (respectively strongly $(a)$-regular) along $\cM$ at $\bar x$.

The paper \cite{davis2021subgradient} presents a wide array of functions that admit active manifolds along which both conditions $(b_{\leq})$  and strong $(a)$  hold. Here, we discuss in detail a single example of nonlinear programming, and refer the reader to \cite{davis2021subgradient} for many more examples.

\begin{example}[Nonlinear programming]\label{ex:nlp}
{\rm
Consider the problem of nonlinear programming
\begin{equation}\label{eqn:nlp}
\begin{aligned}
\min_{x}~ &f(x)\\
{\rm s.t.}~\,&g_i(x)\leq 0\qquad \textrm{for }i=1,\ldots,m\\
&g_i(x)= 0\qquad \textrm{for }i=m+1,\ldots,n,
\end{aligned}
\end{equation}
where $f$ and $g_i$ are $C^p$-smooth functions on $\R^d$. Let $\cX$ denotes the set of all feasible points to the problem.
Consider now a point $\bar x\in \cX$ that is critical for the function $f+\delta_{\cX}$ and define the active index set 
$$\mathcal{I}=\{i: g_i(\bar x)=0\}.$$
Suppose the following is true:
\begin{itemize}
\item {\bf (LICQ)} the gradients $\{\nabla g_i(\bar x)\}_{i\in \mathcal{I}}$ are linearly independent.
\end{itemize}
Then the set 
$$\cM=\{x: g_i(x)=0~\forall i\in \cI\}$$
is a $C^p$ smooth manifold locally around $\bar x$. Moreover, all three functions  $f$, $\delta_{\cX}$, and $f+\delta_{\cX}$ are $(b_{\leq})$-regular and strongly $(a)$-regular along $\cM$ near $\bar x$. In order to ensure that $\cM$ is an active manifold of $f+\delta_{\cX}$, an extra condition is required. Define the Lagrangian function 
$$\mathcal{L}(x,y):=f(x)+\sum_{i=1}^{n+m} y_i g_i(x).$$
Criticality of $\bar x$ and LICQ ensures that there exists a (unique) Lagrange multiplier vector $\bar y\in \R^{m}_+\times \R^n$ satisfying $\nabla_x \mathcal{L}(\bar x,\bar y)=0$ and $\bar y_i=0$ for all $i\notin \cI$.
Suppose the following standard assumption is true:
\begin{itemize}
\item {\bf (Strict complementarity)} $\bar y_i>0$ for all $i\in \mathcal{I}\cap \{1,\ldots, m\}$.
\end{itemize}
Then $\cM$ is indeed an active $C^p$ manifold for $f+\delta_{\cX}$ at $\bar x$.}
\end{example}

\subsection{Smoothly invertible maps and active manifolds}
Performance of statistical estimation procedures strongly depends on the sensitivity of the problem to perturbation. A variety of estimation problems can in turn be modeled as the task of solving an inclusion $0\in F(x)$ for some set-valued map $F$, whose values we can only approximate by sampling. We next review basic perturbation theory based on the inverse/implicit function theorem paradigm, while closely following the monograph \cite{dontchev2009implicit}.

A {\em set-valued map} $F\colon\R^d\rightrightarrows\R^m$ is an assignment that maps a point $x\in \R^d$ to a set $F(x)\subset \R^m$. Set-valued maps always admit a set-valued inverse:
$$F^{-1}(y)=\{x: y\in F(x)\}.$$
The domain and graph of $F$ are defined, respectively, as 
$$\dom F:=\{x: F(x)\neq \emptyset\}\qquad \textrm{and}\qquad \gph F:=\{(x,y): y\in F(x)\}.$$
We will be interested in the sensitivity of the solutions to the system $v\in F(x)$ with respect to perturbations of the left-hand-side $v$, or equivalently, the variational behavior of the map $v\mapsto F^{-1}(v)$. In particular, we will be interested in settings when the graph of $F^{-1}$ coincides locally around a base point $(v,x)$ with a graph of a smooth map. This is the content of the following definition. 

\begin{definition}[Smooth invertibility]{\rm 
Consider a set-valued map $F\colon\R^d\rightrightarrows\R^m$ and  a pair $(\bar x,\bar v)\in \gph F$. We say that $F$ is {\em $C^p$ invertible around $(\bar x,\bar v)$ with inverse $\sigma(\cdot)$} if there exists a single-valued $C^p$-smooth map $\sigma(\cdot)$ and a neighborhood $U$ of $(\bar v,\bar x)$ satisfying
$$U\cap \gph F^{-1}=U\cap \gph \sigma.$$ 
}
\end{definition}

The definition might seem odd at first: there is nothing ``smooth'' about $F$, and yet we require the graph of $F^{-1}$ to coincide with a graph of a smooth function near $(\bar v,\bar x)$. On the contrary, we will see that in a variety of settings this assumption is indeed valid. In particular, smooth invertibility is typical in nonlinear programing.

\begin{example}[Nonlinear programming]\label{exa: nonlin2}
{\rm
Returning to Example \eqref{ex:nlp} with $p\geq 2$, define the set-valued map
$$F(x)=\nabla f(x)+N_{\cX}(x).$$
Then it is classically known that $F$ is $C^{p-1}$ invertible at $(\bar x,0)$ if and only if the matrix 
$$\Sigma:=P_{T_{\cM}(\bar x)}\nabla^2_{xx}\mathcal{L}(\bar x,\bar y)P_{T_{\cM}(\bar x)}$$
is nonsingular on $T_{\cM}(\bar x)$. In this case, the Jacobian of the inverse map is 
$\nabla \sigma(0)=\Sigma^{\dagger}$, where $\dagger$ denotes the Moore-Penrose pseudoinverse.
It is worthwhile to note that $\Sigma$ can be equivalently written as
$P_{T_{\cM}(\bar x)}\nabla^2_{\cM}f(\bar x)P_{T_{\cM}(\bar x)}.$}
\end{example}

Smooth invertibility is closely tied to active manifolds, and Example~\ref{exa: nonlin2} is a simple consequence. Indeed the following much more general statement is true. This result follows from a standard argument combining active manifolds and the implicit function theorem. The proof appears in  Appendix~\ref{sec:appsmooth}. We will require a mild assumption on the considered functions. Namely, following \cite[Definition 2.1]{poliquin1996prox} a function $f$ is called {\em subdifferentially continuous} at a point $\bar x$ if for any sequences $(x_i,v_i)\in \gph \partial f$ converging to some pair $(\bar x,\bar v)\in \gph\partial f$, the function values $f(x_i)$ converge to $f(\bar x)$. In particular, functions that are continuous on their domains and closed convex functions are subdifferentially continuous.

\begin{thm}[Smooth Invertibility and Active Manifolds]\label{thm:smooth_invert_main} Consider the set-valued map 
$$F(x):=A(x)+\partial f(x),$$
where $A\colon\R^d\to\R^d$ is $C^p$-smooth and $f\colon\R^d\to\R\cup\{\infty\}$ is subdifferentially continuous near a point $\bar x$. Suppose that $f$ admits a $C^{p+1}$ active manifold $\cM$ at some point $\bar x$ for $-A(\bar x)\in \hat\partial f(\bar x)$. 
Let $G(x)=0$ be any $C^{p+1}$-smooth local defining equations for $\cM$ near $\bar x$ and let $\hat f$ be any $C^{p+1}$-smooth function that agrees with $f$ on a neighborhood of $\bar x$ in $\cM$. Define the map 
$$\mathcal{H}(x,y):=A(x)+\nabla \hat f(x)+\nabla G(x)^{\top}y.$$
Then there exists a unique multiplier vector $\bar y$ satisfying the  condition $0=\mathcal{H}(\bar x,\bar y)$. Moreover, $F$ is $C^p$-invertible around $(0, \bar x)$ with inverse $\sigma(\cdot)$ if and only if the matrix 
$$\Sigma:=P_{T_{\cM}( \bar x)}\nabla_{x} \mathcal{H}(\bar x, \bar y)P_{T_{\cM}(\bar x)}$$ is nonsingular on $T_{\cM}(\bar x)$, and in this case  equality $\nabla \sigma(0)=\Sigma^{\dagger}$ holds.
\end{thm}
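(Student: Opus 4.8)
The plan is to reduce the statement about the set-valued map $F = A + \partial f$ to a classical application of the inverse function theorem for the smooth map $\mathcal{H}$, by exploiting the fact that near $\bar x$ the graph of $\partial f$ is pinned to the active manifold. First I would use the sharpness half of the active manifold definition, together with the assumption $-A(\bar x)\in\hat\partial f(\bar x)$ and subdifferential continuity, to show that there is a neighborhood of $(0,\bar x)$ in which every solution $x$ of $v\in F(x)$ with $v$ small must lie on $\cM$: indeed $v - A(x)\in\partial f(x)$ is then small, and by sharpness (the uniform lower bound on $\|\partial f\|$ off $\cM$ near $\bar x$, valid in the function-value localization, which subdifferential continuity lets us convert to a genuine neighborhood) such a point cannot be off $\cM$. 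This step identifies $\gph F^{-1}$ locally with $\{(v,x): x\in\cM,\ v - A(x)\in\partial f(x)\}$.

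Next I would compute $\partial f(x)$ for $x\in\cM$ near $\bar x$. Since $\cM$ is an active manifold and $f$ restricted to $\cM$ is $C^{p+1}$-smooth, a standard fact (e.g. from \cite{drusvyatskiy2014optimality}) gives that for $x\in\cM$ near $\bar x$ one has $\partial f(x) = \nabla\hat f(x) + N_{\cM}(x) = \nabla\hat f(x) + \range\nabla G(x)^\top$, where $G=0$ are the local defining equations. Combining with the previous step, the relation $v\in F(x)$ near $(0,\bar x)$ becomes: $x\in\cM$ and there exists $y$ with $v = A(x) + \nabla\hat f(x) + \nabla G(x)^\top y = \mathcal{H}(x,y)$; moreover the surjectivity of $\nabla G(x)$ makes $y$ unique given $(v,x)$. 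In particular, setting $v=0$ and $x=\bar x$ yields the existence and uniqueness of $\bar y$ with $\mathcal{H}(\bar x,\bar y)=0$.

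Now I would set up the smooth equation $\Phi(x,y) := (\mathcal{H}(x,y), G(x)) = (v,0)$ on $\R^d\times\R^n$, whose solutions $(x,y)$ near $(\bar x,\bar y)$ are in bijection (via the first two steps) with the local graph of $F^{-1}$: the projection $(x,y)\mapsto x$ realizes this correspondence. Applying the implicit/inverse function theorem to $\Phi$ at $(\bar x,\bar y)$, the map $v\mapsto x(v)$ is $C^p$ exactly when $D\Phi(\bar x,\bar y)$ is invertible, and then $\sigma(v) = x(v)$ is the required $C^p$ localization of $F^{-1}$. The derivative $D\Phi(\bar x,\bar y)$ has block form $\begin{bmatrix}\nabla_x\mathcal{H}(\bar x,\bar y) & \nabla G(\bar x)^\top\\ \nabla G(\bar x) & 0\end{bmatrix}$; a routine Schur-complement / linear-algebra computation — using that $\nabla G(\bar x)$ has full row rank so that $T_{\cM}(\bar x)=\Nul\nabla G(\bar x)$ and $N_{\cM}(\bar x)=\range\nabla G(\bar x)^\top$ — shows this saddle-point matrix is nonsingular if and only if $\Sigma = P_{T_{\cM}(\bar x)}\nabla_x\mathcal{H}(\bar x,\bar y)P_{T_{\cM}(\bar x)}$ is nonsingular on $T_{\cM}(\bar x)$, and that in that case $\nabla\sigma(0)$, read off from the inverse of the block matrix projected back to the $x$-coordinate, equals $\Sigma^\dagger$.

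The main obstacle I anticipate is the first step: carefully upgrading the sharpness condition — which is stated with a localization both in distance \emph{and} in function value, $U=\{x\in B_\epsilon(\bar x): |f(x)-f(\bar x)|<\epsilon\}$ — into a clean statement valid on a genuine neighborhood of $(0,\bar x)$ in $\gph F^{-1}$. This is exactly where $-A(\bar x)\in\hat\partial f(\bar x)$ (rather than merely the limiting subdifferential) and subdifferential continuity of $f$ are needed: they guarantee that if $(x_i,v_i)\to(\bar x,0)$ along $\gph F^{-1}$ then $f(x_i)\to f(\bar x)$, so the function-value localization is eventually satisfied and the points cannot escape $\cM$. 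Once that is in hand, the rest is bookkeeping with the implicit function theorem and a finite-dimensional Schur-complement identity; I would relegate those computations to the appendix as indicated.
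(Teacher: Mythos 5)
Your proposal follows essentially the same two-step route as the paper: first reduce $\gph F$ near $(\bar x,0)$ to the graph of the smooth map $\Phi+N_\cM$ with $\Phi = A+\nabla\hat f$, then apply the inverse function theorem to the bordered system $(\mathcal{H}(x,y),G(x))=(v,0)$. The reduction you sketch in Steps 1--2, via sharpness upgraded by subdifferential continuity, is exactly the content of the smooth-reduction lemma the paper imports by citation, so the two versions agree; one small imprecision is that the identity $\partial f(x)=\nabla\hat f(x)+N_\cM(x)$ for $x\in\cM$ near $\bar x$ should really be stated as an equality of graphs locally around $(\bar x,-A(\bar x))$, but the local-graph form is all the proof uses, so no harm is done.

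The genuine gap is the assertion that ``the map $v\mapsto x(v)$ is $C^p$ \emph{exactly when} $D\Phi(\bar x,\bar y)$ is invertible.'' The inverse function theorem gives only the ``if'' direction. The converse is not automatic: a smooth localization $\sigma$ of $F^{-1}$ is a priori only a smooth parametrization of the slice $\{(v,0)\}$ of the image of $\Phi$, not a full local inverse of the map $(x,y)\mapsto(\mathcal{H}(x,y),G(x))$, so its existence does not by itself force $D\Phi(\bar x,\bar y)$ to be invertible. The paper supplies a separate argument: differentiate the identity $tv=\mathcal{H}(\sigma(tv),y(t))$ at $t=0$, project onto $T_\cM(\bar x)$, and use $\range\nabla\sigma(0)\subseteq T_\cM(\bar x)$ to deduce that $\Sigma$ is onto, hence nonsingular, on $T_\cM(\bar x)$. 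Your approach can be repaired without leaving the IFT framing by observing that the multiplier $y(v)$ is itself $C^p$ in $v$ (solve $\nabla G(\sigma(v))^\top y = v - A(\sigma(v)) - \nabla\hat f(\sigma(v))$ using surjectivity of $\nabla G$), so $(\sigma(\cdot),y(\cdot))$ is a smooth right inverse of $\Phi$ along $\{(v,0)\}$, giving $\R^d\times\{0\}\subseteq\range D\Phi(\bar x,\bar y)$; combining this with surjectivity of $\nabla G(\bar x)$ yields $\{0\}\times\R^n\subseteq\range D\Phi(\bar x,\bar y)$ as well, so $D\Phi(\bar x,\bar y)$ is invertible and the Schur-complement equivalence then closes the argument. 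Either way, the converse direction needs to be written out; as stated, your ``exactly when'' is an overclaim on what the IFT delivers.
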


\section{Asymptotic normality and optimality of SAA.}\label{sec:smooth_invertible_maps}
Before analyzing the asymptotic performance of stochastic approximation algorithms, it is instructive to first recall guarantees for sample average approximation (SAA), where the assumptions, conclusions, and arguments are much simpler to state. This is the content of this section: we derive the asymptotic distribution of the SAA estimator for nonsmooth problems and show that it is asymptotically locally minimax optimal in the sense of H\'{a}jek and Le Cam \cite{le2000asymptotics,van2000asymptotic}.
Throughout the section we focus on the problem of finding a point $x^{\star}$ satisfying the variational inclusion:
 \begin{equation}\label{eqn:VIII}
0\in A(x)+H(x) \qquad \textrm{where }\qquad A(x)=\mathop\EE_{z\sim \cP} A(x,z).
\end{equation}
Here $H\colon\R^d\rightrightarrows \R^d$ is an arbitrary set-valued map, $\cP$ is a fixed probability distribution on some measure space $(\cZ, \mathcal{F})$, and $A\colon\R^d\times \mathcal{\cZ}\to \R^d$ is a measurable map. We will impose the following assumption throughout the rest of the section.

\begin{assumption}\label{assump:smooth_inverse}
{\rm The map $F:=A+H$ is $C^1$-smoothly invertible at $(0,\bar x)$ with inverse $\sigma(\cdot)$.
}
\end{assumption}

The SAA approach to solving \eqref{eqn:VIII} proceeds as follows. Let $S=\{z_1,\ldots, z_k\}$ be i.i.d samples drawn from $\cP$ and let $x_k$ be a solution of the problem
 \begin{equation}\label{eqn:sample_average}
0\in A_{S}(x)+H(x)\qquad \textrm{where}\qquad A_S(x):=\frac{1}{k}\sum_{i=1}^k A(x,z_i),
\end{equation}
assuming one exists. We will first show that the solutions of sample average approximations are asymptotically normal with covariance $\nabla\sigma(0) \cdot {\rm Cov}(A(\bar x,z)) \cdot \nabla\sigma(0)^{\top}$. Though variants of this result are well-known \cite{king1993asymptotic,shapiro1989asymptotic,dupacova1988asymptotic}, we provide a short proof in Appendix~\ref{sec:appsample_average_approx} highlighting the use of the solution map $\sigma(\cdot)$.  To this end, we impose the following standard assumption.

\begin{assumption}[Integrability and smoothness]\label{base_assumpt} {\rm Suppose that there exists a neighborhood $U$ around $\bar x$ satisfying the following.
\begin{enumerate}
\item\label{ass:lower1} For almost every $z$, the map $A(\cdot,z)$ is differentiable at every $x\in U$.
\item\label{ass:lower2} The second moment bounds hold:
$$\sup_{x\in U}\mathop\mathbb{E}_{z\sim \cP}\|A(x,z)\|^2<\infty\qquad \textrm{and}\qquad \sup_{x\in U}\mathop\EE_{z\sim \cP}\left[ \|\nabla A(x,z)\|^2_{\rm op}\right]<\infty.$$
\end{enumerate}
}
\end{assumption}

The following theorem shows that as long as $x_k$ eventually stay in a sufficiently small neighborhood of $\bar x$, the error $\sqrt{k}(x_k-\bar x)$ is asymptotically normal with covariance $\nabla\sigma(0) \cdot {\rm Cov}(M(\bar x,z)) \cdot \nabla\sigma(0)^{\top}$. Verifying that the problem \eqref{eqn:sample_average} admits solutions $x_k$ that are sufficiently close to $\bar x$ is a separate and well-studied topic and we do not discuss it here.

\begin{thm}[Sample average approximation]\label{thm:asympt_covar}
Suppose that Assumptions~\ref{assump:smooth_inverse} and \ref{base_assumpt} hold. In particular, there exist $\epsilon_1,\epsilon_2>0$ and a $C^1$-smooth map $\sigma\colon B_{\epsilon_1}(0)\to  B_{\epsilon_2}(\bar x)$ satisfying 
$$\gph \sigma=(B_{\epsilon_1}(0)\times B_{\epsilon_2}(\bar x))\cap \gph F^{-1}.$$ 
Suppose moreover that there exists a square integrable function $L(z)$ satisfying
\begin{equation}\label{eqn:lip_assume}
\|\nabla A(x_1,z)-\nabla A(x_2,z)\|\leq L(z)\|x_1-x_2\|\qquad \forall x_1,x_2\in B_{\epsilon_2}(\bar x).
\end{equation}
Shrinking $\epsilon_2$, if necessary, let us ensure that
$\epsilon_2\leq \min\left\{\frac{\lip(\sigma)^{-1}}{2\EE L}, \sqrt{\frac{\epsilon_1}{2\EE L}}\right\}$.
Let $S=\{z_1,\ldots, z_k\}$ be i.i.d samples drawn from $\cP$ and let $x_k$ be a measurable selection of the solutions \eqref{eqn:sample_average} such that ${\rm Pr}[x_k\in B_{\epsilon_2}(\bar x)]\to 1$ as $k$ tends to infinity. Then asymptotic normality holds:
$$\sqrt{k}(x_k-\bar x)\xrightarrow[]{D}\mathsf{N}\big(0, \nabla\sigma(0) \cdot {\rm Cov}(A(\bar x,z)) \cdot \nabla\sigma(0)^{\top}\big).$$
\end{thm}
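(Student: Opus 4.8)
The plan is to run a localized implicit-function argument for the finite-sample system $0 \in A_S(x) + H(x)$, reduce it to composing the smooth localization $\sigma(\cdot)$ with the empirical mean $v_k := \frac{1}{k}\sum_{i=1}^k A(\bar x, z_i)$, and then invoke the classical CLT plus the delta method. Write $v_k = -\big(A_S(\bar x) - A(\bar x)\big) + 0 = -\frac{1}{k}\sum_{i=1}^k\big(A(\bar x,z_i) - A(\bar x)\big)$ (using $A(\bar x) = -H(\bar x)$-valued data at $\bar x$, i.e. $0 \in A(\bar x) + H(\bar x)$); more precisely, the quantity entering the analysis is the random perturbation $w_k := A(\bar x) - A_S(\bar x)$, which by Assumption~\ref{base_assumpt}\eqref{ass:lower2} has $\EE w_k = 0$, finite second moment, and $\sqrt{k}\,w_k \xrightarrow{D} \mathsf{N}(0, {\rm Cov}(A(\bar x,z)))$ by the multivariate Lindeberg–Lévy CLT.

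The key steps, in order. (i) \emph{Translate the finite-sample inclusion into a perturbation of $F$.} For any $x$, the condition $0 \in A_S(x) + H(x)$ is equivalent to $\big(A(x) - A_S(x)\big) \in A(x) + H(x) = F(x)$, i.e. $x \in F^{-1}\big(A(x) - A_S(x)\big)$. So a solution $x_k$ of \eqref{eqn:sample_average} is a fixed point of the map $x \mapsto F^{-1}\big(A(x) - A_S(x)\big)$. (ii) \emph{Pin down which branch of $F^{-1}$ is active.} On the event $\{x_k \in B_{\epsilon_2}(\bar x)\}$ — whose probability tends to $1$ — I need $\big(A(x_k) - A_S(x_k), x_k\big) \in B_{\epsilon_1}(0)\times B_{\epsilon_2}(\bar x)$, so that this pair lies on $\gph\sigma$ and hence $x_k = \sigma\big(A(x_k) - A_S(x_k)\big)$. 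The size bound on the perturbation comes from $\|A(x_k) - A_S(x_k)\| \le \|w_k\| + \|(A - A_S)(x_k) - (A - A_S)(\bar x)\| \le \|w_k\| + \EE[L]\,\epsilon_2^2$-type estimates, using the Lipschitz-gradient hypothesis \eqref{eqn:lip_assume} and a uniform law of large numbers (or Markov's inequality on the $L$-averaged modulus of continuity) to control the empirical Jacobian deviation uniformly on $B_{\epsilon_2}(\bar x)$; the choice $\epsilon_2 \le \min\{(2\EE L)^{-1}\lip(\sigma)^{-1}, \sqrt{\epsilon_1/2\EE L}\}$ is exactly what keeps the perturbation inside $B_{\epsilon_1}(0)$ with probability $\to 1$. (iii) \emph{Solve the fixed-point equation asymptotically.} From $x_k = \sigma\big(w_k + (A_S - A)(\bar x) - (A_S-A)(x_k) + \cdots\big)$ — rearranging step (i) — the Lipschitz bound $\lip(\sigma)\cdot\|(A_S-A)(x_k)-(A_S-A)(\bar x)\| \le \lip(\sigma)\cdot o_P(1)\cdot\|x_k-\bar x\|$ gives a contraction, so $\|x_k - \bar x\| = O_P(\|w_k\|) = O_P(k^{-1/2})$, and then the Jacobian-deviation term is $O_P(k^{-1/2})\cdot o_P(1) = o_P(k^{-1/2})$. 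Hence $x_k = \sigma(w_k) + o_P(k^{-1/2}) = \sigma(0) + \nabla\sigma(0) w_k + o_P(\|w_k\|) + o_P(k^{-1/2})$ by differentiability of $\sigma$ at $0$ and $\sigma(0)=\bar x$. (iv) \emph{Conclude.} Multiply by $\sqrt k$: $\sqrt k(x_k-\bar x) = \nabla\sigma(0)\,\sqrt k\, w_k + o_P(1)$, and apply Slutsky together with the CLT for $\sqrt k\, w_k$ to get the stated normal limit with covariance $\nabla\sigma(0)\,{\rm Cov}(A(\bar x,z))\,\nabla\sigma(0)^\top$.

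The main obstacle is step (ii)–(iii): controlling the \emph{random, $x$-dependent} perturbation $A(x)-A_S(x)$ uniformly over a neighborhood of $\bar x$ and showing it stays inside the domain $B_{\epsilon_1}(0)$ of the branch $\sigma$ with probability tending to one, so that one is genuinely on the smooth graph rather than on some other component of $F^{-1}$. This is where the Lipschitz-in-gradient assumption \eqref{eqn:lip_assume} with square-integrable $L$, the second-moment bounds, and the explicit smallness of $\epsilon_2$ all get used; the argument is essentially a quantitative inverse-function/Banach-fixed-point estimate made uniform in the sample via Markov's inequality applied to $\EE[L]$. Everything after that — the Taylor expansion of $\sigma$ and the CLT — is routine. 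I would relegate the full $\epsilon$-bookkeeping to an appendix (as the authors indicate they do), keeping the body-level argument at the level of the four steps above.
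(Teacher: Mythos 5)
Your proposal captures the same essential structure as the paper's proof: rewrite the SAA inclusion as $A(x_k) - A_S(x_k) \in F(x_k)$, restrict to the high-probability event that both $x_k$ and the perturbation land on the smooth branch $\sigma$, obtain the $O_P(1/\sqrt k)$ rate from a quantitative inverse-function estimate, and finish with the CLT and the delta method.

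Be careful, though, in step~(iii): the assertion that $\|(A_S-A)(x_k)-(A_S-A)(\bar x)\|\le o_P(1)\,\|x_k-\bar x\|$ is not available a priori. The hypothesis $\Pr[x_k\in B_{\epsilon_2}(\bar x)]\to 1$ does \emph{not} force $\|x_k-\bar x\|\xrightarrow{p}0$, so the relevant Lipschitz modulus of $A_S-A$ is only controlled by $O_P(1/\sqrt k)+\bigl(\EE L+\tfrac1k\sum_i L(z_i)\bigr)\|x_k-\bar x\|$, which on the event $x_k\in B_{\epsilon_2}(\bar x)$ can be of size $\approx 2\EE L\,\epsilon_2$, a fixed constant. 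This is exactly where the first constraint $\epsilon_2\le(2\EE L)^{-1}\lip(\sigma)^{-1}$ enters: the paper's rearranged inequality $(\lip(\sigma)^{-1}-\EE L\|x_k-\bar x\|)\,\|x_k-\bar x\|\,1_{\{x_k\in B_{\epsilon_2}(\bar x)\}\cap\{w_k\in B_{\epsilon_1}(0)\}}=O_P(1/\sqrt k)$ closes because the prefactor is bounded below by $\lip(\sigma)^{-1}/2$, not because the modulus is $o_P(1)$. You attribute both pieces of the constraint on $\epsilon_2$ to keeping the perturbation inside $B_{\epsilon_1}(0)$; in fact only $\epsilon_2\le\sqrt{\epsilon_1/(2\EE L)}$ serves that purpose (it caps the quadratic Taylor remainder by $\epsilon_1/2$). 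Finally, no uniform law of large numbers is needed: the paper works from a single second-order Taylor expansion of $A-A_S$ about $\bar x$, invoking the i.i.d.\ CLT only for $A(\bar x,z_i)$, $\nabla A(\bar x,z_i)$, and $L(z_i)$.
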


We will next show that the asymptotic covariance in \eqref{thm:asympt_covar} is the best possible among all estimators of $\bar x$, in a local minimax sense of H\'ajek and Le Cam \cite{le2000asymptotics,van2000asymptotic}. Namely, we will lower-bound the performance of {\em any} estimation procedure for finding a solution to \eqref{eqn:VIII} induced by an adversarially-chosen sequence of small perturbations $\mathcal{P}'$ to $\mathcal{P}$. We will measure the size of the perturbation with the $\phi$-divergence
$$
 \Delta_{\phi}\big(\cP'\parallel \cP\big)=\int_{\cZ} \phi\bigg(\frac{d\cP'}{d\cP}\bigg) \,d\cP,
$$
induced by any $C^3$-smooth convex function $\phi\colon (0,\infty) \rightarrow \RR$ satisfying $\phi(1)=0$.
Define an {\em admissible neighborhood} $B_{\varepsilon}$ of $\cP$ to consist of all probability distributions $\cP'$ such that $\Delta_{\phi}(\cP'\parallel \cP) \leq \varepsilon$ and such that there exists a solution $\bar x_{\cP'}\in U$ to the perturbed variational equation $0\in \mathop\EE_{z\sim \cP'}[A(x,z)]+H(x).$  The following is the main result of this section.
In the theorem, we let $\EE_{P'_k}$ denote the expectation with respect to $k$ i.i.d. observations $z_i\sim \cP'$.

\begin{thm}[Asymptotic optimality]\label{thm:main_opt}
Suppose that Assumption~\ref{assump:smooth_inverse} and \ref{base_assumpt} hold. Let $\mathcal{L}\colon \RR^{d} \rightarrow [0,\infty)$ be any symmetric, quasiconvex, and lower semicontinuous function, and let $\widehat{x}_k\colon \cZ^{k}\rightarrow \RR^{d}$ be a sequence of estimators.  Then the inequality
  \begin{equation*}
  \lim_{c\to \infty}\liminf_{k \rightarrow \infty} \sup_{\cP'\in B_{c/k}} {\EE}_{P_{k}'}\big[\mathcal{L}\big(\sqrt{k} ( {\widehat x}_{k} - \bar x_{\cP'})\big)\big] \geq {\EE}\big[\mathcal{L}(Z)\big]
  \end{equation*}
  holds, where $Z\sim \mathsf{N}\big(0, \nabla\sigma(0) \cdot {\rm Cov}(A(\bar x,z)) \cdot \nabla\sigma(0)^{\top}\big)$.\end{thm}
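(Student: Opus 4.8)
The plan is to follow the classical Hájek--Le Cam local asymptotic minimax (LAM) paradigm, adapted to the variational setting through the smooth localization $\sigma(\cdot)$, essentially reducing the lower bound to the smooth parametric case à la \cite{duchi2021asymptotic}. First I would fix a finite-dimensional parametric submodel: given a bounded direction $h \in \R^d$ (eventually ranging over a ball), construct a path $t \mapsto \cP_t$ of perturbations of $\cP$ that is differentiable in quadratic mean (DQM) with a prescribed score, chosen so that the induced perturbation of the expectation $\EE_{z\sim\cP_t}[A(x,z)]$ moves the right-hand side of \eqref{eqn:VIII} in a controlled way. A convenient choice is a ``tilting'' family $d\cP_t/d\cP = 1 + t\,\langle \ell(z), \cdot\rangle + o(t)$ with $\ell(z)$ a bounded vector-valued function whose covariance against $A(\bar x, z)$ realizes the target direction; the $\phi$-divergence constraint $\Delta_\phi(\cP_t \parallel \cP) \le c/k$ translates, via the second-order Taylor expansion of $\phi$ at $1$ (here is where $C^3$-smoothness and $\phi(1)=0$ are used, giving $\Delta_\phi \approx \tfrac{\phi''(1)}{2}\,\mathrm{Var}(\cdot)$), into the scaling $t \asymp 1/\sqrt{k}$, i.e. $t = h/\sqrt{k}$ is admissible for $\|h\|$ bounded by a constant depending on $c$.

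Next I would compute how the solution $\bar x_{\cP_t}$ moves. Since $F = A + H$ is $C^1$-invertible at $(0,\bar x)$ with inverse $\sigma$, and the perturbed equation reads $v(t) \in A(x) + H(x)$ with $v(t) := -(\EE_{\cP_t} - \EE_{\cP})[A(x,\cdot)]|_{x=\bar x} + o(t)$, the implicit characterization gives $\bar x_{\cP_t} = \sigma(v(t)) = \bar x + \nabla\sigma(0)\,\dot v(0)\,t + o(t)$. Thus along the path the ``true parameter'' $\bar x_{\cP_k'}$ with $\cP_k' = \cP_{h/\sqrt{k}}$ shifts by $\tfrac{1}{\sqrt{k}}\nabla\sigma(0)\dot v(0) h + o(1/\sqrt{k})$, which is exactly the regime probed by LAM. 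By the DQM property, the sequence of product measures $\cP_k'^{\otimes k}$ is mutually contiguous with $\cP^{\otimes k}$ and satisfies local asymptotic normality (LAN) with a Gaussian shift experiment whose information operator is governed by $\mathrm{Cov}(A(\bar x,z))$ (pulled through $\nabla\sigma(0)$). One then invokes the abstract Hájek--Le Cam minimax theorem (as in \cite{van2000asymptotic}, or the convolution-theorem route) for the limiting Gaussian shift experiment: any estimator sequence of the shifted parameter has asymptotic risk, against the symmetric quasiconvex lsc loss $\cL$, at least $\EE[\cL(Z)]$ with $Z \sim \mathsf{N}(0, \nabla\sigma(0)\,\mathrm{Cov}(A(\bar x,z))\,\nabla\sigma(0)^\top)$ — here one chooses the score function $\ell$ and the range of $h$ so that the covariance of the limiting Gaussian shift, restricted to the relevant directions, matches the target covariance, and then lets $c \to \infty$ to sweep out all of $\R^d$. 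The $\sup$ over $\cP' \in B_{c/k}$ dominates the $\sup$ over the one-parameter family $\{\cP_{h/\sqrt{k}} : \|h\| \le r(c)\}$, which is all that is needed.

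The main obstacle, I expect, is not the abstract LAM machinery but the construction and verification of a perturbation family that simultaneously (i) is rich enough that the induced Gaussian shift experiment has the full covariance $\nabla\sigma(0)\mathrm{Cov}(A(\bar x,z))\nabla\sigma(0)^\top$ — this requires the score $\ell(z)$ to span the right subspace, which can be delicate if $\mathrm{Cov}(A(\bar x,z))$ is degenerate or if $\nabla\sigma(0)$ has a nontrivial kernel (one should work modulo $\ker\nabla\sigma(0)$, or equivalently on the range of $\nabla\sigma(0)$, since $\cL$ need not be coercive there); (ii) keeps $\cP_t$ a genuine probability measure with bounded density ratio and finite second moments uniformly, so that Assumption~\ref{base_assumpt} and the integrability needed for DQM/LAN persist along the path; and (iii) has $\phi$-divergence controlled at the exact rate $c/k$, which forces care in the Taylor remainder of $\phi$ and in ensuring $d\cP_t/d\cP$ stays positive. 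A clean way around (i) is to use the exponential-tilting family $d\cP_t/d\cP \propto \exp(t\langle \ell(z), h\rangle)$, which is automatically a positive density, has $\phi$-divergence of order $t^2$, and whose score is exactly $\langle \ell(z), h\rangle$; choosing $\ell(z) = A(\bar x, z) - \EE A(\bar x, z)$ makes the Fisher information of the submodel equal to $\mathrm{Cov}(A(\bar x, z))$, and pushing this through the delta-method identity $\bar x_{\cP_t} \approx \bar x + t\,\nabla\sigma(0)\,\mathrm{Cov}(A(\bar x,z))\,h$ produces precisely the advertised limiting covariance after the standard Gaussian-shift minimax computation. Verifying the remainder estimates $v(t) = t\,\dot v(0) + o(t)$ uniformly, and that $x_k$ genuinely solves the perturbed equation near $\bar x$ (using smooth invertibility on a fixed neighborhood $U$), are the routine but necessary technical checks.
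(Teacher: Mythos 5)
Your proposal follows essentially the same route as the paper: build a one-parameter family of perturbations of $\cP$ scoring against $g(z)=A(\bar x,z)-A(\bar x)$, verify local asymptotic normality, push through the smooth localization $\sigma$ via the implicit function theorem to get a regular derivative $-\nabla\sigma(0)\,\mathrm{Cov}(A(\bar x,z))$ for the solution map, and invoke the abstract H\'ajek--Le Cam minimax theorem. The only real divergences are cosmetic: the paper uses the specific bounded tilting $d\cP_u \propto (1+h(u^\top g(z)))\,d\cP$ with $h$ a $C^3$ bounded function linear near the origin (a hybrid of your two suggestions, chosen to match the Duchi--Ruan LAN verification), and handles the possible singularity of $\mathrm{Cov}(A(\bar x,z))$ by regularizing $(\Sigma+\lambda I)^{-1}$ and sending $\lambda\to 0$ via monotone convergence, rather than quotienting by the kernel as you sketch.
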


The proof of Theorem~\ref{thm:main_opt} appears in Appendix \ref{sec:local_minimax}, and is inspired by the analogous theorem of Duchi-Ruan for stochastic nonlinear programming \cite{duchi2021asymptotic}.

\section{Stochastic approximation: assumptions \& examples}\label{assumptions:for_algos}
We next move to stochastic approximation algorithms, and in this section set forth the algorithms we will consider and the relevant assumptions. The concrete examples we will present will all be geared to solving variational inclusions, but the specifics of this problem class is somewhat distracting. Therefore we will instead only isolate the essential ingredients that are needed for our results to take hold. Setting the stage, our goal is to find a point $x$ satisfying the inclusion 
\begin{equation}\label{eqn:set-valued}
0\in F(x),
\end{equation}
where $F\colon\RR^d\rightrightarrows\RR^d$ is a set-valued map. Throughout, we fix one such solution $\bar x$ of \eqref{eqn:set-valued}. We will assume that in a certain sense, the problem \eqref{eqn:set-valued} is ``variationally smooth''.  That is, there exists a distinguished manifold $\mathcal{M}$---the active manifold in concrete examples---containing $\bar x$ and such that the map $x\mapsto P_{T_{\cM}(x)}F(x)$ is single-valued and $C^1$-smooth on $\cM$ near $\bar x$. The following assumption makes this precise.

\begin{assumption}[Smooth reduction]\label{ass:basic_assumpt}
{\rm 
Suppose that there exists a $C^p$ manifold $\mathcal{M}\subset\R^d$ such that the following properties are true.
\begin{enumerate}[label=$\mathrm{(C\arabic*)}$]
\item\label{it:single_val_FM} 
The map
$F_{\cM}\colon \mathcal{M}\to \R^d$ defined by 
$$F_{\cM}(x):=P_{T_{\cM}(x)}F(x)$$
is single-valued  on some neighborhood of $\bar x$ in $\cM$.
\item\label{it:smooth_reduction} 
There exists a neighborhood $U$ of $(\bar x,0)$ such that 
$$U\cap \gph F=U\cap \gph (F_{\cM}+N_{\cM}).$$
\end{enumerate}

}
\end{assumption}

We note that smooth invertibility of $F$ can be easily characterized in terms of the covariant  Jacobian $\nabla_{\cM} F_{\cM}(\bar x)$. This is the content of the following lemma.

\begin{lem}[Jacobian of the solution map]
The map $F$ is $C^p$-smoothly invertible around $(\bar x,0)$ with localization $\sigma(\cdot)$  if and only if the linear map 
$P_{T_{\cM}(\bar x)} \nabla F_{\cM}(\bar x)P_{T_{\cM}(\bar x)}$ is nonsingular on $T_{\cM}(\bar x)$. In this case, the Jacobian of the localization is given by $$\nabla \sigma(0)=(P_{T_{\cM}(\bar x)} \nabla_{\cM} F_{\cM}(\bar x)P_{T_{\cM}(\bar x)})^{\dagger}.$$
\end{lem}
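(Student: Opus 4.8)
## Proof plan for the Lemma (Jacobian of the solution map)

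The plan is to reduce the set-valued inversion problem to a finite-dimensional smooth equation on the manifold, to which the classical implicit function theorem applies. Concretely, fix $C^{p}$-smooth local defining equations $G(x)=0$ for $\cM$ near $\bar x$, so that $N_{\cM}(x) = \range(\nabla G(x)^{\top})$ for $x\in\cM$ near $\bar x$, and pick a $C^{p}$-smooth map $\widehat F_{\cM}\colon\R^{d}\to\R^{d}$ agreeing with $F_{\cM}$ on $\cM$ near $\bar x$ (possible by \ref{it:single_val_FM} and the definition of smoothness on a manifold). By \ref{it:smooth_reduction}, for $(x,v)$ near $(\bar x,0)$ the inclusion $v\in F(x)$ is equivalent to $x\in\cM$, $v\in F_{\cM}(x)+N_{\cM}(x)$, i.e. to the existence of a multiplier $y$ with
\begin{equation*}
\mathcal{H}(x,y):=\widehat F_{\cM}(x)+\nabla G(x)^{\top}y - v = 0,\qquad G(x)=0.
\end{equation*}
So $\gph F^{-1}$ near $(0,\bar x)$ is the projection onto $(v,x)$-coordinates of the solution set of this $(d+\dim N_{\cM})$-dimensional smooth system in the unknowns $(x,y)$, parametrized by $v$.

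First I would establish existence and uniqueness of the multiplier $\bar y$ at the base point: from $0\in F(\bar x)=F_{\cM}(\bar x)+N_{\cM}(\bar x)$ and surjectivity of $\nabla G(\bar x)$ on the normal space, $\bar y$ is the unique solution of $\widehat F_{\cM}(\bar x)+\nabla G(\bar x)^{\top}\bar y=0$. Next I would compute the Jacobian of the map $(x,y)\mapsto(\mathcal{H}(x,y),G(x))$ at $(\bar x,\bar y)$ and show it is nonsingular \emph{if and only if} $\Sigma:=P_{T}\nabla_{\cM}F_{\cM}(\bar x)P_{T}$ is nonsingular on $T:=T_{\cM}(\bar x)$; here one uses that $\ker\nabla G(\bar x)=T$, that $\range\nabla G(\bar x)^{\top}=N:=N_{\cM}(\bar x)$, and the standard block-elimination argument for KKT-type systems (the relevant Schur complement on $T$ is precisely $\Sigma$, after noting that the normal-space part of $\nabla_x\mathcal{H}$ is absorbed by the multiplier block). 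This is essentially the computation already carried out in Theorem~\ref{thm:smooth_invert_main}, so I would invoke that theorem directly: applying it with $A\equiv 0$ and $f$ replaced by an appropriate function having $\cM$ as active manifold is awkward, so instead I would re-use its \emph{proof mechanism}—the implicit function theorem applied to $(\mathcal{H},G)$—which does not actually need the variational structure of $f$, only \ref{it:single_val_FM}--\ref{it:smooth_reduction}.

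Granting nonsingularity, the implicit function theorem yields $C^{p}$-smooth maps $v\mapsto(x(v),y(v))$ solving the system near $v=0$, and $\sigma(v):=x(v)$ is then the required localization. Finally I would differentiate the identity $\widehat F_{\cM}(\sigma(v))+\nabla G(\sigma(v))^{\top}y(v)=v$ together with $G(\sigma(v))=0$ at $v=0$: the second identity forces $\range\nabla\sigma(0)\subseteq T$, and pre-multiplying the first by $P_{T}$ (which kills the $\nabla G^{\top}y$ term since $\range\nabla G(\bar x)^{\top}=N\perp T$) gives $P_{T}\nabla_{\cM}F_{\cM}(\bar x)\nabla\sigma(0)=P_{T}$. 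Combined with $\range\nabla\sigma(0)\subseteq T$ this identifies $\nabla\sigma(0)$ as the Moore–Penrose pseudoinverse $\Sigma^{\dagger}$, viewed as a linear operator on $\R^{d}$ with range and co-range equal to $T$. For the converse direction, if $F$ is $C^{p}$-invertible with some localization $\sigma$, smoothness of $\sigma$ and the same differentiation argument force $\Sigma\cdot\nabla\sigma(0)|_{T}=I_{T}$, hence $\Sigma$ is nonsingular on $T$.

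The main obstacle I expect is purely bookkeeping: carefully matching the abstract hypotheses \ref{it:single_val_FM}--\ref{it:smooth_reduction} to the hypotheses of Theorem~\ref{thm:smooth_invert_main} (which is phrased for $F=A+\partial f$), so that one may cite it rather than redo the implicit-function-theorem computation. The cleanest route is probably to observe that the proof of Theorem~\ref{thm:smooth_invert_main} only uses the decomposition $\gph F = \gph(F_{\cM}+N_{\cM})$ locally and the smoothness of $F_{\cM}$ on $\cM$—exactly \ref{it:single_val_FM}--\ref{it:smooth_reduction}—and to state the lemma's proof as ``apply the argument of Theorem~\ref{thm:smooth_invert_main} verbatim with $\mathcal{H}(x,y)=\widehat F_{\cM}(x)+\nabla G(x)^{\top}y$,'' with the pseudoinverse identification as above. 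The second mild technical point is verifying $P_{T}\nabla_{\cM}F_{\cM}(\bar x)P_{T}=P_{T}\nabla F_{\cM}(\bar x)P_{T}$ (i.e. that the covariant Jacobian restricted to $T$ agrees with the ambient one), which follows from the definition of the covariant Jacobian in Section~\ref{sec:manifolds} together with $\range\nabla P_{\cM}(\bar x)\subseteq T$.
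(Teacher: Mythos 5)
Your approach matches the paper's (reduce to a smooth system via defining equations $G$, a smooth extension of $F_{\cM}$, and the implicit function theorem), but you miss a simplification the paper uses and, more importantly, you leave a gap that the simplification is precisely designed to close.

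On the first point: you say that invoking Theorem~\ref{thm:smooth_invert_main} is awkward because it is phrased for $F=A+\partial f$, and so you propose to ``re-use its proof mechanism.'' But the paper has already isolated that mechanism as a standalone statement, namely Lemma~\ref{lem:basic_jac_soln}, which treats maps of the form $\Phi+N_{\cM}$ with $\Phi$ smooth and $\cM$ a smooth manifold, with no variational/subdifferential structure whatsoever. In light of~\ref{it:smooth_reduction}, one takes $\Phi$ to be any smooth extension of $F_{\cM}$ and cites Lemma~\ref{lem:basic_jac_soln} directly; there is no need to re-derive the implicit-function-theorem bookkeeping.

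On the second, more substantive point: your Schur-complement and differentiation steps silently discard the curvature term coming from $\nabla G$. Writing $\mathcal{H}(x,y)=\widehat F_{\cM}(x)+\nabla G(x)^{\top}y$, the $x$-partial at $(\bar x,\bar y)$ is $\nabla\widehat F_{\cM}(\bar x)+\nabla^{2}G(\bar x)[\,\cdot\,,\bar y]$, so the Schur complement on $T$ is
$$P_{T}\bigl(\nabla\widehat F_{\cM}(\bar x)+\nabla^{2}G(\bar x)[\,\cdot\,,\bar y]\bigr)P_{T},$$
which is \emph{not} equal to $\Sigma=P_{T}\nabla_{\cM}F_{\cM}(\bar x)P_{T}$ in general. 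Likewise, when you differentiate $\widehat F_{\cM}(\sigma(v))+\nabla G(\sigma(v))^{\top}y(v)=v$ at $v=0$ and pre-multiply by $P_{T}$, you kill $\nabla G(\bar x)^{\top}\nabla y(0)$ (since it lies in $N$) but not the term $\nabla^{2}G(\bar x)[\nabla\sigma(0)\,\cdot\,,\bar y]$; your parenthetical justification only addresses the former. The cure is to observe that $\bar y=0$: since $0\in F(\bar x)$ and $F_{\cM}$ is the single-valued tangential projection from~\ref{it:single_val_FM}, we get $F_{\cM}(\bar x)=P_{T_{\cM}(\bar x)}(0)=0$, hence $\nabla G(\bar x)^{\top}\bar y=0$, and surjectivity of $\nabla G(\bar x)$ forces $\bar y=0$. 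This is exactly why the paper's proof applies Lemma~\ref{lem:basic_jac_soln} ``with $\bar y=0$.'' Once $\bar y=0$ is in hand, the $\nabla^{2}G$ terms vanish and your computation goes through; without it, the final formula for $\nabla\sigma(0)$ would be wrong.

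Aside from these two issues, your identification of $\nabla\sigma(0)$ as $\Sigma^{\dagger}$ (using $\range\nabla\sigma(0)\subseteq T$) and your observation that the $P_{T}$-conjugate of the ambient Jacobian of the extension agrees with $P_{T}\nabla_{\cM}F_{\cM}(\bar x)P_{T}$ are both correct and mirror the paper's closing remark.
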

\begin{proof}
Let $\Phi$ be a smooth extension of $F$ to a neighborhood $V\subset\R^d$ of $\bar x$. In light of Assumption~\ref{it:smooth_reduction}, the graphs of $F$ and $\Phi+N_{\cM}$ coincide near $(\bar x,0)$, and therefore we can focus on existence of smooth localizations of $(\Phi+N_{\cM})^{-1}$.  Applying Lemma~\ref{lem:basic_jac_soln} with $\bar y=0$, we see that $\Phi+N_{\cM}$ is $C^p$-smoothly invertible around $(\bar x,0)$ if and only if the linear map 
$P_{T_{\cM}(\bar x)}\nabla \Phi(\bar x)P_{T_{\cM}(\bar x)}$ is nonsingular on $T_{\cM}(\bar x)$. In this case, the Jacobian of the localization is given by $\nabla \sigma(0)=(P_{T_{\cM}(\bar x)}\nabla \Phi(\bar x)P_{T_{\cM}(\bar x)})^{\dagger}$.
Noting the equality $\nabla F_{\cM}(\bar x)P_{T_{\cM}(\bar x)}=\nabla_{\cM} \Phi(\bar x)P_{T_{\cM}(\bar x)}$ completes the proof.
\end{proof}

The stochastic approximation algorithms we consider assume access to  a \emph{generalized gradient mapping}:
$$
G \colon \RR_{++} \times \RR^d\times \RR^d \rightarrow \RR^d.
$$ 
Given $x_0 \in \RR^d$, the algorithm  iterates the update 
\begin{align}\label{alg:perturbedGiteration}
x_{k+1} = x_k - \alpha_kG_{\alpha_k}(x_k, \perturb_k), 
\end{align}
where $\alpha_k>0$ is a control sequence and $\perturb_k$ is stochastic noise. We will place relevant assumptions on the noise $\perturb_k$ later in Section~\ref{sec:twopillarsmainresults}. 

We make two assumptions on $G$. The first (Assumption~\ref{assumption:localbound}) is similar to classical Lipschitz assumptions and ensures the steplength can only scale linearly in $\|\perturb\|$. 

\begin{assumption}[Steplength] \label{assumption:localbound}
{\rm
We suppose that there exists a constant $\localconstant>0$ and a neighborhood $\cU$ of $\bar x$ such that the estimate
$$\sup_{x \in \cU_F} \|G_\alpha(x, \perturb)\| \leq \localconstant(1+\|\perturb\|),$$
holds for all $\nu \in \RR^d$ and $\alpha > 0$, where  we set $\cU_F := \cU \cap \dom F$.}\end{assumption}

The second assumption makes precise the relationship between the mapping $G$ and $F_{\mathcal{M}}$.

\begin{assumption}[Strong (a) and aiming]\label{assumption:Aproposed}
{\rm We suppose that there exist constants $\localconstant, \localmu > 0$ and a neighborhood $\cU$ of $\bar x$ such that the following hold for all $\nu \in \RR^d$ and $\alpha > 0$, where we set $\cU_F := \cU \cap \dom F$.
\begin{enumerate}[label=$\mathrm{(E\arabic*)}$]
	\item\label{assumption:smoothcompatibility} {\bf (Tangent comparison)}
For all $x \in \cU_F$, we have
\begin{align*}
\|P_{\tangentM{P_{\cM}(x)}}(G_\alpha(x, \perturb) - F(P_{\cM}(x)) - \nu)\| \leq C (1 + \|\perturb\|)^2(\dist(x, \cM) +\alpha).
\end{align*}
\item \label{assumption:aiming} {\bf (Proximal Aiming)} For $x \in \cU_F$, we have 
\begin{align*}
\dotp{G_\alpha(x, \perturb) - \nu, x - P_{\cM}(x)} &\geq \mu \cdot \dist(x, \cM) - (1+\|\perturb\|)^2(o(\dist(x, \cM)) + C\alpha).
\end{align*}
\end{enumerate}}
\end{assumption}
Some comments are in order. 
Assumption~\ref{assumption:smoothcompatibility}  ensures that the direction of motion $G_{\alpha_k}(x_k, \perturb_k)$ approximates well $F_{\cM}(P_{\cM}(x))$ in tangent directions to the manifold $\cM$. Assumption~\ref{assumption:aiming} ensures that after subtracting the noise from $G_{\alpha_k}(x_k, \perturb_k)$, the update direction $x_{k} - x_{k+1}$ locally points towards the manifold $\cM$. We will later show that this ensures the iterates $x_k$ approach the manifold $\cM$ at a controlled rate.

\subsection{Examples of stochastic approximation for variation inclusions}\label{sec:algos_and_assumptions}

The rest of the section is devoted to examples of algorithms satisfying Assumptions~\ref{assumption:localbound} and \ref{assumption:Aproposed}. In all cases, we will consider the task of solving the variational inclusion 
\begin{equation}\label{eqn:basic_VI}
0\in A(x)+\partial g(x)+\partial f(x).
\end{equation}
Here $A\colon\R^d\to\R^d$ is any single-valued continuous map, $f\colon\R^d\to\overline\R$ is a closed function, and $g\colon\R^d\to\overline\R$ is a closed function that is bounded from below.\footnote{In particular, $\prox_{\alpha f}(x)$ is nonempty for all $x\in\R^d$ and all $\alpha>0$.}
As explained in the introduction, variational inclusions encompass a variety of problems, most-notably first-order optimality conditions for nonlinear programming and Nash equilibria of games.
In order to identify \eqref{eqn:basic_VI} with \eqref{eqn:set-valued}, we define the set-valued map $F$ to be 
$$F(x):=A(x)+\partial g(x)+\partial f(x).$$
Throughout, we fix a point $\xs$ satisfying the inclusion~\eqref{eqn:basic_VI}.

A classical algorithm for problem~\eqref{eqn:basic_VI} is the stochastic forward-backward iteration, which proceeds by taking ``forward-steps'' on $A+\partial g$ and proximal steps on $f$. Specifically, given a current iterate $x_t$, the algorithm performs the update
\begin{equation}\label{eqn:algorithm}
\left\{\begin{aligned}
&{\rm Choose }~w_t\in \partial g(x_t)\\
&{\rm Choose }~ x_{t+1}\in \prox_{\alpha_t f}(x_t-\alpha_t(A(x_t)+w_t+\nu_t))
\end{aligned}\right\},
\end{equation}
where $\nu_t$ is a noise sequence.
The operator $G_{\alpha}(x,\nu)$ corresponding to this algorithm is simply
$$G_{\alpha}(x, \perturb) := \frac{x - s_f(x - \alpha( A(x)+s_g(x) + \perturb))}{\alpha},$$
where $s_g(x)$ is any selection of the subdifferential $\partial g(x)$ and $s_f(x)$ is any selection of the proximal map $\prox_{\alpha f}(x)$. The goal of this section is to verify Assumption~\ref{assumption:Aproposed} for this operator under a number of reasonable assumptions on $A$, $g$, and $f$. 

In particular, the local boundedness condition \ref{assumption:localbound} for $G$ is widely used in the literature, with a variety of sufficient conditions known. The following lemma describes a number of such conditions, which we will use in what follows. The proof appear in Appendix~\ref{sec:lemma_bounded_app}.

\begin{lem}[Local boundedness]\label{lem:basic_level_bound}
Suppose that $A(\cdot)$ and $s_g(\cdot)$ are locally bounded around $\bar x$. Then Assumption~\ref{assumption:localbound} is guaranteed to hold in any of the following settings.
\begin{enumerate}
\item\label{lb:2} $f$ is the indicator function of a closed set $\cX$.
\item\label{lb:3} $f$ is convex and the function $x\mapsto\dist(0,\partial f(x))$ is bounded on $\dom f$ near $\bar x$.
\item\label{lb:4} $f$ is Lipschitz continuous on $\dom g\cap \dom f$.
\end{enumerate}
\end{lem}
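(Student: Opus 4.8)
The plan is to reduce all three settings to one elementary estimate coming from optimality of the backward (proximal) step, exploiting that the $\tfrac{1}{2\alpha}$ weight inside $\prox_{\alpha f}$ exactly cancels the $\tfrac1\alpha$ factor in the definition of $G_\alpha$, so the resulting bound is automatically uniform in the stepsize. Concretely, write $b := A(x) + s_g(x) + \perturb$ and $p := s_f(x-\alpha b)\in\prox_{\alpha f}(x-\alpha b)$, so that $G_\alpha(x,\perturb) = \tfrac1\alpha(x-p)$. By the local boundedness of $A(\cdot)$ and $s_g(\cdot)$ near $\bar x$ I may shrink the neighborhood $\cU$ (and, in setting (2), also intersect it with the neighborhood on which $x\mapsto\dist(0,\partial f(x))$ is bounded) so that $M := \sup_{x\in\cU_F}\big(\|A(x)\|+\|s_g(x)\|\big) < \infty$; then $\|b\|\le M+\|\perturb\|$ on $\cU_F$. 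It therefore suffices to exhibit a constant $K\ge 0$, depending only on $f$ and $\cU$, for which $\|x-p\|\le 2\alpha\,(K+\|b\|)$, since this yields $\|G_\alpha(x,\perturb)\|\le 2(K+M)+2\|\perturb\|\le C\,(1+\|\perturb\|)$ with $C := 2(K+M+1)$, uniformly over $\alpha>0$ and $\perturb\in\R^d$.

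For the core estimate, note $x\in\cU_F\subseteq\dom F\subseteq\dom f$, so $x$ is admissible in the problem defining $\prox_{\alpha f}(x-\alpha b)$. Comparing the objective at the minimizer $p$ with its value at $x$, expanding $\|p-(x-\alpha b)\|^2$ and $\|x-(x-\alpha b)\|^2$, and cancelling the common term $\tfrac\alpha2\|b\|^2$, I obtain
$$\frac{1}{2\alpha}\|p-x\|^2 + \la p-x,\, b\ra \;\le\; f(x)-f(p),$$
hence $\tfrac{1}{2\alpha}\|p-x\|^2 \le \big(f(x)-f(p)\big) + \|p-x\|\cdot\|b\|$. It remains to bound $f(x)-f(p)$ by a linear function of $\|x-p\|$ in each case. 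In (1), $p\in\cX$ (it is a projection) and $x\in\dom F\subseteq\cX$, so $f(x)=f(p)=0$ and $K=0$ works. In (2), convexity gives $f(x)-f(p)\le\la v,\,x-p\ra\le\|v\|\cdot\|x-p\|$ for any $v\in\partial f(x)$; taking $v$ of minimal norm and $K := \sup_{x\in\cU_F}\dist(0,\partial f(x))$ finishes it. In (3), $f(x)-f(p)\le\ell\,\|x-p\|$ with $\ell$ the Lipschitz constant, so $K=\ell$ works, provided one checks that $p$ lies in the set $\dom g\cap\dom f$ on which $f$ is Lipschitz — which holds in the relevant configurations (where $\dom f\subseteq\dom g$), since $p\in\dom f$ always. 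In every case the displayed inequality becomes $\tfrac{1}{2\alpha}\|p-x\|^2\le(K+\|b\|)\|p-x\|$, i.e. $\|p-x\|\le 2\alpha(K+\|b\|)$, as needed.

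The one point that genuinely requires care — and the reason the three cases are itemized rather than folded into a single ``$f$ has bounded subgradients near $\bar x$'' hypothesis — is that the bound must hold for \emph{every} $\alpha>0$, including large ones, for which the backward point $p=\prox_{\alpha f}(x-\alpha b)$ may lie far from $\bar x$. This is exactly why the $\alpha$-cancellation in the core estimate, rather than any localization argument, is the right mechanism, and why in (3) the Lipschitz hypothesis must be imposed globally on $\dom g\cap\dom f$ rather than merely locally near $\bar x$. I would also note in passing that settings (1) and (2) admit a shorter alternative proof via nonexpansiveness of $\prox_{\alpha f}$ together with the identity $\prox_{\alpha f}(x+\alpha v)=x$ for $v\in\partial f(x)$, valid when $f$ is convex — but that route additionally needs $\cX$ convex in (1), whereas the estimate above uses only closedness of $\cX$.
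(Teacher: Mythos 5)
Your proof is correct, and it takes a genuinely different route from the paper's for cases (1) and (2). The paper treats the three settings with three separate devices: for (1) it bounds $\|x-P_{\cX}(x-\alpha b)\|$ by $\dist_{\cX}(x-\alpha b)+\alpha\|b\|\leq 2\alpha\|b\|$ (using only that $x\in\cX$); for (2) it uses nonexpansiveness of $\prox_{\alpha f}$ together with the identity $x=\prox_{\alpha f}(x+\alpha z)$ for a minimal-norm $z\in\partial f(x)$; and only for (3) does it invoke the prox-optimality inequality $\tfrac{1}{2\alpha}\|x_+-x\|^2\le f(x)-f(x_+)-\langle w,x_+-x\rangle$. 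You instead derive a single ``core estimate'' (which is exactly the paper's step for case (3)) and specialize the term $f(x)-f(p)$ in each setting: $0$ in (1), a minimal-norm subgradient bound via convexity in (2), and the Lipschitz constant in (3). The unified approach is cleaner and makes it visually obvious that the $\alpha$-cancellation is what produces an $\alpha$-uniform bound; the paper's case-by-case arguments for (1)–(2) are marginally shorter and avoid any need for the $f(x)-f(p)$ decomposition. Your closing observation is also accurate: the nonexpansiveness route the paper uses for (2) would require $\cX$ convex if transplanted to (1), whereas your estimate only needs $\cX$ closed. One further point in your favor: you explicitly flag that case (3) requires $p\in\dom g\cap\dom f$ for the Lipschitz bound to apply, whereas the paper's proof uses the same bound $f(x)-f(x_+)\le L\|x-x_+\|$ without comment. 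In the paper this is harmless because (3) is only invoked when $g=0$ so $\dom g\cap\dom f=\dom f\ni p$ automatically, but you are right that the hypothesis as stated makes this a point worth checking.
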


We next verify Assumption~\ref{assumption:Aproposed} in a number of reasonable settings; all proofs appear in the appendix. In particular, it will be useful to note the following expression for $F_{\cM}$. We will use this lemma throughout the section, without explicit reference.

\begin{lem}[Local tangent reduction]\label{lem:loc_tan_redux_app}
Suppose that $f$ and $g$ are Lipschitz continuous on their domains, $A$ is $C^p$-smooth, $f+g$ admits an active $C^{p+1}$ manifold at $\xs$ for $-A(\xs)$, and $f$ and $g$  are both $C^{p+1}$-smooth and strongly $(a)$ regular along $\cM$ near $\xs$. Then Assumption~\ref{ass:basic_assumpt} holds and $F_{\cM}$ admits the simple form
\begin{equation}\label{eqn:simple_F}
F_{\cM}(x)=P_{T_{\cM}(x)}(A(x))+\nabla_{\cM}g(x)+\nabla_{\cM}f(x),
\end{equation}
for all $x\in \cM$ near $\xs$.
\end{lem}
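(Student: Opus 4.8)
The plan is to unpack the statement into three pieces --- single-valuedness of $F_\cM$ near $\xs$ (part \ref{it:single_val_FM} of Assumption~\ref{ass:basic_assumpt}), the local graph identity (part \ref{it:smooth_reduction}), and the explicit formula \eqref{eqn:simple_F} --- and to handle the first and third together, then the second in two halves. The elementary engine is this: reading the strong $(a)$-regularity inequality for $h\in\{f,g\}$ on the diagonal $x=y\in\cM$ near $\xs$ gives $\|P_{\tangentM{y}}(v-\nabla_\cM h(y))\|\le \localconstant(1+\|v\|)\cdot 0=0$ for every $v\in\partial h(y)$, so $P_{\tangentM{y}}\partial h(y)=\{\nabla_\cM h(y)\}$; here $\partial h(y)\neq\emptyset$ because $h$ is lower semicontinuous and Lipschitz on $\dom h\supseteq\cM$, so Fr\'{e}chet subgradients of $h$ are nonempty on a dense subset of $\dom h$ near $y$ and are uniformly bounded, yielding a limiting subgradient at $y$. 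Consequently, for $x\in\cM$ near $\xs$, linearity of the projection yields
\[
F_\cM(x)=P_{\tangentM{x}}A(x)+P_{\tangentM{x}}\partial g(x)+P_{\tangentM{x}}\partial f(x)=\{P_{\tangentM{x}}A(x)+\nabla_\cM g(x)+\nabla_\cM f(x)\},
\]
which is a single vector; this settles both \ref{it:single_val_FM} and \eqref{eqn:simple_F}. Projecting $0\in F(\xs)$ onto $\tangentM{\xs}$ also records the fact $F_\cM(\xs)=0$, used below.

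For the graph identity, one inclusion is bookkeeping. If $x\in\cM$ near $\xs$ and $v\in F(x)$, then $P_{\tangentM{x}}v=F_\cM(x)$ by the above, and since $N_{\cM}(x)$ is a linear subspace, $v\in F_\cM(x)+N_{\cM}(x)$. I would also show $\gph F$ near $(\xs,0)$ lies over $\cM$: given $(x,v)\in\gph F$, write $v=A(x)+w_g+w_f$ with $w_g\in\partial g(x),\,w_f\in\partial f(x)$; then $w_g+w_f+A(\xs)$ is small (as $v$ is small and $A$ is continuous), and --- invoking the subdifferential calculus for strongly $(a)$-regular sums from \cite{davis2021subgradient} to identify $w_g+w_f$ with a subgradient of $f+g$ at $x$, and the continuity of $f+g$ on its domain to meet the function-value localization --- the sharpness of the active manifold of the tilted function $(f+g)(\cdot)+\langle A(\xs),\cdot\rangle$ forces $x\in\cM$ for $(x,v)$ close enough to $(\xs,0)$.

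The remaining inclusion is the real content, and the expected main obstacle. Given $x\in\cM$ near $\xs$ and $v$ near $0$ with $P_{\tangentM{x}}v=F_\cM(x)$, I must realize $v=A(x)+w_g+w_f$ with $w_g\in\partial g(x),\,w_f\in\partial f(x)$. Picking any $w_g^0\in\partial g(x),\,w_f^0\in\partial f(x)$, the diagonal identity shows their tangential parts agree with those of $v-A(x)$, so $n:=v-(A(x)+w_g^0+w_f^0)\in N_{\cM}(x)$, and $n$ is small because $F_\cM(\xs)=0$. It thus suffices to prove that $\partial g(x)+\partial f(x)$ contains a relative neighborhood of $w_g^0+w_f^0$ inside $\nabla_\cM g(x)+\nabla_\cM f(x)+N_{\cM}(x)$ --- a ``normal fiber fills up'' statement. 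This is exactly where the active manifold hypothesis on $f+g$ (rather than mere strong $(a)$-regularity of $f$ and $g$) is indispensable: Proposition~\ref{prop: sharpness} applied to $(f+g)(\cdot)+\langle A(\xs),\cdot\rangle$ gives linear growth off $\cM$, which by a standard subgradient estimate forces $N_{\cM}(x)\cap c\mathbb{B}$ --- after the appropriate shift --- to lie inside $\partial\big((f+g)(\cdot)+\langle A(\xs),\cdot\rangle\big)(x)$ for some $c>0$; untilting, relating $\partial(f+g)$ back to $\partial g+\partial f$ on $\cM$, and using that the active manifold hypothesis at $\xs$ places $-A(\xs)$ in the relevant relative interior (so that small normal perturbations stay realizable) gives the claim, after which shrinking $U$ so that $\|n\|<c$ throughout finishes the proof. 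The difficulty is precisely this conversion of linear normal growth of the sum $f+g$ into the filling-up of the normal fiber of $\partial g+\partial f$, carried out with limiting-subdifferential calculus and genuinely using the active manifold assumption on the sum rather than on the summands individually.
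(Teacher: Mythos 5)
Your handling of the first two claims---single-valuedness of $F_\cM$ and the formula \eqref{eqn:simple_F}---is the same as the paper's: specialize strong $(a)$-regularity on the diagonal $x=y\in\cM$ to conclude $P_{T_\cM(y)}\partial h(y)=\{\nabla_\cM h(y)\}$ for $h\in\{f,g\}$, then add. Where you diverge is in establishing the local graph identity \ref{it:smooth_reduction}. The paper does this in two lines by invoking Lemma~\ref{lem:smooth_reduct} (the smooth-reduction result cited from \cite[Proposition 10.12]{drusvyatskiy2014optimalityarxiv}) applied to the sum $f+g$ at the tilt $-A(\xs)$, obtaining $\gph\partial(f+g)=\gph[\nabla_\cM(f+g)+N_\cM]$ near $(\xs,-A(\xs))$, and then adding the continuous map $A$. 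You instead re-derive the content of that lemma from the ground up: Proposition~\ref{prop: sharpness} gives linear growth of $f+g$ off $\cM$, which you use both to show that $\gph F$ near $(\xs,0)$ is supported on $\cM$ (the sharpness/identification direction) and to show that the normal fiber fills up (small $n\in N_\cM(x)$ added to $\nabla_\cM(f+g)(x)$ remains a Fr\'echet subgradient). That is exactly the argument underlying Lemma~\ref{lem:smooth_reduct}, so you are taking a lower-level, more self-contained path where the paper takes a one-step citation; the cost is that the ``normal fiber fills up'' step you flag as the main obstacle is precisely what the cited lemma packages. Both proofs must at some point identify $\partial g(x)+\partial f(x)$ (which is what $F$ involves) with $\partial(f+g)(x)$ (which is what Lemma~\ref{lem:smooth_reduct}, Proposition~\ref{prop: sharpness}, and the active-manifold hypothesis are stated for). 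The paper passes over this silently; you acknowledge it and defer to the calculus of strongly $(a)$-regular sums in \cite{davis2021subgradient}, which is the more honest treatment, though still informal. If you want to close that step explicitly, note that the sum rule $\partial(f+g)\subset\partial f+\partial g$ (available since $g$ is locally Lipschitz) gives the inclusion needed for the normal-fiber direction for free, while the opposite inclusion needed for the ``$\gph F$ lies over $\cM$'' direction is the one that genuinely requires the regularity you invoke.
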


\subsubsection{Stochastic forward algorithm $(f=0)$}\label{sec:examplesoperators}
We begin with the simplest case of \eqref{eqn:basic_VI} where $f=0$. In this case, the iteration \eqref{alg:perturbedGiteration} reduces to a pure stochastic forward algorithm and the map $G$ takes the simple form 
$$G_{\alpha}(x, \perturb) := A(x)+s_g(x) + \perturb,$$
which is independent of $\alpha$.
Let us introduce the following assumption on the problem data.
\begin{assumption}[Assumptions for the forward algorithm]\label{assumption:subgradient}
{\rm Suppose that $f=0$ and that both $g(\cdot)$ and $A(\cdot)$ are Lipschitz continuous around $\bar x$. 
Suppose that $\cM \subseteq \cX$ is a $C^{p}$-smooth manifold for $g$ at $\bar x$.
\begin{enumerate}[label=$\mathrm{(F\arabic*)}$]
	\item \label{assumption:projectedgradient:stronga0} {\bf (Strong (a))}
The function $g$ is strongly $(a)$-regular along $\cM$ at $\bar x$. 
\item \label{assumption:projectedgradient:proximalaiming0}{\bf (Proximal aiming)} There exists $\mu > 0$ such that the inequality holds: 
\begin{align}\label{prop:projectedgradient:eq:aiming0}
\dotp{A(\bar x)+v, x - P_{\cM}(x)} \geq \mu\cdot  \dist(x, \cM) \qquad \text{for all $x$ near $\bar x$ and $v \in  \partial g(x)$.}
\end{align}
\end{enumerate}}
\end{assumption}

Note that Corollary~\ref{cor:prox-aiming_gen} shows that the aiming condition~\ref{assumption:projectedgradient:proximalaiming0} holds as long as the inclusion $-A(\bar x)\in \hat\partial g(\bar x)$ holds, $\cM$ is an active manifold for $g$ at $\bar x$ for $v=- A(\bar x)$, and $g$ is $(b_{\leq})$-regular along $\cM$ at $\bar x$.
The following proposition shows that Assumption~\ref{assumption:subgradient} suffices to ensure Assumption~\ref{assumption:Aproposed}. The proof appears in the appendix.

\begin{proposition}[Forward method]\label{prop:subgradient}
Assumption~\ref{assumption:subgradient} implies  Assumption~\ref{assumption:Aproposed}. 
\end{proposition}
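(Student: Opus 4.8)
The plan is to verify the two parts of Assumption~\ref{assumption:Aproposed}---the tangent comparison \ref{assumption:smoothcompatibility} and the proximal aiming \ref{assumption:aiming}---for the operator $G_\alpha(x,\perturb) = A(x) + s_g(x) + \perturb$, using the structural facts provided by Lemma~\ref{lem:loc_tan_redux_app} together with the hypotheses \ref{assumption:projectedgradient:stronga0} and \ref{assumption:projectedgradient:proximalaiming0}. Since $f=0$, Lemma~\ref{lem:loc_tan_redux_app} tells us that $F_{\cM}(x) = P_{T_{\cM}(x)}(A(x)) + \nabla_{\cM} g(x)$ for $x\in\cM$ near $\xs$, and in particular $F(P_\cM(x)) = A(P_\cM(x)) + \partial g(P_\cM(x))$. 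Throughout I will write $y = P_\cM(x)$ and freely use that $P_\cM$ is locally Lipschitz (indeed $C^{p-1}$), so $\|x-y\| = \dist(x,\cM)$ and $\|y - \xs\| = O(\|x-\xs\|)$.

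\emph{Tangent comparison.} We must bound $\|P_{\tangentM{y}}(G_\alpha(x,\perturb) - F(y) - \perturb)\|$. Since the $\perturb$ terms cancel and $G$ is $\alpha$-independent, this is $\|P_{\tangentM{y}}(A(x) + s_g(x) - A(y) - \nabla_\cM g(y))\|$, where we have used that $P_{\tangentM y} F(y) = F_\cM(y) = P_{\tangentM y}A(y) + \nabla_\cM g(y)$ is single-valued (so the selection $s_g$ is irrelevant after projection onto $\tangentM y$). Split this into $\|P_{\tangentM y}(A(x) - A(y))\|$, which is $O(\|x-y\|)$ by Lipschitzness (or $C^p$-smoothness) of $A$, plus $\|P_{\tangentM y}(s_g(x) - \nabla_\cM g(y))\|$, which is exactly the quantity controlled by strong $(a)$-regularity \ref{assumption:projectedgradient:stronga0}: it is at most $C(1+\|s_g(x)\|)\|x-y\|$. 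Finally, since $s_g$ is locally bounded around $\bar x$ by hypothesis, $1 + \|s_g(x)\| = O(1) \le C'(1+\|\perturb\|)^2$ on $\cU_F$, and the whole estimate is $\le C(1+\|\perturb\|)^2\dist(x,\cM) \le C(1+\|\perturb\|)^2(\dist(x,\cM)+\alpha)$, as required.

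\emph{Proximal aiming.} Here we must lower-bound $\dotp{G_\alpha(x,\perturb) - \perturb,\, x - y} = \dotp{A(x) + s_g(x),\, x-y}$. The idea is to compare with the assumed aiming inequality \eqref{prop:projectedgradient:eq:aiming0}, which is stated at the anchor point: $\dotp{A(\bar x) + v,\, x - y} \ge \mu\dist(x,\cM)$ for $v\in\partial g(x)$. Taking $v = s_g(x)$ and writing $\dotp{A(x)+s_g(x),x-y} = \dotp{A(\bar x)+s_g(x),x-y} + \dotp{A(x)-A(\bar x),x-y}$, the first term is $\ge \mu\dist(x,\cM)$ and the second is bounded in absolute value by $\|A(x)-A(\bar x)\|\cdot\dist(x,\cM)$. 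By continuity of $A$ at $\bar x$, shrinking the neighborhood $\cU$ so that $\|A(x)-A(\bar x)\| \le \mu/2$, this error is $o(\dist(x,\cM))$ (in fact it can be absorbed into a smaller effective $\mu$, but the $o(\cdot)$ form suffices), so $\dotp{G_\alpha(x,\perturb)-\perturb, x-y} \ge \tfrac{\mu}{2}\dist(x,\cM) \ge \mu'\dist(x,\cM) - (1+\|\perturb\|)^2(o(\dist(x,\cM)) + C\alpha)$. This completes the verification. The steplength bound \ref{assumption:localbound} is immediate here since $\|G_\alpha(x,\perturb)\| \le \|A(x)\| + \|s_g(x)\| + \|\perturb\| \le \localconstant(1+\|\perturb\|)$ by local boundedness of $A$ and $s_g$; alternatively one may cite Lemma~\ref{lem:basic_level_bound}\ref{lb:2}.

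\emph{Main obstacle.} The only genuinely delicate point is bookkeeping the $(1+\|\perturb\|)^2$ factors and the interplay between the $o(\dist(x,\cM))$ error in \eqref{prop:projectedgradient:eq:aiming0}'s refinement versus the clean linear lower bound; since $G$ does not depend on $\alpha$ and the noise enters only additively, both assumptions are actually \emph{noise-free} statements about $A(x)+s_g(x)$, so the $(1+\|\perturb\|)^2$ slack and the $C\alpha$ terms are never needed---one simply keeps them for uniformity with the general framework. Care is also needed to justify passing from the assumed aiming inequality at $\bar x$ to one valid with $A(x)$ in place of $A(\bar x)$, which is where the continuity of $A$ and a possible shrinking of $\cU$ are used.
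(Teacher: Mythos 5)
Your proof is correct and takes essentially the same approach as the paper: you decompose the tangent comparison into an $A(x)-A(y)$ piece (controlled by Lipschitzness of $A$) plus an $s_g(x)-\nabla_\cM g(y)$ piece (controlled by strong $(a)$-regularity together with local boundedness of $s_g$), and for proximal aiming you anchor at $A(\bar x)$ via \eqref{prop:projectedgradient:eq:aiming0} and absorb the $A(x)-A(\bar x)$ error by shrinking the neighborhood, exactly as the paper does. The only difference is that you spell out the $(1+\|s_g(x)\|)$ bookkeeping more explicitly, which the paper leaves implicit.
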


The following is now immediate.
\begin{cor}[Active manifolds]
Suppose $f=0$ and that both $g(\cdot)$ and $A(\cdot)$ are Lipschitz continuous around $\bar x$. 
Suppose moreover that the inclusion $-A(\bar x)\in \hat \partial g(\bar x)$ holds, that $g$ admits a $C^2$ active manifold around $\bar x$ for $\bar v=-A(\bar x)$, and that $g$ is both $(b)_{\leq}$-regular and strongly $(a)$-regular along $\cM$ at $\bar x$. Then Assumption~\ref{assumption:Aproposed} holds.
\end{cor}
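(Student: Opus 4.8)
The plan is to reduce everything to Proposition~\ref{prop:subgradient} by checking that the stated hypotheses imply Assumption~\ref{assumption:subgradient}. That assumption asks for two things: (F1) strong $(a)$-regularity of $g$ along $\cM$ at $\bar x$, and (F2) the proximal-aiming inequality \eqref{prop:projectedgradient:eq:aiming0}. Condition (F1) is literally one of the hypotheses here. The ambient requirements of Assumption~\ref{assumption:subgradient} — Lipschitzness of $g$ and $A$ near $\bar x$, and $\cM$ being a $C^2$ manifold along which $g$ restricts $C^2$-smoothly — are either assumed directly or contained in the meaning of ``$\cM$ is an active $C^2$ manifold for $g$ at $\bar x$ for $\bar v$'' via Definition~\ref{defn:ident_man}: the smoothness clause for the tilted function $g_{\bar v}=g-\langle \bar v,\cdot\rangle$ is equivalent to the one for $g$, since the two differ by a smooth function. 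So the only real work is to establish (F2).

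For (F2), I would apply Corollary~\ref{cor:prox-aiming_gen} to the tilted function $h:=g_{\bar v}=g+\langle A(\bar x),\cdot\rangle$, and its hypotheses are all met: $h$ is closed and locally Lipschitz near $\bar x$; the inclusion $0\in\hat\partial h(\bar x)$ is exactly $-A(\bar x)\in\hat\partial g(\bar x)$; $\cM$ is by assumption an active $C^1$ manifold for $h$ at $\bar x$; and $h$ is $(b_{\leq})$-regular along $\cM$ at $\bar x$ because $(b_{\leq})$-regularity is preserved under tilting by a linear function — the estimate \eqref{eqn:bd1} for $g$ carries over to $h$ after shifting each subgradient by the constant $-\bar v$ and absorbing the bounded factor $1+\|\bar v\|$ into the $o(\cdot)$ term. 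Corollary~\ref{cor:prox-aiming_gen} then yields a constant $\mu>0$ with $\langle w,x-P_{\cM}(x)\rangle\ge \mu\cdot\dist(x,\cM)+(1+\|w\|)\,o(\dist(x,\cM))$ for all $x\in\dom g$ near $\bar x$ and all $w\in\partial h(x)=\partial g(x)+A(\bar x)$.

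Finally I would clean up the error term. Writing $w=A(\bar x)+v$ with $v\in\partial g(x)$, the left-hand side above becomes precisely $\langle A(\bar x)+v,x-P_{\cM}(x)\rangle$; and since $g$ is Lipschitz near $\bar x$, the subgradients $\partial g(x)$ are uniformly bounded there, so $\|w\|$ is bounded by a constant and the term $(1+\|w\|)\,o(\dist(x,\cM))$ is dominated by $\tfrac{\mu}{2}\dist(x,\cM)$ once $x$ lies in a sufficiently small neighborhood of $\bar x$ (using $|o(t)|/t\to 0$). This leaves $\langle A(\bar x)+v,x-P_{\cM}(x)\rangle\ge \tfrac{\mu}{2}\dist(x,\cM)$, which is \eqref{prop:projectedgradient:eq:aiming0} with constant $\mu/2$, so Assumption~\ref{assumption:subgradient} holds and Proposition~\ref{prop:subgradient} delivers Assumption~\ref{assumption:Aproposed}. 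I do not expect any genuine obstacle — the statement is flagged as ``immediate'' — and the only step carrying a small amount of content is this last absorption of the subgradient-dependent $o(\dist(x,\cM))$ error, which the Lipschitz hypothesis renders routine.
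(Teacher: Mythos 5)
Your proof is correct and follows the same route the paper intends: strong $(a)$-regularity is assumed directly, proximal aiming is deduced from Corollary~\ref{cor:prox-aiming_gen} applied to the tilted function $g_{\bar v}$, and Proposition~\ref{prop:subgradient} then converts Assumption~\ref{assumption:subgradient} into Assumption~\ref{assumption:Aproposed}. You also correctly fill in the two details the paper leaves implicit — that $(b_{\leq})$-regularity is stable under tilting by $\langle A(\bar x),\cdot\rangle$, and that local Lipschitzness of $g$ bounds the subgradients so the $(1+\|v\|)\,o(\dist(x,\cM))$ term can be absorbed into $\tfrac{\mu}{2}\dist(x,\cM)$.
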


\subsubsection{Stochastic projected forward algorithm $(f=\delta_{\cX})$}
Next, we focus on the particular instance of \eqref{eqn:basic_VI} where  $f$ is an indicator function of a closed set $\cX$. In this case, the iteration \eqref{alg:perturbedGiteration} reduces to a stochastic projected forward algorithm and the map $G$ takes the form 
$$G_{\alpha}(x, \perturb) := \frac{x - s_\cX(x - \alpha( A(x)+s_g(x) + \perturb))}{\alpha},$$
where $s_\cX(x)$ is a selection of the projection map $P_{\cX}(x)$.
In order to ensure Assumption~\ref{assumption:Aproposed} for the stochastic projected forward method, we introduce the following assumption.
\begin{assumption}[Assumptions for the projected gradient mapping]\label{assumption:projectedgradient}
{\rm Suppose that $f$ is the indicator function of a closed set $\cX$ and both $g (\cdot)$ and $A(\cdot)$ are Lipschitz continuous around $\bar x$. Let $\cM \subseteq \cX$ be a $C^2$ manifold containing $\bar x$ and suppose that $f$ is $C^2$ on $\cM$ near $\bar x$.
\begin{enumerate}[label=$\mathrm{(G\arabic*)}$]
\item \label{assumption:projectedgradient:stronga} {\bf (Strong (a))}
The function $g$ and set $\cX$ are strongly $(a)$-regular along $\cM$ at $\bar x$. 
\item \label{assumption:projectedgradient:proximalaiming}{\bf (Proximal aiming)} There exists $\mu > 0$ such that the inequality holds 
\begin{align}\label{prop:projectedgradient:eq:aiming}
\dotp{A(\bar x)+v, x - P_{\cM}(x)} \geq \mu\cdot  \dist(x, \cM) \qquad \forall\text{~$x \in \cX$ near $\bar x$ and $v \in \partial g(x)$.}
\end{align}
\item\label{assumption:projectedgradient:bproxregularity} {\bf (Condition (b))} The set $\cX$ is  $(b_{\leq})$-regular along $\cM$ at $\bar x$.\end{enumerate}}
\end{assumption}

Note that a similar argument as Corollary~\ref{cor:prox-aiming_gen} shows that the aiming condition~\ref{assumption:projectedgradient:proximalaiming} holds as long as the inclusion $-A(\bar x)\in \hat\partial (g+f)(\bar x)$ holds, $\cM$ is an active manifold of $g+f$ at $\bar x$ for $v=-A(\bar x)$, and $g$ is $(b_{\leq})$-regular along $\cM$ at $\bar x$.

The following proposition shows that Assumption~\ref{assumption:projectedgradient} is sufficient to ensure Assumption~\ref{assumption:Aproposed}.
\begin{proposition}[Projected forward method]\label{prop:projectedgradient}
If Assumptions~\ref{assumption:localbound}  and \ref{assumption:projectedgradient} hold, then so does Assumption~\ref{assumption:Aproposed}.
\end{proposition}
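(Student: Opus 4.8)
The plan is to verify conditions \ref{assumption:smoothcompatibility} and \ref{assumption:aiming} of Assumption~\ref{assumption:Aproposed} for the projected-forward operator $G_{\alpha}(x,\perturb) = \alpha^{-1}(x - s_{\cX}(x - \alpha(A(x)+s_g(x)+\perturb)))$, working on a small neighborhood $\cU_F = \cU \cap \cX$ of $\bar x$. The key reduction is to compare one step of the projected-forward map with one step of the \emph{unprojected} forward map, for which Proposition~\ref{prop:subgradient} already applies: since $\cX$ and $g$ are both strongly $(a)$-regular and $\cX$ is $(b_{\leq})$-regular along $\cM$, the hypotheses of Proposition~\ref{prop:subgradient} (in the role played by $g$, with $f=0$) are in force for the $A+\partial g$ part. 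So the heart of the argument is a perturbation estimate showing that replacing the subgradient step by a subgradient-then-project step only perturbs the update direction by $O((1+\|\perturb\|)^2(\dist(x,\cM)+\alpha))$ in the relevant sense.

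First I would record the basic properties of the projection onto $\cX$ near $\bar x$. Writing $u = x - \alpha(A(x)+s_g(x)+\perturb)$ for the point being projected, note $\dist(u,\cX)\leq \dist(x,\cX)+\alpha\|A(x)+s_g(x)+\perturb\| = O(\dist(x,\cM)) + O(\alpha(1+\|\perturb\|))$ since $x\in\cX$ and $A,s_g$ are locally bounded; here I use that $\cM\subseteq\cX$ so $\dist(x,\cX)\le\dist(x,\cM)$. The nonexpansiveness of $P_{\cX}$ near $\bar x$ (valid because $\cX$ is prox-regular along $\cM$, a consequence of strong $(a)$- and $(b_{\leq})$-regularity as developed in \cite{davis2021subgradient}) then gives that $s_{\cX}(u)$ is within $O(\dist(x,\cM)+\alpha(1+\|\perturb\|))$ of $u$. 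Consequently $\alpha G_{\alpha}(x,\perturb) = x - s_{\cX}(u) = (x-u) + (u - s_{\cX}(u)) = \alpha(A(x)+s_g(x)+\perturb) + r$, where the residual $r = u - s_{\cX}(u)$ lies in $N_{\cX}(s_{\cX}(u))$ and has norm $O(\dist(x,\cM)+\alpha(1+\|\perturb\|))$.

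The two conditions then follow by projecting this identity appropriately. For \ref{assumption:smoothcompatibility}, I apply $P_{T_{\cM}(P_{\cM}(x))}$ to $G_{\alpha}(x,\perturb) - F(P_{\cM}(x)) - \perturb$: the $A+s_g$ contribution is handled exactly as in Proposition~\ref{prop:subgradient} (tangent comparison for the forward method), contributing $O((1+\|\perturb\|)^2(\dist(x,\cM)+\alpha))$; the extra term $\alpha^{-1}r$ contributes a tangent component bounded using strong $(a)$-regularity of $\cX$ — since $r \in N_{\cX}(s_{\cX}(u))$ and $s_{\cX}(u)$ is within $O(\dist(x,\cM)+\alpha(1+\|\perturb\|))$ of the manifold, strong $(a)$ forces $P_{T_{\cM}}r$ to be of order $\|r\|$ times that distance, i.e. $O((1+\|\perturb\|)^2(\dist(x,\cM)+\alpha)^2)$, which after dividing by $\alpha$ is absorbed into the allowed bound on the relevant neighborhood (shrinking $\cU$ so that $\dist(x,\cM)$ and the effective $\alpha$-range are small). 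For \ref{assumption:aiming}, I take the inner product of $G_{\alpha}(x,\perturb)-\perturb$ with $x - P_{\cM}(x)$: the term $\langle A(x)+s_g(x), x-P_{\cM}(x)\rangle$ is bounded below by $\mu\dist(x,\cM) - (1+\|\perturb\|)^2 o(\dist(x,\cM))$ by the proximal aiming estimate of Proposition~\ref{prop:subgradient} (using \ref{assumption:projectedgradient:proximalaiming} and $(b_{\leq})$ via Corollary~\ref{cor:prox-aiming_gen}), and the residual term $\langle \alpha^{-1}r, x - P_{\cM}(x)\rangle$ is controlled by writing $r\in N_{\cX}(s_{\cX}(u))$, using $(b_{\leq})$-regularity of $\cX$ to bound $\langle r, s_{\cX}(u) - P_{\cM}(s_{\cX}(u))\rangle$ from below by $-o(\|r\|)\cdot$(distance) and separately estimating $\langle r, x - s_{\cX}(u)\rangle$ and the discrepancy between $P_{\cM}(x)$ and $P_{\cM}(s_{\cX}(u))$.

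The main obstacle I anticipate is the bookkeeping in the residual term for condition \ref{assumption:aiming}: one must be careful that the $(b_{\leq})$-error, which is of the form $o(\|r\|)$ in the distance of $s_{\cX}(u)$ to $\cM$, genuinely produces a term of the shape $(1+\|\perturb\|)^2(o(\dist(x,\cM)) + C\alpha)$ after dividing by $\alpha$ — this requires carefully separating the part of $\|r\|$ coming from $\dist(x,\cM)$ from the part coming from $\alpha(1+\|\perturb\|)$, and using that $r$ points (approximately) normal to $\cX$, hence approximately normal to $\cM$, so that its pairing with the nearly-tangent vector $x - P_{\cM}(x)$ is second-order small. Everything else is a routine combination of the projection estimates above with the already-established forward-method bounds of Proposition~\ref{prop:subgradient} and the regularity calculus of \cite{davis2021subgradient}.
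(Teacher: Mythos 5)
Your overall plan is the same as the paper's: write $x_+ = s_{\cX}(x-\alpha(A(x)+s_g(x)+\perturb))$, observe that $w:=G_\alpha(x,\perturb)-\perturb-A(x)-s_g(x)$ (your $\alpha^{-1}r$) lies in $N_{\cX}(x_+)$, bound the forward part by the arguments of Proposition~\ref{prop:subgradient}, use strong~$(a)$ of $\cX$ to bound $P_{T_{\cM}(y)}w$ for \ref{assumption:smoothcompatibility}, and use $(b_{\leq})$ of $\cX$ to control the pairing of $w$ with $x - P_{\cM}(x)$ for \ref{assumption:aiming}. Two slips are worth flagging, plus one missing case.

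First, your estimate $\dist(u,\cX)\le\dist(x,\cX)+\alpha\|A(x)+s_g(x)+\perturb\|=O(\dist(x,\cM))+O(\alpha(1+\|\perturb\|))$ is an unnecessary over-estimate: since $x\in\cX$ we have $\dist(x,\cX)=0$, so $\|r\|=\dist(u,\cX)\le\alpha\|A(x)+s_g(x)+\perturb\|=O(\alpha(1+\|\perturb\|))$ with no $\dist(x,\cM)$ term. (This is also exactly what Assumption~\ref{assumption:localbound} gives directly, so the appeal to prox-regularity/nonexpansiveness of $P_{\cX}$ is superfluous.) The over-estimate matters: plugging $\|r\|=O(\dist(x,\cM)+\alpha(1+\|\perturb\|))$ into the strong-$(a)$ bound produces a $\dist(x,\cM)^2/\alpha$ term, which \emph{cannot} be ``absorbed by shrinking $\cU$'' because the bound in Assumption~\ref{assumption:Aproposed} must hold for all $\alpha>0$, including $\alpha\ll\dist(x,\cM)$. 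With the correct bound $\|r\|=O(\alpha(1+\|\perturb\|))$ the term disappears and the inequality $\alpha^{-1}\|P_{T_{\cM}(y)}r\|\le C(1+\|\perturb\|)^2(\dist(x,\cM)+\alpha)$ comes out exactly right.

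Second, you need to treat separately the event that $x_+\notin B_{\epsilon}(\bar x)$: the noise $\perturb$ is unbounded, so even for $x$ arbitrarily close to $\bar x$ the projected point $x_+$ can leave the region where strong~$(a)$ and $(b_{\leq})$ are valid. In that regime the paper simply notes $\|x-x_+\|\ge\epsilon/2$, so $\|P_{T_{\cM}(y)}w\|\le\|w\|\le(2/\epsilon)\|w\|\|x-x_+\|\le C(1+\|\perturb\|)^2\alpha$, and similarly for the aiming term; your sketch does not address it.

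Finally, for \ref{assumption:aiming} you apply $(b_{\leq})$ at the pair $(x_+,P_{\cM}(x_+))$ and then correct for $P_{\cM}(x_+)-P_{\cM}(x)$; the paper applies $(b_{\leq})$ directly at the pair $(x_+,P_{\cM}(x))$, which is legitimate since $(b_{\leq})$ holds for any $y\in\cM$ near $\bar x$, and avoids that extra Lipschitz-of-$P_{\cM}$ bookkeeping. Your route works but is unnecessarily indirect.
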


The following is now immediate.
\begin{cor}[Active manifolds]
Suppose that $f$ is the indicator function of a closed set $\cX$ and both $g (\cdot)$ and $A(\cdot)$ are Lipschitz continuous around $\bar x$. Suppose moreover the inclusion $-A(\bar x)\in \hat \partial (g+f)(\bar x)$ holds, $g+f$ admits a $C^2$ active manifold around $\bar x$ for the vector $\bar v=-A(\bar x)$, and both $g$ and $f$ are $(b_{\leq})$-regular and  strongly $(a)$-regular along $\cM$ at $\bar x$. Then Assumption~\ref{assumption:Aproposed} holds.
\end{cor}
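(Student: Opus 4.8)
The plan is to deduce the final corollary from Proposition~\ref{prop:projectedgradient} by checking that the hypotheses in the corollary imply Assumptions~\ref{assumption:localbound} and \ref{assumption:projectedgradient}, since once those hold, Proposition~\ref{prop:projectedgradient} delivers Assumption~\ref{assumption:Aproposed} directly. So the whole argument is a matter of unpacking definitions and invoking two earlier results: Corollary~\ref{cor:prox-aiming_gen} (proximal aiming from $(b_{\leq})$-regularity and activity) and Lemma~\ref{lem:basic_level_bound} (local boundedness).

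First I would verify Assumption~\ref{assumption:localbound}. Since $A(\cdot)$ is Lipschitz around $\bar x$ it is in particular locally bounded, and the selection $s_g(\cdot)$ is locally bounded because $g$ is Lipschitz around $\bar x$ (so $\partial g$ is locally bounded by the standard characterization of Lipschitz continuity via bounded subgradients). With $f=\delta_{\cX}$ an indicator of a closed set, case~\eqref{lb:2} of Lemma~\ref{lem:basic_level_bound} applies and yields Assumption~\ref{assumption:localbound}. Next I would assemble the ingredients of Assumption~\ref{assumption:projectedgradient}. The hypothesis that $g+f$ admits a $C^2$ active manifold $\cM$ around $\bar x$ for $\bar v=-A(\bar x)$ provides a $C^2$ manifold $\cM$ with $\cM\subseteq \cX=\dom f$ (active manifolds are contained in the domain), and $f=\delta_{\cX}$ restricted to $\cM$ is identically zero, hence $C^2$ on $\cM$; this sets up the standing hypotheses of Assumption~\ref{assumption:projectedgradient}. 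Strong $(a)$-regularity of $g$ and $\cX$ along $\cM$ at $\bar x$ is assumed outright, giving \ref{assumption:projectedgradient:stronga}. The $(b_{\leq})$-regularity of $f=\delta_{\cX}$ along $\cM$ at $\bar x$ is exactly the statement that $\cX$ is $(b_{\leq})$-regular along $\cM$ at $\bar x$ (by the convention that set regularity is regularity of the indicator), which is assumed; this gives \ref{assumption:projectedgradient:bproxregularity}.

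The one condition requiring a small argument is the proximal aiming \ref{assumption:projectedgradient:proximalaiming}, i.e. $\langle A(\bar x)+v,\, x-P_{\cM}(x)\rangle \ge \mu\cdot\dist(x,\cM)$ for all $x\in\cX$ near $\bar x$ and $v\in\partial g(x)$. For this I would apply Corollary~\ref{cor:prox-aiming_gen} to the function $h:=g+f=g+\delta_{\cX}$: it is closed, Lipschitz continuous on its domain $\dom g\cap\cX$, admits the active $C^1$ manifold $\cM$ at $\bar x$ for $\bar v=-A(\bar x)$ (equivalently $0\in\hat\partial h_{\bar v}(\bar x)$ with $h_{\bar v}(x)=h(x)-\langle\bar v,x\rangle$, which is the hypothesis $-A(\bar x)\in\hat\partial(g+f)(\bar x)$ together with the active-manifold assumption), and is $(b_{\leq})$-regular along $\cM$ near $\bar x$ since both $g$ and $f$ are. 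Corollary~\ref{cor:prox-aiming_gen} applied to the tilted function then gives a constant $\mu_0>0$ with $\langle w, x-P_{\cM}(x)\rangle \ge \mu_0\dist(x,\cM)+o(\dist(x,\cM))$ for all $w\in\partial h_{\bar v}(x) = A(\bar x)+\partial g(x)+N_{\cX}(x)$; restricting to $x\in\cX$ (so $0\in N_{\cX}(x)$, hence $A(\bar x)+v\in\partial h_{\bar v}(x)$ for $v\in\partial g(x)$) and absorbing the $o(\dist(x,\cM))$ term into $\mu_0\dist$ by shrinking the neighborhood yields the clean inequality with some $\mu>0$. I expect this transfer from the tilted-sum function to the stated aiming inequality — in particular keeping track of the subdifferential sum rule $\partial(g+\delta_{\cX})(x)\supseteq \partial g(x)+N_{\cX}(x)$ under the Lipschitz-plus-closedness setup, and the domain bookkeeping "$x$ near $\bar x$" versus "$x\in\cX$ near $\bar x$" — to be the only genuinely delicate point; everything else is immediate from the cited results. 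Having established Assumptions~\ref{assumption:localbound} and \ref{assumption:projectedgradient}, Proposition~\ref{prop:projectedgradient} finishes the proof.
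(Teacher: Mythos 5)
Your proof is correct and follows essentially the same route as the paper: deduce Assumption~\ref{assumption:localbound} from Lemma~\ref{lem:basic_level_bound}(\ref{lb:2}), verify the four items of Assumption~\ref{assumption:projectedgradient} from the hypotheses, and invoke Proposition~\ref{prop:projectedgradient}. One small remark on the aiming step: you apply Corollary~\ref{cor:prox-aiming_gen} literally to $h=g+\delta_{\cX}$, which requires $(b_\leq)$-regularity of the \emph{sum}, and you assert this from $(b_\leq)$-regularity of $g$ and $\delta_\cX$ separately without justification. This sum rule does hold here because $g$ is Lipschitz (so $\|v_g\|$ is uniformly bounded and the factors $(1+\|v_g\|)$ and $(1+\|v_f\|)$ can be combined into $C(1+\|v_g+v_f\|)$), but the paper's remark just before Proposition~\ref{prop:projectedgradient} instead points to a ``similar argument'' to Corollary~\ref{cor:prox-aiming_gen} that needs only $(b_\leq)$-regularity of $g$: one uses Proposition~\ref{prop: sharpness} on $g+\delta_\cX$ (which requires only the active-manifold hypothesis, not its $(b_\leq)$-regularity) to get $g(x)-g(P_\cM(x))+\langle A(\bar x),x-P_\cM(x)\rangle\ge c\,\dist(x,\cM)$ for $x\in\cX$, then applies $(b_\leq)$-regularity of $g$ alone to trade $g(x)-g(P_\cM(x))$ for $\langle v,x-P_\cM(x)\rangle$. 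This avoids the sum rule entirely and is the more economical route; your version is still valid for the purposes of the corollary.
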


\subsubsection{Stochastic forward-backward method $(g=0)$}
Finally, we focus on the particular instance of \eqref{eqn:basic_VI} where  $g=0$. In this case, the iteration \eqref{alg:perturbedGiteration} reduces to a stochastic forward-backward algorithm and the map $G$ becomes
$$G_{\alpha}(x, \perturb) := \frac{x - s_f(x - \alpha( A(x) + \perturb))}{\alpha},$$

In order to ensure Assumption~\ref{assumption:Aproposed} for the stochastic proximal gradient method, we introduce the following assumptions.
\begin{assumption}[Assumptions for the forward-backward method]\label{assumption:proximalgradient}
{\rm Suppose  $g=0$  and $f(\cdot)$ and $A(\cdot)$ are Lipschitz continuous on $\dom f$ near $\bar x$. Suppose moreover that there exists a $C^2$ manifold  $\cM \subset \cX$ containing $\bar x$ and such that $f$ is $C^2$-smooth on $\cM$ near $\bar x$.
\begin{enumerate}[label=$\mathrm{(H\arabic*)}$]
	\item \label{assumption:proximalgradient:stronga}{\bf (Strong (a))}
The function $f$ is strongly $(a)$-regular along $\cM$ at $\bar x$. 
\item \label{assumption:proximalgradient:proximalaimingprox} {\bf (Proximal Aiming)} There exists $\mu > 0$ such that the inequality 
\begin{align}\label{prop:proximalgradient:eq:aiming}
\dotp{A(\bar x)+v, x - P_{\cM}(x)} \geq \mu\cdot  \dist(x, \cM) - (1+\|v\|)o(\dist(x, \cM))
\end{align}
holds for all $x \in \dom f$ near $\bar x$ and $v \in \hat\partial f(x)$.
\end{enumerate}}
\end{assumption}

Note that Corollary~\ref{cor:prox-aiming_gen} shows that the aiming condition~\ref{assumption:proximalgradient:proximalaimingprox} holds as long as the inclusion $-A(\bar x)\in \hat\partial f(\bar x)$ holds, $\cM$ is an active manifold for $f$ at $\bar x$ for $v=-A(\bar x)$, and $f$ is $(b_{\leq})$-regular along $\cM$ at $\bar x$.

The following proposition shows that Assumption~\ref{assumption:proximalgradient} is sufficient to ensure Assumption~\ref{assumption:Aproposed}.

\begin{proposition}[Forward-backward method]\label{prop:proximalgradient}
If Assumptions~\ref{assumption:localbound} and~\ref{assumption:proximalgradient} hold, then so does Assumption~\ref{assumption:Aproposed}.
\end{proposition}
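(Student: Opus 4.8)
The plan is to verify the two clauses \ref{assumption:smoothcompatibility} and \ref{assumption:aiming} of Assumption~\ref{assumption:Aproposed} directly from the structure of the forward--backward map. Fix $x\in\cU_F$, set $y:=P_{\cM}(x)$ and $r:=\dist(x,\cM)$, and write $x^{+}:=s_f\bigl(x-\alpha(A(x)+\nu)\bigr)$, so that $G:=G_\alpha(x,\nu)=\alpha^{-1}(x-x^{+})$. The first step is the observation that the optimality condition for the proximal map makes $v:=G-A(x)-\nu$ a proximal --- hence Fr\'echet --- subgradient of $f$ at $x^{+}$, i.e.\ $v\in\hat\partial f(x^{+})$; this is the device that lets us feed $x^{+}$ into the \emph{local} hypotheses of Assumption~\ref{assumption:proximalgradient}. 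Next I would record the a priori bounds from Assumption~\ref{assumption:localbound}: $\|G\|\le C(1+\|\nu\|)$, whence $\|x^{+}-x\|=\alpha\|G\|\le C\alpha(1+\|\nu\|)$, $\|v\|\le C(1+\|\nu\|)$ (using local boundedness of $A$), and $|\dist(x^{+},\cM)-r|\le C\alpha(1+\|\nu\|)$. I would also note that on $\cM$ near $\bar x$ the reduced map simplifies to $F_{\cM}=P_{\tangentM{\cdot}}A(\cdot)+\nabla_{\cM}f(\cdot)$, which is the $g=0$ case of Lemma~\ref{lem:loc_tan_redux_app} and in any case is immediate from strong $(a)$-regularity of $f$.

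The argument then splits according to whether $\alpha(1+\|\nu\|)$ is above or below a fixed small threshold $c_0$. When $\alpha(1+\|\nu\|)\ge c_0$, both inequalities are essentially trivial: the right-hand sides dominate $c_0(1+\|\nu\|)$, while the left-hand sides are $O(1+\|\nu\|)$ (since $r$ is bounded and $\|A(x)\|,\|v\|,\|G\|,\|F_\cM(y)\|=O(1+\|\nu\|)$), so it suffices to take the constant $C$ in Assumption~\ref{assumption:Aproposed} large enough. The substantive case is $\alpha(1+\|\nu\|)<c_0$, in which (after shrinking $\cU$) the points $x^{+}$ and $y':=P_{\cM}(x^{+})$ stay in the region where strong $(a)$-regularity, the aiming inequality~\eqref{prop:proximalgradient:eq:aiming}, and the Lipschitzness of $A$, of $P_{\cM}$, of $y\mapsto P_{\tangentM{y}}$, and of $\nabla_{\cM}f$ all hold. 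For \ref{assumption:smoothcompatibility} I would split $P_{\tangentM{y}}(G-\nu)=P_{\tangentM{y}}A(x)+P_{\tangentM{y}}v$, bound the $A$-part by $\|A(x)-A(y)\|\le Cr$, invoke strong $(a)$-regularity of $f$ at $y'$ (legitimate because $v\in\partial f(x^{+})$) to get $\|P_{\tangentM{y'}}v-\nabla_{\cM}f(y')\|\le C(1+\|v\|)\dist(x^{+},\cM)\le C(1+\|\nu\|)^2(r+\alpha)$, and then transport the estimate from $y'$ to $y$ at cost $C\|y-y'\|(1+\|\nu\|)\le C\alpha(1+\|\nu\|)^2$ using the Lipschitzness of $P_{\tangentM{\cdot}}$ and of $\nabla_{\cM}f$ on $\cM$.

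For \ref{assumption:aiming} I would expand $\dotp{G-\nu,x-y}=\alpha\dotp{A(x)+v,G}+\dotp{A(x)+v,x^{+}-y'}+\dotp{A(x)+v,y'-y}$; the first and third terms are $O(\alpha(1+\|\nu\|)^2)$, and in the middle term I would replace $A(x)$ by the constant $A(\bar x)$ --- the resulting error is at most $\|A(x)-A(\bar x)\|\dist(x^{+},\cM)$, whose part proportional to $r$ carries a constant of order $\mathrm{diam}(\cU)$ (made smaller than $\mu_0/2$, where $\mu_0$ is the aiming constant of Assumption~\ref{assumption:proximalgradient}, by shrinking $\cU$) and whose remainder is $O(\alpha(1+\|\nu\|)^2)$ --- and then apply~\eqref{prop:proximalgradient:eq:aiming} at $x^{+}$ with the subgradient $v\in\hat\partial f(x^{+})$. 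It remains to convert $\dist(x^{+},\cM)$ back into $\dist(x,\cM)$ in both the leading $\mu_0$-term and the $o(\cdot)$-term; for the latter I would first replace $o(\cdot)$ by a nondecreasing majorant and use $|\dist(x^{+},\cM)-r|\le C\alpha(1+\|\nu\|)$ to write $o(\dist(x^{+},\cM))\le\widetilde o(\dist(x,\cM))+C\alpha(1+\|\nu\|)$. Collecting all contributions yields \ref{assumption:aiming} with $\mu:=\mu_0/2$.

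The main obstacle --- and the reason the statement is phrased with a quadratic factor $(1+\|\nu\|)^2$ rather than a linear one --- is exactly this transport of estimates from the ``output'' point $x^{+}$ (and its projection $y'$) back to the ``input'' point $x$ (and its projection $y$): the displacement $\|x^{+}-x\|$ is only controlled by $\alpha(1+\|\nu\|)$, and it always gets paired with a subgradient of norm $O(1+\|\nu\|)$, so each such transfer costs one extra power of $(1+\|\nu\|)$. The other mildly delicate points are the coordination of constants --- first fix $\cU$ small enough to absorb the $A(x)-A(\bar x)$ cross term into a halved aiming constant, then fix $c_0$ small enough to keep $x^{+}$ in the good region, then take $C$ large enough for the large-step regime --- and the elementary rewriting of $o(\dist(x^{+},\cM))$ in terms of $o(\dist(x,\cM))$ and $\alpha$. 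Everything else is a routine application of the stated Lipschitz and smoothness hypotheses, and the proofs of the analogous Propositions~\ref{prop:subgradient} and~\ref{prop:projectedgradient} suggest the same template carries through here.
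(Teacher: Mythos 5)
Your proposal is correct and follows essentially the same route as the paper's proof: both arguments hinge on the same key device — the proximal optimality condition gives $v := G_\alpha(x,\nu) - A(x) - \nu \in \hat\partial f(x^+)$, which lets you feed $x^+$ and $y' = P_\cM(x^+)$ into the strong-(a) and aiming hypotheses — and both transport estimates from $(x^+, y')$ back to $(x, y)$ at a cost of one extra power of $(1+\|\nu\|)$ using the a-priori step bound $\|x - x^+\| \le C\alpha(1+\|\nu\|)$ and Lipschitzness of $P_\cM$, $P_{T_\cM(\cdot)}$, $\nabla_\cM f$, and $A$. The only differences are cosmetic: you organize the dichotomy around $\alpha(1+\|\nu\|) \lessgtr c_0$ rather than the paper's $x^+ \in B_\epsilon(\bar x)$ (these are interchangeable modulo constants), and in the aiming step you expand $x - y = (x - x^+) + (x^+ - y') + (y' - y)$ keeping $A(x)$ and replacing by $A(\bar x)$ only in the middle term, whereas the paper writes $\langle A(x_+)+w, x_+ - y'\rangle + \langle A(x)-A(x_+), x-y\rangle + \langle A(x_+)+w, (x-y)-(x_+-y')\rangle$ and upgrades the aiming inequality from $A(\bar x)$ to $A(\cdot)$ beforehand — both land in the same place, with the $A$-replacement error absorbed by halving the aiming constant after shrinking the neighborhood.
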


The following is now immediate.
\begin{cor}[Active manifolds]
Suppose  $g=0$  and both $f$ and $A(\cdot)$ are Lipschitz continuous on $\dom f$ near $\bar x$.  Suppose moreover the inclusion $-A(\bar x)\in \hat \partial f(\bar x)$ holds. Suppose that $f$ admits a $C^2$ active manifold around $\bar x$ for $\bar v=-A(\bar x)$ and $f$ is both $(b)_{\leq}$-regular and strongly $(a)$-regular along $\cM$ at $\bar x$. Then Assumption~\ref{assumption:Aproposed} holds.
\end{cor}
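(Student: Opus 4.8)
The plan is to reduce the corollary to Proposition~\ref{prop:proximalgradient}, which already asserts that Assumption~\ref{assumption:localbound} together with Assumption~\ref{assumption:proximalgradient} implies Assumption~\ref{assumption:Aproposed}. Thus the entire task is to check that the hypotheses of the corollary supply both of those assumptions. For Assumption~\ref{assumption:localbound} (Steplength) I would invoke Lemma~\ref{lem:basic_level_bound}: since $g=0$, the subgradient selection $s_g$ is identically zero and hence locally bounded, and $A$ is locally bounded because it is Lipschitz near $\bar x$; moreover $\dom g=\R^d$, so the corollary's hypothesis that $f$ is Lipschitz on $\dom f$ near $\bar x$ is exactly case~\ref{lb:4} of the lemma. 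Local boundedness of $G$ is therefore immediate.

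Next, for Assumption~\ref{assumption:proximalgradient}: its smoothness preamble — that $\cM\subset\dom f$ is a $C^2$ manifold containing $\bar x$ and that $f$ is $C^2$ on $\cM$ near $\bar x$ — is built into the hypothesis that $\cM$ is a $C^2$ active manifold for $f$ at $\bar x$ (Definition~\ref{defn:ident_man}); condition~\ref{assumption:proximalgradient:stronga} (strong $(a)$) is assumed outright. The only nontrivial item is the proximal aiming inequality~\ref{assumption:proximalgradient:proximalaimingprox}, and for this I would apply Corollary~\ref{cor:prox-aiming_gen}, not to $f$ itself but to the tilted function $f_{\bar v}(x)=f(x)-\langle\bar v,x\rangle$ with $\bar v=-A(\bar x)$. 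By definition, "$\cM$ is an active manifold for $f$ at $\bar x$ for $\bar v$" means precisely that $\cM$ is an active $C^2$ manifold of $f_{\bar v}$ at $\bar x$; the base-point condition $0\in\hat\partial f_{\bar v}(\bar x)$ follows from $-A(\bar x)\in\hat\partial f(\bar x)$; and closedness, local Lipschitz continuity on the domain, and $(b_{\leq})$-regularity along $\cM$ all transfer from $f$ to $f_{\bar v}$ since the two differ by a linear function. Corollary~\ref{cor:prox-aiming_gen} then yields a constant $\mu>0$ with $\langle w,\,x-P_{\cM}(x)\rangle\ge\mu\cdot\dist(x,\cM)+(1+\|w\|)\cdot o(\dist(x,\cM))$ for all $x\in\dom f$ near $\bar x$ and all $w\in\partial f_{\bar v}(x)=\partial f(x)+A(\bar x)$. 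Writing $w=A(\bar x)+v$ with $v\in\partial f(x)$ and absorbing the bounded offset $\|A(\bar x)\|$ into the $o(\cdot)$ term, this becomes exactly condition~\ref{assumption:proximalgradient:proximalaimingprox}, first for all $v\in\partial f(x)$ and hence a fortiori for all $v\in\hat\partial f(x)\subseteq\partial f(x)$.

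With Assumptions~\ref{assumption:localbound} and~\ref{assumption:proximalgradient} both verified, Proposition~\ref{prop:proximalgradient} delivers Assumption~\ref{assumption:Aproposed} and the corollary follows. I do not expect a serious obstacle here: the argument is essentially pure bookkeeping, and the only point that deserves care is tracking which object — $f$ versus its linear tilt $f_{\bar v}$ — each hypothesis is stated for, together with the (routine) observation that active-manifold status, $(b_{\leq})$-regularity, and the Fréchet base-point inclusion $0\in\hat\partial(\cdot)(\bar x)$ are all stable under a linear perturbation, which changes neither the manifold, nor the subdifferential beyond a translation, nor any $o(\cdot)$-type estimate.
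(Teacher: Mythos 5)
Your proof is correct and follows the same route the paper leaves implicit (the paper declares the corollary ``now immediate'' after Corollary~\ref{cor:prox-aiming_gen} and Proposition~\ref{prop:proximalgradient}): verify Assumption~\ref{assumption:localbound} via Lemma~\ref{lem:basic_level_bound}(\ref{lb:4}), obtain the aiming inequality~\ref{assumption:proximalgradient:proximalaimingprox} by applying Corollary~\ref{cor:prox-aiming_gen} to the tilt $f_{\bar v}$, and then invoke Proposition~\ref{prop:proximalgradient}. The one substantive detail you supply---that closedness, Lipschitzness, the base-point inclusion, and $(b_\leq)$-regularity all transfer from $f$ to $f_{\bar v}$, with the constant $1+\|A(\bar x)\|$ absorbed into the $o(\cdot)$ term---is exactly the bookkeeping the paper suppresses, and you handle it correctly.
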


\section{Asymptotic normality}\label{sec:ass_norm_main}

Next, we impose two assumptions on the step-size $\alpha_k$ and the noise sequence  $\perturb_k$. The first is standard, and is summarized next.

\begin{assumption}[Standing assumptions]\label{assumption:zero}
{\rm~Assume the following.
\begin{enumerate}[label=$\mathrm{(J\arabic*)}$]
\item The map $G$ is measurable.
\item There exist constants $c_1, c_2 > 0$ and $\gamma \in (1/2, 1]$ such that 
$$
\frac{c_1}{k^\gamma} \leq \alpha_k \leq \frac{c_2}{k^\gamma}.
$$
\item $\{\perturb_k\}$ is a martingale difference sequence w.r.t.\ to the increasing sequence of $\sigma$-fields 
$$
\cF_k = \sigma(x_j \colon j \leq k \text{ and } \perturb_j  \colon j<k),
$$
and there exists a function $q \colon \RR^d \rightarrow \RR_+$ that is bounded on bounded sets with 
$$
\EE[\perturb_k \mid \cF_k] = 0 \qquad \text{ and } \qquad  \EE[\|\perturb_k\|^4\mid \cF_k]  < q(x_k).
$$
We let $\EE_k[\cdot ] = \EE[ \cdot \mid \cF_k]$ denote the conditional expectation.
\item The inclusion $x_k \in \dom F$ holds for all $k \geq 1$.
\end{enumerate}}
\end{assumption}
All items in Assumption~\ref{assumption:zero} are standard in the literature on stochastic approximation methods and mirror for example those found in~\cite[Assumption C]{davis2020stochastic}. The only exception is the fourth moment bound on $\|\perturb_k\|$, which stipulates that $\nu_k$ has slightly lighter tails. This bound appears to be necessary for the setting we consider.

To prove our asymptotic normality results, we impose a further  assumption on the noise sequence $\nu_k$, which also appears in~\cite[Assumption D]{duchi2021asymptotic}. Before stating it, as motivation, consider   the  stochastic variational inequality \eqref{eqn:basic_VI} given by:
$$0\in A(x)+\partial f(x)+\partial g(x)\qquad \textrm{where}\qquad A(x)=\mathop\EE_{z\sim \cP} A(x,z).$$
Then the noise $\nu_k$ in the algorithm \eqref{eqn:algorithm} takes the form
$$\nu_k=A(x_k; z_k)-A(x_k).$$
Equivalently, we may decompose the right-hand-side as
$$
\nu_k= \underbrace{A(\bar x; z_k)-A(\bar x)}_{=:\nu^{(1)}_k} + \underbrace{(A(x_k; z_k) - A(\bar x; z_k)) + (A(\bar x) - A(x_k))}_{=:\nu^{(2)}_k(x_k)},
$$
The two components $\nu^{(1)}_k$ and $\nu^{(2)}_k(x_k)$ are qualitatively different in the following sense.
On one hand, the sum $\frac{1}{\sqrt{k}}\sum_{i=1}^k \nu^{(1)}_i$ clearly converges to a zero-mean normal vector as long as the covariance ${\rm Cov}(A(\bar x, z))$ exists. On the other hand, $\nu^{(2)}_k(x_k)$ is small in the sense that
$\EE_k\|\nu^{(2)}_k(x_k)\|^2\leq 2\cdot \EE_z[L(z)^2]\cdot\|x_k-\bar x\|^2,$
where $L(z)$ is a Lipschitz constant of $A(\cdot,z)$.
With this example in mind, we introduce the following assumption on the noise sequence.

\begin{assumption}\label{assumption:martinagle}
{\rm Fix a point $\bar x \in \dom F$ at which Assumption~\ref{ass:basic_assumpt} holds and let $U$ be a matrix whose column vectors form an orthogonal basis of $T_\cM(\bar x)$. We suppose the noise sequence has decomposable structure $\nu_k = \nu_k^{(1)} + \nu_k^{(2)}(x_k)$, where $\nu_k^{(2)} \colon \dom F \rightarrow \RR^d$ is a random function satisfying 
$$
\EE_k[\|U^\top\nu_k^{(2)}(x)\|^2] \leq C\|x - \bar x\|^2 \qquad \text{for all $x\in \dom F$ near $\bar x$},
$$
and some $C > 0$. In addition, we suppose that for all $x \in \dom F$, we have $\EE_k[\perturb_k^{(1)}] = \EE_k[\perturb_k^{(2)}(x)] = 0$ and the following limit holds:
$$
\frac{1}{\sqrt{k}} \sum_{i=1}^k U^\top\perturb_i^{(1)}  \xrightarrow{D} N(0, \Sigma).
$$
for some symmetric positive semidefinite matrix $\Sigma$.}
\end{assumption}
We are now ready to state the main result of this work---asymptotic normality for stochastic approximation algorithms.

\begin{thm}[Asymptotic Normality]\label{thm: asymptotic normality proposed}
Suppose that Assumption~\ref{ass:basic_assumpt}, \ref{assumption:localbound}, \ref{assumption:Aproposed}, \ref{assumption:zero}, and~\ref{assumption:martinagle} hold. Suppose that $\gamma \in (\frac{1}{2},1)$ and that the sequence $x_k$ generated by the process \eqref{alg:perturbedGiteration} converges to $\bar x$ with probability one. Suppose that there exists a constant $\mu>0$ satisfying 
\begin{equation}\label{eqn:strong_growth}
\langle\nabla_{\cM} F_{\cM}(\bar x)v,v\rangle \geq \mu \|v\|^2\qquad \textrm{for all}\qquad v\in T_{\cM}(\bar x).
\end{equation}
Then $F$ is $C^p$-smoothly invertible around $(\bar x,0)$ with inverse $\sigma(\cdot)$ and 
the average iterate $\bar x_k = \frac{1}{k} \sum_{i=1}^k x_i$ satisfies
	$$\sqrt{k}(\bar x_k - \bar x) \xrightarrow{D} N\left(0,  \nabla \sigma(0) \cdot \Sigma \cdot\nabla \sigma(0)^{\top}\right).$$
	Moreover, $\nabla \sigma(0)$ can be equivalently written as $\nabla \sigma(0)=(P_{T_{\cM}(\bar x)} \nabla_{\cM} F_{\cM}(\bar x)P_{T_{\cM}(\bar x)})^{\dagger}$.
\end{thm}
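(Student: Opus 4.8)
The plan is to reduce the nonsmooth dynamics on $\cX$ to a smooth (Riemannian-type) stochastic approximation scheme on the active manifold $\cM$, and then apply a Polyak--Juditsky style averaging argument to the reduced iteration. I would organize the argument in three stages.

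\emph{Stage 1: the iterates approach the manifold at a controlled rate.} Using the Proximal Aiming property \ref{assumption:aiming}, together with the steplength bound \ref{assumption:localbound} and the fourth-moment control on $\perturb_k$ from \ref{assumption:zero}, I would show that $\dist(x_k,\cM)\to 0$ sufficiently fast—quantitatively, that $\EE[\dist(x_k,\cM)^2]=o(\alpha_k)$ or, better, $\dist(x_k,\cM)=o(\sqrt{\alpha_k})$ along the sequence (in an appropriate almost-sure/$L^2$ sense). The mechanism is standard: expanding $\dist(x_{k+1},\cM)^2$ and using $\langle G_{\alpha_k}(x_k,\perturb_k)-\perturb_k,\,x_k-P_{\cM}(x_k)\rangle\ge \mu\dist(x_k,\cM)-(1+\|\perturb_k\|)^2(o(\dist)+C\alpha_k)$ produces a recursion with a contraction term $-\mu\alpha_k\dist(x_k,\cM)$ that, combined with almost-sure convergence $x_k\to\bar x$ (hypothesis) and $\gamma\in(1/2,1)$, forces the claimed rate. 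This is where the $(1+\|\perturb\|)^2$ factors and the fourth-moment bound are genuinely used.

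\emph{Stage 2: the shadow sequence $y_k:=P_{\cM}(x_k)$ is an inexact smooth stochastic approximation.} Since $\cM$ is $C^p$ ($p\ge 2$) and $P_{\cM}$ is $C^{p-1}$ near $\bar x$ with $\nabla P_{\cM}(y)=P_{T_\cM(y)}$ on $\cM$, a second-order Taylor expansion of $P_{\cM}$ gives
$$y_{k+1}=y_k-\alpha_k\,P_{T_\cM(y_k)}G_{\alpha_k}(x_k,\perturb_k)+O\!\big(\alpha_k^2\|G_{\alpha_k}\|^2\big)+O\!\big(\dist(x_k,\cM)\cdot\alpha_k\|G_{\alpha_k}\|\big).$$
Now invoke the Tangent Comparison \ref{assumption:smoothcompatibility} to replace $P_{T_\cM(y_k)}G_{\alpha_k}(x_k,\perturb_k)$ by $P_{T_\cM(y_k)}(F(y_k)+\perturb_k)=F_\cM(y_k)+P_{T_\cM(y_k)}\perturb_k$ up to an error $C(1+\|\perturb_k\|)^2(\dist(x_k,\cM)+\alpha_k)$, using Assumption~\ref{ass:basic_assumpt}\ref{it:single_val_FM}. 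Combining with Stage~1, all error terms are $o(\alpha_k)$ in the relevant sense, so
$$y_{k+1}=y_k-\alpha_k\big(F_\cM(y_k)+P_{T_\cM(y_k)}\perturb_k\big)+o(\alpha_k).$$
Then I would write $F_\cM(y_k)=\nabla_\cM F_\cM(\bar x)(y_k-\bar x)+o(\|y_k-\bar x\|)$ (using $F_\cM(\bar x)=0$, which holds since $0\in F(\bar x)=F_\cM(\bar x)+N_\cM(\bar x)$ forces $P_{T_\cM(\bar x)}F(\bar x)=0$), and split $\perturb_k=\perturb_k^{(1)}+\perturb_k^{(2)}(x_k)$ per Assumption~\ref{assumption:martinagle}: the $U^\top\perturb^{(2)}$ piece contributes $O(\|x_k-\bar x\|)$ which is absorbable, and the $U^\top\perturb^{(1)}$ piece is the CLT-producing martingale noise. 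Working in the coordinates given by $U$ (an orthonormal basis of $T_\cM(\bar x)$) and handling the drift of $P_{T_\cM(y_k)}$ versus $P_{T_\cM(\bar x)}$ via another Taylor term, this puts the shadow iteration exactly in the form required by the Polyak--Juditsky theorem, with Hurwitz linearization $\nabla_\cM F_\cM(\bar x)$ (positive definite on $T_\cM(\bar x)$ by \eqref{eqn:strong_growth}) and limiting noise covariance $\Sigma$.

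\emph{Stage 3: averaging and transfer back to $x_k$.} Apply the Polyak--Juditsky central limit theorem (or Ruppert--Polyak averaging for nonlinear smooth stochastic approximation) to the reduced scheme on $T_\cM(\bar x)$ to get $\sqrt{k}\,U^\top(\bar y_k-\bar x)\xrightarrow{D}N(0,\Theta^{-1}\Sigma\,\Theta^{-\top})$ where $\Theta=U^\top\nabla_\cM F_\cM(\bar x)U$; un-coordinatizing, $\sqrt{k}(\bar y_k-\bar x)\xrightarrow{D}N(0,\nabla\sigma(0)\Sigma\nabla\sigma(0)^\top)$ with $\nabla\sigma(0)=(P_{T_\cM(\bar x)}\nabla_\cM F_\cM(\bar x)P_{T_\cM(\bar x)})^\dagger$ by the Jacobian-of-the-solution-map lemma and Theorem~\ref{thm:smooth_invert_main} (which also gives smooth invertibility of $F$). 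Finally, $\bar x_k-\bar y_k=\frac1k\sum_{i\le k}(x_i-P_\cM(x_i))$ has norm $\le\frac1k\sum\dist(x_i,\cM)$, which is $o(1/\sqrt{k})$ by the Stage~1 rate (choosing $\gamma$ so that $\sum_{i\le k}\sqrt{\alpha_i}/k=o(k^{-1/2})$, i.e. using $\gamma>1/2$), so $\sqrt{k}(\bar x_k-\bar x)$ has the same limit.

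\emph{Main obstacle.} The crux is Stage~1 married to Stage~2: obtaining a distance-to-$\cM$ rate that is genuinely $o(\sqrt{\alpha_k})$ (so that after averaging the tracking error is $o(k^{-1/2})$) while simultaneously controlling the second-order curvature terms of $P_{\cM}$ and the drift of the tangent-space projection, all in the presence of multiplicative $(1+\|\perturb_k\|)^2$ noise factors whose conditional fourth moments are only bounded by $q(x_k)$. Making these error estimates summable-in-the-right-sense—and verifying that the resulting perturbed smooth recursion still satisfies the hypotheses of the Polyak--Juditsky theorem (in particular that the $o(\alpha_k)$ remainder does not affect the $1/\sqrt k$ asymptotics after averaging)—is the technical heart of the proof; everything else is bookkeeping or an appeal to results already assembled in the paper.
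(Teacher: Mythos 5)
Your three-stage plan is essentially the paper's own architecture: Stage~1 and Stage~2 are exactly Pillars~I and~II (Propositions~\ref{prop:gettingclosertothemanifold} and~\ref{prop:shadow}), and Stage~3 is the paper's Lemmas~\ref{lem: contractionofykinexp}--\ref{lem: convergetogether} plus the final Polyak--Juditsky computation in coordinates $U^\top(y_k-\bar x)$. The auxiliary sequence $z_k=P_{\bar x+T_\cM(\bar x)}(y_k)$ that the paper introduces is what you call ``handling the drift of $P_{T_\cM(y_k)}$ via another Taylor term,'' so that part is a fair sketch.

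However, the arithmetic in your Stage~3 transfer is wrong, and this is not a cosmetic point: it is precisely the place where the naive ``rate + Ces\`aro'' argument fails and the paper's supermartingale machinery is actually needed. You claim $\frac{1}{k}\sum_{i\le k}\dist(x_i,\cM)=o(k^{-1/2})$ because ``$\sum_{i\le k}\sqrt{\alpha_i}/k=o(k^{-1/2})$ once $\gamma>1/2$.'' With $\alpha_i\asymp i^{-\gamma}$ this Ces\`aro average is $\asymp k^{-\gamma/2}$, and $k^{-\gamma/2}=o(k^{-1/2})$ would require $\gamma>1$, contradicting $\gamma\in(1/2,1)$. The moment bound $\EE[\dist^2(x_k,\cM)1_{\tau>k}]\le C\alpha_k$ (Pillar~I, part~\ref{eq:prop:gettingclosertothemanifoldbound1p5:expectedsquareddistance}) therefore does not give the tracking error you need, and neither would a hypothetical $\dist(x_k,\cM)=o(\sqrt{\alpha_k})$. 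What the paper actually uses is the almost-sure \emph{summability} statement from Pillar~I part~\ref{eq:prop:gettingclosertothemanifoldbound1:sum}: $\sum_k \frac{k^{\gamma-1}}{\log^2(k+1)}\dist(x_k,\cM)1_{\tau_{k_0,\delta}>k}<\infty$ a.s.\ (a Robbins--Siegmund type conclusion exploiting the \emph{linear} contraction term $-\mu\alpha_k\dist(x_k,\cM)$ in the one-step recursion, via Lemma~\ref{lem:fastsummability}); since $k^{\gamma-1}\ge k^{-1/2}$ eventually when $\gamma>1/2$, this yields $\sum_k k^{-1/2}\dist(x_k,\cM)<\infty$ a.s., and then Kronecker's lemma (Lemma~\ref{lem:kronecker}) gives $n^{-1/2}\sum_{k\le n}\dist(x_k,\cM)\to 0$ a.s. That summability-then-Kronecker route, not a pointwise rate, is the correct bridge; your proposal would need to be repaired at this point.

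A smaller issue: your Stage~2 error term cannot literally be ``$o(\alpha_k)$''--it is $\alpha_k E_k$ with $E_k$ a random sequence whose controlled moments are $\EE_k[\|E_k\|^2 1_{\tau>k}]\le C$ and $\EE[\|E_k\|^21_{\tau>k}]\le C\alpha_k$ (Pillar~II, part~\ref{prop:shadow:part:error:part:upperbound}). Showing that these errors, together with the tangent-projection drift $\zeta_k$ and the $\nu_k^{(2)}$ piece, wash out of $\sqrt{n}\,\bar\Delta_n$ requires the same summability/Kronecker discipline (see Lemmas~\ref{lem: almostsureyk} and the claims in Appendix~\ref{sec:proofsofstochastic}), again rather than deterministic $o(\alpha_k)$ bookkeeping. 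You correctly flag this as the technical heart, but the mechanism you propose for resolving it is the one that does not work.
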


The conclusion of this theorem is surprising: although the sequence $x_k$ never reaches the manifold, the limiting distribution of $\sqrt{k}(\bar x_k - \bar x)$ is supported on the tangent space $\tangentM{\bar x}$. Thus asymptotically, the ``directions of nonsmoothness," which are normal to $\cM$, are quickly ``averaged out." When $\|G_{\alpha_k}(x_k, \perturb_k)\|$ is bounded away from $0$ for all $k$, this means that $x_k$ must  oscillate across the manifold, instead of approaching it from one direction.

\subsection{Asymptotic normality in nonlinear programming}
As a simple illustration of Theorem~\ref{thm: asymptotic normality proposed}, we now spell out the consequence for the stochastic projected gradient method for stochastic nonlinear programming, already discussed in Example~\ref{ex:nlp}. Namely, consider the problem \eqref{eqn:nlp} and let $\bar x$ be a local minimizer. Suppose that $g_i$ are $C^3$-smooth near $\bar x$ and $f$ takes the form $f(x)=\EE_{z\sim \cP} f(x,z)$ for some probability distribution $\cP$ and each function $f(\cdot,z)$ is $C^3$-smooth near $\cX$. 
Consider the following stochastic projected gradient method: 
\begin{align}\label{eq:NLPformulationSGD}
\text{Sample: } &z_k \sim P \notag \\
\text{Update: } & x_{k+1} \in P_{\cX}(x_k - \alpha_k \nabla f(x_k; z_k)).
\end{align}
In order to understand the asymptotics of the algorithm, as in Example~\ref{ex:nlp}, let $\bar y$ be the Lagrange multiplier vector and suppose that LICQ and strict complementarity holds. Suppose moreover the second-order sufficient conditions: 
there exists $\mu > 0$ such that  
$$
w^\top\left[\nabla^2_{xx} \mathcal{L}(\bar x,\bar y)\right] w \geq \mu \|w\|^2 \qquad \text{for all $w \in \tangentM{\bar x}$.}
$$
Note that, as explained in Example~\ref{exa: nonlin2}, this condition is simply the requirement that the covariant Hessian of $f := f_0 + \delta_{\cX}$
$$
\nabla_{\cM}^2 f(\bar x) = P_{\tangentM{\bar x}}\nabla^2_{xx}\mathcal{L}(x^{\star},y^{\star}) P_{\tangentM{\bar x}}
$$ 
is positive definite on $\tangentM{\bar x}$. Finally, to ensure our noise sequence
\begin{align*}
	\perturb_k &= \nabla f(x_k; z_k) - \nabla f(x_k) \\
	&= \underbrace{\nabla f(\bar x; z_k) - \nabla f_0(\bar x)}_{=:\perturb_k^{(1)}}  + \underbrace{(\nabla f(x_k; z_k) - \nabla f(\bar x; z_k) + \nabla f(\bar x) - \nabla f(x_k))}_{=: \perturb_k^{(2)}(x_k)}, 
\end{align*}
satisfies Assumptions~\ref{assumption:zero} and~\ref{assumption:martinagle}, we assume the stochasticity is sufficiently well-behaved:
\begin{enumerate}[label=$\mathrm{(G\arabic*)}$]
\setcounter{enumi}{4}
\item \label{assumption:stochasticgradientNLP}{\bf (Stochastic Gradients)} As a function of $x$, the fourth moment 
$$
x \in \cX \mapsto \EE_{z \sim \cP}[\|\nabla f(x; z) - \nabla f(x)\|^4]
$$ 
is bounded on bounded sets. Moreover, there exists $C > 0$ such that
\begin{align*}
\EE_{z \sim \cP}[\|\nabla f(x; z) - \nabla f(\bar x; z)\|^2 ] \leq C\|x - \bar x\|^2 \qquad \text{for all $x \in \cX$.}
\end{align*}
Finally, the gradients $P_{\tangentM{\bar x}} \nabla f(\bar x; z)$ have finite covariance $\Sigma = \text{Cov}(P_{\tangentM{\bar x}}\nabla f(\bar x; z))$.
\end{enumerate}

With these assumptions in hand, we have the following asymptotic normality result for nonlinear programming---a direct corollary of Theorem~\ref{thm: asymptotic normality proposed}.
\begin{cor}[Asymptotic normality in nonlinear programming]\label{cor:asymptoticnormalityNLP}
Suppose that $\gamma \in (\frac{1}{2},1)$ and consider the iterates $x_k$ generated by the stochastic projected gradient method~\eqref{eq:NLPformulationSGD}. Then if $x_k$ converges to $\bar x$ with probability 1, the average iterate $\bar x_k = \frac{1}{k} \sum_{i=1}^k x_i$ satisfies
	$$\sqrt{k}(\bar x_k - \bar x) \xrightarrow{d} N\left(0,  \nabla \sigma(0) \cdot\mathrm{Cov}(\nabla f(\bar x; z))  \cdot \nabla \sigma(0)^{\top}\right),$$
	where $\nabla \sigma(0)=(P_{T_{\cM}(\bar x)} \nabla^2_{xx} \mathcal{L}(\bar x,\bar y)P_{T_{\cM}(\bar x)})^{\dagger}$.
\end{cor}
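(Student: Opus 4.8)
Since Corollary~\ref{cor:asymptoticnormalityNLP} is advertised as a direct consequence of Theorem~\ref{thm: asymptotic normality proposed}, the plan is to cast the process~\eqref{eq:NLPformulationSGD} into the abstract template~\eqref{alg:perturbedGiteration} and then check, one by one, that every hypothesis of Theorem~\ref{thm: asymptotic normality proposed} is in force, and finally to simplify the covariance. Concretely, I would identify the data of~\eqref{eqn:basic_VI} as $A(x)=\nabla f(x)$ (the gradient of the expected objective), $g\equiv 0$, and proximal function $\delta_{\cX}$ with $\cX=\{x:g_i(x)\le 0\}$, so that $F(x)=\nabla f(x)+N_\cX(x)$ and the projected-gradient map $G_\alpha(x,\perturb)=\alpha^{-1}(x-s_\cX(x-\alpha(\nabla f(x)+\perturb)))$ reproduces~\eqref{eq:NLPformulationSGD} with noise $\perturb_k=\nabla f(x_k;z_k)-\nabla f(x_k)$. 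The restrictions $\gamma\in(\tfrac12,1)$ and a.s.\ convergence $x_k\to\bar x$ are passed directly from the hypotheses of the corollary to those of the theorem.

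\emph{Variational-analytic hypotheses.} Here I would invoke Example~\ref{ex:nlp}: LICQ makes $\cM=\{x:g_i(x)=0,\ i\in\cI\}$ a $C^3$ manifold near $\bar x$; the first-order optimality condition at the local minimizer $\bar x$ gives $-\nabla f(\bar x)\in\hat N_\cX(\bar x)$; strict complementarity makes $\cM$ an active $C^3$ manifold of $f+\delta_\cX$ at $\bar x$, equivalently (by the standard tilt-invariance of active manifolds under $C^p$ perturbations with vanishing gradient at the base point) an active manifold of $\delta_\cX$ at $\bar x$ for $\bar v=-\nabla f(\bar x)$; and $\delta_\cX$ is $(b_{\le})$-regular and strongly $(a)$-regular along $\cM$, as is $g\equiv 0$ trivially. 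Feeding this into Lemma~\ref{lem:loc_tan_redux_app} yields Assumption~\ref{ass:basic_assumpt} together with the identity $F_{\cM}(x)=P_{\tangentM{x}}\nabla f(x)=\nabla_{\cM}f(x)$ for $x\in\cM$ near $\bar x$. Local boundedness of $\nabla f$ gives Assumption~\ref{assumption:localbound} via Lemma~\ref{lem:basic_level_bound} (the indicator-function case), and, with the aiming condition supplied by Corollary~\ref{cor:prox-aiming_gen} exactly as in the discussion following Assumption~\ref{assumption:projectedgradient}, Proposition~\ref{prop:projectedgradient} delivers Assumption~\ref{assumption:Aproposed}.

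\emph{Probabilistic hypotheses.} Measurability of $G$ and the inclusion $x_k\in\dom F=\cX$ are immediate, and the step-size bound of Assumption~\ref{assumption:zero} is hypothesized. Since $z_k$ is independent of $\cF_k$, the sequence $\perturb_k=\nabla f(x_k;z_k)-\nabla f(x_k)$ is a martingale difference sequence, and the requirement that its conditional fourth moment be bounded on bounded sets is the first clause of~\ref{assumption:stochasticgradientNLP}; this settles Assumption~\ref{assumption:zero}. For Assumption~\ref{assumption:martinagle} I would use the split $\perturb_k^{(1)}=\nabla f(\bar x;z_k)-\nabla f(\bar x)$ and $\perturb_k^{(2)}(x)=(\nabla f(x;z_k)-\nabla f(\bar x;z_k))+(\nabla f(\bar x)-\nabla f(x))$; both have zero conditional mean, and Jensen's inequality together with the second clause of~\ref{assumption:stochasticgradientNLP} give $\EE_k\|U^\top\perturb_k^{(2)}(x)\|^2\le\EE_k\|\perturb_k^{(2)}(x)\|^2\le 4C\|x-\bar x\|^2$. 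Since $\perturb_k^{(1)}$ depends only on $z_k$, the i.i.d.\ multivariate CLT yields $\tfrac1{\sqrt k}\sum_{i\le k}U^\top\perturb_i^{(1)}\xrightarrow{D}N(0,\Sigma)$ with $\Sigma=U^\top\,\mathrm{Cov}(\nabla f(\bar x;z))\,U$, finite by the last clause of~\ref{assumption:stochasticgradientNLP}.

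\emph{Growth condition, conclusion, and the main difficulty.} It remains to check~\eqref{eqn:strong_growth}. Since $F_\cM=\nabla_\cM f$ on $\cM$, the calculus recalled in Section~\ref{sec:manifolds} identifies the quadratic form of $\nabla_{\cM}F_{\cM}(\bar x)$ on $\tangentM{\bar x}$ with the covariant Hessian $\nabla^2_{\cM}f(\bar x)$, which by Example~\ref{exa: nonlin2} equals $P_{\tangentM{\bar x}}\nabla^2_{xx}\mathcal{L}(\bar x,\bar y)P_{\tangentM{\bar x}}$ (because $g_i\equiv 0$ on $\cM$ for $i\in\cI$ and $\bar y_i=0$ for $i\notin\cI$, so $\mathcal{L}(\cdot,\bar y)$ and $f$ agree to second order along $\cM$); hence the assumed second-order sufficient condition is precisely~\eqref{eqn:strong_growth}. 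Theorem~\ref{thm: asymptotic normality proposed} then gives $\sqrt k(\bar x_k-\bar x)\xrightarrow{D}N(0,\nabla\sigma(0)\,\Sigma\,\nabla\sigma(0)^\top)$ with $\nabla\sigma(0)=(P_{\tangentM{\bar x}}\nabla^2_{xx}\mathcal{L}(\bar x,\bar y)P_{\tangentM{\bar x}})^{\dagger}$, and since $\nabla\sigma(0)P_{\tangentM{\bar x}}=\nabla\sigma(0)$ one has $\nabla\sigma(0)\,\mathrm{Cov}(\nabla f(\bar x;z))\,\nabla\sigma(0)^\top=\nabla\sigma(0)\,\mathrm{Cov}(P_{\tangentM{\bar x}}\nabla f(\bar x;z))\,\nabla\sigma(0)^\top$, which matches the displayed covariance. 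I do not anticipate a real obstacle: this is a verification checklist. The only points needing care are (i) noting that the \emph{Fr\'echet} inclusion $-\nabla f(\bar x)\in\hat N_\cX(\bar x)$—not merely the limiting one—holds under LICQ at a local minimizer, since this is what activates the sharpness/aiming machinery; and (ii) the cosmetic reconciliation of the abstract $\Sigma$ (written through the basis matrix $U$ of $\tangentM{\bar x}$) with the coordinate-free $\mathrm{Cov}(\nabla f(\bar x;z))$, handled by $\nabla\sigma(0)P_{\tangentM{\bar x}}=\nabla\sigma(0)$.
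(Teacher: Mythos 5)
Your proposal is correct and mirrors exactly what the paper does: the paper itself offers no displayed proof of this corollary beyond the setup in the preceding paragraph (casting projected SGD as the abstract iteration with $A=\nabla f$, $g\equiv 0$, $f=\delta_{\cX}$; invoking Example~\ref{ex:nlp} for the active-manifold and regularity conditions; matching the second-order sufficient condition to~\eqref{eqn:strong_growth} via Example~\ref{exa: nonlin2}; and using the decomposition $\nu_k=\nu_k^{(1)}+\nu_k^{(2)}(x_k)$ with (G5) to verify Assumptions~\ref{assumption:zero} and~\ref{assumption:martinagle}), which is precisely your checklist. Your two points of care---that the Fr\'echet inclusion $-\nabla f(\bar x)\in\hat N_{\cX}(\bar x)$ holds under LICQ at a local minimizer, and the reconciliation of the tangent-coordinate $\Sigma$ with $\mathrm{Cov}(\nabla f(\bar x;z))$ via $\nabla\sigma(0)P_{T_{\cM}(\bar x)}=\nabla\sigma(0)$---are both correct and are the right things to watch for.
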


As stated in the introduction, this appears to be the first asymptotic normality guarantee for the standard stochastic projected gradient method in general nonlinear programming problems with $C^3$ data, even in the convex setting. Finally we note that even for simple optimization problems, dual averaging procedures can achieve suboptimal convergence \cite{duchi2021asymptotic}. This is surprising since such methods identify the active manifold~\cite{JMLR:v13:lee12a} (also see~\cite[Section 4.1]{duchi2021asymptotic}), while projected stochastic gradient methods do not.

\section{The two pillars of the proof of Theorem~\ref{thm: asymptotic normality proposed}}\label{sec:twopillarsmainresults}
The proof of our main result, Theorem~\ref{thm: asymptotic normality proposed}, appears in the appendix. In this section, we outline the main ingredients of the proof. Namely, Assumption~\ref{assumption:Aproposed} at a point $\bar x$ guarantees two useful behaviors, provided the iterates $\{x_k\}$ of algorithm~\eqref{alg:perturbedGiteration} remain in a small ball around $\bar x$. First $x_k$ must approach the manifold $\cM$ containing $\bar x$ at a controlled rate, a consequence of the proximal aiming condition. Second the shadow $y_k = P_{\cM}(x_k)$ of the iterates along the manifold form an approximate Riemannian stochastic gradient sequence with an implicit retraction. Moreover, the approximation error of the sequence decays with $\dist(x_k, \cM)$ and $\alpha_k$, quantities that quickly tend to zero.

The formal statements summarizing these two modes of behaviors require local arguments. Consequently, we will frequently refer to the following stopping time:
given an index $k \geq 1$ and a constant $\delta > 0$, define
\begin{align}\label{def:stoppingtime}
\tau_{\discrete, \delta} := \inf\{j \geq k \colon x_j  \notin B_{\delta}(\bar x)\}.
\end{align}
Note that the stopping time implicitly depends on $\bar x$, a point at which Assumption~\ref{assumption:Aproposed} is satisfied. The following proposition proved in \cite[Proposition 5.1]{davis2021subgradient} shows that sequence $x_k$ rapidly approaches the manifold. We note that \cite[Proposition 5.1]{davis2021subgradient} was stated specifically for optimization problems rather than for finding zeros of set-valued maps; the argument in this more general setting is identical however.

\begin{proposition}[Pillar I: aiming towards the manifold]\label{prop:gettingclosertothemanifold}
Suppose that Assumptions~\ref{ass:basic_assumpt}, \ref{assumption:localbound}, \ref{assumption:Aproposed},\ref{assumption:zero} hold. Let $\gamma \in (1/2, 1]$ and assume $c_1 \geq 32/\mu$ if $\gamma = 1$. Then for all $k_0 \geq 1$ and sufficiently small $\delta > 0$, there exists a constant $C$, such that the following hold with stopping time $\tau_{k_0, \delta}$ defined in~\eqref{def:stoppingtime}: 
\begin{enumerate}
\item \label{eq:prop:gettingclosertothemanifoldbound1}  There exists a random variable $V_{k_0, \delta}$ such that 
\begin{enumerate}
 \item \label{eq:prop:gettingclosertothemanifoldbound1:as} The limit holds: $$\frac{k^{2\gamma - 1}}{\log(k+1)^2}\dist^2(x_{k}, \cM)1_{\tau_{k_0, \delta} > k} \xrightarrow{\text{a.s.}}  V_{k_0, \delta}.$$
\item \label{eq:prop:gettingclosertothemanifoldbound1:sum} The sum is almost surely finite: $$\sum_{k=1}^\infty \frac{k^{\gamma - 1}}{\log(k+1)^2}\dist(x_{k}, \cM)1_{\tau_{k_0, \delta} > k} < +\infty.$$ 
\end{enumerate} 
\item \label{eq:prop:gettingclosertothemanifoldbound1p5}  We have
\begin{enumerate}
\item \label{eq:prop:gettingclosertothemanifoldbound1p5:expectedsquareddistance}The expected squared distance satisfies:
 $$
\EE[\dist^2(x_k, \cM)1_{\tau_{k_0, \delta} > k}]  \leq C\alpha_k \qquad \text{for all $k \geq 1$}.
$$
\item \label{eq:prop:gettingclosertothemanifoldbound1p5:saddle} The tail sum  is bounded:
$$
\EE\left[  \sum_{i=k}^\infty  \alpha_i \dist(x_i , \cM)1_{\tau_{k_0, \delta} > i}\right] \leq C\sum_{i=k}^\infty \alpha_i^2 \qquad \text{for all $k \geq 1$.}
$$
\end{enumerate}
\end{enumerate}
\end{proposition}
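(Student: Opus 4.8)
The strategy is to run a Lyapunov-type argument on the squared distance $r_k^2 := \dist^2(x_k, \cM)$ while the iterates remain in a small ball $B_\delta(\bar x)$, i.e.\ on the event $\{\tau_{k_0,\delta} > k\}$. First I would set $d_k = \dist(x_k, \cM)$ and expand $d_{k+1}^2$. Since $d_{k+1} \le \|x_{k+1} - P_\cM(x_k)\|$ and $x_{k+1} = x_k - \alpha_k G_{\alpha_k}(x_k, \nu_k)$, one gets
$$
d_{k+1}^2 \le d_k^2 - 2\alpha_k \dotp{G_{\alpha_k}(x_k,\nu_k), x_k - P_\cM(x_k)} + \alpha_k^2\|G_{\alpha_k}(x_k,\nu_k)\|^2.
$$
Now invoke the two halves of Assumption~\ref{assumption:Aproposed}: the proximal aiming bound~\ref{assumption:aiming} controls the cross term from below by $\mu d_k - (1+\|\nu_k\|)^2(o(d_k) + C\alpha_k)$, while the steplength bound from Assumption~\ref{assumption:localbound} gives $\|G_{\alpha_k}\|^2 \le C^2(1+\|\nu_k\|)^2$. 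Taking $\EE_k[\cdot]$ and using the fourth-moment bound from Assumption~\ref{assumption:zero} (to control $\EE_k(1+\|\nu_k\|)^2$, and later its square, by a constant once $x_k$ is in a bounded region), and shrinking $\delta$ so the $o(d_k)$ term is dominated by $\tfrac{\mu}{2} d_k$, I obtain the key recursion on the stopped process: for some constants $c,C>0$,
$$
\EE_k[d_{k+1}^2 \,1_{\tau > k+1}] \le (1 - c\alpha_k)\, d_k^2\, 1_{\tau>k} - c\alpha_k d_k 1_{\tau>k} + C\alpha_k^2.
$$
(Here I'm folding the indicator manipulations into the inequality; the point is that restricting to $\{\tau>k\}$ only helps.)

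From this master inequality everything follows by now-standard recursion lemmas. For part~\ref{eq:prop:gettingclosertothemanifoldbound1p5:expectedsquareddistance}, drop the negative $-c\alpha_k d_k$ term, take full expectations, and unroll $\EE[d_{k+1}^2 1_{\tau>k+1}] \le (1-c\alpha_k)\EE[d_k^2 1_{\tau>k}] + C\alpha_k^2$; since $\alpha_k \asymp k^{-\gamma}$ with $\gamma \in (1/2,1]$, a comparison/Abel-summation argument gives $\EE[d_k^2 1_{\tau>k}] \le C'\alpha_k$ — this is exactly the regime where $\sum \alpha_k = \infty$ and $\alpha_k^2 = o(\alpha_k)$, so the fixed point of the recursion tracks $\alpha_k$. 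For part~\ref{eq:prop:gettingclosertothemanifoldbound1p5:saddle}, keep the $-c\alpha_k d_k$ term: rearranging the master inequality and summing from $k$ to $\infty$ telescopes the $d_j^2$ terms, leaving $c\,\EE[\sum_{i\ge k}\alpha_i d_i 1_{\tau>i}] \le \EE[d_k^2 1_{\tau>k}] + C\sum_{i\ge k}\alpha_i^2 \le C''\sum_{i\ge k}\alpha_i^2$, where the last step uses part~\ref{eq:prop:gettingclosertothemanifoldbound1p5:expectedsquareddistance} plus $\alpha_k \le \alpha_k^2 \cdot (\text{const}) \cdot k^\gamma$... more cleanly, $\EE[d_k^2 1_{\tau>k}] \le C'\alpha_k$ and $\alpha_k = O(\sum_{i\ge k}\alpha_i^2 \cdot k^{\gamma})$ is false in general, so instead I'd note $\alpha_k \le C \sum_{i \ge k}\alpha_i^2$ fails for $\gamma$ near $1/2$; the right move is to absorb $\EE[d_k^2 1_{\tau>k}]$ directly, observing $\alpha_k = O(\alpha_{k}^2 / \alpha_k)$ is vacuous — rather, one keeps the bound as $C'\alpha_k + C\sum_{i\ge k}\alpha_i^2$ and uses that for $\gamma \in (1/2,1]$ the dominant term $\sum_{i\ge k}\alpha_i^2 \asymp k^{1-2\gamma}$ is comparable to $\alpha_k = k^{-\gamma}$ only when $\gamma \ge 1-\gamma$, i.e.\ $\gamma \ge 1/2$, which holds. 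So the stated bound is consistent.

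For the almost-sure statements in part~\ref{eq:prop:gettingclosertothemanifoldbound1}, I would apply the Robbins–Siegmund supermartingale convergence theorem to a suitably rescaled version of $d_k^2 1_{\tau>k}$. Concretely, multiply the master inequality through by the weight $w_k = k^{2\gamma-1}/\log(k+1)^2$ and check that $w_{k+1}/w_k \le 1 + c\alpha_k + (\text{summable})$, so that $w_k d_k^2 1_{\tau>k}$ is a nonnegative almost-supermartingale with summable upward increments (the $w_k \alpha_k^2 \asymp k^{-1}/\log(k+1)^2$ terms are summable, and the $w_k \alpha_k d_k$ terms are controlled). Robbins–Siegmund then yields almost-sure convergence $w_k d_k^2 1_{\tau>k} \to V_{k_0,\delta}$, giving~\ref{eq:prop:gettingclosertothemanifoldbound1:as}, and simultaneously the summability $\sum_k w_k \alpha_k d_k 1_{\tau>k} < \infty$; since $w_k \alpha_k = \alpha_k k^{2\gamma-1}/\log(k+1)^2 = k^{\gamma-1}/\log(k+1)^2$, this is precisely~\ref{eq:prop:gettingclosertothemanifoldbound1:sum}. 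The main obstacle is bookkeeping: handling the stopping-time indicators cleanly (so that $1_{\tau>k+1} \le 1_{\tau>k}$ is used in the right direction and the one-step expansion stays valid), choosing $\delta$ small enough that the $o(\dist(x,\cM))$ terms in Assumption~\ref{assumption:Aproposed} are genuinely absorbed, and verifying the weight-ratio estimate $w_{k+1}/w_k = 1 + O(\alpha_k)$ with an explicit enough constant to feed Robbins–Siegmund. I would point out that this is exactly the computation carried out in~\cite[Proposition 5.1]{davis2021subgradient}, and that the passage from the optimization setting there to the set-valued setting here changes nothing, since only Assumptions~\ref{assumption:localbound} and~\ref{assumption:Aproposed} are used and these are stated abstractly.
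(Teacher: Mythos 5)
Your proposal is correct and follows the same route as the paper, which simply invokes \cite[Proposition~5.1]{davis2021subgradient} and remarks that the argument carries over verbatim to the set-valued setting because only Assumptions~\ref{assumption:localbound}, \ref{assumption:Aproposed}, and \ref{assumption:zero} are used; you correctly reconstruct that Lyapunov argument (one-step expansion of $\dist^2(x_k,\cM)$, aiming plus steplength bounds, conditional moment control of the noise, then Robbins--Siegmund / the paper's Lemma~\ref{lem:fastsummability} for the almost-sure statements and the deterministic recursion Lemma~\ref{lem:sequencelemmasquared} and telescoping for the expectation bounds) and explicitly cite the source. One small clean-up: in your treatment of part~\ref{eq:prop:gettingclosertothemanifoldbound1p5:saddle} you second-guess yourself over whether $\alpha_k \lesssim \sum_{i\ge k}\alpha_i^2$; in fact with $\alpha_k\asymp k^{-\gamma}$ one has $\sum_{i\ge k}\alpha_i^2 \asymp k^{1-2\gamma}$ (or $k^{-1}$ when $\gamma=1$), and $-\gamma \le 1-2\gamma$ for all $\gamma\le 1$, so the absorption of $\EE[\dist^2(x_k,\cM)1_{\tau>k}]\le C\alpha_k$ into $C\sum_{i\ge k}\alpha_i^2$ is valid throughout the stated range $\gamma\in(1/2,1]$, no hedging needed.
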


Next we study the evolution of  the shadow $y_k = P_{\cM}(x_k)$  along the manifold, showing that $y_k$ is locally an inexact Riemannian stochastic gradient sequence with error that asymptotically decays as $x_k$ approaches the manifold. Consequently, we may control the error using Proposition~\ref{prop:gettingclosertothemanifold}. The following proposition was proved in \cite[Proposition 5.2]{davis2021subgradient}. We note that \cite[Proposition 5.2]{davis2021subgradient} was stated specifically for optimization problems rather than for finding zeros of set-valued maps; the argument in this more general setting is identical however.

\begin{proposition}[Pillar II: the shadow iteration]\label{prop:shadow}
Suppose that Assumptions~\ref{ass:basic_assumpt}, \ref{assumption:localbound}, \ref{assumption:Aproposed},\ref{assumption:zero} hold. Then for all $k_0 \geq 1$ and sufficiently small $\delta > 0$, there exists a constant $C$, such that the following hold with stopping time $\tau_{k_0, \delta}$ defined in~\eqref{def:stoppingtime}: there exists a sequence of $\cF_{k+1}$-measurable random vectors $E_k \in \RR^d$ such that
\begin{enumerate}
\item \label{prop:shadow:part:sequence} The shadow sequence 
\begin{align*}
y_k = \begin{cases}
P_{\cM}(x_k) & \text{if $x_k \in B_{2\delta}(\bar x)$} \\
\bar x &  \text{otherwise.}
\end{cases}
\end{align*}
satisfies $y_k \in B_{4\delta}(\bar x) \cap \cM$ for all $k$ and the recursion holds:
\begin{align}\label{eqn:shadow:eq:iteration}
\boxed{y_{k+1} = y_k - \alpha_k F_{\mathcal{M}}(y_k) - \alpha_k P_{\tangentM{y_k}}(\perturb_k) + \alpha_k E_k \qquad \text{for all $k \geq 1.$}} 
\end{align}
Moreover, for such $k$, we have $\EE_k[P_{\tangentM{y_k}}(\perturb_k)] = 0$.
\item \label{prop:shadow:part:error}  Let $\gamma \in (1/2, 1]$ and assume that $c_1 \geq 32/\mu$ if $\gamma = 1$.  
\begin{enumerate}
\item\label{prop:shadow:part:error:part:upperbound} We have the following bounds for $k_0 \leq k \leq \tau_{k_0, \delta} -1$:
\begin{enumerate}
\item \label{prop:shadow:part:error:part:upperbound:2}$\max\{\EE_k [\|E_k\|1_{\tau_{k_0, \delta} > k}, \EE_k [\|E_k\|^21_{\tau_{k_0, \delta} > k}]\} \leq C$.
\item \label{prop:shadow:part:error:part:upperbound:3}$\EE[\|E_k\|^2 1_{\tau_{k_0, \delta} > k}] \leq C\alpha_k$.
\item\label{prop:shadow_missing} The sum is finite:
$$\sum_{k=1}^{\infty} \frac{k^{\gamma-1}}{\log(k+1)^2}\max\{\|E_k\|1_{\tau_{k_0},\delta>k},\EE_k[\|E_k\|]1_{\tau_{k_0},\delta>k}\}<+\infty.$$
\end{enumerate}
\item \label{prop:shadow:part:error:part:random:saddle} The tail sum is bounded
$$
\EE\left[1_{\tau_{k_0, \delta} = \infty}  \sum_{i=k}^\infty  \alpha_i\|E_k\|\right] \leq C\sum_{i=k}^\infty \alpha_i^2 \qquad \text{for all $k \geq 1$}.
$$
\end{enumerate}
\end{enumerate}
\end{proposition}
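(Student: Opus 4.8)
The plan is to derive the recursion~\eqref{eqn:shadow:eq:iteration} by differentiating the projection $P_{\cM}$ along the algorithmic step~\eqref{alg:perturbedGiteration}, and then to estimate the residual using Assumption~\ref{assumption:Aproposed}, the moment bounds of Assumption~\ref{assumption:zero}, and the distance control supplied by Pillar~I (Proposition~\ref{prop:gettingclosertothemanifold}). Throughout I work on the event $\{\tau_{k_0,\delta}>k\}$, on which $x_{k_0},\dots,x_k$ all lie in $B_\delta(\bar x)$; shrinking $\delta$ so that $B_\delta(\bar x)\subseteq\cU$ and so that $P_{\cM}$ is $C^{1,1}$ and a near-isometry on a tube containing $B_{2\delta}(\bar x)$, the bound $\|P_{\cM}(x_k)-\bar x\|=\|P_{\cM}(x_k)-P_{\cM}(\bar x)\|\le 2\delta$ yields the containment $y_k\in B_{4\delta}(\bar x)\cap\cM$ of part~\ref{prop:shadow:part:sequence}, and $F_{\cM}(y_k)$ is well defined by Assumption~\ref{ass:basic_assumpt}. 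Since $y_k$ is a measurable function of $x_k$, it is $\cF_k$-measurable, hence $\EE_k[P_{\tangentM{y_k}}(\perturb_k)]=P_{\tangentM{y_k}}\EE_k[\perturb_k]=0$, which is the remaining claim of part~\ref{prop:shadow:part:sequence}.

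Write $g_k:=G_{\alpha_k}(x_k,\perturb_k)$, so $x_{k+1}=x_k-\alpha_k g_k$. When $x_{k+1}$ stays in the tube (so that $y_{k+1}=P_{\cM}(x_{k+1})$), a second-order Taylor expansion of $P_{\cM}$ gives $y_{k+1}=y_k-\alpha_k\nabla P_{\cM}(x_k)g_k+O(\alpha_k^2\|g_k\|^2)$. Using $\range(\nabla P_{\cM}(x_k))\subseteq\tangentM{y_k}$, the identity $\nabla P_{\cM}(y_k)=P_{\tangentM{y_k}}$, and local Lipschitzness of $\nabla P_{\cM}$, one has $\nabla P_{\cM}(x_k)g_k=P_{\tangentM{y_k}}g_k+O(\dist(x_k,\cM)\|g_k\|)$, while the tangent-comparison bound~\ref{assumption:smoothcompatibility} gives $P_{\tangentM{y_k}}g_k=F_{\cM}(y_k)+P_{\tangentM{y_k}}(\perturb_k)+\rho_k$ with $\|\rho_k\|\le C(1+\|\perturb_k\|)^2(\dist(x_k,\cM)+\alpha_k)$. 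Defining $E_k$ so that~\eqref{eqn:shadow:eq:iteration} holds identically makes it $\cF_{k+1}$-measurable, and combining the displays with the steplength bound~\ref{assumption:localbound} (which gives $\|g_k\|\le C(1+\|\perturb_k\|)$ on $\{\tau_{k_0,\delta}>k\}$) produces the master estimate $\|E_k\|\,1_{\tau_{k_0,\delta}>k}\le C(1+\|\perturb_k\|)^2(\dist(x_k,\cM)+\alpha_k)\,1_{\tau_{k_0,\delta}>k}$. On the complementary event, where $x_{k+1}$ leaves the tube, $y_{k+1}=\bar x$ by definition and only the crude bound $\|E_k\|\le C\alpha_k^{-1}+C(1+\|\perturb_k\|)$ holds; but by the fourth-moment assumption this event has $\cF_k$-conditional probability $O(\alpha_k^4)$, so it contributes at most $O(\alpha_k)$ to $\EE_k\|E_k\|$ and $O(\alpha_k^2)$ to $\EE_k\|E_k\|^2$.

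The probabilistic conclusions then follow by conditioning on $\cF_k$: since $\dist(x_k,\cM)$, $\alpha_k$ and $1_{\tau_{k_0,\delta}>k}$ are $\cF_k$-measurable while $\EE_k[(1+\|\perturb_k\|)^4]$ is bounded on $B_\delta(\bar x)$, item~\ref{prop:shadow:part:error:part:upperbound:2} reduces to boundedness of $\dist(x_k,\cM)1_{\tau_{k_0,\delta}>k}$, and item~\ref{prop:shadow:part:error:part:upperbound:3} together with the tail bound~\ref{prop:shadow:part:error:part:random:saddle} reduce to the estimates $\EE[\dist^2(x_k,\cM)1_{\tau_{k_0,\delta}>k}]\le C\alpha_k$ and $\EE[\sum_{i\ge k}\alpha_i\dist(x_i,\cM)1_{\tau_{k_0,\delta}>i}]\le C\sum_{i\ge k}\alpha_i^2$ of Pillar~I (items~\ref{eq:prop:gettingclosertothemanifoldbound1p5:expectedsquareddistance} and~\ref{eq:prop:gettingclosertothemanifoldbound1p5:saddle}). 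For the almost-sure summability~\ref{prop:shadow_missing} I would split $\|E_k\|1_{\tau_{k_0,\delta}>k}$ into its $\cF_k$-conditional mean plus a martingale-difference remainder: the conditional-mean part is dominated by $C(\dist(x_k,\cM)+\alpha_k)1_{\tau_{k_0,\delta}>k}$, whose $\tfrac{k^{\gamma-1}}{\log(k+1)^2}$-weighted sum is a.s.\ finite by item~\ref{eq:prop:gettingclosertothemanifoldbound1:sum} of Pillar~I together with $\sum_k k^{\gamma-1}\alpha_k/\log(k+1)^2<\infty$; the remainder is a martingale transform whose predictable quadratic variation is a.s.\ finite (by item~\ref{eq:prop:gettingclosertothemanifoldbound1:as}, which forces $\dist(x_k,\cM)1_{\tau_{k_0,\delta}>k}=O(\log(k+1)\,k^{1/2-\gamma})$ and hence makes $\sum_k(\tfrac{k^{\gamma-1}}{\log(k+1)^2}\dist(x_k,\cM))^2$ summable), so a localized martingale convergence theorem makes it converge a.s.

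\textbf{The main obstacle.} The conceptual core is short: a second-order Taylor expansion of $P_{\cM}$ combined with the tangent-comparison hypothesis~\ref{assumption:smoothcompatibility}. The real work is the bookkeeping — tracking the $(1+\|\perturb_k\|)$-type prefactors coming from $\|g_k\|$ and from the quadratic remainder, and verifying that they do not spoil the summability statements, which forces one to interleave the almost-sure and $L^1$ distance estimates of Pillar~I with the fourth-moment control on $\perturb_k$ — together with carefully handling the rare event where $x_{k+1}$ escapes the tube, so that the truncation $y_{k+1}=\bar x$ in part~\ref{prop:shadow:part:sequence} is genuinely negligible. This last point, and the martingale argument for~\ref{prop:shadow_missing}, is also where the hypothesis that $\perturb_k$ has a bounded fourth (rather than merely second) conditional moment enters.
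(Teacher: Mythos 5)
The paper does not prove Proposition~\ref{prop:shadow}: it cites \cite[Proposition~5.2]{davis2021subgradient} and remarks that the optimization-specific argument carries over verbatim to the set-valued setting. There is therefore no in-paper proof against which to compare. That said, your sketch is, to my reading, the natural argument and is sound in its structure: a Taylor expansion of $P_{\cM}$ along the step, identification of $\nabla P_{\cM}(x_k)g_k$ with $P_{T_{\cM}(y_k)}g_k$ up to an $O(\dist(x_k,\cM)\|g_k\|)$ error via $\nabla P_{\cM}(y_k)=P_{T_{\cM}(y_k)}$ and Lipschitzness of $\nabla P_{\cM}$, an application of the tangent-comparison hypothesis~\ref{assumption:smoothcompatibility} to replace $P_{T_{\cM}(y_k)}g_k$ by $F_{\cM}(y_k)+P_{T_{\cM}(y_k)}(\perturb_k)$, the resulting master estimate $\|E_k\|\lesssim(1+\|\perturb_k\|)^2(\dist(x_k,\cM)+\alpha_k)$ on the good event, and then the reduction of the four probabilistic claims to Pillar~I via conditioning on $\cF_k$. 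The martingale-plus-predictable split for part~\ref{prop:shadow_missing} is also the right device.

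Two points you should spell out more carefully. First, the second-order Taylor expansion of $P_{\cM}$ requires a Lipschitz Jacobian for $P_{\cM}$ on a tube containing $B_{2\delta}(\bar x)$; your $C^{1,1}$ assumption amounts to $\cM$ being at least $C^{2,1}$ (the paper's own proof of Theorem~\ref{thm: asymptotic normality proposed} shrinks $\cU$ so that $P_{\cM}$ is $C^2$, consistent with the $C^{p+1}$ hypotheses elsewhere). Second, and more substantively, the fourth-moment hypothesis on $\perturb_k$ enters twice in a way worth flagging: (i) the master estimate squares the $(1+\|\perturb_k\|)^2$ prefactor, so controlling $\EE_k\|E_k\|^2$ already requires $\EE_k[(1+\|\perturb_k\|)^4]$ to be bounded; and (ii) on the rare event $\{x_{k+1}\notin B_{2\delta}(\bar x)\}$, the chain $\|x_{k+1}-x_k\|\le\alpha_k C(1+\|\perturb_k\|)$ from Assumption~\ref{assumption:localbound}, combined with a fourth-moment Markov bound, gives probability $O(\alpha_k^4)$, which is exactly enough to tame the $\alpha_k^{-1}$ blowup in the crude bound for $E_k$ and yield the $O(\alpha_k^2)$ contribution to $\EE_k\|E_k\|^2$ needed for part~\ref{prop:shadow:part:error:part:upperbound:3}; with only a second moment this contribution is $O(1)$, which is fatal. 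Finally, for part~\ref{prop:shadow_missing} with $\gamma=1$ the constant bound $\EE_k[\|E_k\|^2 1_{\tau>k}]\le C$ alone does not make $\sum_k \log(k+1)^{-4}\EE_k[\|E_k\|^21_{\tau>k}]$ summable — you must combine $\EE_k[\|E_k\|^21_{\tau>k}]\lesssim\dist^2(x_k,\cM)+\alpha_k^2$ with the almost-sure rate $\dist(x_k,\cM)1_{\tau>k}=O(\log(k+1)\,k^{1/2-\gamma})$ from Pillar~I(\ref{eq:prop:gettingclosertothemanifoldbound1:as}), which you do invoke; I just note the quadratic-variation step is the place where that refinement is actually needed, not merely convenient.
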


With the two pillars we separate our study of the sequence $x_k$ into two orthogonal components: In the tangent/smooth directions, we study the sequence $y_k$, which arises from an inexact gradient method with rapidly decaying errors and is amenable to the techniques of smooth optimization. In the normal/nonsmooth directions, we steadily approach the manifold, allowing us to infer strong properties of $x_k$ from corresponding properties for $y_k$.

		\bibliographystyle{plain}
	\bibliography{reference}

\begin{thebibliography}{10}

\bibitem{asi2019stochastic}
Hilal Asi and John~C Duchi.
\newblock Stochastic (approximate) proximal point methods: Convergence,
  optimality, and adaptivity.
\newblock {\em SIAM Journal on Optimization}, 29(3):2257--2290, 2019.

\bibitem{borwein2010convex}
Jonathan Borwein and Adrian~S Lewis.
\newblock {\em Convex analysis and nonlinear optimization: theory and
  examples}.
\newblock Springer Science \& Business Media, 2010.

\bibitem{boumal2020introduction}
Nicolas Boumal.
\newblock An introduction to optimization on smooth manifolds.
\newblock {\em Available online, Aug}, 2020.

\bibitem{condborellcantelli}
Louis H.~Y. Chen.
\newblock {A Short Note on the Conditional Borel-Cantelli Lemma}.
\newblock {\em The Annals of Probability}, 6(4):699 -- 700, 1978.

\bibitem{clarke2008nonsmooth}
Francis~H Clarke, Yuri~S Ledyaev, Ronald~J Stern, and Peter~R Wolenski.
\newblock {\em Nonsmooth analysis and control theory}, volume 178.
\newblock Springer Science \& Business Media, 2008.

\bibitem{cutler2022stochastic}
Joshua Cutler, Mateo D{\'\i}az, and Dmitriy Drusvyatskiy.
\newblock Stochastic approximation with decision-dependent distributions:
  asymptotic normality and optimality.
\newblock {\em arXiv preprint arXiv:2207.04173}, 2022.

\bibitem{davis2019stochastic_geo}
Damek Davis, Dmitriy Drusvyatskiy, and Vasileios Charisopoulos.
\newblock Stochastic algorithms with geometric step decay converge linearly on
  sharp functions.
\newblock {\em arXiv preprint arXiv:1907.09547}, 2019.

\bibitem{davis2021subgradient}
Damek Davis, Dmitriy Drusvyatskiy, and Liwei Jiang.
\newblock Active manifolds, stratifications, and convergence to local minima in
  nonsmooth optimization.
\newblock {\em arXiv preprint arXiv:2108.11832v2}, 2021.

\bibitem{davis2020stochastic}
Damek Davis, Dmitriy Drusvyatskiy, Sham Kakade, and Jason~D Lee.
\newblock Stochastic subgradient method converges on tame functions.
\newblock {\em Foundations of computational mathematics}, 20(1):119--154, 2020.

\bibitem{dembo2016lecture}
A~Dembo.
\newblock Lecture notes on probability theory: Stanford statistics 310.
\newblock {\em Accessed October}, 1:2016, 2016.

\bibitem{dontchev2009implicit}
Asen~L Dontchev and R~Tyrrell Rockafellar.
\newblock {\em Implicit functions and solution mappings}, volume 543.
\newblock Springer, 2009.

\bibitem{MR3461323}
D.~Drusvyatskiy, A.~D. Ioffe, and A.~S. Lewis.
\newblock Generic minimizing behavior in semialgebraic optimization.
\newblock {\em SIAM J. Optim.}, 26(1):513--534, 2016.

\bibitem{drusvyatskiy2014optimalityarxiv}
Dmitriy Drusvyatskiy and Adrian~S Lewis.
\newblock Optimality, identifiability, and sensitivity.
\newblock {\em arXiv:1207.6628v1}.

\bibitem{drusvyatskiy2014optimality}
Dmitriy Drusvyatskiy and Adrian~S Lewis.
\newblock Optimality, identifiability, and sensitivity.
\newblock {\em Mathematical Programming}, 147(1):467--498, 2014.

\bibitem{duchi2021asymptotic}
John~C Duchi and Feng Ruan.
\newblock Asymptotic optimality in stochastic optimization.
\newblock {\em The Annals of Statistics}, 49(1):21--48, 2021.

\bibitem{dupacova1988asymptotic}
Jitka Dupacov{\'a} and Roger Wets.
\newblock Asymptotic behavior of statistical estimators and of optimal
  solutions of stochastic optimization problems.
\newblock {\em The annals of statistics}, 16(4):1517--1549, 1988.

\bibitem{durrett2019probability}
Rick Durrett.
\newblock {\em Probability: theory and examples}, volume~49.
\newblock Cambridge university press, 2019.

\bibitem{king1993asymptotic}
Alan~J King and R~Tyrrell Rockafellar.
\newblock Asymptotic theory for solutions in statistical estimation and
  stochastic programming.
\newblock {\em Mathematics of Operations Research}, 18(1):148--162, 1993.

\bibitem{le2000asymptotics}
Lucien Le~Cam, Lucien~Marie LeCam, and Grace~Lo Yang.
\newblock {\em Asymptotics in statistics: some basic concepts}.
\newblock Springer Science \& Business Media, 2000.

\bibitem{lee2013smooth}
John~M Lee.
\newblock Smooth manifolds.
\newblock In {\em Introduction to Smooth Manifolds}, pages 1--31. Springer,
  2013.

\bibitem{JMLR:v13:lee12a}
Sangkyun Lee and Stephen~J. Wright.
\newblock Manifold identification in dual averaging for regularized stochastic
  online learning.
\newblock {\em Journal of Machine Learning Research}, 13(55):1705--1744, 2012.

\bibitem{lemarecha2000}
Claude Lemar{\'e}chal, Fran{\c{c}}ois Oustry, and Claudia Sagastiz{\'a}bal.
\newblock The $\mathcal{U}$-lagrangian of a convex function.
\newblock {\em Transactions of the American mathematical Society},
  352(2):711--729, 2000.

\bibitem{lewis2002active}
Adrian~S Lewis.
\newblock Active sets, nonsmoothness, and sensitivity.
\newblock {\em SIAM Journal on Optimization}, 13(3):702--725, 2002.

\bibitem{mifflin2005algorithm}
Robert Mifflin and Claudia Sagastiz{\'a}bal.
\newblock A ${VU}$-algorithm for convex minimization.
\newblock {\em Mathematical programming}, 104(2):583--608, 2005.

\bibitem{miller2005newton}
Scott~A Miller and J{\'e}r{\^o}me Malick.
\newblock Newton methods for nonsmooth convex minimization: connections
  among-lagrangian, riemannian newton and sqp methods.
\newblock {\em Mathematical programming}, 104(2):609--633, 2005.

\bibitem{mordukhovich2006variational}
Boris~S Mordukhovich.
\newblock {\em Variational analysis and generalized differentiation I: Basic
  theory}, volume 330.
\newblock Springer Science \& Business Media, 2006.

\bibitem{nesterovdualaveraging}
Yurii Nesterov.
\newblock Primal-dual subgradient methods for convex problems.
\newblock {\em Mathematical Programming}, 120(1):221--259, 2009.

\bibitem{penot2012calculus}
Jean-Paul Penot.
\newblock {\em Calculus without derivatives}, volume 266.
\newblock Springer Science \& Business Media, 2012.

\bibitem{poliquin1996prox}
Ren{\'e} Poliquin and R~Rockafellar.
\newblock Prox-regular functions in variational analysis.
\newblock {\em Transactions of the American Mathematical Society},
  348(5):1805--1838, 1996.

\bibitem{polyak1992acceleration}
Boris~T Polyak and Anatoli~B Juditsky.
\newblock Acceleration of stochastic approximation by averaging.
\newblock {\em SIAM journal on control and optimization}, 30(4):838--855, 1992.

\bibitem{robbins1971convergence}
Herbert Robbins and David Siegmund.
\newblock A convergence theorem for non negative almost supermartingales and
  some applications.
\newblock In {\em Optimizing methods in statistics}, pages 233--257. Elsevier,
  1971.

\bibitem{rockafellar2009variational}
R~Tyrrell Rockafellar and Roger J-B Wets.
\newblock {\em Variational analysis}, volume 317.
\newblock Springer Science \& Business Media, 2009.

\bibitem{shapiro1989asymptotic}
Alexander Shapiro.
\newblock Asymptotic properties of statistical estimators in stochastic
  programming.
\newblock {\em The Annals of Statistics}, 17(2):841--858, 1989.

\bibitem{shapiroreducible}
Alexander Shapiro.
\newblock On a class of nonsmooth composite functions.
\newblock {\em Mathematics of Operations Research}, 28(4):677--692, 2003.

\bibitem{van2000asymptotic}
Aad~W Van~der Vaart.
\newblock {\em Asymptotic statistics}, volume~3.
\newblock Cambridge university press, 2000.

\bibitem{wright1993identifiable}
Stephen~J Wright.
\newblock Identifiable surfaces in constrained optimization.
\newblock {\em SIAM Journal on Control and Optimization}, 31(4):1063--1079,
  1993.

\bibitem{xiao2009dual}
Lin Xiao.
\newblock Dual averaging method for regularized stochastic learning and online
  optimization.
\newblock {\em Advances in Neural Information Processing Systems}, 22, 2009.

\end{thebibliography}
	\appendix
	
	\section{Proofs from Section~\ref{sec:notation}}

\subsection{Proof of Theorem~\ref{thm:smooth_invert_main} }\label{sec:appsmooth}
The proof of Theorem~\ref{thm:smooth_invert_main} follows from two  lemmas. The first allows one to reduce the sensitivity analysis of the inclusion $v\in A(x)+\partial f(x)$ to an entirely smooth setting. More precisely, the following basic result, proved in \cite[Proposition 10.12]{drusvyatskiy2014optimalityarxiv}, shows that as soon as $f$ admits an active manifold, the graph of the subdifferential $\partial f$ admits a smooth description.

\begin{lem}[Smooth reduction]\label{lem:smooth_reduct}
Let $f$ be a subdifferentially continuous function that admits a $C^2$ active manifold $\cM$ at a point $\bar x$ for a vector $\bar w\in \hat\partial f(\bar x)$. Then equality holds:
$$\gph \partial f=\gph \partial (f+\delta_{\cM})\qquad \textrm{locally around}\qquad (\bar x,\bar w).$$
\end{lem}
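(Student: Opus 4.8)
\noindent\emph{Proof sketch.} The plan is to pass to the untilted setting, localize both subdifferential graphs onto $\cM$ using the sharpness of the active manifold, and then identify each graph near $(\bar x,\bar w)$ with one and the same affine normal bundle over $\cM$. First, replacing $f$ by the tilted function $f(\cdot)-\langle\bar w,\cdot\rangle$ translates both $\gph\partial f$ and $\gph\partial(f+\delta_{\cM})$ by the fixed vector $\bar w$ while leaving $\delta_{\cM}$ untouched, so we may assume $\bar w=0$; then $0\in\hat\partial f(\bar x)$ and $\cM$ is an active manifold for $f$ at $\bar x$ in the sense of Definition~\ref{defn:ident_man}. Fix a $C^2$ function $\hat f$ defined near $\bar x$ that agrees with $f$ on $\cM$. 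Since $f+\delta_{\cM}$ and $\hat f+\delta_{\cM}$ coincide as extended-real-valued functions on a neighborhood of $\bar x$, the exact sum rule for a $C^1$ function plus an arbitrary function yields $\partial(f+\delta_{\cM})(x)=\nabla\hat f(x)+N_{\cM}(x)$ for every $x\in\cM$ near $\bar x$.

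Next I would localize to the manifold. The inclusion $\dom(f+\delta_{\cM})\subseteq\cM$ is immediate, and for $\partial f$ the sharpness clause of Definition~\ref{defn:ident_man} together with subdifferential continuity suffices: if $(x_i,v_i)\to(\bar x,0)$ with $v_i\in\hat\partial f(x_i)$, then $f(x_i)\to f(\bar x)$, so $x_i$ eventually lies in the function-value-localized neighborhood $U$ appearing in the sharpness condition, while $\|v_i\|\to 0$ eventually falls below the sharpness gap, forcing $x_i\in\cM$. Hence there is a neighborhood $W$ of $(\bar x,0)$ on which both graphs are supported over $\cM$ and on which every limiting subgradient is a limit of Fr\'{e}chet subgradients taken at points of $\cM$. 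The inclusion $\gph\partial f\cap W\subseteq\gph\partial(f+\delta_{\cM})$ is then easy: since $f\le f+\delta_{\cM}$ with equality on $\cM$, any $v\in\hat\partial f(x)$ with $x\in\cM$ also lies in $\hat\partial(f+\delta_{\cM})(x)$, and passing to limits over points of $\cM$ handles limiting subgradients.

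The reverse inclusion $\gph\partial(f+\delta_{\cM})\cap W\subseteq\gph\partial f$ is the crux, since it must reconstruct all of the normal directions of $\partial f$. Here I would show that, after shrinking $W$ and choosing a large enough $\rho>0$, every $v=\nabla\hat f(x)+n$ with $x\in\cM$ near $\bar x$, $n\in N_{\cM}(x)$, and $\|v\|$ small is a \emph{proximal} subgradient of $f$ at $x$, hence a Fr\'{e}chet and thus a limiting subgradient. To verify $f(x')\ge f(x)+\langle v,x'-x\rangle-\tfrac{\rho}{2}\|x'-x\|^2$ for $x'$ near $x$, I would write $x'=P_{\cM}(x')+r'$ with $\|r'\|=\dist(x',\cM)$, lower-bound $f(x')\ge\hat f(P_{\cM}(x'))+c\cdot\dist(x',\cM)$ using the linear growth estimate of Proposition~\ref{prop: sharpness}, and Taylor-expand $\hat f$ along $\cM$. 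The only terms that are not manifestly quadratic in $\|x'-x\|$ collapse to $-\langle P_{N_{\cM}(x)}v,r'\rangle-\langle\nabla_{\cM}f(x),r'\rangle$, which together with $c\cdot\dist(x',\cM)$ contributes at least $\big(c-\|v\|-\|\nabla_{\cM}f(x)\|\big)\|r'\|\ge 0$ once $W$ is small (using $\nabla_{\cM}f(\bar x)=0$, which follows from $0\in\hat\partial f(\bar x)$), while the remaining errors---cross terms between $n$ and the along-$\cM$ displacement, controlled by the $C^2$ curvature of $\cM$, and the $\nabla^2\hat f$ Taylor remainder---are genuinely quadratic and absorbed into $-\tfrac{\rho}{2}\|x'-x\|^2$.

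The main obstacle is precisely this last estimate: the normal component of $\nabla\hat f(x)$ is in general \emph{not} small---only its tangential part $\nabla_{\cM}f(x)$ vanishes at $\bar x$---so one must see that it recombines with $n$ into the small vector $P_{N_{\cM}(x)}v$ rather than producing an uncontrolled term linear in $\dist(x',\cM)$. The linear normal growth supplied by sharpness (Proposition~\ref{prop: sharpness}) is exactly what dominates this recombined linear error, and the $C^2$ regularity of $\hat f$ and of $\cM$ is what keeps every other error quadratic; closedness of $f$ enters only so that Proposition~\ref{prop: sharpness} applies.
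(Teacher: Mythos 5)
Your argument is correct and follows the standard route that the cited reference (\cite[Proposition 10.12]{drusvyatskiy2014optimalityarxiv}) takes: tilt to $\bar w=0$; use subdifferential continuity together with the sharpness gap to localize $\gph\partial f$ onto $\cM$ near $(\bar x,0)$; identify $\partial(f+\delta_{\cM})$ with $\nabla\hat f+N_{\cM}$ via the exact smooth-plus-arbitrary sum rule; get $\partial f\subseteq\partial(f+\delta_{\cM})$ from $f\le f+\delta_{\cM}$ with equality on $\cM$; and get the reverse inclusion by showing each $v=\nabla\hat f(x)+n$ with $\|v\|<c$ is a proximal subgradient of $f$ at $x$, using the linear growth $f(x')\ge f(P_{\cM}(x'))+c\cdot\dist(x',\cM)$ of Proposition~\ref{prop: sharpness} to dominate the term $-\langle v,r'\rangle$ and the $C^2$ regularity of $\cM$ and $\hat f$ to keep all remaining errors quadratic. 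The paper itself defers to that reference rather than reproducing the argument, and your key observation---that the normal component of $\nabla\hat f(x)$ is not small but recombines with $n$ into $P_{N_{\cM}(x)}v$, whose linear effect is absorbed by the growth estimate---is exactly the point that makes the proximal-subgradient estimate close.
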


\noindent Note that letting $\hat f$ be a $C^2$-smooth function that agrees with $f$ on $\cM$ near $\bar x$, we may write
$$ \partial (f+\delta_{\cM})(x)=\partial (\hat f+\delta_{\cM})(x)=\nabla \hat f(x)+N_{\cM}(x).$$
Thus, under the same assumptions as in Lemma~\ref{lem:smooth_reduct}, equality :
$$\gph \partial f=\gph (\nabla \hat f+N_{\cM})\qquad \textrm{holds locally around}\qquad (\bar x,\bar w).$$
It follows from the lemma, that we may now focus on variational inclusions of the form $v\in \Phi(x)+N_{\cM}(x)$, where $\Phi$ and $\cM$ are smooth. Perturbation theory of such inclusions is entirely classical and is summarized in the following lemma.

\begin{lem}[Smooth variational inequality]\label{lem:basic_jac_soln}
Consider a set-valued map 
\begin{equation}\label{eqn:local_smooth_reduction}
F(x)=\Phi( x)+N_{\cM}(x)
\end{equation}
and the let $\bar x$ be a point satisfying $0\in F(\bar x)$. Suppose that  $\Phi\colon\R^d\to\R^d$ is a $C^{p}$-smooth map and $\cM\subset\R^d$ is a $C^{p+1}$-smooth manifold around $\bar x$. Let $G(x)=0$ be any $C^{p+1}$-smooth local defining equations for $\cM$ and define the map 
$$\mathcal{H}(x,y)=\Phi(x)+\nabla G(x)^{\top}y.$$
Then there exists a unique vector $\bar y$ satisfying the  condition $0=\mathcal{H}(\bar x,\bar y)$. Moreover, $F$ is $C^p$-invertible around $(0, \bar x)$ with inverse $\sigma(\cdot)$ if and only if 
$P_{T_{\cM}(\bar x)}\nabla_{x} \mathcal{H}( \bar x,\bar y)P_{T_{\cM}( \bar x)}$ is nonsingular on $T_{\cM}(\bar x)$. In this case,  equality holds:
 $$\nabla \sigma(0)=(P_{T_{\cM}(\bar x)}\nabla_{x} \mathcal{H}(\bar x, \bar y)P_{T_{\cM}( \bar x)})^{\dagger}.$$
\end{lem}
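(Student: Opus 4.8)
The plan is to turn the set-valued inclusion $v\in F(x)$ into a \emph{square system of smooth equations} in a pair $(x,y)$ and apply the classical inverse function theorem, so that all the variational content collapses into one linear-algebra fact about a saddle-point (KKT) matrix. First note that since $G=0$ is a local defining equation for $\cM$ with $\nabla G$ surjective near $\bar x$, we have $N_{\cM}(x)=\range\nabla G(x)^{\top}$ for every $x\in\cM$ near $\bar x$, and $\nabla G(x)^{\top}$ is injective there. Applying this at $\bar x$ to $0\in\Phi(\bar x)+N_{\cM}(\bar x)$ immediately produces a \emph{unique} $\bar y$ with $\Phi(\bar x)+\nabla G(\bar x)^{\top}\bar y=0$, i.e.\ $\mathcal{H}(\bar x,\bar y)=0$.

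\textbf{Reformulation.} For $x$ near $\bar x$ with $G(x)=0$ and $v$ near $0$, the inclusion $v\in F(x)$ holds iff $v-\Phi(x)\in\range\nabla G(x)^{\top}$, which — by injectivity of $\nabla G(x)^{\top}$ — is equivalent to the existence of a \emph{unique} $y$ with $\mathcal{H}(x,y)=v$; since this $y$ depends continuously on $(v,x)$ and equals $\bar y$ at $(0,\bar x)$, it stays near $\bar y$. Introducing the $C^p$ map $\Psi(x,y):=(\mathcal{H}(x,y),G(x))$ between spaces of equal dimension (note $\Psi$ is only $C^p$, not $C^{p+1}$, because $\nabla G$ is $C^p$), we obtain that near $(0,\bar x)$,
$$\gph F^{-1}=\bigl\{(\mathcal{H}(x,y),x):G(x)=0,\ (x,y)\ \text{near}\ (\bar x,\bar y)\bigr\}=\theta(\mathcal{S}),$$
where $\mathcal{S}=\{(x,y):G(x)=0\}$ is a $C^{p+1}$ manifold of dimension $d$ and $\theta(x,y)=(\mathcal{H}(x,y),x)$. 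The restriction of $\theta$ to $\mathcal{S}$ is an injective immersion whose differential at $(\bar x,\bar y)$ on $T_{(\bar x,\bar y)}\mathcal{S}=\ker\nabla G(\bar x)\times\R^m$ is $(u,w)\mapsto(Mu+\nabla G(\bar x)^{\top}w,\,u)$ with $M:=\nabla_x\mathcal{H}(\bar x,\bar y)$ (visibly injective since $\nabla G(\bar x)^{\top}$ is injective); being also a homeomorphism onto its image, $\theta|_{\mathcal{S}}$ is an embedding, so $\gph F^{-1}$ is a $C^p$ submanifold of dimension $d$ near $(0,\bar x)$ with tangent space $\{(Mu+\nabla G(\bar x)^{\top}w,u):u\in T_{\cM}(\bar x),\,w\in\R^m\}$.

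\textbf{The characterization and the Jacobian.} Now $F$ is $C^p$-invertible around $(0,\bar x)$ exactly when the projection $(v,x)\mapsto v$ restricts to a local diffeomorphism of $\gph F^{-1}$ onto a neighborhood of $0$, i.e.\ when $(u,w)\mapsto Mu+\nabla G(\bar x)^{\top}w$ is a bijection $T_{\cM}(\bar x)\times\R^m\to\R^d$, equivalently (dimensions match) an injection. Writing $B:=\nabla G(\bar x)$ and $T:=\ker B=T_{\cM}(\bar x)$ and applying $P_T$ (which annihilates $B^{\top}w\in\range B^{\top}=T^{\perp}$), injectivity of this map is equivalent to nonsingularity of $P_TMP_T$ on $T$, which is in turn equivalent to nonsingularity of the saddle matrix $\begin{bmatrix}M&B^{\top}\\ B&0\end{bmatrix}=\nabla\Psi(\bar x,\bar y)$. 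In that case, solving $Mu+B^{\top}w=v$ with $u\in T$ by projecting onto $T$ gives $u=(P_TMP_T)^{\dagger}v$; since $u$ is exactly the $x$-derivative of the localization, $\nabla\sigma(0)=(P_{T_{\cM}(\bar x)}MP_{T_{\cM}(\bar x)})^{\dagger}$, which can also be checked by differentiating $v=\mathcal{H}(\sigma(v),\rho(v))$ and $G(\sigma(v))=0$ at $v=0$.

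\textbf{Main obstacle.} The routine parts are the manifold bookkeeping and the continuity/smoothness claims; the point needing care is the equivalence ``$F$ is $C^p$-invertible $\iff$ $\nabla\Psi(\bar x,\bar y)$ nonsingular.'' One direction is the inverse function theorem applied to $\Psi$: if $\nabla\Psi(\bar x,\bar y)$ is nonsingular then $\Psi$ is a local $C^p$-diffeomorphism and $\sigma(v):=$ (first component of $\Psi^{-1}(v,0)$) is a $C^p$ localization of $F^{-1}$ with $\gph\sigma=\gph F^{-1}$ locally. The converse is the subtle one: a nonzero kernel vector of $\nabla\Psi(\bar x,\bar y)$ forces the tangent space of $\gph F^{-1}$ (computed above) to project non-isomorphically onto the $v$-axis, so $\gph F^{-1}$ cannot locally be the graph of a $C^1$ (a fortiori $C^p$) single-valued map — here one genuinely uses that $\gph F^{-1}$ is a $C^p$ manifold near $(0,\bar x)$, supplied by the embedding $\theta$. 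Alternatively, one may simply invoke the classical strong-metric-regularity/implicit-function theory for maps of the form $\Phi+N_{\cM}$ from \cite{dontchev2009implicit}, which gives both directions at once.
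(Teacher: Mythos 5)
Your proof is correct, and the overall structure matches the paper's. The backward direction is essentially identical: both you and the paper apply the inverse function theorem to the KKT system $\Psi(x,y)=(\mathcal{H}(x,y),G(x))$, verify that invertibility of the saddle matrix $\nabla\Psi(\bar x,\bar y)$ is equivalent to nonsingularity of $P_T \nabla_x\mathcal{H}(\bar x,\bar y) P_T$ on $T=T_{\cM}(\bar x)$, and read the Jacobian $\nabla\sigma(0)=(P_T \nabla_x\mathcal{H}\,P_T)^{\dagger}$ off the inverse of the block matrix.

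Where you genuinely diverge is the forward (``only if'') direction. You first exhibit $\gph F^{-1}$ near $(0,\bar x)$ as the image of the $d$-dimensional manifold $\mathcal{S}=\{G=0\}\times\RR^m$ under the embedding $\theta(x,y)=(\mathcal{H}(x,y),x)$, compute its tangent space $\{(Mu+B^\top w,u)\}$, and observe that $C^p$-invertibility forces the $v$-projection to be an isomorphism on that tangent space, which by your linear-algebra reduction is exactly nonsingularity of $P_TMP_T$. The paper instead proceeds computationally: it fixes a direction $v$, differentiates the multiplier curve $y(t)$ along $\sigma(tv)$, derives the matrix identity $P_T=P_T\nabla_x\mathcal{H}(\bar x,\bar y)\nabla\sigma(0)$, and concludes from $\range\nabla\sigma(0)\subseteq T$ that $P_T\nabla_x\mathcal{H}\,P_T$ must be onto $T$ and hence nonsingular. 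Both routes are sound; yours is more geometric and packages the whole argument into one statement about tangent spaces, but it relies on establishing that $\theta|_{\mathcal{S}}$ is a homeomorphism onto $\gph F^{-1}$ near $(0,\bar x)$, which you assert with only a one-line appeal to continuous dependence of the multiplier (a small but real gap to fill, using that $y=(\nabla G(x)\nabla G(x)^\top)^{-1}\nabla G(x)(v-\Phi(x))$ is continuous in $(v,x)$). The paper's curve argument avoids that bookkeeping at the cost of being less conceptual. The identification of $\nabla\sigma(0)$ and the uniqueness of $\bar y$ are handled the same way in both.
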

\begin{proof}
The existence of $\bar y$ follows from the expression $N_{\cM}(\bar x)=\range(\nabla G(\bar x)^\top)$, while uniqueness follows from surjectivity of $\nabla G(\bar x)$.

We first prove the backward implication and derive the claimed expression for the Jacobian. Suppose that $P_{T_{\cM}(\bar x)}\nabla_{x} \mathcal{H}(\bar x, \bar y)P_{T_{\cM}( \bar x)}$ is indeed nonsingular on $T_{\cM}( \bar x)$.
Then there exists $\epsilon>0$ such that for any $v\in \epsilon \mathbb{B}$ and $x\in B_{\epsilon}( \bar x)$, the inclusion $v\in \Phi(x)+N_{\cM}(x)$ holds if and only if there exists $y$ satisfying
\begin{equation}
\left\{\begin{aligned}
v&= \Phi(x) +  \nabla G(x)^{\top}y\\
0&= G(x)
\end{aligned} \right\}.
\end{equation}
Treating the right-hand-side as a mapping of $(x,y)$, its Jacobian at $( \bar x, \bar y)$ is given by 
\begin{equation}\label{eqn:invertible_sys}
\begin{bmatrix}
\nabla_{x} \mathcal{H}(\bar x,\bar y) & \nabla G( \bar x)^{\top}\\
\nabla G( \bar x) & 0
\end{bmatrix}.
\end{equation}
A quick computation shows that this matrix is invertible since
$P_{T_{\cM}( \bar x)}\nabla_{x} \mathcal{H}( \bar x, \bar y)P_{T_{\cM}( \bar x)}$ is nonsingular on $T_{\cM}( \bar x)$. Therefore, the inverse function theorem ensures that for all small $v$, the system \eqref{eqn:invertible_sys} admits a unique solution $\sigma(v)$ near $ \bar x$, and which varies $C^p$ smoothly in $v$. Inverting  \eqref{eqn:invertible_sys} yields the expression for the Jacobian
$\nabla \sigma(0)=(P_{T_{\cM}( \bar x)}\nabla_{x} \mathcal{H}( \bar x, \bar y)P_{T_{\cM}( \bar x)})^{\dagger}$.

Conversely, suppose that $F$ is $C^p$-smoothly invertible around $(0, \bar x)$ with inverse $\sigma(\cdot)$. Fix a vector $v\in \R^d$. Then for all sufficiently small $t>0$ there exists a unique vector $y(t)\in \R^d$ satisfying 
$$tv= \Phi(\sigma(tv))+\nabla G(\sigma(tv))^{\top}y(t).$$
Subtracting the equation $0=\Phi( \bar x)+\nabla G( \bar x)^{\top} \bar y$ and projecting both sides to $P_{T_{\cM}( \bar x)}$ yields
\begin{align*}
P_{T_{\cM}( \bar x)}v=P_{T_{\cM}( \bar x)}\left[\frac{\Phi(\sigma(tv))-\Phi( \bar x)}{t}\right]+P_{T_{\cM}( \bar x)}\left[\frac{\nabla G(\sigma(tv))^{\top}}{t}y(t)\right].
\end{align*}
It is straightforward to see that $y(t)$ is continuous, and therefore the right-side tends to
$P_{T_{\cM}( \bar x)}\nabla_x \mathcal{H}( \bar x, \bar y)\nabla \sigma(0)v$ as $t$ tends to zero. Summarizing, since $v$ is arbitrary, we have shown the matrix identity
$$P_{T_{\cM}( \bar x)}=P_{T_{\cM}( \bar x)}\nabla_x \mathcal{H}( \bar x, \bar y)\nabla \sigma(0).$$
Taking into account that the range of $\nabla \sigma(0)$ is contained in $T_{\cM}( \bar x)$, it follows that the range of $P_{T_{\cM}( \bar x)}\nabla_x \mathcal{H}( \bar x,\bar y)P_{T_{\cM}(\bar x)}$ must be equal to $T_{\cM}( \bar x)$. Therefore $P_{T_{\cM}( \bar x)}\nabla_x \mathcal{H}( \bar x, \bar y)P_{T_{\cM}( \bar x)}$ must be nonsingular on $T_{\cM}( \bar x)$, as claimed.
\end{proof}

We are now ready to complete the proof of Theorem~\ref{thm:smooth_invert_main}. Namely, Lemma~\ref{lem:smooth_reduct} together with continuity of $A(\cdot)$ directly imply that locally around $(\bar x,0)$, the graph of $F$ coincides with the graph of the map 
$$x\mapsto \Phi(x)+N_{\cM}(x),$$
where we set $\Phi(x)=A(x)+\nabla \hat f(x)$.  Lemma~\ref{lem:basic_jac_soln} directly implies that $F$ is $C^p$-invertible around $(0, \bar x)$ with inverse $\sigma(\cdot)$ if and only if 
$\Sigma=P_{T_{\cM}(\bar x)}\nabla_{x} \mathcal{H}( \bar x, \bar y)P_{T_{\cM}( \bar x)}$ is nonsingular on $T_{\cM}(\bar x)$. In this case,  equality $\nabla \sigma(0)=\Sigma^{\dagger}$ holds.

\section{Proofs from Section~\ref{sec:smooth_invertible_maps}}
\subsection{Proof of Theorem~\ref{thm:asympt_covar}}\label{sec:appsample_average_approx}
Define the random vectors $w_k:=A(x_k)-A_S(x_k)$ and define the events 
$$\mathcal{E}_k:=\{x_k\in B_{\epsilon_2}(\bar x)\}\qquad \textrm{and}\qquad \mathcal{Z}_k=\{w_k\in B_{\epsilon_1}(0)\}.$$ 
Note that $1_{\mathcal{E}_k}\xrightarrow[]{p}1$ by our assumptions.
The very definition of $x_k$ implies the inclusion $w_k \in (A+H)(x_k)$. Therefore, the equality holds:
\begin{equation}\label{eqn:basic_science}
\sqrt{k}(x_k-\bar x)1_{\mathcal{E}_k\cap \mathcal{Z}_k}=\sqrt{k}[\sigma(w_k1_{\mathcal{E}_k\cap \mathcal{Z}_k})-\sigma(0)].
\end{equation}
A application of the delta method (Lemma~\ref{lem:basic_facts_conv}(\ref{basic_facts_conv5})) 
shows that as long as $\sqrt{k}w_k1_{\mathcal{E}_k\cap \mathcal{Z}_k}\xrightarrow[]{D}\mathsf{N}\big(0,\Sigma)$, for some matrix $\Sigma$, then the right-hand-side of \eqref{eqn:basic_science} converges in distribution to $\mathsf{N}\big(0,\nabla \sigma(0)\Sigma\nabla \sigma(0)^{\top})$. 
Our task therefore reduces to computing the limit of $\sqrt{k}w_k1_{\mathcal{E}_k\cap \mathcal{Z}_k}$.

Let us first show $1_{\mathcal{Z}_k}\xrightarrow[]{p}1$.
A first-order expansion of  $A(\cdot)$ and $A(\cdot,z_i)$ around $\bar x$ yields
\begin{equation}\label{eqn:basic_expans}
\begin{aligned}
\| w_k&-\underbrace{(A(\bar x)-A_S(\bar x))}_{O_P(1/\sqrt{k})}+\underbrace{(\nabla A(\bar x)-\nabla A_S(\bar x))(x_k-\bar x)}_{O_P(1/\sqrt{k})}\|1_{\mathcal{E}_k}\\
&\quad\leq\tfrac{1}{2}\Big(\EE L+\underbrace{\frac{1}{k}\sum_{i=1}^k L(z_i)}_{\EE L+O_P(1/\sqrt{k})}\Big) \|x_k-\bar x\|^21_{\mathcal{E}_k},
\end{aligned}
\end{equation}
where the statements in brackets follow from the central limit theorem. Rearranging, yields
$$\| w_k1_{\mathcal{E}_k}\|\leq  \epsilon_2^2\EE L+O_P(1/\sqrt{k})\leq \frac{\epsilon_1}{2}+O_P(1/\sqrt{k}).$$
We conclude $1_{\mathcal{Z}_k\cap \mathcal{E}_k}\xrightarrow[]{p}1$. Taking into account $1_{\mathcal{E}_k}\xrightarrow[]{p}1$, we  deduce $1_{\mathcal{Z}_k}\xrightarrow[]{p}1$, as claimed.

Next, we show $\|x_k-\bar x\|=O_P(1/\sqrt{k})$.
Observe that  \eqref{eqn:basic_science} directly implies
$\|x_k-\bar x\|1_{\mathcal{E}_k\cap \mathcal{Z}_k}\leq \lip(\sigma) \|w_k1_{\mathcal{E}_k\cap \mathcal{Z}_k}\|.$
Combining this with \eqref{eqn:basic_expans}, after multiplying through by $1_{\mathcal{Z}_k}$, we deduce
$$\lip(\sigma)^{-1}\|x_k-\bar x\|1_{\mathcal{E}_k\cap \mathcal{Z}_k}= O_P(1/\sqrt{k})+(\EE L+O_P(1/\sqrt{k}))\|x_k-\bar x\|^21_{\mathcal{E}_k\cap \mathcal{Z}_k}.$$
Rearranging, yields
$$(\lip(\sigma)^{-1}-\EE L\|x_k-\bar x\|)\cdot\|x_k-\bar x\|1_{\mathcal{E}_k\cap \mathcal{Z}_k}=O_P(1/\sqrt{k}).$$
Noting that the coefficient on the left hand side is bounded below by $1/2$ in the event $\mathcal{E}_k$, we  deduce $\|x_k-\bar x\|1_{\mathcal{E}_k\cap \mathcal{Z}_k}=O_P(1/\sqrt{k})$. Taking into account $1_{\mathcal{E}_k\cap \mathcal{Z}_k}\xrightarrow[]{p}1$, we deduce $\|x_k-\bar x\|=O_P(1/\sqrt{k})$ as claimed.

Finally, multiplying \eqref{eqn:basic_expans} through by $\sqrt{k}$ yields
\begin{align*}
\sqrt{k} w_k1_{\mathcal{E}_k}&=\underbrace{\sqrt{k}(A(\bar x)-A_S(\bar x))1_{\mathcal{E}_k}}_{\xrightarrow[]{D} \mathsf{N}\left(0,{\rm Cov}(A(\bar x,z)\right)}+\underbrace{(\nabla A(\bar x)-\nabla A_S(\bar x))\sqrt{k}(x_k-\bar x)1_{\mathcal{E}_k}}_{\xrightarrow[]{p}0}\\
&\quad+\underbrace{\frac{1}{2}\left(\EE L(z)+\frac{1}{k}\sum_{i=1}^k L(z_i)\right) O(\sqrt{k}\|x_k-\bar x\|^2)1_{\mathcal{E}_k}}_{\xrightarrow[]{p}0},
\end{align*}
where the claimed limits follow from the central limit theorem and the fact that $\sqrt{k}\|x_k-\bar x\|$ is bounded in probability. Multiplying through by $1_{\mathcal{Z}_k}$, we deduce $\sqrt{k}w_k1_{\mathcal{E}_k\cap \mathcal{Z}_k}\xrightarrow[]{D} \mathsf{N}\big(0,{\rm Cov}(A(\bar x,z))).$ Thus returning to \eqref{eqn:basic_science} and the application of the Delta method, we see that $\sqrt{k}(x_k-\bar x)1_{\mathcal{E}_k\cap \mathcal{Z}_k}\xrightarrow[]{D} \mathsf{N}\big(0,\nabla \sigma(0){\rm Cov}(A(\bar x,z))\nabla \sigma(0)^{\top})$.
It remains to note that $\sqrt{k}(x_k-\bar x)1_{\mathcal{E}_k\cap \mathcal{Z}_k}$ and  $\sqrt{k}(x_k-\bar x)$ have the same limiting distribution since $1_{\mathcal{E}_k\cap \mathcal{Z}_k}\xrightarrow[]{p}1$.

\subsection{Proof of Theorem~\ref{thm:main_opt}}\label{sec:local_minimax}
Throughout the section, we suppose that Assumption~\ref{assump:smooth_inverse} and \ref{base_assumpt} hold.
We now show that the asymptotic covariance in \eqref{thm:asympt_covar} is the best possible among all estimators of $\bar x$. Namely, we will lower-bound the performance of {\em any} estimation procedure for finding an equilibrium point on an adversarially-chosen sequence of small perturbations of the target problem. 
In order to define this sequence, define the set  
$$\mathcal{G}:=\{g\colon\mathcal{Z}\to\R^d: \mathop\EE_{z\sim\cP}[g(z)]=0,~\mathop\mathbb{E}_{z\sim \cP}\|g(z)\|^2<\infty\}.$$
Fix now a function $g\in \cG$ and an arbitrary $C^{3}$-smooth function $h \colon \RR \rightarrow [-1, 1]$ such that its first three derivatives are bounded and $h(t) = t$ for all $t \in [-1/2, 1/2]$. Now for each $u \in \RR^{d}$, define a new probability distribution $\cD^{u}$ whose density is given by
\begin{equation}
  \label{eq:perturbation-of-the-force}
d\cP_u(z) := \frac{1+h\big(u^{\top} g(z)\big)}{C(u)}\,d\cP(z),
\end{equation}
where  $C(u)$ is the normalizing constant  $C(u) := 1 + \int h\big(u^{\top} g(z)\big)\,d\cP(z)$. Thus each vector $u\in\R^d$ induces the perturbed problem 
\begin{equation}\label{eqn:VIII_perturb}
0\in L(x,u)+H(x) \qquad \textrm{where }\qquad L(x,u)=\mathop\EE_{z\sim \cP_u} A(x,z).
\end{equation}

Reassuringly, the following lemma shows that map $(x,u)\mapsto L(x,u)$ is $C^1$ near $(\bar x,0)$.

\begin{lem}\label{lem:deriv_formula}
The map $(x,u)\mapsto L(x,u)$ is $C^1$ near $(\bar x,0)$ with partial derivatives 
$$\nabla_x L(\bar x,0)=\nabla A(\bar x)\qquad \textrm{and}\qquad \nabla_u L(x,0)=\mathop\mathbb{E}_{z\sim \cP}A(\bar x,z)g(z)^{\top}.$$
\end{lem}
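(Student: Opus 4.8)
The plan is to exhibit $L$ as a ratio of two maps that are manifestly differentiable under the integral sign. Writing $C(u):=1+\int h(u^{\top}g(z))\,d\cP(z)$ for the normalizing constant in \eqref{eq:perturbation-of-the-force} and $N(x,u):=\int A(x,z)\bigl(1+h(u^{\top}g(z))\bigr)\,d\cP(z)$ for the unnormalized integral, we have $L(x,u)=N(x,u)/C(u)$. Since $h(t)=t$ near $0$ we have $h(0)=0$, hence $C(0)=1$ and $C$ stays positive on a neighborhood of $0$; thus $1/C$ is as smooth as $C$ there, and it suffices to prove that $N$ is $C^1$ near $(\bar x,0)$, that $C$ is $C^1$ near $0$, and then to evaluate derivatives via the quotient rule.

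First I would treat $C$. Because $h$ is $C^3$ with bounded derivatives and $g\in\cG$ has $\int\|g(z)\|^2\,d\cP(z)<\infty$ (so $g\in L^1(\cP)$), dominated convergence legitimizes $\nabla C(u)=\int h'(u^{\top}g(z))\,g(z)\,d\cP(z)$, continuous in $u$ by the same bound. The essential observation is the value at the base point: $h'(0)=1$ and $\int g(z)\,d\cP(z)=0$ by the very definition of $\cG$, so $\nabla C(0)=0$. This cancellation is precisely what makes the $u$-derivative of the ratio at $u=0$ reduce to the $u$-derivative of $N$.

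Next I would treat $N$. Differentiating in $u$ gives $\nabla_u N(x,u)=\int A(x,z)\,h'(u^{\top}g(z))\,g(z)^{\top}\,d\cP(z)$, valid by dominated convergence since the integrand is bounded, uniformly for $u$ near $0$, by $\sup_t|h'(t)|\cdot\|A(x,z)\|\,\|g(z)\|$, which is integrable by Cauchy--Schwarz together with the second-moment bound $\sup_{x\in U}\int\|A(x,z)\|^2\,d\cP(z)<\infty$ of Assumption~\ref{base_assumpt} and $g\in L^2(\cP)$. Differentiating in $x$ gives $\nabla_x N(x,u)=\int\nabla A(x,z)\bigl(1+h(u^{\top}g(z))\bigr)\,d\cP(z)$, using that $A(\cdot,z)$ is differentiable on $U$ for a.e.\ $z$ and $\sup_{x\in U}\int\|\nabla A(x,z)\|_{\mathrm{op}}^2\,d\cP(z)<\infty$; the uniform $L^2$ control supplies uniform integrability of $\{\nabla A(x,z)\}_{x\in U}$, so Vitali's convergence theorem gives both the differentiation under the integral and the joint continuity of $\nabla_x N$ in $(x,u)$ near $(\bar x,0)$. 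Together with the quotient rule this yields that $L$ is $C^1$ near $(\bar x,0)$.

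It then remains to read off the formulas. Since $C(0)=1$ and $h(0)=0$, for $x$ near $\bar x$ we have $L(x,0)=\int A(x,z)\,d\cP(z)=A(x)$, whence $\nabla_x L(\bar x,0)=\nabla A(\bar x)$. For the other partial, the quotient rule at $u=0$ together with $C(0)=1$ and $\nabla C(0)=0$ gives $\nabla_u L(x,0)=\nabla_u N(x,0)=\int A(x,z)\,g(z)^{\top}\,d\cP(z)=\EE_{z\sim\cP}[A(x,z)g(z)^{\top}]$, which at $x=\bar x$ equals $\EE_{z\sim\cP}[A(\bar x,z)g(z)^{\top}]$, as claimed. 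The only genuinely delicate point is the $x$-direction: the hypotheses of Assumption~\ref{base_assumpt} guarantee only uniform second-moment control, not a single integrable envelope, so one must pass to the limit through uniform integrability / Vitali rather than naive domination; by contrast the $u$-direction is routine, owing to the boundedness of $h$ and its derivatives and the square-integrability of $g$.
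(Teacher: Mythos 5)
Your proof takes the same route as the paper: decompose $L(x,u)$ as the quotient of the unnormalized integral by the normalizing constant $C(u)$, differentiate both under the integral sign, and observe that $\nabla C(0)=\int h'(0)\,g(z)\,d\cP(z)=0$ because $g$ has mean zero, which is the cancellation that makes $\nabla_u L(x,0)$ reduce to the derivative of the numerator. The paper writes the numerator as $A(x)+\hat L(x,u)$ with $\hat L(x,u)=\int h(u^\top g(z))A(x,z)\,d\cP(z)$, whereas you keep it as $N(x,u)=\int A(x,z)(1+h(u^\top g(z)))\,d\cP(z)$, but these are the same decomposition; the paper also records $\nabla^2 C(0)=0$ to write $C(u)=1+o(\|u\|^2)$, which is not needed once you invoke the quotient rule with $C(0)=1$ and $\nabla C(0)=0$ as you do. The one place where you improve on the paper's argument is the $x$-derivative: the paper's footnote appeals to dominated convergence via an alleged envelope ``$\sup_{x\in U}\|\nabla A(x,z)\|^2$ is integrable,'' but Assumption~\ref{base_assumpt} only supplies $\sup_{x\in U}\EE\|\nabla A(x,z)\|_{\mathrm{op}}^2<\infty$, i.e.\ a uniform second-moment bound rather than a pointwise $z$-envelope; passing to the limit through uniform integrability (Vitali), as you propose, is the correct way to use exactly what the assumption gives.
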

\begin{proof}
Let us first consider the normalizing constant $C(u)=1 + \int h\big(u^{\top} g(z)\big)\,d\cP(z)$. The dominated convergence theorem\footnote{using that $h'$ and $h''$ are bounded and $\EE_{z\sim \cP}\|g(z)\|^2<\infty$} directly implies that $C(\cdot)$ is twice differentiable with $\nabla C(u)=\int h'\big(u^{\top} g(z) \big)g(z)^{\top}\,d\cP(z)$ and $\nabla^2 C(u)=\int h''\big(u^{\top} g(z) \big)g(z)g(z)^{\top}\,d\cP(z)$. Moreover, the dominated convergence theorem\footnote{using that there is a neighborhood $U$ of $\bar x$ such that $\sup_{x\in U}\|\nabla A(x,z)\|^2$ is integrable.} implies that $A(x)$ is $C^1$-smooth with $\nabla A(x)=\EE_{z\sim \cP} \nabla A(x,z)$. 
Thus it now suffices to argue that $\hat L(x,u):=\int h(u^{\top} g(z))A(x,z) \,d\cP(z)$ is $C^1$-smooth.
An application of the dominated convergence theorem in $u$ directly implies that $\hat L(x,u)$ is differentiable in $u$ with 
$\nabla_u \hat L(x,u)=\int h'(u^{\top} g(z))A(x,z)g(z)^{\top} \,d\cP(z)$ and moreover $\nabla_u \hat L(x,u)$ is continuous in $(x,u)$. Similarly, the dominated convergence theorem\footnote{using that $h$ is bounded and there is a neighborhood $U$ of $\bar x$ such that $\sup_{x\in U}\|\nabla A(x,z)\|^2$ is integrable.} implies that $\hat L(x,u)$ is differentiable in $x$ with $\nabla_x \hat L(x,u)=\int h(u^{\top} g(z))\nabla A(x,z) \,d\cP(z)$ and $\nabla_x \hat L(x,u)$ is continuous in $(x,u)$. Thus $L(\cdot,\cdot)$ is $C^1$-smooth near $(\bar x,u)$. Observe that the expression $\nabla_x L(\bar x,0)=\nabla A(\bar x)$ follows trivially since $L(x,0)\equiv A(x)$ for all $x$. To see the expression for $\nabla_u L(x,0)$, observe that $\nabla C(0)=0$ and $\nabla^2 C(0)=0$ and therefore $C(u)=1+o(\|u\|^2)$. It follows immediately that $\nabla_u L(x,0)=\nabla_u \hat L(x,0)$ for all $x$, thereby completing the proof.
\end{proof}

The family of problems~\ref{eqn:VIII_perturb} would not be particularly useful if their solution would vary wildly in $u$. On the contrary, the following lemma shows that all small $u$, each problem \ref{eqn:VIII_perturb} admits a unique solution in $U$, which moreover varies smoothly in $u$. We will use the following standard notation. A map $\sigma(\cdot)$ is called a {\em localization} of a set-valued map $F(\cdot)$ around a pair $(\bar u,\bar v)\in \gph F$ if the two sets, $\gph \sigma$ and $\gph F$, coincide locally around $(\bar u,\bar v)$.

\begin{lem}[Derivative of the solution map]\label{lem:der_soln}
The solution map 
$$S(u)=\{x: 0\in L(x,u)+H(x)\}.$$
admits a single-valued localization $s(\cdot)$ around around $(0,\bar x)$ that is differentiable at $0$ with Jacobian
$$\nabla s(0)=- \nabla\sigma(0)\cdot \mathop\mathbb{E}_{z\sim \cP}[A(\bar x,z)g(z)^{\top}].$$
\end{lem}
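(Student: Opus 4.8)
The plan is to apply the implicit function theorem to the smooth system obtained by combining the reduced variational inequality with a defining equation for the solution map's parametrization, exactly in the spirit of Lemma~\ref{lem:basic_jac_soln}, but now tracking the dependence on the parameter $u$. First I would invoke Assumption~\ref{assump:smooth_inverse}: the map $F = A + H$ is $C^1$-smoothly invertible around $(0,\bar x)$ with inverse $\sigma(\cdot)$, so there is a neighborhood on which $\gph F^{-1}$ agrees with $\gph\sigma$. The key identity is that for $v$ near $0$, the equation $v \in L(x,u) + H(x)$ is equivalent to $v - (L(x,u) - A(x)) \in A(x) + H(x) = F(x)$, i.e.\ to $x = \sigma\big(v + A(x) - L(x,u)\big)$, valid as long as the argument stays in the domain of the localization $\sigma$. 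Setting $v = 0$, a point $x$ near $\bar x$ solves the perturbed inclusion~\eqref{eqn:VIII_perturb} if and only if it solves the fixed-point equation
\[
x = \sigma\big(A(x) - L(x,u)\big).
\]
By Lemma~\ref{lem:deriv_formula}, $L(\cdot,\cdot)$ is $C^1$ near $(\bar x,0)$ and $L(x,0) \equiv A(x)$, so the right-hand side is a $C^1$ function of $(x,u)$ near $(\bar x,0)$ that equals $\sigma(0) = \bar x$ when $u = 0$ (for every $x$, since then the argument of $\sigma$ is $0$).

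Next I would define $\Psi(x,u) := x - \sigma\big(A(x) - L(x,u)\big)$ and apply the implicit function theorem at $(\bar x, 0)$. We have $\Psi(\bar x, 0) = 0$, and $\nabla_x \Psi(\bar x, 0) = I - \nabla\sigma(0)\big(\nabla A(\bar x) - \nabla_x L(\bar x, 0)\big) = I - \nabla\sigma(0)\cdot 0 = I$, which is invertible; here I used $\nabla_x L(\bar x, 0) = \nabla A(\bar x)$ from Lemma~\ref{lem:deriv_formula}. Hence there is a $C^0$ (indeed the selection is continuous, and differentiable at $0$ since $\Psi$ is $C^1$) single-valued localization $s(\cdot)$ of $S$ around $(0,\bar x)$ with $s(0) = \bar x$. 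Differentiating the identity $s(u) = \sigma\big(A(s(u)) - L(s(u), u)\big)$ at $u = 0$ and using the chain rule gives
\[
\nabla s(0) = \nabla\sigma(0)\Big(\nabla A(\bar x)\nabla s(0) - \nabla_x L(\bar x,0)\nabla s(0) - \nabla_u L(\bar x,0)\Big) = -\,\nabla\sigma(0)\,\nabla_u L(\bar x, 0),
\]
again because $\nabla A(\bar x) = \nabla_x L(\bar x,0)$, so the $\nabla s(0)$ terms cancel. Substituting $\nabla_u L(\bar x,0) = \mathbb{E}_{z\sim\cP}[A(\bar x,z)g(z)^\top]$ from Lemma~\ref{lem:deriv_formula} yields the claimed formula $\nabla s(0) = -\nabla\sigma(0)\cdot \mathbb{E}_{z\sim\cP}[A(\bar x,z)g(z)^\top]$.

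The main obstacle is a bookkeeping one rather than a conceptual one: I must make sure the fixed-point reformulation is genuinely \emph{equivalent} to the inclusion on a full neighborhood of $(0,\bar x)$ — that is, that $S(u) \cap U$ coincides with the solution set of $\Psi(\cdot,u) = 0$ — which requires that the argument $A(x) - L(x,u)$ lands in the ball $B_{\epsilon_1}(0)$ on which $\gph F^{-1} = \gph\sigma$; this follows by continuity since $A(\bar x) - L(\bar x,0) = 0$, after shrinking neighborhoods. A second minor point is that $\sigma$ is only assumed $C^1$ and $L$ only $C^1$, so $s$ is $C^0$ with a derivative at the origin (not necessarily $C^1$); this is exactly what the statement claims, so no extra smoothness is needed, but I should phrase the implicit-function-theorem application so as not to over-claim. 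Everything else is the routine chain-rule computation above, where the crucial cancellation is driven by the single fact $\nabla_x L(\bar x,0) = \nabla A(\bar x)$.
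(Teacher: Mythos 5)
Your proof is correct, but it takes a genuinely different route from the paper's. The paper's proof is modular: it invokes \cite[Theorem 2B.10]{dontchev2009implicit} to pass from $A + H$ to its linearization $\Psi + H$ (where $\Psi(x) = A(\bar x) + \nabla A(\bar x)(x-\bar x)$) without changing the Jacobian of the inverse, and then applies the generalized implicit-function theorem \cite[Theorem 2D.6]{dontchev2009implicit} to the parametrized inclusion to read off $\nabla s(0) = -\nabla\hat\sigma(0)\circ\nabla_u L(\bar x,0)$. You instead bypass the generalized machinery entirely by observing that $0\in L(x,u)+H(x)$ is equivalent (for $(x,u)$ near $(\bar x,0)$) to the fixed-point equation $x = \sigma(A(x)-L(x,u))$, which turns the inclusion into an ordinary $C^1$ equation $\Psi(x,u) = x - \sigma(A(x)-L(x,u)) = 0$. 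The crucial cancellation $\nabla_x L(\bar x,0) = \nabla A(\bar x)$ then makes $\nabla_x\Psi(\bar x,0) = I$, so the classical implicit function theorem applies directly and the chain rule produces the formula. Your approach is more elementary and self-contained; the paper's is shorter on the page but requires citing two nontrivial results from \cite{dontchev2009implicit}. One small inaccuracy in your closing remarks: since $\Psi$ is $C^1$ (as a composition of the $C^1$ maps $\sigma$, $A$, and $L$), the classical IFT actually yields a $C^1$ localization $s$, not merely one differentiable at the origin; you under-claim here, but the stated conclusion of the lemma certainly follows.
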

\begin{proof}
Assumption \ref{assump:smooth_inverse} ensures that the map $A+H$ is $C^1$ invertible around $(\bar x,0)$ with some inverse $\sigma(\cdot)$. Define now the linearization $\Psi(x):=A(\bar x)+\nabla A(\bar x)(x-\bar x)$ of $A$ at $\bar x$. Invoking \cite[Theorem 2B.10]{dontchev2009implicit}, we deduce that the map $\Psi+H$ is also $C^1$ invertible around $(0,\bar x)$ with inverse $\hat \sigma$  and which satisfies $\nabla\hat \sigma(0)=\nabla\sigma(0)$. Note that in light of Lemma~\ref{lem:deriv_formula}, we may equivalently write $\Psi$ as $\Psi(x):=L(\bar x,0)+\nabla_x L(\bar x,0)(x-\bar x)$. Applying \cite[Theorem 2D.6]{dontchev2009implicit}, we deduce that the map $S(u)$ admits a single-valued localization $s(\cdot)$ around $(0,\bar x)$ that is differentiable at $0$ and satisfies $\nabla s(0)=-\nabla \hat \sigma(0)\circ \nabla_u L(\bar x,0)$. An application of Lemma~\ref{lem:deriv_formula} completes the proof.
\end{proof}

In light of Lemma~\ref{lem:der_soln}, for all small $u$, we define the solution $\bar x_u:=s(u)$.  
The rest of the section is devoted to proving the following theorem. We let $\EE_{P_{u}^k}$ denote the expectation with respect to $k$ i.i.d. observations $z_i\sim \cP_u$.

\begin{thm}[Local minimax]\label{thm:optimality}
Let $\cL\colon \RR^{d} \rightarrow [0,\infty)$ be symmetric, quasiconvex, and lower semicontinuous, let $\widehat x_{k}\colon \cZ^{k}\rightarrow U$ be a sequence of estimators, and set $g(x,z):=A(\bar x,z)-A(x)$. Then the inequality
  \begin{equation}
    \label{eq:4}
    \lim_{c\to\infty}\liminf_{k \rightarrow \infty} \sup_{\|u\|\leq \sqrt{c/k}} {\EE}_{P_{k,u}}\big[\cL\big(\sqrt{k} (\widehat{x}_{k} - \bar x_{u})\big)\big] \geq {\EE}\big[\cL(Z)\big]
  \end{equation}
  holds, where $Z\sim \mathsf{N}\big(0, \nabla\sigma(0) \cdot {\rm Cov}(A(\bar x,z)) \cdot \nabla\sigma(0)^{\top}\big)$.
 \end{thm}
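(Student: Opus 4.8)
The plan is to recognize the perturbed family \eqref{eqn:VIII_perturb} as a smooth finite-dimensional statistical submodel and then to invoke the classical local asymptotic minimax theorem of H\'ajek and Le Cam (see e.g. \cite[Chapters 7--8]{van2000asymptotic}). Two ingredients are needed: (i) local asymptotic normality (LAN) of the submodel $\{\cP_u:u\in\RR^d\}$ at $u=0$, with Fisher information ${\rm Cov}(A(\bar x,z))$; and (ii) differentiability at $u=0$ of the target parameter $u\mapsto \bar x_u$, which is exactly the content of Lemma~\ref{lem:der_soln}. Combining these two facts with the abstract minimax theorem produces the asserted bound, and a final bookkeeping step translates the "$\lim_{c\to\infty}\sup_{\|u\|\le\sqrt{c/k}}$" form into the "$\sup$ over finite sets" form in which the abstract theorem is usually stated.

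First I would fix the score direction by taking $g\in\cG$ to be $g(z):=A(\bar x,z)-A(\bar x)$, which indeed lies in $\cG$ since $\EE_{z\sim\cP}[g(z)]=0$ and $\EE_{z\sim\cP}\|g(z)\|^2<\infty$ by Assumption~\ref{base_assumpt}. Build the laws $\cP_u$ as in \eqref{eq:perturbation-of-the-force}. Because $h(t)=t$ for $|t|\le 1/2$, the functions $h,h',h''$ are bounded, and $\nabla C(0)=0$, $\nabla^2 C(0)=0$ (as computed within the proof of Lemma~\ref{lem:deriv_formula}), a standard truncation argument that uses \emph{only} the second moment bound $\EE_{z\sim\cP}\|g(z)\|^2<\infty$ shows that $u\mapsto\sqrt{d\cP_u/d\cP}$ is differentiable in quadratic mean at $u=0$ with score $g$. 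Quadratic mean differentiability of the single-observation experiment then upgrades (via \cite[Thm.~7.2]{van2000asymptotic}) to LAN of the i.i.d.\ product experiments:
\[
\log\frac{d\cP^{\otimes k}_{t/\sqrt k}}{d\cP^{\otimes k}_{0}}\;=\;t^\top\Delta_k-\tfrac12\,t^\top I\,t+o_{\cP^{\otimes k}}(1),\qquad
\Delta_k:=\frac{1}{\sqrt k}\sum_{i=1}^k g(z_i)\;\xrightarrow{D}\;\mathsf N(0,I),
\]
where $I:=\EE_{z\sim\cP}[g(z)g(z)^\top]={\rm Cov}(A(\bar x,z))$ (the central limit theorem applies because $\EE\|g\|^2<\infty$).

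Next I would combine LAN with the derivative of the solution map. By Lemma~\ref{lem:der_soln}, $\bar x_u=s(u)$ with $s$ differentiable at $0$ and $\nabla s(0)=-\nabla\sigma(0)\cdot\EE_{z\sim\cP}[A(\bar x,z)g(z)^\top]$; since $\EE_{z\sim\cP}[A(\bar x)\,g(z)^\top]=0$, this equals $-\nabla\sigma(0)\cdot{\rm Cov}(A(\bar x,z))=-\nabla\sigma(0)\,I$. The local asymptotic minimax theorem (valid for any symmetric, quasiconvex, lower semicontinuous loss $\cL$, after the routine reduction to $\cL\wedge M$ and the monotone limit $M\to\infty$) then gives, for every sequence of estimators $\widehat x_k$,
\[
\sup_{J}\ \liminf_{k\to\infty}\ \sup_{t\in J}\ \EE_{P_{k,t/\sqrt k}}\Big[\cL\big(\sqrt k(\widehat x_k-\bar x_{t/\sqrt k})\big)\Big]\ \ge\ \EE[\cL(Z)],\qquad Z\sim\mathsf N\big(0,\nabla s(0)\,I^{\dagger}\,\nabla s(0)^\top\big),
\]
the outer supremum ranging over finite subsets $J\subset\RR^d$. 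Since $\nabla s(0)=-\nabla\sigma(0)I$ factors through $I$, we get $\nabla s(0)\,I^{\dagger}\,\nabla s(0)^\top=\nabla\sigma(0)\,I\,\nabla\sigma(0)^\top=\nabla\sigma(0)\cdot{\rm Cov}(A(\bar x,z))\cdot\nabla\sigma(0)^\top$, so $Z$ has exactly the law claimed in the statement. Finally, the set $\{\|u\|\le\sqrt{c/k}\}$ is the image of the Euclidean ball $\{\|t\|\le\sqrt c\}$ under $t\mapsto t/\sqrt k$, and every finite $J$ lies in such a ball; letting $c\to\infty$ therefore recovers \eqref{eq:4}.

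The main obstacle is the LAN step: verifying quadratic mean differentiability carefully under only a second-moment hypothesis on $g$, which forces the truncation estimate on the event $\{\max_{i\le k}|t^\top g(z_i)|>1/2\}$ (negligible because $R^2\,\cP(\|g\|>R)\to 0$ as $R\to\infty$ when $\EE\|g\|^2<\infty$) and the use of contiguity (Le Cam's first lemma, itself a consequence of LAN) to transfer the $o_{\cP^{\otimes k}}(1)$ remainder to an $o(1)$ remainder under $\cP^{\otimes k}_{t/\sqrt k}$. A secondary point worth checking is that a possibly singular information matrix $I$ causes no difficulty: because $\nabla s(0)$ factors through $I$, the quantity $\nabla s(0)I^{\dagger}\nabla s(0)^\top$ does not depend on the choice of generalized inverse and equals the advertised covariance, so the abstract theorem applies verbatim.
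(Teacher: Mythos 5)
Your proposal is correct and follows essentially the same route as the paper's proof: fix the score $g(z)=A(\bar x,z)-A(\bar x)$, appeal to local asymptotic normality of the perturbed submodel $\{\cP_{u/\sqrt k}^{\otimes k}\}$ with precision ${\rm Cov}(A(\bar x,z))$, use Lemma~\ref{lem:der_soln} for differentiability of $u\mapsto\bar x_u$, plug both into the abstract H\'ajek--Le Cam local minimax theorem, and translate the supremum over finite sets into the supremum over shrinking balls. The only small differences are cosmetic: the paper establishes LAN by citing \cite[Lemma 8.3]{duchi2021asymptotic} rather than re-deriving quadratic-mean differentiability from scratch, and it handles a possibly singular precision matrix via the $\lambda\downarrow0$ regularization built into its stated version of the minimax theorem together with a monotone-convergence step, whereas you observe directly that $\nabla s(0)=-\nabla\sigma(0)\,{\rm Cov}(A(\bar x,z))$ factors through the information matrix so the pseudoinverse formula is unambiguous -- both resolutions are valid and yield the identical covariance $\nabla\sigma(0)\,{\rm Cov}(A(\bar x,z))\,\nabla\sigma(0)^{\top}$.
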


Theorem~\ref{thm:optimality} directly implies Theorem~\ref{thm:main_opt}. Indeed, as pointed out in \cite[Section 3.2]{duchi2021asymptotic},  for any given $C>0$, there exists $C>0$ such that the condition $\|u\|\leq \sqrt{C/k}$ implies  $\cP_{u} \in B_{c/k}$ for all sufficiently large $k$.

\subsubsection{Proof of Theorem~\ref{thm:optimality}}
The proof of Theorem~\ref{thm:optimality} will be based on the local minimax theorem of H\'ajek and Le Cam, which appears for example in \cite[Theorem 6.6.2]{le2000asymptotics}. The theorem relies on two concepts: (1) local asymptotic normality of a sequence of distributions and (2) regularity of a sequence of mappings. The exact definition of the former will not be important for us due to reasons that will be clear shortly. The latter property, however, will be crucially used.

\begin{definition}[Regular mapping sequence]
  Let $W \subset \RR^{n}$ be a neighborhood of zero. A sequence of mappings $\{\Gamma_{k}\colon W \rightarrow \RR^{d}\}$ is \emph{regular} with derivative $D \in \RR^{d \times n}$ if
  $$\lim_{k\rightarrow \infty}\sqrt{k} \big( \Gamma_{k}(u) - \Gamma_{k}(0)\big) = D u \qquad \text{for all } u \in W$$
\end{definition}

We can now state the local minimax theorem.

\begin{thm}\label{thm:localminimax}
  Let $\{\Omega_{k}, \cF_{k}, Q_{{k,u}}\}_{u \in \RR^d}$ be a locally asymptotically normal family with precision $Q \succeq 0$ and let $\{\Gamma_{k}\colon \RR^d\rightarrow \RR^{d}\}$ be regular sequence with derivative $D$. Let $\cL\colon \RR^{d} \rightarrow [0,\infty)$ be symmetric, quasiconvex, and lower semicontinuous. Then, for any sequence of estimators $T_{k}\colon \Omega_{k} \rightarrow \RR^{d}$, we have
  \begin{equation}\label{eq:lecam}
  \sup_{U_{0} \subset \RR^d, |U_{0}| < \infty}\liminf_{k \rightarrow \infty} \max_{u \in U_{0}}\, {\EE}_{Q_{k, u}}  \big[\cL \big(\sqrt{k}\big(T_{k} - \Gamma_{k}(u)\big)\big)\big] \geq {\EE}\big[\cL(Z)\big],
  \end{equation}
  where $Z \sim \mathsf{N}\big(0, D Q^{{-1}}D^{\top}\big)$ when $Q$ is invertible; if $Q$ is singular, then \eqref{eq:lecam} holds with $Z \sim \mathsf{N}\big(0, D (Q + \lambda I)^{{-1}}D^{\top}\big)$ for all $\lambda>0$.
  \end{thm}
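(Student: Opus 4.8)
The plan is to prove this classical local asymptotic minimax theorem by Le Cam's ``limits of experiments'' method: the localized families $\{Q_{k,u}\}_{u\in\RR^d}$ converge to a Gaussian shift experiment, so the asymptotic minimax risk of estimating $\Gamma_k(u)$ reduces to an exact minimax computation for estimating the linear functional $u\mapsto Du$ from a single Gaussian observation, which I then carry out using a Gaussian prior and Anderson's lemma.

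\emph{Reduction to a Gaussian shift experiment.} Local asymptotic normality gives the quadratic expansion $\log(dQ_{k,u}/dQ_{k,0}) = u^{\top}\Delta_k - \tfrac12 u^{\top}Qu + o_{Q_{k,0}}(1)$ with a central sequence $\Delta_k \xrightarrow{d}\mathsf{N}(0,Q)$ under $Q_{k,0}$. Fix a sequence of estimators $T_k$ and set $S_k := \sqrt{k}(T_k-\Gamma_k(0))$; regularity of $\{\Gamma_k\}$ gives $\sqrt{k}(\Gamma_k(u)-\Gamma_k(0))\to Du$, so by Slutsky $\sqrt{k}(T_k-\Gamma_k(u))$ and $S_k-Du$ have the same weak limit under $Q_{k,u}$, and it suffices to show $\sup_{U_0}\liminf_k\max_{u\in U_0}\EE_{Q_{k,u}}[\cL(S_k-Du)]\ge\EE[\cL(Z)]$. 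Suppose not. Then along an increasing exhaustion of a countable dense set of shifts one extracts, by a diagonal argument, a single subsequence and -- using Le Cam's third lemma together with the asymptotic representation theorem (\cite[Ch.~6]{le2000asymptotics}, \cite[Ch.~9]{van2000asymptotic}) -- a possibly randomized estimator $\widetilde T(X)$ based on one observation $X\sim\mathsf{N}(u,Q^{-1})$ (with the limiting experiment interpreted via the regularization $Q+\lambda I$ when $Q$ is singular) whose Gaussian risk $R(u):=\EE_{\mathsf{N}(u,Q^{-1})}[\cL(\widetilde T-Du)]$ satisfies $R(u)<\EE[\cL(Z)]$ on the dense set, hence, by lower semicontinuity of $R$ (Fatou, from lower semicontinuity of $\cL$ and continuity of the Gaussian densities), on all of $\RR^d$. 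The portmanteau lemma is applied here to the bounded losses $\cL\wedge M$, after which $M\to\infty$ by monotone convergence accommodates the possibly unbounded $\cL$.

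\emph{The Gaussian lower bound.} It remains to contradict $\sup_u R(u)<\EE[\cL(Z)]$ by proving $\sup_u R(u)\ge\EE[\cL(Z)]$ for any randomized estimator $\widetilde T$ of $Du$ from $X\sim\mathsf{N}(u,Q^{-1})$. Bound the supremum below by the Bayes risk under the prior $U\sim\mathsf{N}(0,\lambda^{-1}I)$: then $(U,X)$ are jointly Gaussian, and the conditional law of $DU$ given $X$ is Gaussian with some mean $m(X)$ and covariance $D(Q+\lambda I)^{-1}D^{\top}$ that does not depend on $X$. Conditioning on $X$ and invoking Anderson's lemma -- for symmetric, quasiconvex, lower-semicontinuous $\cL$ and a centered Gaussian $W$ one has $\EE[\cL(W+a)]\ge\EE[\cL(W)]$ for every $a$ -- the conditional risk is minimized at $m(X)$, so $\EE[\cL(\widetilde T-DU)]\ge\EE[\cL(Z_\lambda)]$ with $Z_\lambda\sim\mathsf{N}(0,D(Q+\lambda I)^{-1}D^{\top})$. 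If $Q$ is invertible, sending $\lambda\downarrow 0$ and using weak convergence together with lower semicontinuity of $\cL$ upgrades this to $\EE[\cL(Z)]$; if $Q$ is singular, keeping $\lambda>0$ fixed gives exactly the conclusion stated there. Either way this contradicts the strict inequality produced in the reduction step, establishing \eqref{eq:lecam}.

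\emph{Anticipated main obstacle.} The Gaussian computation is classical and short; the real work is the reduction step. One must handle randomized estimator sequences, control the unbounded loss via truncation and monotone limits, and -- most delicately -- convert the finite-set supremum on the left of \eqref{eq:lecam} into a single subsequence along which the asymptotic representation applies to all shifts simultaneously, via diagonalization over a countable dense set, while coping with the degeneracy of the limiting experiment when $Q$ is singular. This is precisely the machinery behind \cite[Theorem~6.6.2]{le2000asymptotics}, which is the cited source of the statement, so in practice the cleanest route is to invoke that theorem directly and supply only the verification that our hypotheses match its framework.
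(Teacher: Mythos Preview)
The paper does not prove this theorem; it is stated as the classical local asymptotic minimax theorem of H\'{a}jek and Le Cam, attributed to \cite[Theorem~6.6.2]{le2000asymptotics}, and then used as a black box in the derivation of Theorem~\ref{thm:optimality}. Your proposal therefore goes strictly beyond what the paper does by sketching the standard proof via convergence to the Gaussian shift experiment, the asymptotic representation theorem, and Anderson's lemma---which is indeed the route in the cited reference, as you yourself note in your final paragraph. Since the paper offers no argument of its own here, there is nothing to compare against; your instinct to simply invoke \cite[Theorem~6.6.2]{le2000asymptotics} and verify that the hypotheses match is exactly what the paper does.

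One minor caution on your sketch: in the reduction step you write that lower semicontinuity of $R$ propagates the strict inequality $R(u)<\EE[\cL(Z)]$ from a dense set to all of $\RR^d$, but lower semicontinuity only yields $R(u)\le\liminf R(u_n)$, so you would get $\le$ rather than $<$. This is harmless for the eventual contradiction (since the Bayes bound gives $\sup_u R(u)\ge\EE[\cL(Z_\lambda)]$ with equality possible), but the cleaner route---and the one actually taken in the textbook proofs---avoids contradiction altogether and lower-bounds the left side of \eqref{eq:lecam} directly by the Bayes risk in the limit experiment via the representation theorem.
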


We aim to apply Theorem~\ref{thm:localminimax} as follows.
For each $u$, we take $(\Omega_{k}, \cF_{k}, Q_{{k,u}})$ to be the $k$-fold product of the probability spaces $(\Omega, \mathcal{F},\cP_{u/\sqrt{k}})$ and set $\Gamma_k(u)=s(u/\sqrt{k})$. It was shown in \cite[Lemma 8.3]{duchi2021asymptotic} that the the sequence $\{\Omega_{k}, \cF_{k}, Q_{{k,u}}\}_{u \in \R^d}$ is locally asymptotically normal with precision  $D=\underset{z \sim \cP}{\mathop{\EE}}\big[g(z)g(z)^{\top}\big].$ Moreover, the following lemma establishes regularity of the sequence $\Gamma_k$.

\begin{lem} \label{lem:smooth-kappa}
  The sequence $\Gamma_k\colon \RR^d\to \RR^d$ is regular with derivative $- \nabla\sigma(0)\cdot \mathop\mathbb{E}_{z\sim \cP}[g(z)A(\bar x,z)^{\top}]$.
\end{lem}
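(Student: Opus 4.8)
The plan is to unwind the definition of a regular mapping sequence and reduce everything to the differentiability of the solution localization $s(\cdot)$ already established in Lemma~\ref{lem:der_soln}. By definition, I must show that $\sqrt{k}\big(\Gamma_k(u)-\Gamma_k(0)\big)=\sqrt{k}\big(s(u/\sqrt{k})-s(0)\big)$ converges, as $k\to\infty$, to $Du$ for every $u$ in some fixed neighborhood $W$ of the origin, where $D$ is the claimed derivative. Since $s$ is only a single-valued localization near $0$, I would first shrink $W$ to a ball contained in the domain of $s$; then $u/\sqrt{k}\in W$ for every $u\in W$ and every $k\ge 1$, so $\Gamma_k(u)$ is well defined and the statement makes sense.

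The core step is a one-line rescaling argument. Lemma~\ref{lem:der_soln} asserts that $s$ is differentiable at $0$, so $s(h)=s(0)+\nabla s(0)h+o(\|h\|)$ as $h\to 0$. Substituting $h=u/\sqrt{k}$, which tends to $0$, and multiplying through by $\sqrt{k}$ yields $\sqrt{k}\big(s(u/\sqrt{k})-s(0)\big)=\nabla s(0)u+\sqrt{k}\cdot o(\|u\|/\sqrt{k})\to\nabla s(0)u$. Hence $\{\Gamma_k\}$ is regular with derivative $\nabla s(0)$.

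It then remains to identify $\nabla s(0)$ with the matrix in the statement. Lemma~\ref{lem:der_soln} gives $\nabla s(0)=-\nabla\sigma(0)\cdot\EE_{z\sim\cP}[A(\bar x,z)g(z)^{\top}]$, where in the setting of Theorem~\ref{thm:optimality} the centering function is $g(z)=A(\bar x,z)-A(\bar x)$, which lies in $\cG$ because $\EE_{z\sim\cP}[g(z)]=0$. Since $g$ has mean zero, $\EE_{z\sim\cP}[A(\bar x,z)g(z)^{\top}]=\EE_{z\sim\cP}[g(z)g(z)^{\top}]=\EE_{z\sim\cP}[g(z)A(\bar x,z)^{\top}]={\rm Cov}(A(\bar x,z))$, so the two transposed forms coincide and $\nabla s(0)=-\nabla\sigma(0)\cdot\EE_{z\sim\cP}[g(z)A(\bar x,z)^{\top}]$, exactly as claimed.

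I do not expect a genuine obstacle here: all the analytic content---existence of the single-valued localization $s$, its differentiability, and the formula for $\nabla s(0)$---is packaged in Lemma~\ref{lem:der_soln}, which itself rests on the implicit/inverse function theorem apparatus of \cite{dontchev2009implicit}. The only points requiring care are the domain bookkeeping (ensuring $u/\sqrt{k}$ stays in the region where $s$ is single-valued) and the harmless transpose conversion between $\EE_{z\sim\cP}[A(\bar x,z)g(z)^{\top}]$ and $\EE_{z\sim\cP}[g(z)A(\bar x,z)^{\top}]$, both of which equal the symmetric matrix ${\rm Cov}(A(\bar x,z))$.
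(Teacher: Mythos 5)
Your proof follows the paper's argument exactly: expand $s$ to first order, substitute $u/\sqrt{k}$, rescale by $\sqrt{k}$, and pass to the limit using the differentiability of $s$ at $0$ furnished by Lemma~\ref{lem:der_soln}. You additionally make explicit---correctly, via the mean-zero property of $g$ and the resulting symmetry of $\EE[g(z)g(z)^{\top}]$---why the transposed form $\EE[g(z)A(\bar x,z)^{\top}]$ appearing in the lemma statement agrees with the form $\EE[A(\bar x,z)g(z)^{\top}]$ produced by Lemma~\ref{lem:der_soln}, a reconciliation the paper's proof silently elides.
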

\begin{proof}
Using Lemma~\ref{lem:der_soln}, a first-order expansion of $s(\cdot)$ around $\bar x$ yields 
$$\sqrt{k}(\Gamma_k(u)-\Gamma_k(0))=\sqrt{k}(s(u/\sqrt{k})-\bar x)=- \nabla\sigma(0)\cdot \mathop\mathbb{E}_{z\sim \cP}[g(z)A(\bar x,z)^{\top}] u+\frac{o(k^{-1/2})}{k^{-1/2}}.$$
Letting $k$ tends to infinity completes the proof.
\end{proof}

We now apply Theorem~\ref{thm:localminimax}. Let $\cL\colon \RR^{d} \rightarrow [0,\infty)$ be symmetric, quasiconvex, and lower semicontinuous, $\widehat x_{k}\colon \cZ^{k}\rightarrow \RR^{d}$ be a sequence of estimators, and $c>0$. Set 
\begin{align*}
g(z)&:=A(\bar x,z)-A(\bar x),\\
\Sigma&:=\underset{z \sim \cP}{\mathop{\EE}}\big[g(z)g(z)^{\top}\big],\\
W&:=\nabla\sigma(0).
\end{align*}
To connect the inequality \eqref{eq:lecam} to \eqref{eq:4}, observe that for any finite subset $U_0\subset \RR^d$, we have
  \begin{equation*}
\liminf_{k \rightarrow \infty} \sup_{\|u\|\leq \sqrt{c/k}} {\EE}_{P_{k,u}}\big[\cL\big(\sqrt{k} (\widehat{x}_{k} - \bar{x}_{u})\big)\big] \geq \liminf_{k \rightarrow \infty}\, \max_{u\in U_0}\, {\EE}_{Q_{k,u}}\big[\cL\big(\sqrt{k} \big(\widehat{x}_{k} - \Gamma_k(u)\big)\big)\big].
  \end{equation*}
  where $c=\max_{u\in U_0}\|u\|^2$.
Taking the supremum over all finite $U_0\subset \RR^d$ and applying Theorem~\ref{thm:localminimax}  yields
\begin{equation}\label{eq:4-almost}
	\lim_{c\to \infty}\liminf_{k \rightarrow \infty} \sup_{\|u\|\leq \sqrt{c/k}} {\EE}_{P_{k,u}}\big[\cL\big(\sqrt{k} (\widehat{x}_{k} - \bar x_{u})\big)\big] \geq {\EE}\big[\cL(Z_{\lambda})\big]
\end{equation}
where $Z_{\lambda} \sim \mathsf{N}\big(0,W\Sigma(\Sigma + \lambda I)^{{-1}} \Sigma^{\top} W^{\top}\big)$ for any $\lambda > 0$. Basic linear algebra shows 
$$\lim_{\lambda\downarrow0} \Sigma\big(\Sigma + \lambda I\big)^{{-1}}\Sigma =\Sigma.$$
A straightforward argument based on the monotone convergence theorem (see e.g. \cite[Section 5.1.2]{cutler2022stochastic}) therefore implies that the right side of 
\eqref{eq:4-almost} tends to ${\EE}\big[\cL(Z)\big]$ as $\lambda\downarrow 0$, where $Z\sim \mathsf{N}(0,W\Sigma W^{\top})$.
The proof is complete.

\section{Proofs from Section~\ref{assumptions:for_algos}}

\subsection{Proof of Lemma~\ref{lem:basic_level_bound}}\label{sec:lemma_bounded_app}
Throughout the proof, let $\epsilon>0$ and $L$ be such that 
$$\max\{\|s_g(x)\|, \|A(x)\|\}\leq L\qquad \forall x\in B_{\epsilon}(\bar x).$$
To see  Claim~\eqref{lb:2}, for all $x$ sufficiently close to $\bar x$, we compute 
\begin{align*}
\alpha\|G_{\alpha}(x, \perturb)\|&=\|x-s_f(x-\alpha(A(x)+s_g(x)+\nu))\|\\
&\leq \dist_{\cX}(x-\alpha(A(x)+s_g(x)+ \perturb))+\alpha\|A(x)+s_g(x)+ \perturb\|\\
&\leq 2\alpha\|A(x)+s_g(x)+ \perturb\|\\
&=  2\alpha (2L +\|\perturb\|).
\end{align*}

Throughout the rest of the proof, we set $x_+:=x-\alpha G_{\alpha}(x,\nu)$.
We now verify~Claim~\ref{lb:3}. To this end, suppose that $f$ is convex and by increasing $L$ we may ensure $\dist(0,\partial f(x))\}\leq L$ for all $x\in B_{\epsilon}(\bar x)\cap \dom f.$
Choose a vector $z\in \partial f(x)$ of minimal length. Algebraic manipulations show 
$x=\prox_{\alpha f}(x+\alpha z)$. Since $\prox_{\alpha f}$ is nonexpansive, for all $x\in B_{\epsilon}(\bar x)$, we deduce 
\begin{align*}
\alpha\|G_{\alpha}(x, \perturb)\|&=\|x-\prox_{\alpha f}(x-\alpha(A(x)+s_g(x)+\nu))\|\\
&\leq \alpha \|A(x)+s_g(x)+z+\nu\|\\
&\leq \alpha(3L+\|\nu\|),
\end{align*}
as claimed.

We next verify Claim~\ref{lb:4}. To this end, suppose that $f$ is $L$-Lipschitz continuous on $\dom g\cap \dom f$.  Set $w := A(x) + s_g(x)+\perturb$ and observe that the very definition of $x_+$ ensures
\begin{align*}
\frac{1}{2\alpha}\|x_+ - x\|^2  &\leq f(x) - f(x_+) - \dotp{w, x_+ - x} \\
&\leq L\|x_+ - x\| + \|w\|\|x_+ -x\|.
\end{align*}
Consequently, for all $x\in B_{\epsilon}(\bar x)$ we have 
$
\alpha^{-1}\|x_+ - x\| \leq 2(L+\|w\|) \leq 2(3\localconstant + \|\perturb\|),
$
as desired.

\subsection{Proof of Lemma~\ref{lem:loc_tan_redux_app}}\label{sec:loc_tan_app}
Strong $(a)$-regularity implies the equalities, $P_{T_{\cM}(x)}\partial f(x)=\{\nabla_{\cM} f(x)\}$  and $P_{T_{\cM}(x)}\partial g(x)=\{\nabla_{\cM} g(x)\}$, for all $x\in \cM$ near $\xs$. The claimed expression~\eqref{eqn:simple_F} follows immediately. 
Next, Lemma~\ref{lem:smooth_reduct} implies
that $\gph \partial (f + g)$ coincides with $\gph [\nabla_{\cM} (f+g)+N_{\cM}]$ around $(\xs,-A(\xs))$. Thus locally around $(\xs,0)$ equalities hold:
\begin{align*}
\gph [A+\partial (f + g)]&=\gph [A+\nabla_{\cM} f+\nabla_{\cM} g+N_{\cM}]=\gph (F_{\cM}+N_{\cM}),
\end{align*}
as claimed.

\subsection{Proof of Proposition~\ref{prop:subgradient}}
\begin{proof}
Fix $x\in \cX$, $\alpha>0$, and $\nu\in \R^d$.  Assumption~\ref{assumption:localbound} holds trivially, and follows for example from Lemma~\ref{lem:basic_level_bound}(\ref{lb:2}). In order to verify Assumption~\ref{assumption:smoothcompatibility}, we compute
\begin{align*}
\|P_{\tangentM{P_{\cM}(x)}}(G_\alpha(x, \perturb) - F(P_{\cM}(x)) - \nu)\| &=  \|P_{\tangentM{P_{\cM}(x)}}(A(x)+s_g(x) - F(P_{\cM}(x)))\|.
\end{align*}
Therefore, for $x$ sufficiently close to $\bar x$ we may upper bound the right-hand-side as  
\begin{align*}
 &\|P_{\tangentM{P_{\cM}(x)}}[s_g(x)-\nabla_{\cM} g(P_{\cM}(x))]+P_{\tangentM{P_{\cM}(x)}}[A(x)-A(P_{\cM}(x))]\|\\
 &\leq C\cdot \dist(x,\cM),
\end{align*}
where the inequality follows from~\ref{assumption:projectedgradient:stronga0} and local Lipschitz continuity of $A(\cdot)$. Thus Assumption~\ref{assumption:smoothcompatibility} holds. Finally, to see Assumption~\ref{assumption:aiming}, we compute
\begin{align*}
\dotp{G_\alpha(x, \perturb) - \nu, x - P_{\cM}(x)}&=\dotp{A(\bar x)+s_g(x), x - P_{\cM}(x)}+\dotp{A(x)-A(\bar x), x - P_{\cM}(x)}\\
&\geq \mu\cdot \dist(x,\cM)-\|A(x)-A(\bar x)\|\cdot\dist(x,\cM)\\
&\geq \tfrac{\mu}{2}\cdot \dist(x,\cM),
\end{align*}
for all $x$ sufficiently close to $\bar x$. The proof is complete.
\end{proof}

\subsection{Proof of Proposition~\ref{prop:projectedgradient}}
Choose $\epsilon > 0$ small enough that the following hold for all $x \in B_{\epsilon}(\bar x) \cap \cX$. First \eqref{prop:projectedgradient:eq:aiming} holds. In particular, since $A(\cdot)$ is locally Lipschitz near $\bar x$, we may be sure that 
\begin{equation}\label{prop:projectedgradient:eq:aiming3}
\dotp{A(x)+v, x - P_{\cM}(x)} \geq \frac{\mu}{2}\cdot  \dist(x, \cM),
\end{equation}
for all $v \in \partial g(x)$.
 Second we require that for some $L > 0$, we have
\begin{align}
\|P_{\tangentM{P_{\cM}(x)}}(s_g(x) - \nabla_{\cM} g(P_{\cM}(x))\| &\leq L\cdot \dist(x, \cM),\label{eqn:dumbo0}\\
\|P_{\tangentM{z}}(u)\| &\leq L \|x-z\|, \label{eqn:dumbo}
\end{align}
for all $u \in N_{\cX}(x)$ of unit norm and all $z\in B_{\epsilon}(\bar x)\cap\cM$, a consequence of \ref{assumption:projectedgradient:stronga}. Third, we may choose $\epsilon>0$ so small so that
\begin{equation}\label{eqn:blah_prox}
\dotp{z, x - x'} \geq o(\|x-x'\|) 
\end{equation}
for all $z \in N_{\cX}(x)$ of unit norm, and $x'\in \cM\cap B_{\epsilon}(\bar x)$---a consequence of \ref{assumption:projectedgradient:bproxregularity}. 
We will fix $x\in B_{\epsilon/2}(\bar x)\cap \cX$ and arbitrary $\alpha > 0$ and $\nu \in \RR^d$, and choose an arbitrary $y\in\proj_{\cM}(x)$. Define 
\begin{align*}
w = G_\alpha(x, \perturb) - \perturb- A(x)-s_g(x)  \qquad \text{ and } 
\qquad x_+ = s_\cX(x - \alpha(A(x)+s_g(x)+ \perturb)). 
\end{align*}
Note the inclusion $w\in N_{\cX}(x^+)$. Moreover, shrinking $\epsilon>0$ Assumption~\ref{assumption:localbound} directly implies  
$$
\max\{\|w\|,\|G_{\alpha}(x, \perturb)\|\} \leq C(1 + \|\perturb\|)  \qquad \text{and} \qquad \|x_+ - x\| \leq C(1 + \|\perturb\|)\alpha,
$$
for some constant $C>0$.
We will use these estimates often in the proof. Finally, we let $C$ be a constant independent of $x, \alpha$ and $\nu$, which changes from line to line.

\noindent\underline{Assumption~\ref{assumption:smoothcompatibility}:}
Suppose first that $x_+\in B_{\epsilon}(\bar x)$. Using \eqref{eqn:dumbo}, we compute
\begin{align}\label{eq:projgradientverdierbound1}
\|P_{\tangentM{y}}w\|&\leq L\|w\| \|x_+ - y\|\notag \\
&\leq L\|w\|(\|x_+ - x\| + \dist(x, \cM)) \notag \\
&\leq \localconstant(1+ \|\perturb\|)^2\alpha + \localconstant(1 + \|\perturb\|)\dist(x, \cM).
\end{align}
On the other hand, if $x_+\notin B_{\epsilon}(\bar x)$, then we compute \begin{align}\label{eq:projgradientverdierbound3}
\|P_{\tangentM{y}}w\| \leq \|w\| \leq \frac{2}{\epsilon}\|w\|\|x_+ - x\| \leq \localconstant(1 + \|\perturb\|)^2\alpha.
\end{align}
In either case, Assumption~\ref{assumption:smoothcompatibility} now follows since from \eqref{eqn:dumbo0} we have
\begin{align*}
\|P_{\tangentM{y}}(A(x)+s_g(x) - F(y)\|&\leq
\|P_{\tangentM{y}}( \nabla_\cM g(y) -s_g(x))\|\\
&~~~+\|P_{\tangentM{y}}[A(x)-A(y)]\| \\
&\le \localconstant \dist(x, \cM),
\end{align*}
as we had to show.

\noindent\underline{Assumption~\ref{assumption:aiming}:} We write the decomposition
\begin{equation}\label{eqn:decomp}
\langle G_{\alpha}(x,\nu)-\nu, x-y\rangle=\underbrace{\langle A(x)+s_g(x),x-y\rangle}_{R_1}+\underbrace{\langle w,x_+-y\rangle}_{R_2}+\underbrace{\langle w,x-x_+\rangle}_{R_3}.
\end{equation}
The estimate \eqref{prop:projectedgradient:eq:aiming3} gives 
\begin{equation}\label{eqn:dumb3}
\begin{aligned}
R_1& \geq \tfrac{\mu}{2}\cdot\dist(x,\cM).
\end{aligned}
\end{equation}
We next look at two cases. Suppose first $x_+\in B_{\epsilon}(\bar x)$.
Using the inclusion $w\in N_{\cX}(x_+)$ and \eqref{eqn:blah_prox}, we compute 
\begin{align}
R_2\geq \|w\|\cdot o(\|x_+-y\|)&\geq \|w\|\cdot (o(\|y-x\|)-\|x-x_+\|)\notag\\
&\geq -C(1+\|\nu\|)^2(o(\dist(x,\cM))+\alpha)\label{eqn:dumb1}.
\end{align}
Next, the Cauchy–Schwarz inequality implies
\begin{equation}\label{eqn:dumb2}
|R_3|\leq\|w\| \|x-x_+\|\leq C(\alpha(1+\|\nu\|)^2).
\end{equation}
 Combining \eqref{eqn:decomp}-\eqref{eqn:dumb2} yields the claimed bound \ref{assumption:aiming}.

Suppose now on the contrary that $x_+\notin B_{\epsilon}(\bar x)$ and therefore $\|x-y\|\leq \|x-x_+\|$. We thus deduce $R_2+R_3=\langle w,x-y\rangle\geq -\|w\|\|x-y\|\geq -C\alpha(1+\|\nu\|)^2$ holds. Combining this estimate with \eqref{eqn:decomp} and \eqref{eqn:dumb3} verifies the claim \ref{assumption:aiming}.

\subsection{Proof of Proposition~\ref{prop:proximalgradient}}
Let $\epsilon \in (0,1)$ be small enough such that the following hold for all $x \in B_{\epsilon}(\bar x) \cap \dom f$. First \eqref{prop:proximalgradient:eq:aiming} holds and therefore taking into account Lipschitz continuity of $A(\cdot)$, we may equivalently write 
\begin{equation}\label{eqn:angle}
\dotp{A(x)+v, x - P_{\cM}(x)} \geq \frac{\mu}{2}\cdot  \dist(x, \cM) - (1+\|v\|)o(\dist(x, \cM)),
\end{equation}
for all $v \in \hat\partial f(x)$.
Second we require that for some $L > 0$, we have
\begin{equation}\label{eqn:basic_est_needed}
\|P_{\tangentM{P_{\cM}(x)}}(u - \nabla_{\cM} f(P_{\cM}(x))\| \leq L\sqrt{1+ \|u\|^2}\cdot\dist(x, \cM)
\end{equation}
for all $u \in \partial f(x)$, a consequence of strong (a) regularity. Third, we assume that $\nabla_{\cM} f$ is $L$-Lipschitz on $B_{\epsilon}(\bar x) \cap \cM$. Fourth, we assume that $A(\cdot)$ is $L$-Lipschitz. Shrinking $\epsilon$ we may moreover assume $\epsilon\leq \frac{\mu}{8L}$.
Finally, we may also assume that the maps $x\mapsto P_{\cM}(x)$ and $x\mapsto P_{T_{\cM}(P_{\cM}(x))}$ are Lipschitz continuous on $B_{\epsilon}(\bar x)$.

Fix $x \in B_{\epsilon/2}(\bar x)\cap \dom f$ and $\nu \in \RR^d$ and set $y:=P_{\cM}(x)$. We define the vectors 
\begin{align*}
w = G_\alpha(x, \perturb) - A(x) - \perturb  \qquad \text{ and } 
\qquad x_+ = s_f(x - \alpha(A(x)+ \perturb)). 
\end{align*}
Simple algebraic manipulations show the inclusion $w \in \hat \partial f(x_+)$. Moreover, Assumption~\ref{assumption:localbound} directly implies  
$$
\max\{\|w\|,\|G_{\alpha}(x, \perturb)\|\} \leq C(1 + \|\perturb\|)  \qquad \text{and} \qquad \|x_+ - x\| \leq C(1 + \|\perturb\|)\alpha.
$$
We will use these estimates often in the proof. Finally, we let $C$ be a constant independent of $x, \alpha$ and $\nu$, which changes from line to line.

\noindent\underline{Assumption~\ref{assumption:smoothcompatibility}:} 
First suppose $x_+\in B_{\epsilon}(\bar x)$. Using the triangle inequality, we write 
\begin{align*}
&\|P_{\tangentM{P_{\cM}(x)}}(G_\alpha(x, \perturb) - F(P_{\cM}(x)) - \nu)\|\\
&=\|P_{\tangentM{P_{\cM}(x)}}(w+A(x)- A(P_{\cM}(x))-\nabla_{\cM} f(P_{\cM}(x)))\|\\
&\leq \underbrace{\|P_{\tangentM{P_{\cM}(x)}}(w-\nabla_{\cM} f(P_{\cM}(x_+)))\|}_{R_1}+\underbrace{\|A(x)- A(P_{\cM}(x))\|}_{R_2}+\underbrace{\|\nabla_{\cM} f(P_{\cM}(x)))-\nabla_{\cM} f(P_{\cM}(x_+)))\|}_{R_3}.
\end{align*}
Using the triangle inequality and the estimate \eqref{eqn:basic_est_needed} we deduce
\begin{align*}
R_1&\leq \|P_{\tangentM{P_{\cM}(x_+)}}(w-\nabla_{\cM} f(P_{\cM}(x_+)))\|+\|P_{\tangentM{P_{\cM}(x_+)}}-P_{\tangentM{P_{\cM}(x)}}\|_{\rm op}\cdot \|w-\nabla_{\cM} f(P_{\cM}(x_+))\|\\
&\leq C(1+\|w\|)\cdot\dist(x_+,\cM)+C\|x-x_+\|\cdot(1+\|w\|)\\
&\leq C(1+\|\nu\|)\dist(x_+,\cM)+C(1+\|\nu\|)^2\alpha\\
&\leq C(1+\|\nu\|)(\dist(x,\cM)+C\|x-x_+\|)+C(1+\|\nu\|)^2\alpha\\
&\leq C(1+\|\nu\|)\dist(x,\cM)+C(1+\|\nu\|)^2\alpha.
\end{align*}
Moreover, clearly we have
$R_2\leq C\dist(x,\cM)$
and 
$R_3\leq C\|x-x_+\|\leq (1+\|\nu\|)\alpha.$ Condition~\ref{assumption:smoothcompatibility} follows immediately.

 Now suppose that $x_+ \notin B_{\epsilon}(\bar x)$, and therefore $\|x_+ - x\| \geq \epsilon/2$. Then, we may write
\begin{align*}
\|P_{\tangentM{P_{\cM}(x)}}(G_{\alpha}(x, \perturb)- \perturb - \nabla f_{\cM}(P_{\cM}(x)))\| &\leq \|G_{\alpha}(x, \perturb)\| +  \|\perturb\| +  \|\nabla f_{\cM}(P_{\cM}(x))\|\\
&\leq \frac{2}{\epsilon}(\|G_{\alpha}(x, \perturb)\| +  \|\perturb\| +  \|\nabla f_{\cM}(P_{\cM}(x))\|)\|x - x_+\| \\
&\leq C(1+\|\perturb\|)^2\alpha,
\end{align*}
as desired.

\noindent\underline{Assumption~\ref{assumption:aiming}:} We begin with the decomposition 
\begin{align*}
\langle G_{\alpha}(x,\nu)-\nu, x-y\rangle=&\underbrace{\langle A(x_+)+w,x_+-P_{\cM}(x_+)\rangle}_{R_1}\\
&+\underbrace{\langle A(x)-A(x_+),x-y\rangle}_{R_2}+\underbrace{\langle A(x_+)+w, (x-P_{\cM}(x))-(x_+-P_{\cM}(x_+))\rangle}_{R_3}.
\end{align*}
We now bound the two terms on the right in the case $x_+\in B_{\epsilon}(\bar x)$. Using \eqref{eqn:angle}, we estimate
\begin{align*}
R_1&\geq \tfrac{\mu}{2}\cdot  \dist(x_+, \cM) - (1+\|v\|)o(\dist(x_+, \cM))\\
&\geq \tfrac{\mu}{2}\cdot  (\dist(x, \cM)-\|x-x_+\|) - (1+\|v\|)(o(\dist(x, \cM)) +\|x-x_+\|)\\
&\geq \tfrac{\mu}{2}  \cdot\dist(x, \cM)-(1+\|\nu\|)^2(o(\dist(x, \cM))+C\alpha).
\end{align*}
Next, we compute
$$|R_2|\leq \|A(x)-A(x_+)\|\cdot\dist(x,\cM)\leq 2L\epsilon\cdot \dist(x,\cM)\leq \frac{\mu}{4} \dist(x,\cM).$$
Next using Lipschitz continuity of the map $I-P_{\cM}$ on $B_{\epsilon}(\bar x)$,  we deduce
$$|R_3|\leq C\|A(x_+)+w\|\cdot\|x-x_+\|\leq  C(1+\|\nu\|)^2\alpha.$$
The claimed proximal aiming condition follows immediately with $\mu$ replaced by $\mu/4$.

Let us look now at the case $x_+ \notin B_{\epsilon}(\bar x)$, and therefore $\dist(x,\cM)\leq \frac{\epsilon}{2}\leq  \|x-x^+\|$.
Then we compute \begin{align*}
\dotp{G_{\alpha}(x, \perturb) - \perturb, x - P_{\cM}(x)} &\geq -\dist(x,\cM)\cdot \|G_{\alpha}(x, \perturb) - \perturb\|\\
&= \dist(x, \cM) - \dist(x, \cM)(1 + \|G_{\alpha}(x, \perturb) - \perturb\|) \\
&\geq\dist(x, \cM) - C\|x - x_+\| (1 + C(1+\|\perturb\|)) \\
&\geq \dist(x, \cM) -C(1+\|\perturb\|)^2 \alpha,
\end{align*}
as desired. The proof is complete.

\section{Proof of Theorem~\ref{thm: asymptotic normality proposed}}\label{sec:proofsofstochastic}
We now outline common notation and conventions used in the proof. We let $\mathcal{U}$ be the neighborhood $\bar x$ where all the standing assumptions hold.
Shrinking $\mathcal{U}$, we may assume that the projection map $P_{\cM}$ is $C^2$ in $\mathcal{U}$, and in particular $P_{\cM}$ is Lipschitz with Lipschitz Jacobian. Throughout, the proof we shrink $\mathcal{U}$ several times, when needed. 

Now, denote stopping time~\eqref{def:stoppingtime} by $\tau := \tau_{k_0, \delta}$ and the noise bound by $Q := \sup_{x \in B_{\delta}(\bar x)} q(x)$. Observe that by Proposition~\ref{prop:shadow}, the shadow sequence $y_k $ satisfies $y_k \in B_{4\delta}(x_k) \cap \cM \subseteq B_{\epsilon}(\bar x) \cap \cM$ and recursion~\eqref{eqn:shadow:eq:iteration} holds. In addition, we let $C$ denote a constant depending on $k_0$ and $\delta$, which may change from line to line.

\subsection{Improved rates near strong local minimizers}\label{thm:localminimizersproofs}
As the first step, we obtain an improved rate of convergence under the growth condition \eqref{eqn:strong_growth}. To this end, we first need the following Lemma ensuring that $F_{\cM}$ has sufficient curvature in $B_{2\delta}(\bar x)$.
\begin{lem}[Curvature]\label{thm:localminimizers:lemma:strongconvexity}
The estimate
$$
\dotp{F_\cM (y), y- \bar x} \ge \frac{\mu}{2}\norm{y-\bar x}^2, 
$$
holds for all $x\in \cM$ sufficiently close to $\bar x$.
\end{lem}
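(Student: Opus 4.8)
The plan is to reduce the estimate to a first-order Taylor expansion of the smooth map $F_\cM$ along $\cM$ at $\bar x$ and then invoke the curvature hypothesis~\eqref{eqn:strong_growth}. The first step is to record that $F_\cM(\bar x)=0$: since $\bar x$ satisfies $0\in F(\bar x)$, Assumption~\ref{it:smooth_reduction} gives $0\in F_\cM(\bar x)+N_\cM(\bar x)$, i.e.\ $-F_\cM(\bar x)\in N_\cM(\bar x)$; but $F_\cM(\bar x)\in T_\cM(\bar x)$ by the definition of $F_\cM$, and $T_\cM(\bar x)\cap N_\cM(\bar x)=\{0\}$, so $F_\cM(\bar x)=0$.

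Next I would take any $C^1$-smooth extension $\hat F$ of $F_\cM$ to a neighborhood of $\bar x$ in $\R^d$, so that the restriction of $\nabla\hat F(\bar x)$ to $T_\cM(\bar x)$ is the covariant Jacobian $\nabla_\cM F_\cM(\bar x)$. For $y\in\cM$ near $\bar x$, decompose $y-\bar x=v+w$ with $v:=P_{T_\cM(\bar x)}(y-\bar x)$ and $w:=P_{N_\cM(\bar x)}(y-\bar x)$. Because $\cM$ is a $C^1$ manifold tangent to $\bar x+T_\cM(\bar x)$ at $\bar x$, the normal component is higher order, $\|w\|=o(\|y-\bar x\|)$, and hence $\|v\|^2=\|y-\bar x\|^2-o(\|y-\bar x\|^2)$. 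Combining the $C^1$ expansion $\hat F(y)=\hat F(\bar x)+\nabla\hat F(\bar x)(y-\bar x)+o(\|y-\bar x\|)$ with $\hat F(\bar x)=0$ and $\|\nabla\hat F(\bar x)w\|=O(\|w\|)=o(\|y-\bar x\|)$ yields $F_\cM(y)=\nabla_\cM F_\cM(\bar x)v+o(\|y-\bar x\|)$.

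Pairing this identity with $y-\bar x=v+w$ and using Cauchy--Schwarz together with boundedness of $\nabla_\cM F_\cM(\bar x)$ gives $\langle F_\cM(y),y-\bar x\rangle=\langle\nabla_\cM F_\cM(\bar x)v,v\rangle+o(\|y-\bar x\|^2)$, since $\|v\|\le\|y-\bar x\|$ makes the cross term $\langle\nabla_\cM F_\cM(\bar x)v,w\rangle$ of order $o(\|y-\bar x\|^2)$. Then~\eqref{eqn:strong_growth} bounds the leading term below by $\mu\|v\|^2=\mu\|y-\bar x\|^2-o(\|y-\bar x\|^2)$, so shrinking the neighborhood until every $o(\|y-\bar x\|^2)$ term is at most $\tfrac{\mu}{2}\|y-\bar x\|^2$ in absolute value finishes the proof. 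There is no substantive obstacle here; the only point that needs care is bookkeeping the error terms against the correct norm, in particular recording the first-order tangency of $\cM$ at $\bar x$, which makes the component $w$ of $y-\bar x$ negligible.
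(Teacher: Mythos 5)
Your proof is correct and takes essentially the same approach as the paper: both linearize $F_{\cM}$ at $\bar x$ through a smooth extension and invoke the curvature hypothesis~\eqref{eqn:strong_growth}, you via a direct first-order Taylor expansion with an explicit tangent/normal split, the paper via an integral mean-value theorem and a compactness/subsequence argument. One welcome feature of your write-up is that you record explicitly that $F_{\cM}(\bar x)=0$ (from $-F_{\cM}(\bar x)\in N_{\cM}(\bar x)\cap T_{\cM}(\bar x)=\{0\}$), a fact the paper's proof uses tacitly by subtracting $\Phi(\bar x)$.
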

\begin{proof} 
Let $\Phi$ be a smooth extension of $F_{\cM}$ to a neighborhood of $U\subset\R^d$ of $\bar x$. Consider an arbitrary  sequence $x_i\in \cM$ converging to $\bar x$. Passing to a subsequence, we may assume that the unit vectors $\frac{x_i-\bar x}{\|x_i-\bar x\|}$ converge to some unit vector $w\in T_{\cM}(\bar x)$. Let $H_i:=\int_{0}^1 \nabla \Phi(\bar x+\tau(x_i-\bar x))\, d\tau$ denote the average Jacobian between $\bar x$ and $x_i$. Note that $H_i$ clearly tends to $\nabla \Phi(\bar x)$ as $i$ tends to infinity.
The fundamental theorem of calculus yields
\begin{align*}
\frac{\langle \Phi(x_i)-\Phi(\bar x),x_i-\bar x\rangle}{\|x_i-\bar x\|^2}&=\left\langle H_i\left(\frac{x_i-\bar x}{\|x_i-\bar x\|}\right),\frac{x_i-\bar x}{\|x_i-\bar x\|}\right\rangle\mathrel{\mathop{\rightarrow}^{}_{i\to \infty}}\langle\nabla \Phi(\bar x) w,w\rangle\geq \mu.
\end{align*}
Since $x_i\in \cM$ was an arbitrary sequence converging to $\bar x$, the result follows.
\end{proof}

Next, we obtain a familiar one-step improvement guarantee.
\begin{lem}[One-step improvement]\label{lem: contractionofykinexp}
For all sufficiently small $\delta$, there exists a constant $C$ such that for any $k \ge k_0$, we have
 	\begin{align}\label{eqn: conditional contractiony general}
 	\EE [\norm{y_{k+1} -\bar x}^21_{\tau >k}] &\le \left(1-\frac{\alpha_k\mu}{2}\right)\EE[\norm{y_k - \bar x}^21_{\tau >k}] + C\alpha_k^2.
 \end{align}
\end{lem}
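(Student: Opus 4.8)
The plan is to establish the recursion in Lemma~\ref{lem: contractionofykinexp} directly from the shadow iteration~\eqref{eqn:shadow:eq:iteration} by expanding the square and taking conditional expectations, exploiting the curvature estimate from Lemma~\ref{thm:localminimizers:lemma:strongconvexity} together with the error bounds in Proposition~\ref{prop:shadow} and the distance bounds in Proposition~\ref{prop:gettingclosertothemanifold}. Concretely, on the event $\{\tau > k\}$ I would write
$$
y_{k+1} - \bar x = (y_k - \bar x) - \alpha_k F_{\cM}(y_k) - \alpha_k P_{\tangentM{y_k}}(\perturb_k) + \alpha_k E_k,
$$
expand $\|y_{k+1} - \bar x\|^2$, and collect terms. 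The main deterministic term is $-2\alpha_k \dotp{F_{\cM}(y_k), y_k - \bar x}$, which by Lemma~\ref{thm:localminimizers:lemma:strongconvexity} is at most $-\alpha_k \mu \|y_k - \bar x\|^2$ once $\delta$ is small enough that $y_k$ lies in the required neighborhood (recall $y_k \in B_{4\delta}(\bar x) \cap \cM$ by Proposition~\ref{prop:shadow}). This is exactly the contraction factor $(1 - \alpha_k \mu)$, and I will absorb the discrepancy with the claimed $(1 - \alpha_k \mu / 2)$ into the error terms.

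Next I would bound the remaining terms. The cross term $-2\alpha_k \dotp{y_k - \bar x, P_{\tangentM{y_k}}(\perturb_k)}$ has zero conditional expectation, since $\EE_k[P_{\tangentM{y_k}}(\perturb_k)] = 0$ by Proposition~\ref{prop:shadow}\eqref{prop:shadow:part:sequence} and $y_k$ is $\cF_k$-measurable. The cross term $2\alpha_k \dotp{y_k - \bar x, E_k}$ is controlled using Cauchy--Schwarz and Young's inequality: $2\alpha_k \dotp{y_k - \bar x, E_k} \le \tfrac{\alpha_k \mu}{2}\|y_k - \bar x\|^2 + \tfrac{2\alpha_k}{\mu}\|E_k\|^2$, and on $\{\tau > k\}$ we have $\EE[\|E_k\|^2 1_{\tau > k}] \le C\alpha_k$ from Proposition~\ref{prop:shadow}\eqref{prop:shadow:part:error:part:upperbound:3}, so this contributes $O(\alpha_k^2)$ after multiplying by $\alpha_k$. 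The pure second-order terms $\alpha_k^2 \|F_{\cM}(y_k)\|^2$, $\alpha_k^2 \|P_{\tangentM{y_k}}(\perturb_k)\|^2$, $\alpha_k^2\|E_k\|^2$, and their cross products are each $O(\alpha_k^2)$ in conditional expectation: $F_{\cM}$ is continuous hence bounded on the compact neighborhood, $\EE_k\|\perturb_k\|^2 \le \sqrt{\EE_k\|\perturb_k\|^4} \le \sqrt{q(x_k)} \le \sqrt{Q}$ by Assumption~\ref{assumption:zero} and Jensen, and $\EE[\|E_k\|^2 1_{\tau>k}]\le C$ by Proposition~\ref{prop:shadow}\eqref{prop:shadow:part:error:part:upperbound:2}; the mixed products are handled by Cauchy--Schwarz on these $L^2$ bounds. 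One must be slightly careful that the indicator $1_{\tau > k}$ is $\cF_k$-measurable (indeed $\{\tau > k\} = \{\tau \le k\}^c$ and $\tau \le k$ depends only on $x_1, \dots, x_k$), so it passes freely through the conditional expectation $\EE_k$, and that $1_{\tau > k+1} \le 1_{\tau > k}$ so the left-hand side is legitimately upper bounded by the expression with $1_{\tau > k}$.

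I expect the only real subtlety is bookkeeping: keeping track of which terms are genuinely $O(\alpha_k^2)$ versus which need the Young's-inequality trick to be absorbed into the $\tfrac{\alpha_k\mu}{2}\|y_k-\bar x\|^2$ slack, and verifying that $\delta$ can be shrunk uniformly so that Lemma~\ref{thm:localminimizers:lemma:strongconvexity} applies with constant $\mu$ on all of $B_{4\delta}(\bar x)\cap\cM$ — this is where ``for all sufficiently small $\delta$'' in the statement is used. None of the individual estimates is hard; the potential pitfall is mismatching the filtration indices (note $E_k$ is only $\cF_{k+1}$-measurable, so it cannot be pulled out of $\EE_k$, which is why the bounds in Proposition~\ref{prop:shadow}\eqref{prop:shadow:part:error:part:upperbound} are stated in terms of $\EE_k[\|E_k\|]$ and $\EE_k[\|E_k\|^2]$ rather than pointwise). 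Assembling these bounds, taking total expectations, and summing the $O(\alpha_k^2)$ contributions into a single constant $C$ yields~\eqref{eqn: conditional contractiony general}.
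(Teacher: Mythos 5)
Your proof is essentially identical to the paper's: expand $\|y_{k+1}-\bar x\|^2$ via the shadow recursion \eqref{eqn:shadow:eq:iteration}, apply Lemma~\ref{thm:localminimizers:lemma:strongconvexity} to the drift cross-term to extract the $(1-\alpha_k\mu)$ factor, absorb half of it via Young's inequality on the $\dotp{y_k-\bar x, E_k}$ cross-term, use $\EE_k[P_{\tangentM{y_k}}(\nu_k)]=0$, and bound all remaining terms by $O(\alpha_k^2)$ using Proposition~\ref{prop:shadow}\eqref{prop:shadow:part:error:part:upperbound} and Assumption~\ref{assumption:zero}. The paper's proof organizes the expansion into labeled groups $P_1,P_2,P_3$ but the estimates invoked (curvature lemma, Young, Cauchy--Schwarz, the $\EE[\|E_k\|^21_{\tau>k}]\le C\alpha_k$ bound) are exactly the ones you identify, and your remarks on $\cF_k$-measurability of the indicator and on $E_k$ being only $\cF_{k+1}$-measurable are both correct and in the spirit of the paper.
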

\begin{proof}
Expanding $\norm{y_{k+1}-\bar x}^2$, we obtain
	\begin{align}\label{lem:contractionofykinexp:eq1}
&\norm{y_{k+1} -\bar x}^21_{\tau >k} \notag\\
	&=\norm{y_k- \alpha_k F_\cM(y_k) - \alpha_k P_{\tangentM{y_k}}(\perturb_k) + \alpha_k E_k-\bar x}^21_{\tau >k} \notag \\
		&=\norm{y_k- \alpha_k F_\cM(y_k) + \alpha_k E_k-\bar x}^21_{\tau >k} + \alpha_k^2 \norm{P_{\tangentM{y_k}}(\perturb_k) }^21_{\tau >k} \notag\\
		&- 2\alpha_k \dotp{y_k- \alpha_k F_\cM(y_k) + \alpha_k E_k-\bar x,P_{\tangentM{y_k}}(\perturb_k) }1_{\tau >k} \notag\\
		&= \underbrace{\norm{y_k- \alpha_k F_\cM(y_k) -\bar x}^21_{\tau >k}}_{P_1} +\alpha_k^2\norm{E_k}^21_{\tau >k} + 2\alpha_k\underbrace{\dotp{y_k- \alpha_k F_\cM(y_k) -\bar x, E_k}1_{\tau > k}}_{P_2} \notag\\
		&+ \alpha_k^2 \norm{P_{\tangentM{y_k}}(\perturb_k) }^21_{\tau >k} -  2\alpha_k \underbrace{\dotp{y_k- \alpha_k F_\cM(y_k) + \alpha_k E_k-\bar x,P_{\tangentM{y_k}}(\perturb_k) }1_{\tau >k}}_{P_3}.
	\end{align}
Using Lemma~\ref{thm:localminimizers:lemma:strongconvexity}, we may bound $P_1$ as	\begin{align*}
		P_1 &=\left(\norm{y_k - \bar x}^2 -2\alpha_k\dotp{ F_\cM(y_k), y_k -\bar x} +\alpha_k^2\norm{\nabla f_\cM(y_k)}^2\right)1_{\tau >k}\\
		&\le \left((1-\alpha_k\mu)\norm{y_k - \bar x}^2+C\alpha_k^2\right)1_{\tau >k}
	\end{align*}
Next, using Proposition~\ref{prop:shadow} (\ref{prop:shadow:part:error:part:upperbound:2}) and Assumption~\ref{assumption:zero}, we see that $\EE_k[\norm{E_k}^21_{\tau >k}]$ and $\EE_k[\|P_{\tangentM{y_k}}(\perturb_k)\|^2 1_{\tau >k}]$ are bounded by a numerical constant. It remains to bound $P_2$ and $P_3$. Beginning with the former, using Young's inequality, we compute
\begin{align*}
P_2&\leq \frac{\mu \|y_k- \alpha_k F_\cM(y_k) -\bar x\|^2 1_{\tau >k}}{8}+\frac{2\|E_k\|^2 1_{\tau >k}}{\mu}=\frac{\mu P_1}{8}+\frac{2\|E_k\|^2 1_{\tau >k}}{\mu}.
\end{align*}
Next, again using Young's inequality, we bound the conditional expectation of $P_3$ as follows:
\begin{align*}
 \EE_k[P_3] 
&= \alpha_k\EE_k[\dotp{E_k, P_{\tangentM{y_k}}(\perturb_k)}1_{\tau > k}] \leq \frac{\alpha_k\EE_k\|E_k\|^21_{\tau > k}}{2} + C\alpha_k.
\end{align*}
Thus, returning to Lemma~\ref{lem:contractionofykinexp:eq1} and using Proposition~\ref{prop:shadow}(\ref{prop:shadow:part:error:part:upperbound:3}), we arrive at the estimate:
\begin{align*}
\EE [\norm{y_{k+1} -\bar x}^21_{\tau >k}] &\le (1-\alpha_k\mu/2)\EE[\norm{y_k - \bar x}^21_{\tau >k}] +C\alpha_k^2.\end{align*}
This completes the proof.
\end{proof}

Next, we can iterate the recursion to ensure a fast rate of convergence of $\|y_k-\bar x\|$. As a byproduct, we also obtain estimates on the size of the errors $E_k$. To simplify notation, we write $\tau_{k_0} := \tau_{k_0, \delta}$, since we will consider several values of $k_0$. Under these conventions, we have the following Proposition, which will be useful in ensuring summability of certain sequences.
\begin{lem}\label{lem: almostsureyk}
There exists $C> 0$ such that
\begin{enumerate}
	\item \label{lem: almostsureyk:1}    $\EE[\norm{y_k -\bar x}^2 1_{\tau_{k_0}>k}] \le C/k^\gamma$ for all $k \geq 1$.
	\item \label{lem: almostsureyk:2} 	$\sum_{k=1}^{\infty} \frac{1}{\sqrt{k}}\norm{y_k -\bar x}^2 <\infty$ almost surely.
	\item\label{lem: almostsureyk:3}    $\frac{1}{\sqrt{n}}\sum_{k=1}^{n}\norm{y_k -\bar x}^2 \rightarrow 0$ almost surely.
\item\label{lem: almostsureyk:4}  $\sum_{k=1}^\infty \frac{1}{\sqrt{k}}\|E_k\|  < +\infty$ almost surely.
\item\label{lem: almostsureyk:5}  $\frac{1}{\sqrt{n}}\sum_{k=1}^n \|E_k\|  < +\infty$ almost surely.
	\end{enumerate}
\end{lem}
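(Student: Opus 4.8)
The plan is to first establish item \ref{lem: almostsureyk:1} by iterating the one-step recursion from Lemma~\ref{lem: contractionofykinexp}, and then deduce items \ref{lem: almostsureyk:2}--\ref{lem: almostsureyk:5} as consequences, combining with the error bounds already recorded in Proposition~\ref{prop:shadow}. For \ref{lem: almostsureyk:1}, set $a_k := \EE[\|y_k-\bar x\|^2 1_{\tau_{k_0}>k}]$ and recall that Lemma~\ref{lem: contractionofykinexp} gives $a_{k+1}\le (1-\alpha_k\mu/2)a_k + C\alpha_k^2$ for $k\ge k_0$, where $\alpha_k\asymp k^{-\gamma}$ with $\gamma\in(1/2,1)$. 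This is the textbook Chung-type recursion: since $\sum \alpha_k=\infty$ and $\alpha_k^2/\alpha_k = \alpha_k\to 0$, a standard induction argument (see e.g.\ the references cited after Assumption~\ref{assumption:zero}, or Polyak--Juditsky~\cite{polyak1992acceleration}) yields $a_k = O(\alpha_k) = O(k^{-\gamma})$. One must handle the finitely many indices $k<k_0$ separately, where the indicator $1_{\tau_{k_0}>k}$ forces $y_k=\bar x$ or else $y_k\in B_{4\delta}(\bar x)\cap\cM$, so $a_k$ is trivially bounded; enlarging the constant $C$ absorbs these terms.

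Given \ref{lem: almostsureyk:1}, item \ref{lem: almostsureyk:2} follows by the Fubini/Tonelli argument: on the event $\{\tau_{k_0}=\infty\}$ we have $\sum_k \frac{1}{\sqrt k}\|y_k-\bar x\|^2 1_{\tau_{k_0}>k}$ with expectation $\le C\sum_k k^{-1/2-\gamma}<\infty$ since $\gamma>1/2$, hence the sum is a.s.\ finite on that event; because the iterates converge to $\bar x$ a.s.\ (the standing hypothesis of Theorem~\ref{thm: asymptotic normality proposed}), for a.e.\ sample path there is $k_0$ with $\tau_{k_0}=\infty$, and a union over a countable increasing sequence of $k_0$'s gives the claim on the full probability space. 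Item \ref{lem: almostsureyk:3} is then immediate from \ref{lem: almostsureyk:2} via Kronecker's lemma applied to the partial sums of $\frac{1}{\sqrt k}\|y_k-\bar x\|^2$ (noting $\sqrt n$ increases to $\infty$). For \ref{lem: almostsureyk:4} and \ref{lem: almostsureyk:5}, I invoke Proposition~\ref{prop:shadow}(\ref{prop:shadow:part:error:part:upperbound:3}), which gives $\EE[\|E_k\|^2 1_{\tau_{k_0}>k}]\le C\alpha_k$; by Cauchy--Schwarz $\EE[\|E_k\|1_{\tau_{k_0}>k}]\le \sqrt{C\alpha_k}=O(k^{-\gamma/2})$, which alone is not summable against $k^{-1/2}$ in general, so instead I appeal directly to Proposition~\ref{prop:shadow}(\ref{prop:shadow_missing}), which already asserts summability of $\sum_k \frac{k^{\gamma-1}}{\log(k+1)^2}\|E_k\|1_{\tau_{k_0}>k}$; since $\frac{1}{\sqrt k}\le \frac{k^{\gamma-1}}{\log(k+1)^2}$ fails for $\gamma<1/2$ but we have $\gamma>1/2$ — wait, I should instead combine the a.s.\ bound $\dist^2(x_k,\cM)\lesssim \frac{\log(k+1)^2}{k^{2\gamma-1}}$ from Proposition~\ref{prop:gettingclosertothemanifold}(\ref{eq:prop:gettingclosertothemanifoldbound1:as}) with the per-step error structure; cleanly, the right route is: use Proposition~\ref{prop:shadow}(\ref{prop:shadow:part:error:part:upperbound:2}) to bound $\EE_k[\|E_k\|^2 1_{\tau_{k_0}>k}]\le C$ together with the refined $\EE[\|E_k\|^2 1_{\tau_{k_0}>k}]\le C\alpha_k$, then observe $\sum_k \frac{1}{k}\|E_k\|^2 1_{\tau_{k_0}>k}$ has finite expectation $\le C\sum_k k^{-1-\gamma}<\infty$, so it is a.s.\ finite, and Cauchy--Schwarz gives $\sum_k\frac1{\sqrt k}\|E_k\| \le (\sum_k\frac1k\|E_k\|^2)^{1/2}(\sum_k 1)^{1/2}$ — which diverges, so that does not work either; the correct and simplest argument is simply to quote Proposition~\ref{prop:shadow}(\ref{prop:shadow_missing}) directly, noting that for $\gamma\in(1/2,1)$ one has $\frac1{\sqrt k}\asymp k^{-1/2}$ and $\frac{k^{\gamma-1}}{\log(k+1)^2}$ is eventually larger than $c\,k^{-1/2}$ precisely when $\gamma-1\ge -1/2$, i.e.\ $\gamma\ge 1/2$, up to the logarithmic factor which is defeated by choosing the comparison at the level $k^{\gamma-1+\epsilon}$ — so \ref{lem: almostsureyk:4} follows, and \ref{lem: almostsureyk:5} follows from \ref{lem: almostsureyk:4} by Kronecker's lemma exactly as \ref{lem: almostsureyk:3} followed from \ref{lem: almostsureyk:2}.

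The main obstacle is bookkeeping rather than conceptual: one must track the stopping-time indicators carefully through the recursion (the event $\{\tau_{k_0}>k\}$ is $\cF_k$-measurable, which is what legitimizes passing conditional expectations through in Lemma~\ref{lem: contractionofykinexp}), and one must be careful that the constants $C$ depend only on $k_0$ and $\delta$ and not on $k$. A secondary subtlety is the passage from ``a.s.\ finite on $\{\tau_{k_0}=\infty\}$ for each $k_0$'' to ``a.s.\ finite'', which relies on the fact that $x_k\to\bar x$ a.s.\ implies $\bigcup_{k_0}\{\tau_{k_0,\delta}=\infty\}$ has full measure for any fixed admissible $\delta$. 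The $\gamma>1/2$ hypothesis is used exactly where the series $\sum k^{-1/2-\gamma}$ and $\sum k^{-1-\gamma}$ must converge, and for the error terms one leans on the already-proven summability statement Proposition~\ref{prop:shadow}(\ref{prop:shadow_missing}) so that no new estimate on $E_k$ is needed.
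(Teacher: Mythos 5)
Your proposal is correct and follows essentially the same route as the paper: Part~\ref{lem: almostsureyk:1} is obtained by iterating the one-step recursion of Lemma~\ref{lem: contractionofykinexp} (the paper packages the required induction as Lemma~\ref{lem:sequencelemmasquared}); Parts~\ref{lem: almostsureyk:2}--\ref{lem: almostsureyk:3} follow by the Tonelli argument plus Kronecker; and Parts~\ref{lem: almostsureyk:4}--\ref{lem: almostsureyk:5} are deduced from Proposition~\ref{prop:shadow}(\ref{prop:shadow_missing}) using $\gamma>1/2$ to absorb the $\log^2$ factor, then Kronecker. The only difference is expository: the paper goes directly to the cited lemmas rather than narrating the abandoned Cauchy--Schwarz detours, and it also silently uses the same localization-by-$k_0$ argument you spell out.
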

\begin{proof}
Part~\ref{lem: almostsureyk:1} follows immediately from Lemmas~\ref{lem: contractionofykinexp} and \ref{lem:sequencelemmasquared} by setting $s_k = \EE[\norm{y_k -\bar x}^21_{\tau_{k_0}>k}]$.
We now prove Part~\ref{lem: almostsureyk:2}. By Part~\ref{lem: almostsureyk:1}, we have 
		$$
		\expect{\sum_{k=1}^{\infty}\frac{1}{\sqrt{k}}\norm{y_k -\bar x}^2 1_{ \tau_{k_0} >k}} \le \sum_{k=1}^{\infty}\frac{C}{k^{\gamma+\frac{1}{2}}} <\infty.
		$$
		Therefore, 
		$\sum_{k=1}^{\infty}\frac{1}{\sqrt{k}}\norm{y_k -\bar x}^2 1_{\tau_{k_0}>k}$ is finite almost surely. Taking into account that $x_k \rightarrow \bar x$ almost surely, the sum $\sum_{k=1}^{\infty}\frac{1}{\sqrt{k}}\norm{y_k -\bar x}^2$ must be finite almost surely. Part~\ref{lem: almostsureyk:3} now follows immediately follows from Kronecker lemma~\ref{lem:kronecker}

Next, we prove Part~\ref{lem: almostsureyk:4}. By Proposition~\ref{prop:shadow}(\ref{prop:shadow_missing} ), we know that the error sequence $E_k$ almost surely satisfies
$$
\sum_{k=1}^\infty \frac{1}{\sqrt{k}}\|E_k\|1_{\tau_{k_0} > k}  < +\infty.
$$
Since $x_k \rightarrow \bar x$ almost surely, we deduce that almost surely we have
$
\sum_{k=1}^\infty \frac{1}{\sqrt{k}}\|E_k\|  < +\infty,
$
as desired. Part~\ref{lem: almostsureyk:5} follows from Kronecker lemma~\ref{lem:kronecker}.
\end{proof}

\subsection{Completing the proof of Theorem~\ref{thm: asymptotic normality proposed}}
We now turn to the proof of Theorem~\ref{thm: asymptotic normality proposed}. To this end, we introduce an additional sequence 
\begin{align}
z_k := \proj_{\bar x + \tangentM{\bar x}}(y_k).
\end{align}
Evidently, for all $\delta$ sufficiently small, $z_k$ closely approximates $y_k$. Indeed, due to the smoothness of $\cM$, there exists $C > 0$ such that 
\begin{align}\label{eq:zkykapprox}
\|y_k - z_k \|1_{\tau_{k_0} > k} \leq C\|y_k - \bar x\|^21_{\tau_{k_0} > k}.
\end{align}
The next result states that it suffices to study the distribution of $\frac{1}{\sqrt{n}}\sum_{k=1}^{n}(z_k - \bar x)$.

\begin{lem}[Reduction to an auxiliary seqeunce]\label{lem: convergetogether}
	If $\frac{1}{\sqrt{n}}\sum_{k=1}^{n}(z_k - \bar x)$ converges in distribution to some $\mathcal{D}$, then 
	$\frac{1}{\sqrt{n}}\sum_{k=1}^{n}(x_k - \bar x)$ converges in distribution to $\mathcal{D}$.
\end{lem}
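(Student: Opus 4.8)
The plan is to decompose $x_k - \bar x$ into three pieces: the tangential shadow component $z_k - \bar x$, the discrepancy $y_k - z_k$ between the shadow and its linearization, and the normal component $x_k - y_k = x_k - P_{\cM}(x_k)$. Writing
$$
\frac{1}{\sqrt{n}}\sum_{k=1}^{n}(x_k - \bar x) = \frac{1}{\sqrt{n}}\sum_{k=1}^{n}(z_k - \bar x) + \frac{1}{\sqrt{n}}\sum_{k=1}^{n}(y_k - z_k) + \frac{1}{\sqrt{n}}\sum_{k=1}^{n}(x_k - y_k),
$$
it suffices by Slutsky's theorem to show that the last two averages converge to zero almost surely (hence in probability). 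Once this is established, the claim follows immediately: if the first term converges in distribution to $\mathcal{D}$, so does the whole sum.

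For the middle term, I would invoke the quadratic bound \eqref{eq:zkykapprox}, namely $\|y_k - z_k\|1_{\tau_{k_0} > k} \leq C\|y_k - \bar x\|^21_{\tau_{k_0} > k}$. Since $x_k \to \bar x$ almost surely, on a set of full measure the stopping time $\tau_{k_0}$ is eventually infinite for $k_0$ large enough (more precisely, $1_{\tau_{k_0} > k} = 1$ for all $k$ once $k_0$ is large, on the event that $x_k$ stays in $B_\delta(\bar x)$ from time $k_0$ on, and this event has probability tending to one as $k_0 \to \infty$; a standard argument lets us drop the indicator almost surely). Then $\|y_k - z_k\| \leq C\|y_k - \bar x\|^2$ eventually, so $\frac{1}{\sqrt n}\sum_{k=1}^n \|y_k - z_k\| \leq \frac{C}{\sqrt n}\sum_{k=1}^n \|y_k - \bar x\|^2 + o(1)$, and this tends to zero almost surely by Lemma~\ref{lem: almostsureyk}\eqref{lem: almostsureyk:3}.

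For the normal term, the key input is Proposition~\ref{prop:gettingclosertothemanifold}, which controls $\dist(x_k, \cM) = \|x_k - y_k\|$ (on the event $x_k \in B_{2\delta}(\bar x)$, where $y_k = P_{\cM}(x_k)$). Specifically, part~\eqref{eq:prop:gettingclosertothemanifoldbound1:as} gives $\dist^2(x_k,\cM)1_{\tau_{k_0,\delta}>k} \lesssim \frac{\log(k+1)^2}{k^{2\gamma-1}}$ asymptotically via the almost-sure limit, so $\dist(x_k, \cM)1_{\tau_{k_0,\delta}>k} = O\!\big(\frac{\log(k+1)}{k^{\gamma - 1/2}}\big)$ almost surely. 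Since $\gamma \in (1/2, 1)$, one has $\gamma - 1/2 > 0$, and a Cesàro/Kronecker-type estimate shows $\frac{1}{\sqrt n}\sum_{k=1}^n \frac{\log(k+1)}{k^{\gamma-1/2}} \to 0$ when $\gamma - 1/2 < 1/2$, i.e. $\gamma < 1$; the boundary behavior is fine because of the strict inequality $\gamma<1$. Again, absorbing the indicator using $x_k \to \bar x$ a.s.\ (so that $y_k = P_\cM(x_k)$ for all large $k$ and the indicator is eventually $1$ on a probability-one set, after sending $k_0 \to \infty$), we conclude $\frac{1}{\sqrt n}\sum_{k=1}^n \|x_k - y_k\| \to 0$ almost surely.

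The main obstacle is the bookkeeping around the stopping time and the localization: all the cited estimates from Propositions~\ref{prop:gettingclosertothemanifold} and~\ref{prop:shadow} and Lemma~\ref{lem: almostsureyk} carry an indicator $1_{\tau_{k_0,\delta}>k}$, whereas the statement of the present lemma concerns the unrestricted sequences. The resolution is the standard one: the hypothesis $x_k \to \bar x$ almost surely means that for every $\delta > 0$, the event $\{\tau_{k_0,\delta} = \infty \text{ for some } k_0\}$ has probability one, so on a full-measure set the indicators equal $1$ for all sufficiently large $k$, and finitely many terms do not affect the $\frac{1}{\sqrt n}\sum$ limit. Care is needed to make this uniform enough to pass from the in-probability statements (which hold for each fixed $k_0$) to an almost-sure statement, but this is routine given that Lemma~\ref{lem: almostsureyk} already provides the almost-sure summability/Cesàro conclusions directly.
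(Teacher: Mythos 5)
Your overall decomposition and strategy match the paper's: split into $z_k-\bar x$, $y_k-z_k$, and $x_k-y_k$, show the last two tend to zero a.s.\ after dividing by $\sqrt n$, and finish with Slutsky. The treatment of $y_k-z_k$ via the quadratic bound \eqref{eq:zkykapprox} and Lemma~\ref{lem: almostsureyk} is correct. But the handling of the normal term $x_k-y_k$ has a genuine gap.

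You invoke Proposition~\ref{prop:gettingclosertothemanifold}\eqref{eq:prop:gettingclosertothemanifoldbound1:as} to get the pointwise bound $\dist(x_k,\cM)1_{\tau>k}=O\!\big(\log(k+1)/k^{\gamma-1/2}\big)$ a.s., and then claim a Cesàro estimate $\frac{1}{\sqrt n}\sum_{k=1}^n \frac{\log(k+1)}{k^{\gamma-1/2}}\to 0$ for $\gamma<1$. This estimate is \emph{false}. Writing $a=\gamma-1/2\in(0,1/2)$, one has $\sum_{k=1}^n k^{-a}\log(k+1)\sim \tfrac{1}{1-a}n^{1-a}\log n$, so $\frac{1}{\sqrt n}\sum_{k=1}^n \frac{\log(k+1)}{k^{a}}\sim \tfrac{1}{1-a}n^{1/2-a}\log n=\tfrac{1}{1-a}n^{1-\gamma}\log n\to\infty$ precisely because $\gamma<1$. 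The pointwise rate from part~\eqref{eq:prop:gettingclosertothemanifoldbound1:as} is too weak: plugging that worst-case rate into the weights $\tfrac{k^{\gamma-1}}{\log(k+1)^2}$ from part~\eqref{eq:prop:gettingclosertothemanifoldbound1:sum} gives $\sum_k \tfrac{1}{\sqrt k\,\log(k+1)}=\infty$, so the sequence $\dist(x_k,\cM)$ cannot actually saturate the pointwise bound often, and this sparsity is exactly what part~\eqref{eq:prop:gettingclosertothemanifoldbound1:sum} encodes.

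The fix is to use part~\eqref{eq:prop:gettingclosertothemanifoldbound1:sum} directly, as the paper does: almost surely
\begin{equation*}
\sum_{k=1}^\infty \frac{k^{\gamma-1}}{\log(k+1)^2}\dist(x_k,\cM)1_{\tau_{k_0,\delta}>k}<\infty,
\end{equation*}
and since $\gamma-1/2>0$ implies $\tfrac{1}{\sqrt k}\le C\tfrac{k^{\gamma-1}}{\log(k+1)^2}$ for large $k$, this gives $\sum_k\tfrac{1}{\sqrt k}\|x_k-y_k\|1_{\tau>k}<\infty$. After passing through the stopping time as you describe, Kronecker's lemma (Lemma~\ref{lem:kronecker}) then yields $\frac{1}{\sqrt n}\sum_{k=1}^n\|x_k-y_k\|\to 0$ a.s. Summability is strictly stronger information than the pointwise rate, and it is indispensable here.
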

\begin{proof}
	Note that 
	$$\frac{1}{\sqrt{n}}\sum_{k=1}^{n}(x_k-\bar x) = \frac{1}{\sqrt{n}}\sum_{k=1}^{n}(z_k - \bar x) + \frac{1}{\sqrt{n}}\sum_{k=1}^{n}(x_k - y_k) + \frac{1}{\sqrt{n}}\sum_{k=1}^{n}(y_k - z_k).
	$$
	By Lemma~\ref{lem:basic_facts_conv} (\ref{basic_facts_conv4}), the result will follow once we show that $$ \frac{1}{\sqrt{n}}\sum_{k=1}^{n}(x_k - y_k) \rightarrow 0 \qquad \text{ and } \qquad  \frac{1}{\sqrt{n}}\sum_{k=1}^{n}(y_k - z_k)\rightarrow 0,$$ almost surely. To that end, we recall that Proposition~\ref{prop:gettingclosertothemanifold}(\ref{eq:prop:gettingclosertothemanifoldbound1:sum}) guarantees that almost surely we have
	$$
	\sum_{k=1}^\infty \frac{1}{\sqrt{k}}\norm{x_k-y_k}1_{ \tau_{k_0} > k}   < +\infty   
	$$
	Since $x_k \rightarrow \bar x$ almost surely, for almost every sample path, we can find a $k_0$ such that $\tau_{k_0}=\infty$. Therefore, almost surely we have
	$ \sum_{k=1}^{\infty} \frac{\norm{x_k - y_k}}{\sqrt{k}} <\infty.$
	Applying Kronecker lemma~\ref{lem:kronecker}, almost surely we have
	 $$ \frac{1}{\sqrt{n}}\sum_{k=1}^{n}\norm{x_k - y_k} \rightarrow 0,$$
	 which implies  $ \frac{1}{\sqrt{n}}\sum_{k=1}^{n}(x_k - y_k) \rightarrow 0$.
	 On the other hand, we have by Lemma~\ref{lem: almostsureyk} and inequality~\eqref{eq:zkykapprox}, that 
\begin{align*}
\sum_{k=1}^{\infty} \frac{1}{\sqrt{k}}\norm{y_k -z_k}1_{\tau_{k_0} > k} \leq\sum_{k=1}^{\infty} \frac{C}{\sqrt{k}}\norm{y_k -\bar x}^21_{\tau_{k_0} > k}  < +\infty
\end{align*}
Again since for almost every sample path we may find $k_0$ such that $\tau_{k_0} = \infty$, we have that $\sum_{k=1}^{\infty} \frac{1}{\sqrt{k}}\norm{y_k -z_k} < + \infty$, as desired.
\end{proof}

In light of Lemma~\ref{lem: convergetogether}, it suffices now to compute the limiting distribution of $\frac{1}{\sqrt{n}}\sum_{k=1}^{n}(z_k - \bar x)$. This is the content of following lemma. Notice that in the lemmas, we state the asymptotic covariance matrix in a different equivalent form to that appearing in Theorem~\ref{thm: asymptotic normality proposed}, and which is more convenient for computation.
\begin{lem}
	The sequence $\frac{1}{\sqrt{n}}\sum_{k=1}^{n}(z_k - \bar x)$ converges in distribution to $$N\left(0,  U(U^\top \nabla F_{\cM}(\bar x) U)^{-1}\cdot \Sigma\cdot (U^\top  \nabla F_{\cM}(\bar x) U)^{-1}U^\top\right),$$ where  $U$ is a matrix whose column vectors form an orthonormal basis of $T_\cM(\bar x)$.
\end{lem}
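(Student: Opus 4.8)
The plan is to transform the shadow recursion~\eqref{eqn:shadow:eq:iteration} into a recursion for the tangential sequence $z_k = \proj_{\bar x + \tangentM{\bar x}}(y_k)$ that fits the classical Polyak--Juditsky framework. First I would project the recursion onto $\bar x + \tangentM{\bar x}$: writing $u_k := U^\top(z_k - \bar x) \in \RR^{\dim \cM}$ and applying $U^\top$ to~\eqref{eqn:shadow:eq:iteration}, one obtains
\begin{align*}
u_{k+1} = u_k - \alpha_k U^\top F_{\cM}(y_k) - \alpha_k U^\top P_{\tangentM{y_k}}(\perturb_k) + \alpha_k U^\top E_k + (\text{negligible terms from } y_k \neq z_k).
\end{align*}
The terms coming from the discrepancy $y_k - z_k$ and from replacing $P_{\tangentM{y_k}}$ by $P_{\tangentM{\bar x}} = UU^\top$ are controlled by~\eqref{eq:zkykapprox} and the smoothness of $\cM$, hence are $O(\|y_k - \bar x\|^2)$; by Lemma~\ref{lem: almostsureyk}\eqref{lem: almostsureyk:1} their expectations are summable against $1/\sqrt{k}$ after multiplication by $\alpha_k$, so they do not affect the averaged limit (via Kronecker's lemma, exactly as in Lemma~\ref{lem: convergetogether}).

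Next I would linearize $U^\top F_{\cM}(y_k)$ around $\bar x$. Since $F_{\cM}(\bar x) = 0$ (because $0 \in F(\bar x)$ and $P_{\tangentM{\bar x}}(0)=0$) and $F_{\cM}$ is $C^1$ on $\cM$ near $\bar x$, a first-order expansion gives $U^\top F_{\cM}(y_k) = (U^\top \nabla F_{\cM}(\bar x) U)\, u_k + O(\|y_k - \bar x\|^2)$, where again the remainder is summable after averaging. Setting $H := U^\top \nabla F_{\cM}(\bar x) U$, which is positive definite by the curvature hypothesis~\eqref{eqn:strong_growth} (note $\langle \nabla_{\cM} F_{\cM}(\bar x) v, v\rangle \ge \mu\|v\|^2$ translates to $H \succeq \mu I$ in the $U$-coordinates), the sequence $u_k$ satisfies a linear stochastic recursion
\begin{align*}
u_{k+1} = (I - \alpha_k H) u_k - \alpha_k \xi_k + \alpha_k r_k,
\end{align*}
where $\xi_k := U^\top \perturb_k = U^\top \perturb_k^{(1)} + U^\top \perturb_k^{(2)}(x_k)$ is a martingale-difference sequence (Proposition~\ref{prop:shadow} ensures $\EE_k[P_{\tangentM{y_k}}(\perturb_k)]=0$) and $r_k$ collects all the higher-order and error terms, which satisfy $\frac{1}{\sqrt{n}}\sum_{k=1}^n r_k \to 0$ almost surely by Lemma~\ref{lem: almostsureyk}\eqref{lem: almostsureyk:3},\eqref{lem: almostsureyk:5}.

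The final step is to invoke the Polyak--Juditsky averaging theorem for linear recursions with $H \succ 0$ and step sizes $\alpha_k \asymp k^{-\gamma}$, $\gamma \in (1/2,1)$: the averaged iterate satisfies $\frac{1}{\sqrt{n}}\sum_{k=1}^n u_k \xrightarrow{D} N(0, H^{-1} S H^{-1})$, where $S$ is the asymptotic covariance of the noise. Here I would split $\xi_k = U^\top \perturb_k^{(1)} + U^\top \perturb_k^{(2)}(x_k)$: Assumption~\ref{assumption:martinagle} gives $\frac{1}{\sqrt{k}}\sum U^\top\perturb_i^{(1)} \xrightarrow{D} N(0,\Sigma)$, while the bound $\EE_k\|U^\top \perturb_k^{(2)}(x_k)\|^2 \le C\|x_k - \bar x\|^2$ together with $\dist(x_k,\cM) \to 0$ and $\|y_k-\bar x\|\to 0$ forces the $\perturb^{(2)}$ contribution to the average to vanish (using $\|x_k - \bar x\| \le \|x_k - y_k\| + \|y_k - \bar x\|$ and the summability estimates already established). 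Hence the effective noise covariance is exactly $\Sigma$, giving the limit $N(0, U H^{-1}\Sigma H^{-1} U^\top)$. Finally, transferring back through $z_k = \bar x + U u_k$ and identifying $\nabla\sigma(0) = (P_{\tangentM{\bar x}}\nabla_{\cM} F_{\cM}(\bar x) P_{\tangentM{\bar x}})^\dagger = U H^{-1} U^\top$ via the Jacobian lemma yields the stated covariance $\nabla\sigma(0)\cdot\Sigma\cdot\nabla\sigma(0)^\top$. The main obstacle I anticipate is not any single estimate but the careful bookkeeping needed to show that every error term — the manifold-curvature discrepancy $y_k - z_k$, the variation of $P_{\tangentM{y_k}}$, the linearization remainder of $F_{\cM}$, and the shadow error $E_k$ — is negligible after Cesàro averaging at the $\sqrt{n}$ scale; this requires combining the $O(\alpha_k)$ second-moment bounds of Propositions~\ref{prop:gettingclosertothemanifold} and~\ref{prop:shadow} with the almost-sure summability statements of Lemma~\ref{lem: almostsureyk} in the right order, and invoking a robust (perturbed) form of the Polyak--Juditsky theorem that tolerates such vanishing perturbations $r_k$.
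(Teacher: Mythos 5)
Your overall strategy matches the paper's: pass to tangent coordinates $\Delta_k = U^\top(y_k-\bar x) = U^\top(z_k-\bar x)$, linearize $F_\cM$ at $\bar x$ to get a Polyak--Juditsky linear recursion $\Delta_{k+1} = (I-\alpha_k H)\Delta_k - \alpha_k U^\top\nu_k + \alpha_k(\text{errors})$, and show all error contributions vanish after $n^{-1/2}$ averaging.

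There is, however, a genuine gap in the treatment of the projection discrepancy in the noise term. The recursion~\eqref{eqn:shadow:eq:iteration} carries $P_{\tangentM{y_k}}(\nu_k)$, and you propose replacing it by $P_{\tangentM{\bar x}}(\nu_k)$ at a cost you claim is $O(\|y_k-\bar x\|^2)$. This is false. Writing $\zeta_k := U^\top P_{\tangentM{y_k}}(\nu_k) - U^\top P_{\tangentM{\bar x}}(\nu_k)$, Lipschitzness of $y\mapsto P_{\tangentM{y}}$ only gives $\|\zeta_k\| \le C\|y_k-\bar x\|\,\|\nu_k\|$, so the conditional second moment scales like $\EE_k\|\zeta_k\|^2 \lesssim \|y_k-\bar x\|^2 \asymp \alpha_k$, not $\alpha_k^2$. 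Consequently $\EE\|\zeta_k\|\asymp k^{-\gamma/2}$, and the deterministic estimate you want,
$$\frac{1}{\sqrt n}\sum_{k=1}^n \|\zeta_k\|\;\lesssim\;\frac{1}{\sqrt n}\sum_{k=1}^n k^{-\gamma/2}\;\asymp\;n^{1/2-\gamma/2},$$
diverges for $\gamma<1$. The argument cannot be saved by a "robust Polyak--Juditsky theorem that tolerates vanishing $r_k$" since $\zeta_k$ does not vanish at the required rate; what saves it is that $\zeta_k$ is a mean-zero martingale difference array, so the sum $n^{-1/2}\sum\zeta_k$ benefits from cancellation rather than absolute-value decay. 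The paper's Claim~\ref{item:asymptoticnormalityhardcase2} is devoted precisely to this: it proves $n^{-1/2}\sum\zetak\to 0$ a.s.\ via the $L^2$-martingale SLLN (Lemma~\ref{lem: L2martingaleconvergence}), then separately establishes convergence in probability of $n^{-1/2}\sum A_k^n\zetak$ and $n^{-1/2}\sum B_k^n\zetak$ from second-moment bounds, and finally upgrades to almost-sure convergence by an explicit decomposition $Z_{n,k_0}=\sum V_{k,k_0}+\sum\alpha_k\zetak$ together with estimates on $\EE\|V_{n,k_0}\|^2$ exploiting geometric decay of $\|W_{k+1}^{n+1}\|$. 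This is the technical core of the lemma and is not covered by your bookkeeping. A smaller remark: multiplying the recursion through by $U^\top$ produces no term from "$y_k\ne z_k$" at all, since $U^\top(y_k-\bar x)=U^\top(z_k-\bar x)$ identically; you introduce and then dismiss a spurious remainder there.
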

\begin{proof}
Recall that $UU^{\top}$ is the orthogonal projection onto $T_{\cM}(\bar x)$.
Therefore, we may write $z_k = \bar x + UU^\top (y_k-\bar x)$. Moreover, subtracting $\bar x$ from both sides of~\eqref{eqn:shadow:eq:iteration} and  multiplying by $U^\top$, we have
\begin{align*}
U^\top(y_{k+1} - \bar x) &= U^\top(y_k - \bar x)-  \alpha_k U^\top F_\cM(y_k) - \alpha_k U^\top P_{T_\cM(y_k)}(\perturb_\discrete) +\alpha_k U^\top E_k\\
&=  U^\top(y_k - \bar x)- {\alpha_k U^\top \nabla_{\cM} F_\cM(\bar x)UU^{\top}(y_k -\bar x)}\\
&\qquad- \alpha_k (U^\top F_\cM(y_k) - {U^\top \nabla_{\cM} F_\cM(\bar x)UU^\top(y_k -\bar x))}\\
&\qquad - \alpha_k U^\top P_{T_\cM(\bar x)}(\perturb_\discrete)- \alpha_k( U^\top P_{T_\cM(y_k)}(\perturb_\discrete) - U^\top P_{T_\cM(\bar x)}(\perturb_\discrete))+\alpha_k U^\top E_k.
\end{align*}
Define $\Delta_k = U^\top(y_k -\bar x)$, $H=U^\top \nabla_{\cM} F_\cM(\bar x) U$, $\zeta_k = U^\top P_{T_\cM(y_k)}(\perturb_\discrete) - U^\top P_{T_\cM(\bar x)}(\perturb_\discrete)$, and 
$$R(y)= U^\top  F_\cM(y) - U^\top \nabla_{\cM} F_\cM(\bar x)UU^{\top}(y -\bar x).$$
By our assumption, for every vector $z$ the matrix $H$ satisfies
$$\langle Hz,z\rangle=\langle \nabla_{\cM} F_\cM(\bar x) U z,Uz\rangle\geq \sigma\|Uz\|^2=\sigma \|z\|^2.$$
Consequently $H$ is a strongly monotone matrix. Note moreover the equality $U^\top P_{T_\cM(\bar x)}(\perturb_\discrete)= U^\top UU^\top \nu_k = U^\top \nu_k$. Thus we can rewrite the update of $\Delta_k$ as 
\begin{align*}
	\Delta_{k+1} = \Delta_k - \alpha_k H \Delta_k -\alpha_k U^\top \perturb_k- \alpha_k \left(R(y_k)+ \zeta_k - U^\top E_k \right). 
\end{align*}

In the remainder of the proof, we prove that $\frac{1}{\sqrt{n}} \sum_{k=1}^{n}\Delta_n$ converges in distribution to 
$ N\left(0,  (U^\top \nabla_{\cM} F_\cM(\bar x) U)^{-1} \Sigma(U^\top \nabla_{\cM} F_\cM(\bar x) U)^{-1}\right).$
This implies that result since by definition, $\frac{1}{\sqrt{n}}\sum_{k=1}^{N}(z_k - \bar x) = \frac{1}{\sqrt{n}}\sum_{k=1}^{n}U\Delta_k$. We note that our proof closely mirrors~\cite[Theorem 2]{asi2019stochastic}.

To prove this claim, define the matrices 
$$
B_k^n = \alpha_k \sum_{i=k}^{n}\prod_{j= k+1}^{i}(I-\alpha_j H)\qquad \text{  and  } \qquad A_k^n = B_k^n - H^{-1}.
$$
Polyak and Juditsky~\cite[Lemma 2]{polyak1992acceleration} show that $\bar \Delta_n = \frac{1}{n}\sum_{k=1}^{n} \Delta_k$ satisfies the equality 
\begin{align*}
	\sqrt{n} \bar \Delta_n &= \frac{1}{\sqrt{n}} \sum_{k=1}^{n}H^{-1}U^\top \nu_k\\
	&\qquad + \frac{1}{\sqrt{n}}\sum_{k=1}^{n}A_k^n U^\top \nu_k + \frac{1}{\sqrt{n}}\sum_{k=1}^{n}B_k^n[R(y_k)+ \zeta_k - U^\top E_k] + O\left(\frac{1}{\sqrt{n}}\right),	\end{align*}
where $\sup_{k,n}\max\{\norm{B_k^n}, \norm{A_k^n}\} <+\infty$ and $\lim_{n \rightarrow \infty} \frac{1}{n}\sum_{k=1}^{n} \norm{A_k^n} =0$. Equivalently, after expanding $\nu_k=\nu_k^{(1)}+\nu_k^{(2)}(x_k)$, we obtain
\begin{align*}
\sqrt{n} \bar \Delta_n &= \frac{1}{\sqrt{n}} \sum_{k=1}^{n}H^{-1}U^\top \nu_k^{(1)}\\
	&\qquad + \frac{1}{\sqrt{n}}\sum_{k=1}^{n}A_k^n U^\top \nu_k^{(1)} + \frac{1}{\sqrt{n}}\sum_{k=1}^{n}B_k^n[R(y_k)+ \zeta_k - U^\top E_k + \nu_k^{(2)}(x_k)] + O\left(\frac{1}{\sqrt{n}}\right).\end{align*}
Assumption~\ref{assumption:martinagle} ensures that the sum $ \frac{1}{\sqrt{n}} \sum_{k=1}^{n}H^{-1}U^\top \nu_k^{(1)}$ converges in distribution to 
$$ N\left(0,(U^\top \nabla_{\cM} F_\cM(\bar x) U)^{-1} \Sigma(U^\top \nabla_{\cM} F_\cM(\bar x) U)^{-1}\right).$$
Thus the claim holds if we can show that the other sums in our expression for $\sqrt{n}\bar \Delta_n$ converge to $0$ almost surely. We do so in the following sequence of claims.

\begin{claim}	
	We have that 
	$$\frac{1}{\sqrt{n}}\sum_{k=1}^{n}A_k^n U^\top \nu_k^{(1)} \xrightarrow{\text{a.s.}} 0.$$ 
\end{claim}
\begin{proof}
Using that $\sup_{k,n}\|A_k^n\|<\infty$ and that $\EE\|\nu_k\|^21_{\tau_{k_0} > k}$ is bounded, we deduce
	\begin{align*}
		\EE\left[\norm{\frac{1}{\sqrt{n}} \sum_{k=1}^{n} A_k^n U^\top \nu_k^{(1)}}^21_{\tau_{k_0} > k}\right] &= \frac{1}{n}\sum_{k=1}^{n}\EE\left[\norm{A_k^n U^\top \perturb_k^{(1)}1_{\tau_{k_0} > k}}^2\right]\\
		&\le \frac{C}{n} \sum_{k=1}^{n} \norm{A_k^n}\\	
		&\rightarrow 0.
	\end{align*}
Thus $\frac{1}{\sqrt{n}}\sum_{k=1}^{n}A_k^n U^\top \nu_k1_{\tau_{k_0} > k} $ is a $L^2$-bounded martingale. By~\cite[Theorem 4.4.6]{durrett2019probability}, we know that  $\frac{1}{\sqrt{n}}\sum_{k=1}^{n}A_k^n U^\top \nu_k1_{\tau_{k_0} > k}\xrightarrow{L^2} 0$. On the other hand, by~\cite[Theorem 4.2.11]{durrett2019probability}, $\frac{1}{\sqrt{n}}\sum_{k=1}^{n}A_k^n U^\top \nu_k 1_{\tau_{k_0} > k}$ converges almost surely. Therefore, since for almost every sample path there exists $k_0$ such that $\tau_{k_0} = \infty$, we have $\frac{1}{\sqrt{n}}\sum_{k=1}^{n}A_k^n U^\top \nu_k \xrightarrow{\text{a.s.}} 0$, as desired.
\end{proof}

\begin{claim}\label{item:asymptoticnormalityhardcase}
 We have that $$\frac{1}{\sqrt{n}}\sum_{k=1}^{n}B_k^n U^\top R(y_k) \xrightarrow{\text{a.s.}} 0.$$ 
\end{claim}
\begin{proof}
Let $\Phi$ be a smooth extension of $F_{\cM}$ to a neighborhood $U\subset \R^d$ of $\bar x$. We then deduce
\begin{align*}
R(y)&= U^\top (\Phi(y)-\Phi(\bar x)-\nabla\Phi(\bar x)UU^{\top}(y-\bar x))\\
&=U^{\top}\nabla \Phi(\bar x)(I-UU^{\top})(y-\bar x)+O(\|y-\bar x\|^2)\\
&=U^{\top}\nabla \Phi(\bar x)P_{N_{\cM}(\bar x)}(y-\bar x)+O(\|y-\bar x\|^2)
\end{align*}
Since $\cM$ is $C^2$-smooth, it follows immediately that $\|P_{N_{\cM}(\bar x)}(y-\bar x)\|\leq O(\|y-\bar x\|^2)$ as $y\in \cM$ tends to $\bar x$. Thus, we have $\norm{R(y)}= O(\|y-\bar x\|^2)$. In addition, by our assumption that $x_k \xrightarrow{\text{a.s.}} \bar x$, we have $y_k \xrightarrow{\text{a.s.}} \bar x$. Consequently, there exists a constant $C$ depending on sample path such that $\norm{R(y_k)} \le C \norm{y_k - \bar x}^2$ almost surely. Uniform boundedness of 
$B_k^n$ and  Lemma~\ref{lem: almostsureyk} therefore implies $\frac{1}{\sqrt{n}}\sum_{k=1}^{n}B_k^n U^\top R(y_k) \xrightarrow{\text{a.s.}} 0$.
\end{proof}

\begin{claim}\label{item:asymptoticnormalityhardcase2}
We have that 
$$\frac{1}{\sqrt{n}}\sum_{k=1}^{n}B_k^n \zeta_k \xrightarrow{\text{a.s.}} 0.$$
\end{claim}
\begin{proof}
For $k \ge 1$, define truncated variables $\zeta_k^{(k_0)} = \zeta_k1_{\tau_{k_0}>k}$. Note that suffices to show that 
$$
\frac{1}{\sqrt{n}}\sum_{k=1}^{n}B_k^n \zetak \xrightarrow{\text{a.s.}} 0, 
$$
since on every sample path there exists a $k_0$ such that $\tau_{k_0} = \infty$. Thus, we will work with these truncated variables throughout. 

Turning to the proof, we first show that $\frac{1}{\sqrt{n}}\sum_{k=1}^{n}\zetak \xrightarrow{P} 0$ and $\frac{1}{\sqrt{n}}\sum_{k=1}^{n}A_k^n \zetak \xrightarrow{P} 0$.  Recall that $\zeta_k = U^\top P_{T_\cM(y_k)}(\perturb_\discrete) - U^\top P_{T_\cM(\bar x)}(\perturb_\discrete)$, so we have 
$$
\expect{\zetak\mid \cF_k}= \expect{\zeta_k \mid \cF_k}1_{\tau_{k_0} >k} = 0.
$$
Since $x \mapsto P_{T_\cM(x)}$ is locally Lipschitz on a neighborhood of $\bar x$ in $\cM$, we have the following bound for some $C > 0$ and all sufficiently small $\delta$:
$$
\norm{\zetak}^2 \le C\norm{y_k - \bar x}^21_{\tau_{k_0}>k},
$$
In particular, it holds that  
$$
\expect{\norm{\zetak}^2 \mid \cF_k} \le C^2\norm{y_k - \bar x}^21_{\tau_{k_0}>k}.
$$
Combining with Lemma~\ref{lem: almostsureyk}(\ref{lem: almostsureyk:1}), we know that $\zetak$ is a martingale difference sequence and almost surely,
$$
\sum_{k= 1}^\infty \frac{1}{k} \expect{\norm{\zetak}^2 \mid \cF_k} \le C^2\sum_{k=1}^{\infty}\frac{1}{k}\norm{y_k - \bar x}^2<\infty.
$$
Therefore, by Lemma~\ref{lem: L2martingaleconvergence}, we have 
$$\frac{1}{\sqrt{n}}\sum_{k=1}^{n}\zetak \xrightarrow{\text{a.s.}} 0.$$
In particular, it holds that $\frac{1}{\sqrt{n}}\sum_{k=1}^{n}\zetak \xrightarrow{P} 0.$

Next we show that for any $k_0<\infty$, we have $n^{-1/2}\sum_{k=1}^n A_k^n \zetak \xrightarrow{P} 0$. To see this, note that by Lemma~\ref{lem: almostsureyk}, there exists $C' > 0$ such that 
\begin{align}\label{eq:truncatedboundzeta}
\expect{\norm{\zetak}^2} \le C\expect{\norm{y_k - \bar x}^2 1_{\tau_{k_0}>k}} \le C'\alpha_k.
\end{align}
Hence, the following limit holds 
\begin{align*}
\expect{\norm{\frac{1}{\sqrt{n}}\sum_{k=1}^{n}A_k^n \zetak}^2} &= \frac{1}{n}\sum_{k=1}^{n}\expect{\norm{A_k^n \zetak}^2} 
\leq \frac{C'\alpha_k}{n}\sum_{k=1}^{n} \norm{A_k^n }^2 
\le \frac{C'\alpha_k\sup_{k,n}\|A_k^n\|}{n}\sum_{k=1}^{\infty}\norm{A_k^n} \rightarrow 0,
\end{align*}
where the first equality follows from the martingale difference property and the second inequality follows from the boundedness of moments of $\zetak$. Consequently, we have shown that $$\frac{1}{\sqrt{n}}\sum_{k=1}^{n}A_k^n \zetak \xrightarrow{L^2} 0,$$ which implies that $\frac{1}{\sqrt{n}}\sum_{k=1}^{n}A_k^n \zetak \xrightarrow{P} 0$.

We have therefore proved that $\frac{1}{\sqrt{n}}\sum_{k=1}^{n}B_k^n \zetak \xrightarrow{P} 0.$ We now show that  $\frac{1}{\sqrt{n}}\sum_{k=1}^{n}B_k^n \zetak$ converges almost surely. Since the almost sure limits and limits in probability agree when both exist, this will complete the proof.

To this end, define the sequence 
$$
Z_{n,k_0} = \sum_{k=1}^{n}B_k^n\zetak.
$$
The result follows if we can prove that for any finite $k_0$, the sequence $n^{-1/2}Z_{n,k_0}$ almost surely converges. To that end, note that $B_k^{n+1} -B_k^{n}= \alpha_k \prod_{i=k+1}^{n}(I-\alpha_i H)$. Thus, defining
$$
W_k^n = \prod_{i=k}^n(I-\alpha_i H), \qquad V_{n,k_0}= \sum_{k=1}^{n}\alpha_kW_{k+1}^{n+1}\zetak,
$$
we deduce that $V_{n,k_0}$ is $\cF_{n+1}$ measurable and $Z_{n, k_0}$ admits the decomposition:
$$
Z_{n, k_0} = Z_{n-1, k_0} + V_{n-1,k_0} + \alpha_n \zeta_n^{(k_0)}= \sum_{k=1}^{n-1}V_{k,k_0} + \sum_{k=1}^{n}\alpha_k\zetak.
$$
Note that the sum $\sum_{k=1}^{n}\alpha_k\zetak$ is a square-integrable martingale with summable squared increments, so it converges almost surely~\cite[Theorem 4.2.11]{durrett2019probability}. As a result, we have the following limit $n^{-1/2}\sum_{k=1}^{n}\alpha_k\zetak \xrightarrow{\text{a.s.}}0$.  It thus suffices to show that $n^{-1/2}\sum_{k=1}^{n-1} V_{k,k_0}$ converges almost surely. 

To that end, let $\lambda$ denote the smallest eigenvalue of $H$. Then we have
\begin{equation}\label{eqn:cray_estimation}
\expect{\norm{V_{n,k_0}}^2}=\sum_{k=1}^{n}\alpha_k^2 \norm{W_{k+1}^{n+1}}^2 \expect{\norm{\zetak}^2} \le C'\sum_{k=1}^{n} \alpha_k^3 \norm{W_{k+1}^{n+1}}^2,
\end{equation}
where the inequality follows from the bound $\expect{\norm{\zetak}^2} \le C'\alpha_k$ (see Equation~\eqref{eq:truncatedboundzeta}). The result \cite[Lemma 1 (part 3)]{} shows that there exist constants $\beta>0$ and $K<\infty$ such that for all $k$ and $t\geq k$, the estimate holds:
$$\norm{W_{k+1}^{n+1}}^2\leq K\exp\left(-\beta\sum_{i=k+1}^n \alpha_i\right).$$
Plugging this estimate into \eqref{eqn:cray_estimation}, exactly the same proof as that of~\cite[Lemma A.7]{asi2019stochastic} with $\rho=3$ shows that there exists some constant $C$ such that 
$$
\expect{\norm{V_{n,k_0}}^2} \le \frac{C\log n}{n^{2\gamma}}. 
$$
Hence, for any $\epsilon>0$, we can find some $C$ such that 
$$
\expect{\norm{V_{n,k_0}}^2} \le \frac{C}{n^{2\gamma-\epsilon}}. 
$$
Now define $T_{n,k_0}= \frac{1}{\sqrt{n}}\sum_{k=1}^{n}V_{k,k_0}.$ We claim that $T_{n,k_0}$ almost surely has finite length. Indeed, for any $\epsilon>0$ there exists $C, C' > 0$ such that 
\begin{align*}
	\expect{\norm{T_{n,k_0} - T_{n+1,k_0}}}&\le \abs{\frac{1}{\sqrt{n+1}} -\frac{1}{\sqrt{n}}} \sum_{k=1}^{n}\expect{\norm{V_{k,k_0}}} + \frac{1}{\sqrt{n+1}}\expect{\norm{V_{n+1,k_0}}}\\
	&\le \frac{C}{n^{\frac{3}{2}}}\sum_{k=1}^{n}\frac{1}{k^{\gamma-\epsilon}} + \frac{1}{\sqrt{n}} \frac{1}{n^{\gamma -\epsilon}}\\
	&\le \frac{C'}{n^{\gamma+ 1/2 - \epsilon}}.
\end{align*} 
Since $\gamma \in (\frac{1}{2}, 1)$, we therefore have $\sum_{n}	\expect{\norm{T_{n,k_0} - T_{n+1,k_0}}} <\infty$. Consequently, the sum is finite almost surely: $\sum_{n}	{\norm{T_{n,k_0} - T_{n+1,k_0}}}< + \infty$. This implies that $T_{n,k_0} = n^{-1/2}\sum_{k=1}^{n}V_{k,k_0}$ converges almost surely. Recalling the definition of $V_{k, k_0}$, we find that  $
n^{-1/2}Z_{n,k_0}
$ almost surely converges, which completes the proof. 
\end{proof}

\begin{claim}
We have that 
$$
\frac{1}{\sqrt{n}}\sum_{k=1}^{n}B_k^n \nu_k^{(2)}(x_k) \xrightarrow{\text{a.s.}} 0.$$
\end{claim}
\begin{proof}
This may be proved by argument that mirrors Claim~\ref{item:asymptoticnormalityhardcase}. Indeed, observe that the sequence $\xi_k = \nu_k^{(2)}(x_k)1_{\tau > k_0}$ is a martingale difference sequence, the bounds hold for some $C > 0$
$$
\EE_k[\|\xi_k\|^2] \leq C\|x_k - \bar x\|^21_{\tau > k_0} \qquad \text{ and } \qquad \EE[\|\xi_k\|^2] \leq C\alpha_k, 
$$
and $\sum_{k=1}^\infty \frac{1}{k}\EE_k[\|\xi_k\|^2] \leq \sum_{k=1}^\infty \frac{1}{k} \|x_k - \bar x\|^21_{\tau > k_0} < + \infty$. Only these facts for $\zeta_k^{(k_0)}$ were used to prove Claim~\ref{item:asymptoticnormalityhardcase}.
\end{proof}
\begin{claim}
We have that 
$$\frac{1}{\sqrt{n}}\sum_{k=1}^{n}B_k^n U^\top E_k \xrightarrow{\text{a.s.}} 0.$$ 
\end{claim}
\begin{proof}
This follows immediately from Lemma~\ref{lem: almostsureyk}(\ref{lem: almostsureyk:5} and the fact that $\sup_{k,n}\norm{B_k^n} <\infty$.

\end{proof}
\noindent Taking these claims into account, the proof is complete.
\end{proof}

\section{Auxiliary facts about sequences of random variables.}
\begin{definition}{\rm
Let $\{X_k\}_{k\geq 1}$ and $X$ be random vectors in $\R^d$ defined on a probability space $(\Omega,\mathcal{F},P)$.  
\begin{enumerate}
\item $X_k$ {\em converges almost surely to} $X$, denoted $X_k\xrightarrow[]{a.s.} X$ if for almost every $\omega\in \Omega$, the vector $X_k(\omega)$ converges to $X(\omega)$. 
\item $X_k$ {\em converges in probability to} $X$, denoted $X_k\xrightarrow[]{p} X$, if for every $\epsilon>0$, we have $\lim_{k\to \infty} P(\|X_k-X\|\leq \epsilon)\to 1$. 
\item $X_k$ {\em converges in distribution to} $X$, denoted $X_k\xrightarrow[]{D} X$ if for every bounded continuous function $G\colon\R^d\to\R$, one has $\lim_{k\to \infty} \EE G(x_k)=\EE G(X)$.
\item $X_k$ is {\em bounded in probability}, denoted $X_k=O_p(1)$, if for every $\epsilon>0$, there exist $M_{\epsilon}$ such that $P(\|X_n\|>M_{\epsilon})<\epsilon$ for all sufficiently large indices $k$.
\end{enumerate}}
\end{definition}

\begin{lem}\label{lem:basic_facts_conv} Let $\{X_k\}_{k\geq 1}$, $\{Y_k\}_{k\geq 1}$, $X$, and $Y$ be random vectors in some Euclidean space and let $a, b$ be deterministic. 
The following statements are true.
\begin{enumerate}
\item\label{basic_facts_conv1} The implications hold:
$$X_k\xrightarrow[]{a.s.}  X \qquad\Longrightarrow\qquad X_k\xrightarrow[]{p} X\qquad\Longrightarrow \qquad X_k\xrightarrow[]{D} X\qquad\Longrightarrow\qquad X_k=O_p(1).$$
\item\label{basic_facts_conv2} If $X_k\xrightarrow[]{p}  X$ and $Y_k\xrightarrow[]{p}  Y$, then $aX_k+bY_k\xrightarrow[]{p}  aX+b Y$ and $X_kY_k\xrightarrow[]{p}  XY$. The analogous statement holds for almost sure convergence.
\item\label{basic_facts_conv3} If $X_n=O_p(1)$ and $Y_k\xrightarrow[]{p}  0$, then $X_kY_k\xrightarrow[]{p}  0$.
\item\label{basic_facts_conv4} (Slutsky I) If $X_k\xrightarrow[]{D}  X$ and $Y_k\xrightarrow[]{p} a$, then $X_k+Y_k\xrightarrow[]{D}  X+a$ and $X_kY_K\xrightarrow[]{D} aX$.
\item\label{basic_facts_conv4b} (Slutsky II) If $X_k\xrightarrow[]{p}  X$ and $Y_k\xrightarrow[]{p} Y$, then $X_k+Y_k\xrightarrow[]{p}  X+Y$ and $X_kY_K\xrightarrow[]{p} XY$.
\item \label{elim} If $X_kY_k\xrightarrow[]{D}  X$ and $Y_k\xrightarrow[]{p} c$, then $X_k\xrightarrow[]{D} X/c$, as long as $c\neq 0$.
\item\label{basic_facts_conv5} (Delta Method) Suppose that $\sqrt{k}(X_k-\mu)\xrightarrow[]{d}  \mathsf{N}\big(0,\Sigma)$ for some $\mu\in \R^d$ and some matrix $\Sigma\in \R^{d\times d}$, then we have $\sqrt{k}(g(X_k)-g(\mu))\xrightarrow[]{d}  \mathsf{N}\big(0,\nabla g(\mu)\Sigma\nabla g(\mu)^{\top})$ for any map $g\colon\R^d\to\R^m$ that is differentiable at $\mu$.
\end{enumerate}
\end{lem}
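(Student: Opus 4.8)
The statements collected here are entirely classical (see e.g.\ \cite{van2000asymptotic,durrett2019probability}), so the plan is simply to indicate the standard line of argument. I would establish the three implications in part~(\ref{basic_facts_conv1}) first, then deduce the continuous-mapping and Slutsky-type statements~(\ref{basic_facts_conv2})--(\ref{elim}) from one joint-convergence observation, and finally obtain the delta method~(\ref{basic_facts_conv5}) by linearization.

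For part~(\ref{basic_facts_conv1}), I would argue: almost sure convergence implies convergence in probability since $P(\|X_k - X\| > \epsilon) \le E[\min\{1,\sup_{j \ge k}\|X_j - X\|\}] \to 0$ by dominated convergence; convergence in probability implies convergence in distribution by testing against a bounded Lipschitz function $G$ and splitting $E[G(X_k) - G(X)]$ over the event $\{\|X_k - X\| \le \delta\}$ and its complement; and convergence in distribution implies boundedness in probability by tightness of the limit law together with the portmanteau lemma, which transfers the bound $P(\|X\|>M_\epsilon)<\epsilon/2$ to $X_k$ for large $k$.

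For parts~(\ref{basic_facts_conv2})--(\ref{basic_facts_conv4b}) and~(\ref{elim}), the device I would use is that coordinatewise control yields joint control: if $X_k \xrightarrow{p} X$ and $Y_k \xrightarrow{p} Y$ then $(X_k,Y_k) \xrightarrow{p} (X,Y)$ (and likewise almost surely), while if $X_k \xrightarrow{D} X$ and $Y_k \xrightarrow{p} a$ with $a$ deterministic then $(X_k,Y_k) \xrightarrow{D} (X,a)$. Composing with the continuous maps $(u,v) \mapsto au+bv$ and $(u,v) \mapsto uv$ through the continuous mapping theorem gives~(\ref{basic_facts_conv2}), (\ref{basic_facts_conv4}) and~(\ref{basic_facts_conv4b}); part~(\ref{basic_facts_conv3}) follows from an $\epsilon$--$M$ truncation, using that $X_k$ is tight and $Y_k\to 0$ in probability; and part~(\ref{elim}) follows from~(\ref{basic_facts_conv4}) with multiplier $Y_k^{-1} \xrightarrow{p} c^{-1}$, which is legitimate since $c \ne 0$.

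For part~(\ref{basic_facts_conv5}), I would write $g(X_k) - g(\mu) = \nabla g(\mu)(X_k - \mu) + r(X_k - \mu)$ with $r(h)/\|h\| \to 0$ as $h \to 0$, note that $\sqrt{k}(X_k-\mu) \xrightarrow{D} \mathsf{N}(0,\Sigma)$ forces $X_k - \mu \xrightarrow{p} 0$ and hence $\sqrt{k}\,r(X_k-\mu) = \|\sqrt{k}(X_k-\mu)\|\cdot \tfrac{r(X_k-\mu)}{\|X_k-\mu\|} \xrightarrow{p} 0$ by part~(\ref{basic_facts_conv3}) (the first factor is $O_p(1)$, the second is $o_p(1)$), and then apply the linear map $\nabla g(\mu)$ and Slutsky~(\ref{basic_facts_conv4}) to conclude $\sqrt{k}(g(X_k) - g(\mu)) \xrightarrow{D} \mathsf{N}(0,\nabla g(\mu)\Sigma\nabla g(\mu)^\top)$. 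There is no genuine obstacle here; the only step deserving care is the joint-convergence fact ``$X_k \xrightarrow{D} X$, $Y_k \xrightarrow{p} a$ deterministic $\Rightarrow (X_k,Y_k) \xrightarrow{D} (X,a)$,'' which underlies all the Slutsky assertions, and for which I would cite a precise reference rather than reprove it.
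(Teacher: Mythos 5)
The paper does not prove this lemma; it is stated in the appendix as a collection of classical facts (the sort found in van~der~Vaart or Durrett, both of which the paper cites), so there is no paper argument to compare against. Your sketch is the standard textbook route and is correct, with one small slip in the first implication: the displayed bound $P(\|X_k-X\|>\epsilon) \le \EE[\min\{1,\sup_{j\ge k}\|X_j-X\|\}]$ is not valid as written (take $\sup_{j\ge k}\|X_j-X\|\equiv 0.6$ and $\epsilon=0.5$); the correct move is either to insert a factor $\epsilon^{-1}$ via Markov's inequality, or to apply dominated convergence directly to the indicators $\mathbf{1}\{\sup_{j\ge k}\|X_j-X\|>\epsilon\}$, which tend to $0$ a.s.\ since $\sup_{j\ge k}\|X_j-X\|$ decreases to $0$ a.s. Either fix leaves the rest of the argument---joint convergence plus continuous mapping for parts~(\ref{basic_facts_conv2})--(\ref{elim}), linearization plus part~(\ref{basic_facts_conv3}) for the delta method---untouched and sound.
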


\begin{lem}[Robbins-Siegmund\cite{robbins1971convergence}]\label{lem: Robbins-Siegmund}
Let $A_k, B_k,C_k,D_k \ge 0$ be non-negative random variables adapted to the filtration  $\{\cF_{k}\}$ and satisfying 
$$
\EE[A_{k+1}\mid \mathcal{F}_k] \le (1+B_k)A_k + C_k - D_k.
$$
Then on the event $\{\sum_{k}B_k <\infty, \sum_{k}C_k < \infty\}$, there is a random variable $A_\infty<\infty$ such that $A_k \xrightarrow{\text{a.s.}} A_\infty$ and $\sum_k D_k <\infty$ almost surely. 
\end{lem}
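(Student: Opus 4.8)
The plan is to run the classical Robbins--Siegmund argument: renormalize to absorb the multiplicative perturbation $1+B_k$, exhibit a nonnegative supermartingale after localizing away the additive term $\sum_k C_k$, invoke the supermartingale convergence theorem, and then unwind the normalization.

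\emph{Renormalization.} Since the $B_k\ge 0$ are adapted, $\gamma_k:=\prod_{j=1}^{k-1}(1+B_j)$ (with $\gamma_1:=1$) is nondecreasing and $\gamma_{k+1}$ is $\cF_k$-measurable. Dividing the hypothesis by $\gamma_{k+1}=(1+B_k)\gamma_k$ gives
\[
\EE\!\left[\frac{A_{k+1}}{\gamma_{k+1}}\,\bigg|\,\cF_k\right]\ \le\ \frac{A_k}{\gamma_k}+\frac{C_k}{\gamma_{k+1}}-\frac{D_k}{\gamma_{k+1}}.
\]
Set $A_k':=A_k/\gamma_k$, $C_k':=C_k/\gamma_{k+1}$, $D_k':=D_k/\gamma_{k+1}$. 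On the target event $E:=\{\sum_k B_k<\infty\}\cap\{\sum_k C_k<\infty\}$ the limit $\gamma_\infty:=\lim_k\gamma_k$ is finite, and $\gamma_{k+1}\ge 1$ gives $\sum_k C_k'\le\sum_k C_k<\infty$, $A_k=\gamma_kA_k'$, and $D_k\le\gamma_\infty D_k'$; so it suffices to prove that $A_k'$ converges and $\sum_k D_k'<\infty$ almost surely on $E$.

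\emph{Supermartingale and localization.} A one-line computation from the displayed inequality shows $M_k:=A_k'+\sum_{j=1}^{k-1}(D_j'-C_j')$ obeys $\EE[M_{k+1}\mid\cF_k]\le M_k$, so $\{M_k\}$ is a supermartingale; it need not be nonnegative (nor obviously integrable) because of the $-\sum_j C_j'$ term. I would localize at $\nu_c:=\inf\{k:\sum_{j\le k}C_j'>c\}$ for $c>0$: since $\sum_{j<k\wedge\nu_c}C_j'\le c$, one gets $M_{k\wedge\nu_c}+c\ge A_{k\wedge\nu_c}'+\sum_{j<k\wedge\nu_c}D_j'\ge 0$, and (after the standard truncation taking care of integrability) $\{M_{k\wedge\nu_c}+c\}_k$ is a nonnegative supermartingale, hence converges almost surely by Doob's theorem. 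On $\{\sum_j C_j'\le c\}$ we have $\nu_c=\infty$, so $M_k$ converges there; letting $c\uparrow\infty$ through the positive integers shows $M_k$ converges almost surely on $\{\sum_j C_j'<\infty\}\supseteq E$.

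\emph{Conclusion.} On $E$, $\sum_{j<k}C_j'$ converges, hence so does $A_k'+\sum_{j<k}D_j'$; as $A_k'\ge 0$ and $k\mapsto\sum_{j<k}D_j'$ is nondecreasing, both summands must converge, yielding $\sum_j D_j'<\infty$ and $A_k'\to A_\infty'<\infty$. Multiplying back, $A_k=\gamma_kA_k'\to\gamma_\infty A_\infty'=:A_\infty<\infty$ and $\sum_k D_k\le\gamma_\infty\sum_k D_k'<\infty$ on $E$. The main obstacle will be the localization step: producing a supermartingale that is simultaneously nonnegative and integrable without assuming anything about the $L^1$-size of $\sum_k C_k$ or of the $A_k$; the renormalization and the final bookkeeping are routine.
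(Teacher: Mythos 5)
The paper does not prove this lemma; it is cited as the classical Robbins--Siegmund theorem \cite{robbins1971convergence}, so there is no in-text proof to compare against. Your argument is precisely the standard proof from that paper: renormalize by $\gamma_k=\prod_{j<k}(1+B_j)$ to kill the multiplicative term, form the almost-supermartingale $M_k=A_k'+\sum_{j<k}(D_j'-C_j')$, localize at the time $\nu_c$ when $\sum_j C_j'$ first exceeds $c$ to make $M_{k\wedge\nu_c}+c$ nonnegative, apply Doob's convergence theorem, and unwind. The supermartingale computation, the inequality $M_{k\wedge\nu_c}+c\ge A_{k\wedge\nu_c}'+\sum_{j<k\wedge\nu_c}D_j'\ge 0$, and the final bookkeeping are all correct.

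The one step you explicitly defer --- applying Doob without a priori $L^1$ control --- is indeed where the original proof does its work, and the resolution is slightly different from a ``truncation'' of the summands: one intersects with the $\cF_1$-measurable event $\{A_1\le N\}$. Since $M_1+c=A_1+c$ and $1_{\{A_1\le N\}}$ is $\cF_1$-measurable, the process $(M_{k\wedge\nu_c}+c)\,1_{\{A_1\le N\}}$ is a nonnegative supermartingale bounded in $L^1$ by $N+c$, hence converges a.s.\ to a finite limit by Doob; letting $N\uparrow\infty$ covers the a.s.\ event $\{A_1<\infty\}$, and letting $c\uparrow\infty$ then covers $\{\sum_k C_k'<\infty\}$ as you describe. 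With that replacement for the truncation step, your proof is complete and matches the classical argument.
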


\begin{lem}[Conditional Borel-Cantelli~{\cite{condborellcantelli}}]\label{lem:condborelcantelli}
Let $\{X_n \colon n \ge 1\}$ be a sequence of nonnegative random variables defined on the probability space $(\Omega, \cF, \mathbb{P})$ and  $\{\cF_n \colon n \ge 0\}$ be a sequence of sub-$\sigma$-algebras of $\cF$. Let $M_n = \expect{X_n \mid \cF_{n-1}}$ for $n\ge 1$. If $\{\cF_n \colon n \ge 0\}$ is nondecreasing, i.e., it is a filtration, then $\sum_{n=1}^{\infty} X_n < \infty$ almost surely on $\{\sum_{n=1}^{\infty}M_n <\infty\}$.
\end{lem}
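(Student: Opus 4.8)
The plan is to prove the lemma by a direct stopping-time (localization) argument, reducing the almost-sure finiteness of $\sum_n X_n$ on $\{\sum_n M_n<\infty\}$ to a uniform expectation bound obtained by truncating at deterministic levels. Set $T_0:=0$ and $T_n:=\sum_{k=1}^n M_k$; since each $M_k\ge 0$, the sequence $T_n$ is nondecreasing, and since $M_k$ is $\cF_{k-1}$-measurable, $T_n$ is $\cF_{n-1}$-measurable.

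First I would reduce to a fixed level. Because $\{\sum_n M_n<\infty\}=\bigcup_{b\in\NN}\{T_\infty\le b\}$ is a countable union, it suffices to show that for each fixed $b\in\NN$ one has $\sum_{n=1}^\infty X_n<\infty$ almost surely on $\{T_\infty\le b\}$. Fix $b\in\NN$ and set $\tau_b:=\inf\{n\ge 0: T_{n+1}>b\}$, with $\inf\emptyset=\infty$. The key point --- and the only place where measurability must be handled with care --- is that $\{\tau_b\ge n\}=\{T_n\le b\}\in\cF_{n-1}$ for every $n\ge 1$, so that $\mathbf{1}_{\{\tau_b\ge n\}}$ is predictable. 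Using Tonelli on the nonnegative summands and then pulling the predictable indicator out of the conditional expectation $\EE[\,\cdot\mid\cF_{n-1}]$,
\begin{align*}
\EE\!\left[\sum_{n=1}^{\tau_b}X_n\right]
&=\sum_{n=1}^\infty\EE\!\left[X_n\mathbf{1}_{\{\tau_b\ge n\}}\right]
=\sum_{n=1}^\infty\EE\!\left[\mathbf{1}_{\{\tau_b\ge n\}}\,\EE[X_n\mid\cF_{n-1}]\right]\\
&=\sum_{n=1}^\infty\EE\!\left[M_n\mathbf{1}_{\{\tau_b\ge n\}}\right]=\EE\!\left[T_{\tau_b}\right].
\end{align*}
By the definition of $\tau_b$ and monotonicity of $T_\cdot$, we have $T_{\tau_b}\le b$ on $\{\tau_b<\infty\}$, while on $\{\tau_b=\infty\}$ no level crossing occurred, so $T_{\tau_b}:=T_\infty\le b$ there as well. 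Hence $\EE[\sum_{n=1}^{\tau_b}X_n]\le b<\infty$, and therefore $\sum_{n=1}^{\tau_b}X_n<\infty$ almost surely.

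Finally I would remove the truncation: on $\{T_\infty\le b\}$ we have $T_{n+1}\le b$ for all $n$, hence $\tau_b=\infty$ there, so $\sum_{n=1}^\infty X_n=\sum_{n=1}^{\tau_b}X_n<\infty$ almost surely on $\{T_\infty\le b\}$; taking the union over $b\in\NN$ finishes the proof. The main (and essentially only) obstacle is the measurability bookkeeping around $\tau_b$ --- in particular justifying $\{\tau_b\ge n\}\in\cF_{n-1}$, which is what legitimizes the middle equality in the display above. Note that, in contrast to a naive appeal to the Robbins--Siegmund lemma (Lemma~\ref{lem: Robbins-Siegmund}), this argument never requires the $X_n$ themselves to be adapted to $\{\cF_n\}$; only the conditional expectations $M_n=\EE[X_n\mid\cF_{n-1}]$ need be well-defined.
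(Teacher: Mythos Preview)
Your argument is correct. The paper does not actually prove this lemma; it is stated without proof and attributed to the cited reference, so there is no ``paper's own proof'' to compare against. Your stopping-time/localization argument is the standard route to the conditional Borel--Cantelli lemma: the key observations---that $T_n=\sum_{k\le n}M_k$ is $\cF_{n-1}$-measurable (since $M_k$ is $\cF_{k-1}$-measurable), that consequently $\{\tau_b\ge n\}=\{T_n\le b\}\in\cF_{n-1}$ is predictable, and that Tonelli together with the tower property then yields $\EE\bigl[\sum_{n\le\tau_b}X_n\bigr]=\EE[T_{\tau_b}]\le b$---are all correctly stated and justified. Your closing remark that adaptedness of the $X_n$ themselves is never used (only that the $M_n$ are well-defined nonnegative conditional expectations) is also accurate and worth keeping.
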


\begin{lem}[{\cite[Exercise 5.3.35]{dembo2016lecture}}]\label{lem: L2martingaleconvergence}
	Let  $M_k$ be an $L^2$ martingale adapted to a filtration $\{\cF_k\}$ and let $b_k \uparrow \infty$ be a positive deterministic sequence. Then if 
	$$\sum_{k\ge 1}b_k^{-2}\expect{(M_k - M_{k-1})^2 \mid \cF_{k-1}} < +\infty,$$
	we have $b_n^{-1}M_n \xrightarrow{\text{a.s.}} 0$.
\end{lem}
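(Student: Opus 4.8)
The plan is to reduce the statement to the almost-sure convergence of a martingale transform, followed by an application of Kronecker's lemma. Write $\Delta_k := M_k - M_{k-1}$ for the martingale increments, so that $\{\Delta_k\}$ is a martingale difference sequence with respect to $\{\cF_k\}$, and introduce the normalized partial sums
\[
N_n := \sum_{k=1}^n b_k^{-1}\Delta_k .
\]
Since each $b_k$ is a deterministic positive scalar, $N_n$ is again a martingale adapted to $\{\cF_k\}$, with increments $N_n - N_{n-1} = b_n^{-1}\Delta_n$, and its predictable quadratic variation is
\[
\langle N\rangle_n = \sum_{k=1}^n \expect{(b_k^{-1}\Delta_k)^2 \mid \cF_{k-1}} = \sum_{k=1}^n b_k^{-2}\,\expect{(M_k - M_{k-1})^2 \mid \cF_{k-1}},
\]
which by hypothesis increases to an almost surely finite limit $\langle N\rangle_\infty$.

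The first key step is to conclude from this that $N_n$ converges almost surely to some finite random variable $N_\infty$. This is precisely the statement that an $L^2$ martingale with almost surely finite predictable quadratic variation converges almost surely, which the paper already invokes elsewhere (e.g. \cite[Theorem~4.2.11]{durrett2019probability}). If one wishes to argue it directly, localize at the stopping times $\tau_m := \inf\{n : \langle N\rangle_{n+1} > m\}$: each stopped process $N^{\tau_m}$ satisfies $\EE[(N^{\tau_m}_n)^2] = \EE[\langle N\rangle_{n\wedge \tau_m}] \le m$, hence is $L^2$-bounded and converges almost surely, while the events $\{\tau_m = \infty\}$ exhaust $\{\langle N\rangle_\infty < \infty\}$, a set of full probability under our hypothesis.

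The second key step is Kronecker's lemma (Lemma~\ref{lem:kronecker}). Because $0 < b_k \uparrow \infty$ and the series $\sum_k b_k^{-1}\Delta_k$ converges almost surely, Kronecker's lemma (applied with $a_k = b_k^{-1}\Delta_k$) gives
\[
\frac{1}{b_n}\sum_{k=1}^n \Delta_k \xrightarrow{\text{a.s.}} 0 .
\]
Since the sum telescopes, $\sum_{k=1}^n \Delta_k = M_n - M_0$, this says $b_n^{-1}(M_n - M_0) \to 0$ almost surely; and as $b_n \to \infty$ and $M_0$ is finite, $b_n^{-1}M_0 \to 0$, so $b_n^{-1}M_n \to 0$ almost surely, which is the claim.

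The only genuinely delicate point is the martingale-convergence step, because the hypothesis controls the \emph{conditional} second moments $\expect{\Delta_k^2\mid\cF_{k-1}}$ rather than the unconditional ones $\EE[\Delta_k^2]$; consequently $N_n$ need not be $L^2$-bounded outright, and one must either appeal to the conditional-variance form of the martingale convergence theorem or run the localization argument above. Everything else — the martingale property of $N_n$, the quadratic-variation identity, and the Kronecker step — is routine.
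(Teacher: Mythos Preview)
Your proof is correct and follows the standard approach; the paper itself does not prove this lemma but merely cites it as an exercise from~\cite{dembo2016lecture}, so there is no paper proof to compare against. Your argument---forming the martingale transform $N_n=\sum_{k\le n}b_k^{-1}\Delta_k$, using finiteness of its predictable quadratic variation to deduce almost-sure convergence, and then applying Kronecker's lemma---is exactly the intended route, and your remark about needing localization (since only the conditional second moments are controlled) is well taken.
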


\begin{lem}[Kronecker Lemma]\label{lem:kronecker}
Suppose $\{x_k\}_k$ is an infinite sequence of real number such that the sum
$
\sum_{k=1}^{\infty} x_k 
$ 
exists and is finite. Then for any divergent positive nondecreasing sequence $\{b_k\}$, we have 
$$
\lim_{K \rightarrow \infty}\frac{1}{b_K}\sum_{k=1}^K b_k x_k = 0.
$$
\end{lem}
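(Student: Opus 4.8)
The plan is to derive the result from Abel summation (summation by parts) followed by a Toeplitz/Ces\`aro-type averaging argument. First I would introduce the partial sums $s_n := \sum_{k=1}^n x_k$ with the convention $s_0 := 0$, and let $s := \lim_{n\to\infty} s_n$, which exists and is finite by hypothesis. Writing $x_k = s_k - s_{k-1}$ and summing by parts gives
$$\sum_{k=1}^K b_k x_k = b_K s_K - \sum_{k=1}^{K-1} (b_{k+1} - b_k)\, s_k,$$
where the boundary term $b_1 s_0$ vanishes since $s_0 = 0$. Dividing through by $b_K$ yields
$$\frac{1}{b_K}\sum_{k=1}^K b_k x_k = s_K - \frac{1}{b_K}\sum_{k=1}^{K-1}(b_{k+1}-b_k)\,s_k.$$
Since $s_K \to s$, it suffices to show that the second term on the right also converges to $s$; the two limits then cancel and the claimed limit $0$ follows.

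For that second term I would observe that the coefficients $c_{K,k} := (b_{k+1}-b_k)/b_K$ are nonnegative (because $\{b_k\}$ is nondecreasing), satisfy $\sum_{k=1}^{K-1} c_{K,k} = (b_K - b_1)/b_K \to 1$, and for each fixed $k$ satisfy $c_{K,k} \to 0$ as $K\to\infty$ (since $b_K \to \infty$). These are exactly the hypotheses under which a weighted average of a convergent sequence converges to the same limit. To keep the argument self-contained I would fix $\epsilon > 0$, pick $N$ with $|s_k - s| < \epsilon$ for all $k \ge N$, split the sum at $N$, bound the tail $\sum_{k=N}^{K-1} c_{K,k}|s_k - s| \le \epsilon\,(b_K - b_N)/b_K \le \epsilon$, bound the head $\sum_{k=1}^{N-1} c_{K,k}|s_k - s| \le (b_N - b_1)\max_{k<N}|s_k - s|/b_K$, which tends to $0$ as $K\to\infty$ because $b_K\to\infty$, and absorb the discrepancy between $\sum_k c_{K,k}$ and $1$ through the term $(b_1/b_K)|s|\to 0$. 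Taking $K\to\infty$ gives $\limsup_{K} \big|\frac{1}{b_K}\sum_{k=1}^{K-1}(b_{k+1}-b_k)s_k - s\big| \le \epsilon$, and then $\epsilon \downarrow 0$ finishes the proof.

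This lemma is entirely classical and the argument is essentially bookkeeping, so I do not anticipate a genuine obstacle. The only points requiring care are getting the index ranges right in the summation-by-parts identity (the boundary term and the role of $s_0 = 0$) and, in the Toeplitz-type estimate, cleanly separating the contribution of the finitely many early terms from the uniformly small tail while keeping the normalization $\sum_k c_{K,k}$ close to $1$.
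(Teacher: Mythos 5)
Your proof is correct and complete: the Abel summation identity is set up with the right index ranges, the boundary term $b_1 s_0$ is handled via $s_0 = 0$, and the Toeplitz-type averaging argument (nonnegative weights summing to $1$ in the limit, each weight individually tending to $0$) is carried out carefully, including the normalization discrepancy $b_1/b_K \to 0$. The paper states this lemma as a classical auxiliary fact without supplying a proof, so there is no paper argument to compare against; the route you chose is the standard one and is exactly what one would expect to see written out.
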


The proofs of the following three lemmas may be found in \cite[Section A.7.2]{davis2021subgradient}. 
\begin{lem}\label{lem:fastsummability}
Fix $k_0 \in \NN, c > 0$, and $\gamma \in (1/2, 1]$. Suppose that $\{X_k\}, \{Y_k\},$ and $\{Z_k\}$ are nonnegative random variables adapted to a filtration $\{\cF_k\}$. Suppose the relationship holds:
$$
\EE[X_{k+1}\mid\mathcal{F}_k] \leq (1-ck^{-\gamma})X_k - Y_k + Z_k \qquad \text{for all $k \geq k_0$}.
$$
Assume furthermore that $c \geq 6$ if $\gamma = 1$. Define the constants $a_k:=\frac{k^{2\gamma-1}}{\log^2(k+1)}$. Then there exists a random variable $V < \infty$ such that  on the event $\{\sum_{k=1}^\infty a_{k+1}Z_k < + \infty\}$, the following is true:
\begin{enumerate}
\item The limit holds
$$
 a_kX_k\xrightarrow{\text{a.s.}} V.
$$
\item The sum is finite
$$
  \sum_{k=1}^\infty a_{k+1}Y_k <  +\infty.
$$
\end{enumerate}
\end{lem}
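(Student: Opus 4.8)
The plan is to reduce the statement to the Robbins--Siegmund lemma (Lemma~\ref{lem: Robbins-Siegmund}) applied to the reweighted sequence $A_k := a_k X_k$. Multiplying the assumed recursion by the deterministic weight $a_{k+1}\ge 0$ yields
$$\EE[a_{k+1}X_{k+1}\mid\cF_k]\ \le\ a_{k+1}(1-ck^{-\gamma})\,X_k\ -\ a_{k+1}Y_k\ +\ a_{k+1}Z_k .$$
So once I know that $a_{k+1}(1-ck^{-\gamma})\le a_k$ and $a_{k+1}(1-ck^{-\gamma})\ge 0$ for all large $k$, the sequence $A_k$ satisfies $\EE[A_{k+1}\mid\cF_k]\le A_k - a_{k+1}Y_k + a_{k+1}Z_k$, which is precisely the hypothesis of Lemma~\ref{lem: Robbins-Siegmund} with $B_k\equiv 0$, $C_k = a_{k+1}Z_k$, and $D_k = a_{k+1}Y_k$.

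The work, then, is the weight comparison. I would record the elementary estimate
$$\frac{a_{k+1}}{a_k}\ =\ \Bigl(\tfrac{k+1}{k}\Bigr)^{2\gamma-1}\cdot\frac{\log^2(k+1)}{\log^2(k+2)}\ \le\ \Bigl(1+\tfrac1k\Bigr)^{2\gamma-1}\ \le\ 1+\frac{2\gamma-1}{k},$$
which uses $\log(k+2)\ge\log(k+1)$ and concavity of $t\mapsto t^{2\gamma-1}$ on $(0,\infty)$ (valid since $2\gamma-1\in(0,1]$). Consequently $a_{k+1}(1-ck^{-\gamma})\le a_k$ follows from $(1+\tfrac{2\gamma-1}{k})(1-ck^{-\gamma})\le 1$, i.e.\ from $\tfrac{2\gamma-1}{k}\le ck^{-\gamma}\bigl(1+\tfrac{2\gamma-1}{k}\bigr)$. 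When $\gamma<1$ the right side is at least $ck^{-\gamma}=ck^{1-\gamma}\cdot k^{-1}$ with $k^{1-\gamma}\to\infty$, so this holds for all large $k$; when $\gamma=1$ it reduces to $1\le c(1+\tfrac1k)$, which holds because $c\ge 6$ --- the sole use of the lower bound on $c$. Together with $1-ck^{-\gamma}\ge 0$ for $k\ge c^{1/\gamma}$, this produces an index $k_1\ge k_0$ with $0\le a_{k+1}(1-ck^{-\gamma})\le a_k$ for every $k\ge k_1$.

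Given $k_1$, applying Lemma~\ref{lem: Robbins-Siegmund} to $(A_k)_{k\ge k_1}$ finishes the argument: on the event $\{\sum_k a_{k+1}Z_k<\infty\}$ (and $\sum_k B_k=0$ trivially) it delivers a finite random variable $V$ with $a_kX_k\xrightarrow{\text{a.s.}}V$ and $\sum_{k\ge k_1}a_{k+1}Y_k<\infty$ almost surely; since the finitely many omitted summands $a_{k+1}Y_k$, $k_0\le k<k_1$, are a.s.\ finite, conclusion~2 follows, and conclusion~1 is immediate. I expect the only genuinely delicate step to be the weight estimate: one has to verify that the geometric contraction factor $1-ck^{-\gamma}$ dominates the mild inflation $1+\tfrac{2\gamma-1}{k}$ picked up in passing from $a_k$ to $a_{k+1}$, uniformly for large $k$, and in particular to handle the borderline exponent $\gamma=1$, where this margin is thinnest and both the $\log^2$ normalization and the hypothesis $c\ge 6$ come into play. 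Everything downstream is a routine invocation of Robbins--Siegmund.
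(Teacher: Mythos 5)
Your argument is correct and takes what is almost certainly the intended route: multiply the recursion by the deterministic weight $a_{k+1}$, verify that $a_{k+1}(1-ck^{-\gamma})\le a_k$ holds eventually, and apply Robbins--Siegmund (Lemma~\ref{lem: Robbins-Siegmund}) with $B_k\equiv 0$, $C_k=a_{k+1}Z_k$, $D_k=a_{k+1}Y_k$. The paper itself defers the proof to an external reference and does not reproduce it, but Robbins--Siegmund is precisely the tool it supplies, and the reweighting argument is the natural one. Your verification of the weight inequality is clean --- the Bernoulli estimate $(1+1/k)^{2\gamma-1}\le 1+(2\gamma-1)/k$ together with dropping the favorable $\log^2(k+1)/\log^2(k+2)\le 1$ factor --- and it correctly separates the $\gamma<1$ case, where $k^{1-\gamma}\to\infty$ provides all the room needed, from the borderline $\gamma=1$. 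One remark: at $\gamma=1$ your computation only requires $c\ge 1$, so the hypothesis $c\ge 6$ carries slack under your argument; this is almost certainly inherited for uniformity with the companion Lemmas~\ref{lem:sequencelemmadistance} and~\ref{lem:sequencelemmasquared}, whose constants $12\gamma$ and $16$ are likewise generous rather than tight.
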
 

\begin{lem}\label{lem:sequencelemmadistance}
	Fix $k_0 \in \NN$, $c, C > 0$, and $\gamma \in (1/2, 1]$. Suppose that $\{s_k\}_{k}$ is a nonnegative sequence satisfying 
	\begin{align*}
	s_{k} \le  \frac{c}{12\gamma} \qquad \text{ and } \qquad s_{k+1}^2 \leq s_k^2 - c k^{-\gamma} s_k + Ck^{-2\gamma},  \qquad \text{for all $k \geq k_0$,}
	\end{align*}
Then, there exists a constant $C_{\ub}$ depending only on $c, C, \gamma$ and $k_0$ such that  
	$$
	s_{k} \leq C_{\ub}k^{-\gamma}, \qquad \forall k \ge 1.
	$$
\end{lem}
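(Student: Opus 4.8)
\emph{Proof plan.} The plan is to establish the bound $s_k \le A k^{-\gamma}$ by induction on $k$, for a suitable constant $A = A(c,C,\gamma,k_0)$, with the induction launched not at $k_0$ but at an index $k_1 \ge k_0$ chosen to be of the same order as $A$. First I would dispose of the small indices: for $k_0 \le k < k_1$ the hypothesis $s_k \le \tfrac{c}{12\gamma}$ already gives $s_k \le \tfrac{c}{12\gamma}k_1^\gamma k^{-\gamma}$, and the finitely many indices $k<k_0$ are absorbed into $C_{\ub}$ via any crude uniform bound on the $s_k$ (present in the applications, where $y_k$ stays in a bounded set; alternatively the conclusion is read for $k\ge k_0$). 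Thus it suffices to run the induction for $k\ge k_1$ and set $C_{\ub}=A$ after enlarging $A$ to dominate $\tfrac{c}{12\gamma}k_1^\gamma$.

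The inductive step splits according to the size of $s_k$. If $s_k \ge \tfrac{2C}{c}k^{-\gamma}$ (the ``far'' regime), the additive error $Ck^{-2\gamma}$ is at most half the drift $ck^{-\gamma}s_k$, so $s_{k+1}^2 \le s_k^2 - \tfrac{c}{2}k^{-\gamma}s_k$; feeding in the inductive hypothesis through $k^{-\gamma}\ge s_k/A$ converts this into the contraction $s_{k+1}^2 \le (1-\tfrac{c}{2A})s_k^2$, after which $\sqrt{1-x}\le 1-x/2$ and the Bernoulli estimate $(k/(k+1))^\gamma \ge 1-\gamma/(k+1)$ give $s_{k+1}\le A(k+1)^{-\gamma}$ provided $A \le \tfrac{c(k+1)}{4\gamma}$ — this is exactly what forces $k_1 \gtrsim 4\gamma A/c$, and the degenerate case $A<c/2$ simply yields $s_{k+1}=0$. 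If instead $s_k < \tfrac{2C}{c}k^{-\gamma}$ (the ``close'' regime), dropping the negative term gives $s_{k+1}^2 \le s_k^2 + Ck^{-2\gamma} \le (\tfrac{4C^2}{c^2}+C)k^{-2\gamma}$, and since $(k+1)^{-2\gamma}\ge \tfrac14 k^{-2\gamma}$ for $k\ge 1$, this is $\le A^2(k+1)^{-2\gamma}$ as long as $A^2\ge 4(\tfrac{4C^2}{c^2}+C)$.

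Finally I would pin down $A$: it must satisfy $A\ge 2\sqrt{\tfrac{4C^2}{c^2}+C}$ (from the close regime), $A \ge \tfrac{c}{12\gamma}k_1^\gamma$ (the base case at $k_1$, using $s_{k_1}\le \tfrac{c}{12\gamma}$), and $k_1 \ge \lceil 4\gamma A/c\rceil$ with $k_1=\max\{k_0,\lceil 4\gamma A/c\rceil\}$ (the far-regime ceiling for all $k\ge k_1$). Using $k_1 \le k_0+1+4\gamma A/c$ and subadditivity of $t\mapsto t^\gamma$ for $\gamma\le1$, the middle requirement reads $A \ge \tfrac{c(k_0+1)^\gamma}{12\gamma} + \tfrac{(4\gamma)^\gamma c^{1-\gamma}}{12\gamma}A^\gamma$, whose right-hand side is sublinear in $A$ (it is $\tfrac{c(k_0+1)}{12}+\tfrac{A}{3}$ when $\gamma=1$), so some $A$ depending only on $c,C,\gamma,k_0$ satisfies all three constraints. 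The hard part is precisely this bookkeeping — reconciling the far regime's ceiling $A\le \tfrac{c(k+1)}{4\gamma}$ with the floor on $A$ coming from the close regime and the initial condition, which is why the induction is started at the scale $k_1\asymp A$ rather than at $k_0$; every individual estimate is an elementary convexity/Bernoulli computation.
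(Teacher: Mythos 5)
The paper does not prove this lemma directly: it cites \cite[Section A.7.2]{davis2021subgradient} for the proofs of Lemmas~\ref{lem:fastsummability}--\ref{lem:sequencelemmasquared}, so a line-by-line comparison with "the paper's own proof" is not possible here. On its own merits, your plan is substantively correct and follows the standard template for such recursions: run a forward induction for $s_k\le Ak^{-\gamma}$, split the step into a ``far'' regime (where the drift $-ck^{-\gamma}s_k$ dominates the additive error) and a ``close'' regime (where $s_k=O(k^{-\gamma})$ is already at the target scale), and launch the induction at a shifted index $k_1\asymp A$ so that the contraction factor $(1-c/(4A))$ can catch the polynomial decay of $(k+1)^{-\gamma}/k^{-\gamma}$. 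The bookkeeping that determines $A$ and $k_1$ simultaneously (using $\max\{k_0,\lceil 4\gamma A/c\rceil\}\le k_0+1+4\gamma A/c$ and subadditivity of $t\mapsto t^\gamma$) closes correctly, since the resulting constraint on $A$ has a right-hand side that grows strictly sublinearly in $A$ for $\gamma<1$ and linearly with coefficient $1/3<1$ for $\gamma=1$. Your observation that the statement only constrains $s_k$ for $k\ge k_0$, so that the $\forall k\ge 1$ conclusion needs either a uniform bound for small $k$ or a rephrasing to $k\ge k_0$, is a fair reading of a loosely stated lemma.

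One concrete error: the Bernoulli estimate you quote, $(k/(k+1))^\gamma\ge 1-\gamma/(k+1)$, is false for $\gamma\in(0,1]$. Writing $u=1/(k+1)$, the function $u\mapsto(1-u)^\gamma$ is concave on $[0,1)$ for $\gamma\le 1$, hence lies \emph{below} its tangent line at $u=0$, giving $(1-u)^\gamma\le 1-\gamma u$ --- the opposite direction. The inequality you actually need is a lower bound on $(k/(k+1))^\gamma$, and the valid one comes from the Bernoulli inequality applied with a negative exponent: $(k/(k+1))^\gamma=(1+1/k)^{-\gamma}\ge 1-\gamma/k$. Substituting this in place of your (incorrect) bound changes the far-regime ceiling from $A\le c(k+1)/(4\gamma)$ to $A\le ck/(4\gamma)$, which shifts $k_1$ by at most one index and does not affect the rest of the argument. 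With this single correction, the proof goes through.
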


\begin{lem}\label{lem:sequencelemmasquared}
	Fix $k_0 \in \NN$, $c, C > 0$, and $\gamma \in (1/2, 1]$. Suppose that $\{s_k\}_{k}$ is a nonnegative sequence satisfying 
	\begin{align*}
 \qquad s_{k+1} \leq (1-ck^{-\gamma})s_k  + Ck^{-2\gamma},  \qquad \text{for all $k \geq k_0$,}
	\end{align*}
Assume furthermore that $c \ge 16$ if $\gamma=1$. Then, there exists a constant $C_{\ub}$ depending only on $c, C, \gamma$ and $k_0$ such that  
	$$
	s_{k} \leq C_{\ub}k^{-\gamma}, \qquad \forall k \ge 1.
	$$
\end{lem}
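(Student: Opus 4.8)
The plan is to establish the bound by a single induction on $k$, handling the finitely many small indices by simply enlarging the constant.

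First I would fix an integer $k_1\ge k_0$ large enough that the following hold for every $k\ge k_1$: (i) $1-ck^{-\gamma}\ge 0$, so the recursion is monotone in $s_k$; and (ii) $\gamma\,k^{\gamma-1}\le c/2$. Requirement (i) amounts to $k_1\gtrsim c^{1/\gamma}$. For (ii): when $\gamma<1$ the quantity $\gamma k^{\gamma-1}$ tends to $0$, so $k_1$ can be chosen depending only on $c,\gamma,k_0$; when $\gamma=1$ one has $\gamma k^{\gamma-1}\equiv 1$, and the standing hypothesis $c\ge 16$ guarantees $c/2\ge 1$, so (ii) is automatic. I would then set
\[
C_{\ub}:=\max\Big\{\tfrac{2C}{c},\ \max_{1\le k\le k_1} k^{\gamma}s_k\Big\},
\]
where the terms $s_{k_0+1},\dots,s_{k_1}$ entering the second argument are controlled by iterating the recursion from $s_{k_0}$. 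By construction $s_k\le C_{\ub}k^{-\gamma}$ for all $k\le k_1$, and $C/C_{\ub}\le c/2$.

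Next I would carry out the inductive step. Suppose $s_k\le C_{\ub}k^{-\gamma}$ for some $k\ge k_1$. Since $1-ck^{-\gamma}\ge 0$, substituting into the recursion gives
\[
s_{k+1}\le (1-ck^{-\gamma})C_{\ub}k^{-\gamma}+Ck^{-2\gamma}=C_{\ub}k^{-\gamma}-(cC_{\ub}-C)k^{-2\gamma},
\]
so it remains to verify $C_{\ub}k^{-\gamma}-(cC_{\ub}-C)k^{-2\gamma}\le C_{\ub}(k+1)^{-\gamma}$, equivalently $k^{-\gamma}-(k+1)^{-\gamma}\le (c-C/C_{\ub})k^{-2\gamma}$. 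For the left-hand side I would use the elementary estimate $k^{-\gamma}-(k+1)^{-\gamma}=k^{-\gamma}\bigl(1-(1+1/k)^{-\gamma}\bigr)\le \gamma k^{-\gamma-1}$, which follows from $1-(1+x)^{-\gamma}\le\gamma x$ for $x\ge 0$. After dividing by $k^{-2\gamma}$, the desired inequality reduces to $\gamma k^{\gamma-1}\le c-C/C_{\ub}$, and this holds because $C/C_{\ub}\le c/2$ while $\gamma k^{\gamma-1}\le c/2$ by the choice of $k_1$. This closes the induction, giving $s_k\le C_{\ub}k^{-\gamma}$ for all $k\ge 1$.

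This argument is routine and I do not anticipate a genuine obstacle; it is the squared-sequence analogue of Lemma~\ref{lem:sequencelemmadistance}. The only points that need a little attention are: getting the relative sizes of the two error terms right (the forcing term $k^{-2\gamma}$ dominates the discretization error $\gamma k^{-\gamma-1}$ precisely when $\gamma<1$, while they are of the same order when $\gamma=1$, which is exactly why a lower bound on $c$ is imposed only in the boundary case $\gamma=1$); ensuring $1-ck^{-\gamma}\ge 0$ before substituting the inductive hypothesis, so that no inequality is reversed; and absorbing the finitely many initial terms into $C_{\ub}$.
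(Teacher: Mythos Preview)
Your argument is correct. The paper itself does not supply a proof of this lemma; it simply records that ``the proofs of the following three lemmas may be found in \cite[Section A.7.2]{davis2021subgradient},'' so there is nothing substantive to compare against. Your direct induction with the elementary bound $k^{-\gamma}-(k+1)^{-\gamma}\le \gamma k^{-\gamma-1}$ is exactly the kind of computation one expects for such a recursion, and you have correctly isolated why the extra hypothesis $c\ge 16$ (indeed $c\ge 2$ would already suffice in your argument) is only needed when $\gamma=1$.

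One small remark: as stated in the paper, the lemma claims $C_{\ub}$ depends only on $c,C,\gamma,k_0$, but since the recursion imposes no constraint on $s_1,\ldots,s_{k_0}$, any valid constant must also depend on these initial values. Your definition $C_{\ub}=\max\{2C/c,\ \max_{1\le k\le k_1}k^{\gamma}s_k\}$ reflects this honestly. This is a harmless imprecision in the statement rather than a flaw in your proof, and in the paper's application the sequence $s_k=\EE[\|y_k-\bar x\|^2 1_{\tau_{k_0}>k}]$ is uniformly bounded, so the dependence is innocuous.
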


\end{document}